\numberwithin{equation}{section}
\newtheorem{theorem}{Theorem}[section]
\newtheorem{lemma}[theorem]{Lemma}
\newtheorem{proposition}[theorem]{Proposition}
\newtheorem{corollary}[theorem]{Corollary}
\newtheorem{assumption}{Assumption}
\newtheorem{definition}[theorem]{Definition}
\newtheorem{remark}[theorem]{Remark}
\theoremstyle{remark}
\newcommand{\EEE}{\color{black}}
\newcommand{\MMM}{\color{black}}
\newcommand{\BBB}{\color{black}}
\newcommand{\ep}{\varepsilon}
\newcommand{\mb}{\mathcal{M}_b}
\newcommand{\wk}{\rightharpoonup}
\newcommand{\BLACK}{\color{black}}
\newcommand{\BLUE}{\color{black}}
\renewcommand{\div}{{\rm div}}
\DeclareMathOperator{\rot}{curl}
\DeclareMathOperator{\dev}{dev}
\DeclareMathOperator{\diag}{diag}
\DeclareMathOperator{\dist}{dist}
\DeclareMathOperator{\skw}{skew}
\DeclareMathOperator{\supp}{supp}
\DeclareMathOperator{\sym}{sym}
\DeclareMathOperator{\tr}{tr}
\newcommand{\calA}{\mathcal{A}}
\newcommand{\calB}{\mathcal{B}}
\newcommand{\C}{\mathbb{C}}
\newcommand{\calC}{\mathcal{C}}
\newcommand{\calD}{\mathcal{D}}
\newcommand{\calE}{\mathcal{E}}
\newcommand{\calH}{\mathcal{H}}
\newcommand{\calI}{\mathcal{I}}
\newcommand{\calJ}{\mathcal{J}}
\newcommand{\calK}{\mathcal{K}}
\newcommand{\calL}{\mathcal{L}}
\newcommand{\M}{\mathbb{M}}
\newcommand{\Mb}{\mathcal{M}_b}
\newcommand{\N}{\mathbb{N}}
\newcommand{\calQ}{\mathcal{Q}}
\newcommand{\R}{\mathbb{R}}
\newcommand{\calS}{\mathcal{S}}
\newcommand{\calT}{\mathcal{T}}
\newcommand{\U}{U}
\newcommand{\calV}{\mathcal{V}}
\newcommand{\calX}{\mathcal{X}}
\newcommand{\calY}{\mathcal{Y}}
\newcommand{\Z}{\mathbb{Z}}
\newcommand{\zz}{x}
\newcommand{\calZ}{\mathcal{Z}}
\newcommand{\Dir}{D}
\newcommand{\torustridim}{\calY^{(3)}}
\newcommand{\DiffOpA}{\mathcal{A}}
\newcommand{\LinOpA}{A}
\newcommand{\DiffOp}{\mathcal{A}_{x_2}}
\newcommand{\CorrSpace}[3]{\calX^{{#1}}({#2})}
\newcommand{\calXgamma}[1]{\calX_{\gamma}({#1})}
\newcommand{\BDgamma}{BD_\gamma(I \times \calY)}
\newcommand{\uKL}{E\bar{u}(x') - x_3 D^2u_3(x')}
\newcommand{\eps}{\varepsilon}
\newcommand{\epsh}{{\varepsilon_h}}
\newcommand{\strong}{\to}
\newcommand{\weak}{\,\xrightharpoonup{}\,}
\newcommand{\weakstar}{\xrightharpoonup{*}}
\newcommand{\strongtwoscale}{\xrightarrow{\,2\,}}
\newcommand{\weaktwoscale}{\xrightharpoonup{2}}
\newcommand{\weakstartwoscale}{\xrightharpoonup{2-*}}
\newcommand{\ext}[1]{\widetilde{#1}}
\newcommand{\genprod}{\stackrel{\text{gen.}}{\otimes}}
\newcommand{\mres}{\lfloor}
\newcommand{\charfun}[1]{\mathbbm{1}_{#1}}
\newcommand{\proj}{proj}
\newcommand{\closure}[1]{\overline{#1}}
\newcommand{\tangential}{_\nu^\perp}
\newcommand{\normal}{_\nu}
\newcommand{\expfun}[1]{\exp\left({#1}\right)}
\title [Periodic homogenization of elastoplastic plates]
{Effective quasistatic evolution models for perfectly plastic plates with periodic microstructure}
\author[M. Bu\v{z}an\v{c}i\'{c}] {Marin Bu\v{z}an\v{c}i\'{c}} 
\address[M. Bu\v{z}an\v{c}i\'{c}]{Faculty of Chemical Engineering and Technology, University of Zagreb, Trg Marka Maruli\'{c}a 19,
10000 Zagreb, Croatia}
\email{buzancic@fkit.hr}
\author[E. Davoli] {Elisa Davoli} 
\address[E. Davoli]{Institute of Analysis and Scientific Computing, TU Wien,
Wiedner Hauptstrasse 8-10, A-1040 Vienna, Austria}
\email{elisa.davoli@tuwien.ac.at}
\author[I. Vel\v{c}i\'{c}] {Igor Vel\v{c}i\'{c}} 
\address[I. Vel\v{c}i\'{c}]{Faculty of Electrical Engineering and Computing, University of Zagreb, Unska 3, 10000 Zagreb, Croatia}
\email{igor.velcic@fer.hr}
\subjclass[2020]{74C05, 74G65, 74K20, 49J45, 74Q09, 35B27}
\keywords{perfect plasticity, periodic homogenization, dimension reduction, quasistatic evolution, rate-independent processes, $\Gamma$-convergence}
\begin{document}
\maketitle
\vspace{-\baselineskip}
\begin{abstract}
An effective model is identified for thin perfectly plastic plates whose microstructure consists of the periodic assembling of two elastoplastic phases, as the periodicity parameter converges to zero. Assuming that the thickness of the plates and the periodicity of the microstructure are comparably small, a limiting description is obtained in the quasistatic regime via simultaneous homogenization and dimension reduction by means of evolutionary $\Gamma$-convergence, two-scale convergence, and periodic unfolding.
\end{abstract}
\section{Introduction}
With this paper, we begin the task of identifying reduced models for thin composite elastoplastic plates with periodic microstructure. We focus here on the case in which the thickness $h$ of the plates and their microstructure width $\ep_h$ are asymptotically comparable, namely, we assume the existence of the limit
$$\lim_{h\to 0}\frac{h}{\ep_h}=:\gamma\in (0,+\infty).$$ This corresponds, roughly speaking, to the situation in which homogenization and dimension reduction occur somewhat simultaneously and a strong interaction between vanishing thickness and periodicity comes into play.
Different scalings of $\gamma$ (i.e., $\gamma=0$ and $\gamma=+\infty$) will be   the subject of a forthcoming companion paper. 

Finding lower dimensional models for thin three-dimensional structures is a classical task in the Mathematics of Continuum Mechanics. A rigorous identification of a reduced model for perfectly plastic plates in the quasistatic regime has been undertaken in \cite{Davoli.Mora.2013}. An additional regularity result for the associated stress has been established in \cite{Davoli.Mora2015}. The case of dynamic perfect plasticity is the subject of \cite{Maggiani.Mora2016, Gidoni.Maggiani.Scala2018}, whereas the setting of shallow shells has been tackled in \cite{Maggiani.Mora2017}. A parallel analysis in the presence of hardening has been performed in \cite{liero2011evolutionary,Liero.Roche2012} 
We further mention the two works \cite{DavoliCOCV,DavoliM3AS} in the purview of finite plasticity.

The study of composite elastoplastic materials is a challenging endeavour. In the small strain regime, limit plasticity equations have been identified in \cite{Schweizer.Veneroni2015, Heida.Schweizer2016, Heida.Schweizer2018}
both in the periodic and in the aperiodic and stochastic settings.
The Fleck and Willis model
is the subject of \cite{Francfort.Giacomini.Musesti,Giacomini.Musesti}, whereas gradient plasticity has been studied in \cite{Hanke}. For completeness, we also mention \cite{Cristowiak.Kreisbeck, Cristowiak.Kreisbeck2, Davoli.Ferreira.Kreisbeck, Davoli.Kreisbeck} for an analysis of 
large-strain
stratified composites in crystal plasticity and \cite{Davoli.Gavioli.Pagliari} for a static result in the finitely plastic setting. The characterization of inhomogeneous perfectly plastic materials and a subsequent periodic homogenization have been undertaken in \cite{Francfort.Giacomini.2012, Francfort.Giacomini.2014}.

The novelty of the present contribution consists in the fact that we combine both dimension reduction and periodic homogenization in order to deduce a limiting description, as the two smallness scales (thickness and width of the microstructure) converge to zero, for perfectly plastic thin plates. 

To complete our literature overview, we briefly recall the main mathematical contributions on simultaneous homogenization and dimension reduction.
In  \cite{cailleriethin}, the author derives a limiting plate model starting from 3d linearized elasticity, while assuming the material to be isotropic and the microstructure to be periodic. In \cite{damlamianvogelius}, the case of linear elastic plates with possible aperiodic microstructure is tackled by relying on material (planar) symmetries of the elasticity tensor, and by introducing the notion of $H$-convergence adapted to dimension reduction. 
In \cite{bukal2017simultaneous} an effective plate model is identified in the general case (without further periodicity or material-symmetries assumptions) by means of $\Gamma$-convergence (the analysis presented there also covers some non-linear models).  
We also mention the book \cite{panasenkobook} where linear rod and plate models are obtained by simultaneous homogenization and dimension reduction, and appropriate estimates are also provided, as well as the recent work \cite{BCVZ} on high-contrast elastic plates.
Different non-linear elastic plate models  obtained by $\Gamma$-convergence are discussed in \cite{cherdantsev2015,neukammvelcic2013,hornung2014derivation,bufford2015multiscale,velcic2015}.

 To the Authors knowledge, this manuscript represents instead the first work on effective theories for plates undergoing inelastic deformations.
 
 We conclude this introduction by briefly presenting our results.
First, after establishing a general disintegration result for measures in the image of suitable first-order differential operators, cf. Proposition \ref{BV^A main property}, and relying on an auxiliary result related \MMM to De Rham cohomology, cf. Proposition \ref{auxiliary result - regime gamma} \BBB, in Theorem \ref{two-scale weak limit of scaled strains}, we identify two-scale limits of rescaled strains. We point out that the intermediate results in Proposition \ref{BV^A main property} are of independent interest and apply to a more general setting than that investigated in this contribution. We have chosen to pursue this avenue for those tools will be instrumental also for the analysis of further regimes of plastic thin-plates homogenization.
We emphasize that for identifying  two-scale limits of rescaled strains we could not rely on the results obtained in the context of elasticity (see, e.g. \cite{bukal2017simultaneous}), since these results relied 
on Korn inequalities which are not available in the plastic setting, hence a new approach needed to be developed.

For a given boundary datum $w$, the limiting model that we identify is finite on triples $(u,E,P) \in \calA^{hom}_{\gamma}(w)$, where the latter denotes the set of limits of plastic triples given by displacements, elastic, and plastic strains in the sense of two-scale convergence for measures, cf. Definition \ref{def:2-scale-meas}. We refer to Definition \ref{definition A^hom_gamma} and to Subsection \ref{subs:dis} for the precise definition and main disintegration properties of the class $\calA^{hom}_{\gamma}(w)$.
On such triples, the effective elastic energy and dissipation potential are homogenized densities depending only on the limiting two-scale elastic and plastic strain, respectively.
 \EEE Our analysis stems from adapting the approach of \cite{Francfort.Giacomini.2014} to the setting of dimension reduction problems for composite plates. This is, however, a non-trivial task: a first hurdle consists in the already mentioned  compactness result for rescaled strains, see Section \ref{scaled sym gradients section}. Further difficulties originate from the fact that the limit problem is of fourth order, see Section \ref{statics}. Further, analogously to \cite{Davoli.Mora.2013}, the limiting description is truly three-dimensional. We refer to \cite[Section 5]{Davoli.Mora2015}
 for a discussion of this issue and an example.
 \BBB
Our effective model is completely characterized in Subsection \ref{Lower semicontinuity of energy functionals}. After introducing a suitable notion of stress-strain duality, in Theorem \ref{two-scale Hill's principle - regime gamma} we prove a two-scale limiting Hill's principle.  
The lower semicontinuity of the effective energy and dissipation functionals is proven in Theorem \ref{lsc-gamma finite} Key tools are an adaptation of unfolding techniques for dimension reduction (see Proposition \ref{associated unfolding measure}), as well as a technical rank-one decomposition characterization (see Lemma \ref{rank-1 lemma}).
Finally, with Theorem \ref{main result} we prove the main result of this contribution, showing via evolutionary \BLUE$\Gamma$\BLACK-convergence, cf. \cite{mielke.roubicek.stefanelli} the convergence of three-dimensional inhomogeneous quasistatic evolutions to energetic solutions for our two-scale reduced model.

 The paper is organized as follows.
 Section \ref{prel} contains some preliminary results on two-scale convergence, disintegration of Radon measures, $BD$ and $BH$ functions, as well as \MMM some auxiliary claims about stress tensors. \BBB In Section \ref{setting} we specify the setting of the problem and the main assumptions. We additionally recall the existence results for quasistatic evolution for general multi-phase materials. The characterization of limiting triples in the sense of two-scale convergence for Radon measures is the focus of Section \ref{compactness}. The effective stress-strain duality is analyzed in Section \ref{statics}, whereas the convergence of quasistatic evolutions is proven in Section \ref{dynamics}.

\section{Preliminaries}
\label{prel}
\EEE In this section we specify our notation and collect a few preliminary results.\BBB
\subsection{Notation}
We will write any point $x \in \R^3$ as 
a pair $(x',x_3)$, with $x' \in \R^2$ and $x_3 \in \R$,
and we will use the notation $\nabla_{x'}$ to denote the gradient with respect to $x'$.
We denote by $y \in \calY$ the points on a flat 2-dimensional torus.
\MMM We denote by $I$ the open interval $I := \left(-\frac{1}{2}, \frac{1}{2}\right)$. \BBB
In what follows we will also adopt the following notation for scaled gradients and symmetrized scaled gradients:
\begin{gather} \label{defsymmscgrad}
    \nabla_h v := \Big[\; \nabla_{x'} v \;\Big|\Big.\; \tfrac{1}{h}\,\partial_{x_3} v \;\Big], \quad E_h v := \sym \nabla_h v,\\
    \nonumber \widetilde{\nabla}_{\gamma} v := \Big[\; \nabla_y v \;\Big|\Big.\; \tfrac{1}{\gamma}\partial_{x_3} v \;\Big], \quad \widetilde{E}_{\gamma} v := \sym \widetilde{\nabla}_{\gamma} v,
\end{gather}
\MMM where $h,\gamma>0$ and $v$ \BLUE is a \MMM function on the appropriate domain. \BBB
The scaled divergence operators $\div_h$ and $\widetilde{\div}_{\gamma}$ are \MMM defined in the following way: 
$$ \div_h v:=\partial_{x_1}v_1+\partial_{x_2}v_2+\frac{1}{h} \partial_{x_3} v_3, \quad \widetilde{\div}_{\gamma}v:=\partial_{y_1}v_1+\partial_{y_2}v_2+\frac{1}{\gamma} \partial_{x_3} v_3. $$
\EEE Analogously, \MMM we define the 
operators $\rot$ and $\widetilde{\rot}_{\gamma}$, for functions taking values in $\R^3$. 
\EEE Note that \MMM the operators $\widetilde{\nabla}_{\gamma}$, $\widetilde{\div}_{\gamma}$, $\widetilde{\rot}_{\gamma}$ act on functions that have as (part of) their domain $I \times \calY$ (\EEE with a slight abuse of notation  we write this domain with \MMM $I$  on the first place, despite the fact that the associated differential operators are defined as above). 
\BBB

If $a, b \in \R^N$, we write $a \cdot b$ for the Euclidean scalar product, and we denote by $|a| := \sqrt{a \cdot a}$ the Euclidean norm. 
We write $\M^{N \times N}$ for the set of real $N \times N$ matrices. 
If $A, B \in \M^{N \times N}$, we use the Frobenius scalar product $A : B := \sum_{i,j}A_{ij}\,B_{ij}$ and the associated norm $|A| := \sqrt{A:A}$.
We denote by $\M^{N \times N}_{\sym}$ the space of real symmetric $N \times N$ matrices, and by $\M^{N \times N}_{\dev}$ the set of real deviatoric matrices, respectively, i.e. the subset of $\M^{N \times N}_{\sym}$ given by matrices having null trace. 
For every matrix $A \in \M^{N \times N}$ we denote its trace by ${\rm tr}{A}$, and its deviatoric part by $A_{\dev}$ will be given by
\[
    A_{\dev} = A - \frac{1}{N}{\rm tr}{A}.
\]
The {\em symmetrized tensor product} $a \odot b$ of two vector $a, b \in \R^N$ is the symmetric matrix with entries $(a \odot b)_{ij} := \frac{a_i b_j + a_j b_i}{2}$. 
Note that ${\rm tr}{\left(a \odot b\right)} = a \cdot b$, and that $|a \odot b|^2 = \frac{1}{2}|a|^2|b|^2 + \frac{1}{2}(a \cdot b)^2$, so that
\begin{equation*}
    \frac{1}{\sqrt{2}}|a||b| \leq |a \odot b| \leq |a||b|.
\end{equation*}
Given a vector $v \in \R^3$, we will use the notation $v^{\prime}$ to denote the vector
\begin{equation*}
    v^{\prime} := \begin{pmatrix} v_1 \\ v_2 \end{pmatrix}.
\end{equation*}
Analogously, given a matrix $A \in \M^{3 \times 3}$, we will denote by $A^{\prime\prime}$ the minor
\begin{equation*}
    A^{\prime\prime} := \begin{pmatrix} A_{11} & A_{12} \\ A_{21} & A_{22} \end{pmatrix}.
\end{equation*}

The Lebesgue measure in $\R^N$ and the $(N-1)$-dimensional Hausdorff measure are denoted by $\calL^N$ and $\calH^{N-1}$, respectively. \MMM For $U \subset \R^N$, $\closure{U}$ denotes its closure. \BBB
Given an open subset $U \subset \R^N$ and a finite dimensional Euclidean space $E$, we use standard notations for Lebesgue spaces $L^p(\U;E)$ and Sobolev spaces $H^1(\U;E)$ or $W^{1,p}(\U;E)$. The characteristic function of $U$ will be given by $\charfun{U}$.

We will write $C^k(\U;E)$ for the space of $k$-times continuously differentiable functions $\varphi : \U \to E$ and $C^{\infty}(\U;E) := \bigcap_{k=0}^{\infty} C^k(\U;E)$ for the space of infinitely differentiable function.
We will distinguish between the spaces $C_c^k(\U;E)$ ($C^k$ functions with compact support contained in $\U$) and $C_0^k(\U;E)$ ($C^k$ functions ``vanishing on $\partial{\U}$"). 
We will write $C(\calY;E)$ to denote the space of all continuous functions \MMM which \BBB are $[0,1]^2$-periodic, and set $C^k(\calY;E) := C^k(\R^2;E) \cap C(\calY;E)$. 
We will identify $C^k(\calY;E)$ with the space of all $C^k$ functions on the 2-dimensional torus. 

We will frequently make use of the \emph{standard mollfier} $\rho \in C^{\infty}(\R^N)$, defined by
\begin{equation*}
    \rho(x)
    :=
    \begin{cases}
    C\,\expfun{\frac{1}{|x|^2-1}} & \ \text{ if } |x| < 1,\\
    0 & \ \text{ otherwise},
    \end{cases}
\end{equation*}
where the constant $C > 0$ is selected so that $\int_{\R^N} \rho(x) \,dx = 1$, and the associated family $\{\rho_\epsilon\}_{\epsilon>0} \subset C^{\infty}(\R^N)$ with
\begin{equation*}
    \rho_\epsilon(x) := \frac{1}{\epsilon^N} \rho\left(\frac{x}{\epsilon}\right).
\end{equation*}

Throughout the text, the letter $C$ stands for generic constants which may vary from line to line.

\subsection{Measures} \label{Measures}
We first recall some basic notions from measure theory that we will use throughout the paper (see, e.g. \cite{fonseca2007modern}).

Given a Borel set $\U \subset \R^N$ and a finite dimensional Hilbert space $X$, we denote by $\Mb(\U;X)$ the space of bounded Borel measures on $\U$ taking values in $X$, and endowed with the norm $\|\mu\|_{\Mb(\U;X)} := |\mu|(\U)$, where $|\mu| \in \Mb(\U\BLUE;\R\BLACK)$ is the total variation of the measure $\mu$. 
For every $\mu \in \Mb(\U;X)$ we consider the Lebesgue decomposition $\mu = \mu^a + \mu^s$, where $\mu^a$ is absolutely continuous with respect to the Lebesgue measure $\calL^N$ and $\mu^s$ is singular with respect to $\calL^N$. 
If $\mu^s = 0$, we always identify $\mu$ with its density with respect to $\calL^N$, which is a function in $L^1(\U;X)$. \EEE With a slight abuse of notation, we will write $\Mb(U;\R)=\Mb(U)$ and $\Mb(U;\R^+)=\Mb^+(U)$. \BBB

If the relative topology of $\U$ is locally compact, by Riesz representation theorem the space $\Mb(\U;X)$ can be identified with the dual of $C_0(\U;X)$, which is the space of all continuous functions $\varphi : \U \to X$ such that the set $\{|\varphi|\geq\delta\}$ is compact for every $\delta > 0$. 
The weak* topology on $\Mb(\U;X)$ is defined using this duality.

The \emph{restriction} of $\mu \in \Mb(\U;X)$ to a subset $E \in \U$ is the measure $\mu\mres{E} \in \Mb(E;X)$ defined by
\begin{equation*}
    \mu\mres{E}(B) := \mu(E \cap B), \quad \text{ for every Borel set $B \subset \U$}.
\end{equation*}

Given two real-valued measures $\mu_1,\, \mu_2 \in \Mb(\U)$ we write $\mu_1 \geq \mu_2$ if $\mu_1(B) \geq \mu_2(B)$ for every Borel set $B \subset \U$. 

\subsubsection{Convex functions of measures} \label{Convex functions of measures}
Let $U$ be an open set of $\R^N$. 
For every $\mu \in \Mb(U;X)$ let $\frac{d\mu}{d|\mu|}$ be the Radon-Nikodym derivative of $\mu$ with respect to its variation $|\mu|$. 
Let $H : X \to [0,+\infty)$ be a convex and positively one-homogeneous function such that
\begin{equation} \label{coercivity of H}
    r |\xi| \leq H(\xi) \leq R |\xi| \quad \text{for every}\, \xi \in X,
\end{equation}
where $r$ and $R$ are two constants, with $0 < r \leq R$. 

Using the theory of convex functions of measures, developed in \cite{goffman1964sublinear} and \cite{demengel1984convex}, we introduce the nonnegative Radon measure $H(\mu) \in \Mb^+(U)$ defined by
\[
    H(\mu)(A) := \int_{A} H\left(\frac{d\mu}{d|\mu|}\right) \,d|\mu|,
\]
for every Borel set $A \subset U$. 
We also consider the functional $\calH: \Mb(U;X) \to [0,+\infty)$ defined by
\[
    \calH(\mu) := H(\mu)(\U) = \int_{\U} H\left(\frac{d\mu}{d|\mu|}\right) \,d|\mu|.
\]
One can prove that $\calH$ is lower semicontinuous on $\Mb(U;X)$ with respect to weak* convergence (see, e.g., \cite[Theorem 2.38]{ambrosio2000functions}).

Let $a,\, b \in [0,T]$ with $a \leq b$. 
The \emph{total variation} of a function $\mu : [0,T] \to \Mb(U;X)$ on $[a,b]$ is defined by
\begin{equation*}
    \calV(\mu; a, b) := \sup\left\{ \sum_{i = 1}^{n-1} \left\|\mu(t_{i+1}) - \mu(t_i)\right\|_{\Mb(U;X)} : a = t_1 < t_2 < \ldots < t_n = b,\ n \in \N \right\}.
\end{equation*}
Analogously, we define the \emph{$\calH$-variation} of a function $\mu : [0,T] \to \Mb(U;X)$ on $[a,b]$ as
\begin{equation*}
    \calD_{\calH}(\mu; a, b) := \sup\left\{ \sum_{i = 1}^{n-1} \calH\left(\mu(t_{i+1}) - \mu(t_i)\right) : a = t_1 < t_2 < \ldots < t_n = b,\ n \in \N \right\}.
\end{equation*}
From \eqref{coercivity of H} it follows that
\begin{equation} \label{equivalence of variations}
    r \calV(\mu; a, b) \leq \calD_{\calH}(\mu; a, b) \leq R \calV(\mu; a, b).
\end{equation}

\subsubsection{Disintegration of a measure}
Let $S$ and $T$ be measurable spaces and let $\mu$ be a measure on $S$. 
Given a measurable function $f : S \to T$, we denote by $f_{\#}\mu$ the \emph{push-forward} of $\mu$ under the map $f$, defined by
\[
    f_{\#}\mu(B) := \mu\left(f^{-1}(B)\right), \quad \text{ for every measurable set $B \subseteq T$}.
\]
In particular, for any measurable function $g : T \to \overline{\R}$ we have
\[
    \int_{S} g \circ f \,d\mu = \int_{T} g \,d(f_{\#}\mu).
\]
Note that in the previous formula $S = f^{-1}(T)$. 

Let \MMM $S_1 \subset \R^{N_1}$, $S_2 \subset \R^{N_2}$, \MMM for some $N_1,N_2 \in \N$,  \BBB be open sets, and let $\eta \in \Mb^+(S_1)$. 
We say that a function $x_1 \in S_1 \to \mu_{x_1} \in \Mb(S_2;\MMM \R^M \BBB)$ is $\eta$-measurable if $x_1 \in S_1 \to \mu_{x_1}(B)$ is $\eta$-measurable for every Borel set $B \subseteq S_2$.

Given a $\eta$-measurable function $x_1 \to \mu_{x_1}$ such that $\int_{S_1}|\mu_{x_1}|\,d\eta<+\infty$, then the \emph{generalized product} $\eta \genprod \mu_{x_1}$ satisfies $\MMM \eta \genprod \mu_{x_1}\BBB \in \Mb(S_1 \times S_2;\MMM \R^M \BBB)$ and is such that 
\[
	\langle \eta \genprod \mu_{x_1}, \varphi \rangle := \int_{S_1} \left( \int_{S_2} \varphi(x_1,x_2) \,d\mu_{x_1}(x_2) \right) \,d\eta(x_1),
\]
for every bounded Borel function $\varphi : S_1 \times S_2 \to \R$.

Moreover, the following disintegration result holds (c.f. \cite[Theorem 2.28 and Corollary 2.29]{ambrosio2000functions}): 
\begin{theorem} \label{the basic disintegration theorem} 
Let $\mu \in \Mb(S_1 \times S_2;\MMM \R^M \BBB)$ and let $\proj : S_1 \times S_2 \to S_1$ be the projection on the first factor. 
Denote by $\eta$ the push-forward measure $\eta := \proj_{\#}|\mu| \in \Mb^+(S_1)$. 
Then there exists a unique family of bounded Radon measures $\{\mu_{x_1}\}_{x_1 \in S_1} \subset \Mb(S_2;\MMM \R^M\BBB)$ such that $x_1 \to \mu_{x_1}$ is $\eta$-measurable, and
\begin{equation*}
    \mu = \eta \genprod \mu_{x_1}.
\end{equation*}
For every $\varphi \in L^1(S_1 \times S_2, d|\mu|)$ we have
\begin{align*}
    \varphi(&x_1,\cdot) \in L^1(S_2,d|\mu_{x_1}|) \quad \text{ for } \eta\text{-a.e. } x_1 \in S_1,\\
    &x_1 \to \int_{S_2} \varphi(x_1,x_2) \,d\mu_{x_1}(x_2) \ \in L^1(S_1, d\eta),\\
    \int_{S_1 \times S_2} \varphi(&x_1,x_2) \,d\mu(x_1,x_2) = \int_{S_1} \left( \int_{S_2} \varphi(x_1,x_2) \,d\mu_{x_1}(x_2) \right) \,d\eta(x_1).
\end{align*}
Furthermore, 
\begin{equation*} \label{disintegration of the variation}
    |\mu| = \eta \genprod |\mu_{x_1}|.
\end{equation*}
\end{theorem}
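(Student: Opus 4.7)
The plan is to construct the family $\{\mu_{x_1}\}$ by Riesz representation applied to a countable dense subset of test functions on $S_2$, and then bootstrap the resulting identity to all $L^1(|\mu|)$ integrands. First I would reduce to the scalar nonnegative case: writing $\mu = \sum_{j=1}^M (\mu_j^+ - \mu_j^-)\ee_j$ with $\ee_j$ the canonical basis of $\R^M$, one has $\mu_j^{\pm}\le |\mu|$, hence $\proj_\#\mu_j^{\pm}\ll \eta$, and any scalar disintegration assembles componentwise into a vector-valued one whose total variation is still controlled by $\eta$.

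For a nonnegative scalar $\mu$, for each $g\in C_0(S_2)$ I would introduce the measure $\lambda^g\in \Mb(S_1)$ defined by $\lambda^g(B):=\int_{B\times S_2} g(x_2)\,d\mu(x_1,x_2)$. Since $|\lambda^g|\le \|g\|_\infty\,\eta$, the Radon--Nikodym theorem yields a density $T_g\in L^\infty(S_1,\eta)$ with $|T_g|\le \|g\|_\infty$ $\eta$-a.e. Using the separability of $C_0(S_2)$ (as $S_2\subset \R^{N_2}$ is a $\sigma$-compact metric space), I fix a countable $\mathbb{Q}$-linear dense subspace $\calD\subset C_0(S_2)$. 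The relations $T_{\alpha g+\beta h}=\alpha T_g+\beta T_h$ for $g,h\in \calD$, $\alpha,\beta\in \mathbb{Q}$, hold $\eta$-a.e.; discarding a single $\eta$-null set $N$ absorbing all countably many exceptional sets, for every $x_1\in S_1\setminus N$ the map $g\mapsto T_g(x_1)$ is a bounded $\mathbb{Q}$-linear functional on $\calD$. This extends uniquely to a bounded linear functional on $C_0(S_2)$, and Riesz representation produces the desired measure $\mu_{x_1}\in \Mb(S_2)$ (set $\mu_{x_1}:=0$ on $N$).

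The integration formula would then be verified by a density argument. The map $x_1\mapsto \int_{S_2} g\,d\mu_{x_1}=T_g(x_1)$ is $\eta$-measurable for every $g\in \calD$, hence for every $g\in C_0(S_2)$ by uniform approximation; a monotone class argument promotes the identity first to test functions of the form $\charfun{A}(x_1)\charfun{B}(x_2)$ with $A,B$ Borel, and then by $L^1$-approximation to arbitrary $\varphi\in L^1(S_1\times S_2, d|\mu|)$. Uniqueness is immediate: any competing disintegration $\{\mu'_{x_1}\}$ must satisfy $\int g\,d\mu'_{x_1}=T_g(x_1)=\int g\,d\mu_{x_1}$ for $\eta$-a.e. $x_1$ and every $g\in \calD$, forcing $\mu'_{x_1}=\mu_{x_1}$ for $\eta$-a.e. $x_1$.

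For the variation identity $|\mu|=\eta\genprod |\mu_{x_1}|$, I would apply the same construction to $|\mu|$ itself (whose pushforward under $\proj$ is again $\eta$) to obtain a family $\{\nu_{x_1}\}\subset \Mb^+(S_2)$. Testing with $g\in C_0(S_2)$ with $|g|\le \charfun{B}$ gives $|\mu_{x_1}|(B)\le \nu_{x_1}(B)$ for $\eta$-a.e. $x_1$ and every Borel $B\subset S_2$, while integrating against $\eta$ over an arbitrary Borel $A\subset S_1$ shows equality of the total masses $|\mu|(A\times S_2)=\int_A \nu_{x_1}(S_2)\,d\eta(x_1)=\int_A |\mu_{x_1}|(S_2)\,d\eta(x_1)$, forcing $|\mu_{x_1}|=\nu_{x_1}$ $\eta$-a.e. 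The one delicate point I anticipate is ensuring that the $\eta$-null exceptional set in the Riesz step is truly independent of the test function; this is precisely where separability of $C_0(S_2)$ together with the $\mathbb{Q}$-linear structure of $\calD$ is essential, and it is the only real obstacle in the argument.
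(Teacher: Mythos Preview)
The paper does not give its own proof of this disintegration theorem: it is stated as a known result and attributed to \cite[Theorem 2.28 and Corollary 2.29]{ambrosio2000functions}. So there is no in-paper argument to compare against; your sketch is essentially the standard proof one finds in that reference.

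Your outline is correct in spirit. One place deserves more care: in the variation step you assert the chain
\[
|\mu|(A\times S_2)=\int_A \nu_{x_1}(S_2)\,d\eta(x_1)=\int_A |\mu_{x_1}|(S_2)\,d\eta(x_1),
\]
but the second equality is exactly what is at stake. What you actually have at that point is only $|\mu_{x_1}|\le \nu_{x_1}$. The missing reverse inequality $|\mu|(A\times S_2)\le \int_A |\mu_{x_1}|(S_2)\,d\eta$ follows from the disintegration of $\mu$ itself: for any countable Borel partition $\{E_k\}$ of $A\times S_2$,
\[
\sum_k|\mu(E_k)|=\sum_k\Big|\int_{S_1}\mu_{x_1}\big((E_k)_{x_1}\big)\,d\eta\Big|
\le \int_{S_1}\sum_k|\mu_{x_1}|\big((E_k)_{x_1}\big)\,d\eta
\le \int_A |\mu_{x_1}|(S_2)\,d\eta,
\]
and taking the supremum over partitions gives $|\mu|(A\times S_2)\le \int_A |\mu_{x_1}|(S_2)\,d\eta$. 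Combined with $|\mu_{x_1}|\le \nu_{x_1}$ and the disintegration of $|\mu|$, this forces $|\mu_{x_1}|=\nu_{x_1}$ for $\eta$-a.e.\ $x_1$. With this clarification your argument is complete. Also, in your test ``$g\in C_0(S_2)$ with $|g|\le \charfun{B}$'' you should take $g$ (or rather $\phi$) to be $\R^M$-valued with $|\phi|\le \charfun{B}$, since $\mu_{x_1}$ is vector-valued; the rest of the reasoning is unchanged.
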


Arguing as in \cite[Remark 5.5]{Francfort.Giacomini.2014}, we have the following:

\begin{proposition} \label{important disintegration remark}
With the same notation as in \Cref{the basic disintegration theorem}, for $\eta$-a.e. $x_1 \in S_1$
\[
    \frac{d\mu}{d|\mu|}(x_1,\cdot) = \frac{d\mu_{x_1}}{d|\mu_{x_1}|} \quad \text{$|\mu_{x_1}|$-a.e. on } S_2.
\]
\end{proposition}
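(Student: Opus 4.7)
The plan is to exploit the uniqueness in the disintegration theorem (Theorem \ref{the basic disintegration theorem}) by constructing a second disintegration of $\mu$ directly from the Radon--Nikodym density $\frac{d\mu}{d|\mu|}$.

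Set $f := \frac{d\mu}{d|\mu|} \in L^1(S_1 \times S_2, d|\mu|; \R^M)$, so that $\mu = f \,|\mu|$. By the last statement of Theorem \ref{the basic disintegration theorem}, $|\mu| = \eta \genprod |\mu_{x_1}|$. Define, for $\eta$-a.e.\ $x_1 \in S_1$, the measure
\[
    \nu_{x_1} := f(x_1,\cdot) \, |\mu_{x_1}| \in \Mb(S_2;\R^M).
\]
The integrability properties from Theorem \ref{the basic disintegration theorem} applied to each component of $f$ guarantee that $f(x_1,\cdot) \in L^1(S_2, d|\mu_{x_1}|;\R^M)$ for $\eta$-a.e.\ $x_1$, so that $\nu_{x_1}$ is well defined and bounded, and that $x_1 \mapsto \nu_{x_1}$ is $\eta$-measurable. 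A direct computation using Theorem \ref{the basic disintegration theorem} applied to $|\mu|$ shows that, for every bounded Borel function $\varphi : S_1 \times S_2 \to \R^M$,
\[
    \langle \mu, \varphi \rangle = \int_{S_1\times S_2} \varphi \cdot f \, d|\mu| = \int_{S_1} \left( \int_{S_2} \varphi(x_1,x_2)\cdot f(x_1,x_2) \, d|\mu_{x_1}|(x_2) \right) d\eta(x_1) = \langle \eta \genprod \nu_{x_1}, \varphi\rangle.
\]

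Hence $\mu = \eta \genprod \nu_{x_1}$. Since $\mu = \eta \genprod \mu_{x_1}$ also holds, the uniqueness part of Theorem \ref{the basic disintegration theorem} (applied componentwise, or directly in $\R^M$) yields $\nu_{x_1} = \mu_{x_1}$ for $\eta$-a.e.\ $x_1 \in S_1$. Therefore, for such $x_1$, the measure $\mu_{x_1}$ has density $f(x_1,\cdot)$ with respect to $|\mu_{x_1}|$. Comparing with the Radon--Nikodym decomposition $\mu_{x_1} = \frac{d\mu_{x_1}}{d|\mu_{x_1}|} |\mu_{x_1}|$ and using the uniqueness of the Radon--Nikodym derivative then gives
\[
    f(x_1,\cdot) = \frac{d\mu_{x_1}}{d|\mu_{x_1}|} \quad |\mu_{x_1}|\text{-a.e.\ on } S_2,
\]
which is precisely the claim.

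The only delicate point is invoking the uniqueness of the disintegration for the vector-valued measures $\nu_{x_1}$ and $\mu_{x_1}$, since Theorem \ref{the basic disintegration theorem} is stated in that vector-valued form. This is not really an obstacle but deserves a line of justification: one can either argue componentwise by testing against $\varphi(x_1,x_2) = \psi_1(x_1)\psi_2(x_2) e_k$ for each basis vector $e_k$ of $\R^M$ and countable families $\{\psi_1\}, \{\psi_2\}$ separating Borel sets, or observe that the proof of the disintegration theorem in \cite{ambrosio2000functions} extends verbatim to $\R^M$-valued measures.
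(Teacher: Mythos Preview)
Your proof is correct and follows essentially the same approach as the paper: both construct the alternative disintegration $\mu = \eta \genprod \big(f(x_1,\cdot)\,|\mu_{x_1}|\big)$ via $|\mu| = \eta \genprod |\mu_{x_1}|$ and then invoke uniqueness to identify $f(x_1,\cdot)\,|\mu_{x_1}|$ with $\mu_{x_1}$. The paper compresses this into a single chain of equalities, while you spell out the intermediate steps and add a remark on the vector-valued uniqueness, but the argument is the same.
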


\begin{proof}
Since $\frac{d\mu}{d|\mu|} \in L^1(S_1 \times S_2, d|\mu|)$, from \Cref{the basic disintegration theorem} we have $\frac{d\mu}{d|\mu|}(x_1,\cdot) \in L^1(S_2,d|\mu_{x_1}|)$ for $\eta$-a.e. $x_1 \in S_1$.
Thus,
\begin{align*}
    \eta \genprod \frac{d\mu_{x_1}}{d|\mu_{x_1}|}\,|\mu_{x_1}| = \eta \genprod \mu_{x_1} = \mu = \frac{d\mu}{d|\mu|}\,|\mu| = \eta \genprod \frac{d\mu}{d|\mu|}(x_1,\cdot)\,|\mu_{x_1}|,
\end{align*}
from which we have the claim.
\end{proof}

\subsection{$BD$ and $BH$ functions}

\subsubsection{Functions with bounded deformation}
Let $U$ be an open set of $\R^N$. The space $BD(U)$ of functions with {\em bounded deformation} is
the space of all functions $u \in L^1(U;\R^N)$ whose symmetric gradient $Eu:=\sym\, Du$ (in the sense of distributions) satisfies  $Eu \in \mb(U;\mathbb M^{N \times N}_{sym})$. We point out that $BD(U)$ is a 
Banach space endowed with the norm
$$
\|u\|_{L^1(U;\R^N)} +\|Eu\|_{\mb(U;\mathbb M^{N \times N}_{sym})}.
$$
We say that a sequence $\{u^k\}_k$ converges to $u$ weakly* in $BD(U)$ if $u^k\wk u$ weakly in
$L^1(U;\R^N)$ and $Eu^k\wk Eu$ weakly* in $\mb(U;\mathbb M^{N \times N}_{sym})$. 
\MMM As a consequence of compactness, then necessarily $\{u^k\}_k$ converges to $u$ strongly in $L^1$. \BBB
Every bounded
sequence in $BD(U)$ has a weakly* converging subsequence. If $U$ is bounded and has a Lipschitz boundary, $BD(U)$ can be embedded into $L^{N/(N-1)}(U;\R^N)$ (\MMM the embedding is compact in $L^p$, for $1 \leq p<N/(N-1)$\BBB) and every function $u \in BD(U)$ has a trace,
still denoted by $u$, which belongs to $L^1(\partial U;\R^N)$. If $\Gamma$ is a nonempty open subset of $\partial U$, there exists a constant $C>0$, depending on $U$ and $\Gamma$, such that
\begin{equation} \label{kornbd}
\|u\|_{L^1(U;\R^N)}\leq C \|u\|_{L^1(\Gamma)}+C\|Eu\|_{\mb(U;\mathbb M^{N \times N}_{sym})}.
\end{equation}
(see \cite[Chapter~II, Proposition~2.4 and Remark~2.5]{Temam.1985}). 
For the general properties of the space $BD(U)$ we refer to \cite{Temam.1985}.

\subsubsection{Functions with bounded Hessian}
The space $BH(U)$ of functions with {\em bounded Hessian} is
the space of all functions $u \in W^{1,1}(U)$ whose Hessian $D^2u$ (in the sense of distributions) belongs to $\mb(U;\mathbb M^{N \times N}_{\rm sym})$. It is a Banach space endowed with the norm
$$
\|u\|_{L^1(U)} +\|\nabla u\|_{L^1(U;\R^{N})}+\|D^2u\|_{\mb(U;\mathbb M^{N \times N}_{sym})}.
$$
If $U$ has the cone property, then $BH(U)$ coincides with the space of functions in $L^1(U)$ whose
Hessian belongs to $\mb(U;\mathbb M^{N \times N}_{sym})$.
If $U$ is bounded and has a Lipschitz boundary, $BH(U)$ can be embedded into $W^{1,N/(N-1)}(U)$. If $U$ is bounded and has a $C^2$ boundary, then for every function $u \in BH(U)$ one can define the traces of $u$ and of $\nabla u$, still denoted by $u$ and $\nabla u$; they satisfy $u \in W^{1,1}(\partial U)$, $\nabla u \in L^1(\partial U;\R^N)$, and $\frac{\partial u}{\partial\tau}=\nabla u\cdot\tau$ in $L^1(\partial U)$, where $\tau$ is
any tangent vector to $\partial U$. If, in addition, $N=2$, then $BH(U)$ embeds into $C(\overline U)$, which is the space of all continuous functions on $\overline U$. The general properties of the space $BH(U)$ can be found in \cite{Demengel.1984}.

\subsection{Auxiliary claims about stress tensors}

\subsubsection{Traces of stresses}
We suppose here that $\U$ is an open bounded set of class $C^2$ in $\R^N$.
If $\sigma \in L^2(\U;\M^{N \times N}_{\sym})$ and $\div\sigma \in L^2(\U;\R^N)$, then we can define a distribution $[ \sigma \nu ]$ on $\partial{\U}$ by
\begin{equation} \label{traces of the stress}
    [ \sigma \nu ](\psi) := \int_{\U} \psi \cdot \div\sigma \,dx + \int_{\U} \sigma : E\psi \,dx,
\end{equation}
for every $\psi \in H^1(\U;\R^N)$. 
It \MMM follows \BBB that $[ \sigma \nu ] \in H^{-1/2}(\partial{\U};\R^N)$ (see, e.g., \cite[Chapter 1, Theorem 1.2]{temam2001navier}). 
If, in addition, $\sigma \in L^{\infty}(\U;\M^{N \times N}_{\sym})$ and $\div\sigma \in L^N(\U;\R^N)$, then \eqref{traces of the stress} holds for $\psi \in W^{1,1}(\U;\R^N)$. 
By Gagliardo’s extension theorem, in this case we have $[ \sigma \nu ] \in L^{\infty}(\partial{\U};\R^N)$, and 
\begin{equation*}
    [ \sigma_k \nu ] \weakstar [ \sigma \nu ] \quad \text{weakly* in $L^{\infty}(\partial{\U};\R^N)$},
\end{equation*}
whenever $\sigma_k \weakstar \sigma$ weakly* in $L^{\infty}(\U;\M^{N \times N}_{\sym})$ and $\div\sigma_k \weak \div\sigma$ weakly in $L^N(\U;\R^N)$.

We will consider the normal and tangential parts of $[ \sigma \nu ]$, defined by
\begin{equation*}
    [ \sigma \nu ]\normal := ([ \sigma \nu ] \cdot \nu) \nu, \quad
    [ \sigma \nu ]\tangential := [ \sigma \nu ]-([ \sigma \nu ] \cdot \nu) \nu.
\end{equation*}
Since $\nu \in C^1(\partial{\U};\R^N)$, we have that $[ \sigma \nu ]\normal,\, [ \sigma \nu ]\tangential \in H^{-1/2}(\partial{\U};\R^N)$. If, in addition, $\sigma_{\dev} \in L^{\infty}(\U;\M^{N \times N}_{\dev})$, then it was proved in \cite[Lemma 2.4]{Kohn.Temam.1983} that $[ \sigma \nu ]\tangential \in L^{\infty}(\partial{\U};\R^N)$ and
\begin{equation*}
    \|[ \sigma \nu ]\tangential\|_{L^{\infty}(\partial{\U};\R^N)} \leq \frac{1}{\sqrt2} \|\sigma_{\dev}\|_{L^{\infty}(\U;\M^{N \times N}_{\dev})}.
\end{equation*}

More generally, if $\U$ has Lipschitz boundary and is such that there exists a compact set $S \subset \partial{U}$ with $\calH^{N-1}(S) = 0$ such that $\partial{U} \setminus S$ is a $C^2$-hypersurface, then arguing as in \cite[Section 1.2]{Francfort.Giacomini.2012} we can uniquely determine $[ \sigma \nu ]\tangential$ as an element of $L^{\infty}(\partial{U};\R^N)$ through any approximating sequence $\{\sigma_n\} \subset C^{\infty}(\closure{\U};\M^{N \times N}_{\sym})$ such that 
\begin{align*}
    &\sigma_n \strong \sigma \quad \text{strongly in } L^2(\U;\M^{N \times N}_{\sym}),\\
    &\div\sigma_n \strong \div\sigma \quad \text{strongly in } L^2(\U;\R^N),\\
    &\| (\sigma_n)_{\dev} \|_{L^{\infty}(\U;\M^{N \times N}_{\dev})} \leq \| \sigma_{\dev} \|_{L^{\infty}(\U;\M^{N \times N}_{\dev})}.
\end{align*}

\subsubsection{$L^p$ regularity}
\MMM We \EEE recall \MMM the following proposition from \BBB \cite{Francfort.Giacomini.2012} (see also \cite{Kohn.Temam.1983})\BLACK.

\begin{proposition} \label{Kohn-Temam embedding lemma}
\MMM Let $\U \subset \R^N$ be an open, bounded set with Lipschitz boundary. \BBB The set
\begin{equation*}
    \calS(\U) := \left\{ \sigma \in L^2(\U;\M^{N \times N}_{\sym}) : \div\,\sigma \in L^N(\U;\R^N),\ \sigma_{\dev} \in L^\infty(\U;\M^{N \times N}_{\dev}) \right\},
\end{equation*}
is a subset of $L^p(\U;\M^{N \times N}_{\sym})$ for every $1 \leq p < \infty$, and
\begin{equation*}
    \| \sigma \|_{L^p(\U;\M^{N \times N}_{\sym})} \leq C_p \left( \| \sigma \|_{L^2(\U;\M^{N \times N}_{\sym})} + \| \div\,\sigma \|_{L^N(\U;\R^N)} + \| \sigma_{\dev} \|_{L^{\infty}(\U;\M^{N \times N}_{dev})} \right).
\end{equation*}
\end{proposition}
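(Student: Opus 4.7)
The plan is to reduce the statement to an $L^p$ bound on the scalar $\tr\sigma$ via the decomposition $\sigma = \sigma_{\dev} + \tfrac{1}{N}(\tr\sigma)I_N$, and then to treat $\tr\sigma$ through the elliptic equation it inherits from $\sigma$.

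First, since $\U$ is bounded, $\sigma_{\dev} \in L^\infty(\U;\M^{N\times N}_{\dev})$ is automatically in $L^p(\U)$ with norm controlled by $|\U|^{1/p}\|\sigma_{\dev}\|_{L^\infty}$ for every $1 \leq p < \infty$, so it suffices to bound $\|\tr\sigma\|_{L^p(\U)}$ in terms of the three quantities on the right-hand side. Next, applying $\partial_i\partial_j$ to the identity $\sigma_{ij} = (\sigma_{\dev})_{ij} + \tfrac{1}{N}(\tr\sigma)\delta_{ij}$ yields, in the sense of distributions on $\U$,
\begin{equation*}
\Delta(\tr\sigma) \;=\; N\,\div(\div\sigma) \;-\; N\,\partial_i\partial_j(\sigma_{\dev})_{ij}.
\end{equation*}
The right-hand side decomposes as $\div F_1$ with $F_1 := N\,\div\sigma \in L^N(\U;\R^N)$, plus $\partial_i\partial_j F_2^{ij}$ with $F_2 := -N\,\sigma_{\dev} \in L^\infty(\U;\M^{N\times N})$.

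I would then localize by a cutoff $\chi \in C_c^\infty(\R^N)$ supported near an interior point and invert on all of $\R^N$ using the Newtonian potential. Up to a remainder in $W^{2,2}_{\mathrm{loc}}$ generated by commutators involving $\sigma \in L^2(\U)$, the localized function $\chi\,\tr\sigma$ can be represented as a first-order Riesz potential of an $L^N$ vector plus a double Riesz transform of an $L^\infty$ tensor. By the Sobolev embedding $W^{1,N}(\R^N)\hookrightarrow L^p_{\mathrm{loc}}$ for every $p < \infty$, the first piece lies in $L^p_{\mathrm{loc}}$; by Calder\'on-Zygmund together with John-Nirenberg, the second lies in $\mathrm{BMO}_{\mathrm{loc}} \subset L^p_{\mathrm{loc}}$ for every $p < \infty$. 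This yields the $L^p$ estimate on any compact subset of $\U$, with constants depending only on $p$ and the three norms in the statement.

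To promote the interior bound to an estimate up to $\partial\U$, I would cover $\partial\U$ by finitely many charts in which the boundary is locally the graph of a Lipschitz function, flatten each chart, and extend $\sigma$ by a suitable reflection across the flattened portion. The reflection must be designed to preserve all three structural bounds simultaneously: the $L^2$ control is automatic, the $L^\infty$ control on the deviatoric part is preserved by any orthogonal reflection stabilizing $\M^{N\times N}_{\dev}$, and the $L^N$ control on the divergence must be maintained by an appropriate sign change on the normal components. A partition-of-unity assembly then combines the interior estimate on each extension into the global bound. The main obstacle is precisely this last step: engineering a reflection (or an extension operator) on a merely Lipschitz domain that preserves symmetry, the deviatoric $L^\infty$ bound, and the $L^N$ control of the divergence at once, since a naive reflection creates divergence concentrations on the flattened boundary. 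A technically safer route, in the spirit of the original Kohn-Temam argument, is to approximate $\U$ from inside by smooth subdomains where the reflection machinery is standard, apply the estimate there with uniform constants, and conclude by the lower semicontinuity of the $L^p$ norm.
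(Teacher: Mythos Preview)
The paper does not actually prove this proposition: it is stated as a result recalled from \cite{Francfort.Giacomini.2012} (see also \cite{Kohn.Temam.1983}), with no argument given. So there is no in-paper proof to compare against; your proposal is being measured against the cited literature.

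Your reduction to a scalar problem for $\tr\sigma$ via $\sigma = \sigma_{\dev} + \tfrac{1}{N}(\tr\sigma)I_N$ and the derivation of
\[
\Delta(\tr\sigma) \;=\; N\,\div(\div\sigma) \;-\; N\,\partial_i\partial_j(\sigma_{\dev})_{ij}
\]
are exactly the mechanism behind the Kohn--Temam result, and your interior analysis (Riesz potential of an $L^N$ datum giving $L^p_{\mathrm{loc}}$ for all $p<\infty$, plus second-order Riesz transforms of an $L^\infty$ datum landing in $\mathrm{BMO}$, hence $L^p_{\mathrm{loc}}$ via John--Nirenberg) is correct and is the standard route.

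Where the proposal is genuinely incomplete is the boundary step, and you correctly flag this yourself. Neither of the two fixes you sketch quite closes the gap as written. A reflection that simultaneously preserves symmetry, the $L^\infty$ bound on the deviator, and the $L^N$ bound on the divergence across a merely Lipschitz boundary is not available in general: flattening a Lipschitz graph introduces $L^\infty$ coefficients into the transformed divergence, and a plain even/odd reflection produces a singular layer in $\div\sigma$ on the interface. The alternative of approximating $\U$ from the inside by smooth $\U_k$ also does not obviously work, because the constants coming from your localization and the BMO step depend on the geometry of the subdomain (through the cutoffs, the chart constants, and the John--Nirenberg constant), and you have not argued that these stay uniform as $\U_k \nearrow \U$. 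In the cited references the global estimate is obtained not by reflection but by working directly with the equation for $\tr\sigma$ on $\U$: one uses that the $L^2$ control on $\sigma$ already provides an a priori bound on $\tr\sigma$ in $L^2(\U)$, which serves as the ``boundary'' information needed to upgrade the interior elliptic estimate to a global one (equivalently, one solves a Dirichlet problem for $\Delta$ on $\U$ with right-hand side in $W^{-1,N} + \partial^2 L^\infty$ and datum controlled by $\|\sigma\|_{L^2}$). If you rewrite the boundary step along those lines, the argument goes through on bounded Lipschitz domains with a constant depending only on $p$ and $\U$.
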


\section{Setting of the problem}
\label{setting}
\EEE We describe here our modeling assumptions and recall a few associated instrumental results. \BBB
\MMM Unless otherwise stated,  $\omega \subset \R^2$ is a bounded, connected, and open set with $C^2$   boundary. \BBB
Given a small positive number $h > 0$, we assume that the set 
\begin{equation*}
    \Omega^h := \omega \times (h I),
\end{equation*}
is the reference configuration of a linearly elastic and perfectly plastic plate.

We consider a non-zero Dirichlet boundary condition on the whole lateral surface, i.e. the Dirichlet boundary of $\Omega^h$ is given by $\Gamma_\Dir^h := \partial\omega \times (h I)$.

We work under the assumption that the body is only submitted to a hard device on $\Gamma_\Dir^h$ and that there are no applied loads, i.e. the evolution is only driven by time-dependent boundary conditions. 
More general boundary conditions, together with volume and surfaces forces have been considered, e.g., in \cite{DalMaso.DeSimone.Mora.2006, Francfort.Giacomini.2012, Davoli.Mora.2013} but will, \MMM for simplicity of exposition, \BBB be neglected in this analysis.

\subsection{Phase decomposition}
We recall here some basic notation and assumptions from \cite{Francfort.Giacomini.2014}. 

\MMM Recall that  $\calY = \R^2/\Z^2$ is \BBB the $2$-dimensional torus, let $Y := [0, 1)^2$ be its associated periodicity cell, and denote by $\calI : \calY \to Y$ their canonical identification.
We denote by $\calC$ the set 
\begin{equation*}
    \calC := \calI^{-1}(\partial Y).
\end{equation*}
For any $\calZ \subset \calY$, we define
\begin{equation} \label{periodic set notation}
    \calZ_\eps := \left\{ x \in \R^2 : \frac{x}{\eps} \in \Z^2+\calI(\calZ) \right\},
\end{equation}
and to every function $F : \calY \to X$ we associate the $\eps$-periodic function $F_\eps : \R^2 \to X$, given by
\begin{equation*}
    F_\eps(x) := F\left(y_\eps\right), \;\text{ for }\; \frac{x}{\eps}-\left\lfloor \frac{x}{\eps} \right\rfloor = \calI(y_\eps) \in Y.
\end{equation*}
With a slight abuse of notation we will also write $F_\eps(x) = F\left(\frac{x}{\eps}\right)$.

The torus $\calY$ is assumed to be made up of finitely many phases $\calY_i$ together with their interfaces. 
We assume that those phases are pairwise disjoint open sets with Lipschitz boundary.
Then we have $\calY = \bigcup_{i} \closure{\calY}_i$ and we denote the interfaces by 
\begin{equation*}
    \Gamma := \bigcup_{i,j} \partial\calY_i \cap \partial\calY_j.
\end{equation*}
Furthermore, the interfaces are assumed to have a negligible intersection with the set $\calC$, i.e. for every $i$
\begin{equation} \label{transversality condition}
    \calH^{1}(\partial\calY_i \cap \calC) = 0.
\end{equation}

We will write
\begin{equation*}
    \Gamma := \bigcup_{i \neq j} \Gamma_{ij},
\end{equation*}
where $\Gamma_{ij}$ stands for the interface between $\calY_i$ and $\calY_j$.

We assume that $\omega$ is composed of the finitely many phases $(\calY_i)_\eps$, and that $\Omega^h \cup \Gamma_\Dir^h$ is a geometrically admissible multi-phase domain in the sense of \cite[Subsection 1.2]{Francfort.Giacomini.2012}. 
Additionally, we assume that $\Omega^h$ is a specimen of an elasto-perfectly plastic material having periodic elasticity tensor and dissipation potential.

We are interested in the situation when the period $\eps$ is a function of the thickness $h$, i.e. $\eps = \epsh$, and we assume that the limit 
\begin{equation*}
   \gamma := \lim_{h \to 0} \frac{h}{\epsh}.
\end{equation*}
exists in $(0, +\infty)$. 
We additionally require that $\Gamma$ satisfies the following:
there exists a compact set $S \subset \Gamma$ with $\calH^1(S) = 0$ such that
$\Gamma \setminus S$ is a $C^2$-hypersurface.

We say that a multi-phase torus $\calY$ is \emph{geometrically admissible} if it satisfies the above assumptions.

\begin{remark}
We point out that we assume greater regularity than that in \cite{Francfort.Giacomini.2014}, where the interface $\Gamma \setminus S$ was allowed to be a $C^1$-hypersurface. 
Under such weaker assumptions, in fact, the tangential part of the trace of an admissible stress $[ \sigma \nu ]\tangential$ at a point $x$ on $\Gamma \setminus S$ would not be defined independently of the considered approximating sequence. By requiring a higher regularity of $\Gamma\setminus S$, we will avoid dealing with this situation. 
\end{remark}

\medskip
\noindent{\bf The set of admissible stresses.}

We assume there exist convex compact sets $K_i \in \M^{3 \times 3}_{\dev}$ associated to each phase $\calY_i$.
We work under the assumption that there exist two constants $r_K$ and $R_K$, with $0 < r_K \leq R_K$, such that for every $i$
\begin{equation*}
    \{ \xi \in \M^{3 \times 3}_{\sym} : |\xi| \leq r_K \} \subseteq K_i \subseteq\{ \xi \in \M^{3 \times 3}_{\sym}: |\xi| \leq R_K \}.
\end{equation*}
Finally, we define
\begin{equation*}
    K(y) := K_i, \quad \text{ for } y \in \calY_i.
\end{equation*}

\medskip
\noindent{\bf The elasticity tensor.}

For every $i$, let $(\C_{\rm dev})_i$ and $k_i$ be a symmetric positive definite tensor on $\M^{3\times 3}_{\rm dev}$ and a positive constant, respectively, such that there exist two constants $r_c$ and $R_c$, with $0 < r_c \leq R_c$, satisfying
\begin{align} \label{tensorassumption}
    &r_c |\xi|^2 \leq (\C_{\dev})_i \xi : \xi \leq R_c |\xi|^2 \quad \text{ for every }\xi \in \M^{3 \times 3}_{\dev},\\
    &r_c \leq k_i \leq R_c.
\end{align}

Let $\C$ be the {\em elasticity tensor}, considered as a map from $\calY$ taking values in the set of symmetric positive definite linear operators, $\C : \calY \times \M^{3 \times 3}_{\sym} \to \M^{3 \times 3}_{\sym}$, defined as
\begin{equation*}
    \C(y) \xi := \C_{\dev}(y)\,\xi_{\dev} + \left(k(y)\,{\rm tr}{\xi}\right)\,I_{3 \times 3} \quad \text{ for every } y \in \calY \text{ and } \xi \in \M^{3 \times 3},
\end{equation*}
where $\C_{\dev}(y)=(\C_{\dev})_i$ and $k(y) = k_i$ for every $y \in \calY_i$.

Let $Q:\calY \times \M^{3 \times 3}_{\sym}\to[0,+\infty)$ be the quadratic form associated with $\C$, and given by
\begin{equation*}
    Q(y, \xi) := \frac{1}{2} \C(y) \xi : \xi \quad \text{ for every } y \in \calY \text{ and } \xi \in\M^{3 \times 3}_{\sym}.
\end{equation*}
It follows that $Q$ satisfies
\begin{equation} \label{coercivity of Q}
    r_c |\xi|^2 \leq Q(y, \xi) \leq R_c |\xi|^2 \quad \text{for every }y\in \calY\text{ and }\xi \in \M^{3 \times 3}_{\sym}.
\end{equation}

\medskip
\noindent{\bf The dissipation potential.}

For each $i$, let $H_i : \M^{3 \times 3}_{\dev} \to [0,+\infty)$ be the support function of the set $K_i$, i.e
\begin{equation*}
    H_i(\xi) = \sup\limits_{\tau \in K_i} \tau : \xi.
\end{equation*}
It follows that $H_i$ is convex, positively 1-homogeneous, and satisfies
\begin{equation} \label{coercivity of H_i}
    r_k |\xi| \leq H_i(\xi) \leq R_k |\xi| \quad \text{for every}\, \xi \in \M^{3 \times 3}_{\dev}.
\end{equation}

Then we define the dissipation potential $H : \calY \times \M^{3 \times 3}_{\dev} \to [0,+\infty]$ as follows:
\begin{enumerate}[label=(\roman*)]
    \item For every $y \in \calY_i$, we take
    \begin{equation*}
        H(y, \xi) := H_i(\xi).
    \end{equation*}
    \item 
    For a point $y \in \Gamma \setminus S$ on the interface between $\calY_i$ and $\calY_j$, such that the associated normal $\nu(y)$ points from $\calY_j$ to $\calY_i$, we set
    \begin{equation*}
        H(y, \xi) 
        := 
        \begin{cases}
        H_{ij}(a, \nu(y)) & \ \text{ if } \xi = a \odot \nu(y) \in \M^{3 \times 3}_{\dev},\\
        +\infty & \ \text{ otherwise on }\M^{3 \times 3}_{\dev},
        \end{cases}
    \end{equation*}
    where for $a \in \R^3$ and $\nu \perp a \in \mathbb{S}^2$, 
    \begin{align*}
        H_{ij}(a, \nu) := \inf\Big\{& H_i( a_i \odot \nu ) + H_j( -a_j \odot \nu ) :\\
        &a = a_i - a_j,\ a_i \perp \nu,\ a_j \perp \nu \Big\}.
    \end{align*}
    \item
    For $y \in S$, we define H arbitrarily (e.g. $H(y, \xi) := r_k\,|\xi|$).
\end{enumerate}

\begin{remark} \label{Reshetnyak remark 2}
We point out that $H$ is a Borel function on $\calY \times \M^{3 \times 3}_{\dev}$. 
Furthermore, for each $y \in \calY$, the function $\xi \mapsto H(y, \xi)$ is positively 1-homogeneous and convex.
However, the function $(y, \xi) \mapsto H(y, \xi)$ is not necessarily \MMM lower semicontinous. This creates additional difficulties in proving lower semicontinuity of dissipation functional  given in \Cref{lsc-gamma finite}, see also \cite[Theorem 5.7]{Francfort.Giacomini.2014}. \BBB  
\end{remark}

\medskip
\noindent{\bf Admissible triples and energy.}

On $\Gamma_\Dir^h$ we prescribe a boundary datum being the trace of a map $w^h \in H^1(\Omega^h;\R^3)$ of the following form:
\begin{equation} \label{MGform0}
    w^h(z) := \left( \bar{w}_1(z') - \frac{z_3}{h}\partial_1 \bar{w}_3(z'),\, \bar{w}_2(z') - \frac{z_3}{h}\partial_2 \bar{w}_3(z'),\, \frac{1}{h}\bar{w}_3(z') \right) \,\text{ for a.e. }z=(z',z_3) \in \Omega^h,
\end{equation}
where $\bar{w}_\alpha \in H^1(\omega)$, $\alpha=1,2$, and $\bar{w}_3 \in H^2(\omega)$.
The {\em set of admissible displacements and strains} for the boundary datum $w^h$ is denoted by 
$\calA(\Omega^h, w^h)$ and is defined as the class of all triples 
$(v,f,q) \in BD(\Omega^h) \times L^2(\Omega^h;\M^{3 \times 3}_{\sym}) \times \Mb(\Omega^h;\M^{3 \times 3}_{\dev})$ satisfying
\begin{eqnarray*}
& Ev=f+q \quad\text{in }\Omega^h,
\\
& q=(w^h-v)\odot\nu_{\partial\Omega^h}\calH^2 \quad\text{on }\Gamma_\Dir^h.
\end{eqnarray*} 
The function $v$ represents the {\em displacement} of the plate, while $f$ and $q$ are called the {\em elastic} and {\em plastic strain}, respectively.

For every admissible triple $(v,f,q) \in \calA(\Omega^h, w^h)$ we define the associated {\em energy} as
\begin{equation*}
    \calE_{h}(v,f,q) := 
    \int_{\Omega^h} Q\left(\frac{z'}{\epsh}, f(z)\right) \,dz + 
    \int_{\Omega^h \cup \Gamma_\Dir^h} H\left(\frac{z'}{\epsh}, \frac{dq}{d|q|} \right) \,d|q|.
\end{equation*}
The first term represents the elastic energy, while the second term accounts for plastic dissipation.

\subsection{The rescaled problem} \label{rescaled}
As usual in dimension reduction problems, it is convenient to perform a change of variables in such a way to rewrite the system on a fixed domain independent of $h$. 
To this purpose, 
we consider the open interval $I = \left(-\frac{1}{2}, \frac{1}{2}\right)$ and set
\begin{equation*}
    \Omega \,:=\, \omega \times I, \qquad 
    \Gamma_\Dir \,:=\, \partial\omega \times I. 
\end{equation*}
We consider the change of variables $\psi_h : \closure{\Omega} \to \closure{\Omega^h}$, defined as
\begin{equation} \label{eq:def-psih}
    \psi_h(x',x_3) := (x', hx_3) \quad \text{for every}\, (x',x_3) \in \closure{\Omega},
\end{equation}
and the linear operator $\Lambda_h : \M_{\sym}^{3 \times 3} \to \M_{\sym}^{3 \times 3}$ given by 
\begin{equation} \label{definition Lambda_h}
    \Lambda_h \xi:=\begin{pmatrix}
    \xi_{11} & \xi_{12} & \frac{1}{h}\xi_{13}
    \vspace{0.1 cm}\\
    \xi_{21} & \xi_{22} & \frac{1}{h}\xi_{23}
    \vspace{0.1 cm}\\
    \frac{1}{h}\xi_{31} & \frac{1}{h}\xi_{32} & \frac{1}{h^2}\xi_{33}
    \end{pmatrix}
    \quad\text{for every }\xi\in\M^{3 \times 3}_{\sym}.
\end{equation}
To any triple $(v,f,q) \in \calA(\Omega^h, w^h)$ we associate a triple $(u,e,p) \in BD(\Omega) \times L^2(\Omega;\M^{3 \times 3}_{\sym}) \times \allowbreak\Mb(\Omega \cup \Gamma_\Dir;\M^{3 \times 3}_{\sym})$ defined as follows:
\begin{equation*}
    u := (v_1, v_2, h v_3) \circ \psi_h, \qquad
    e := \Lambda_{h}^{-1}f\circ\psi_h, \qquad 
    p := \tfrac{1}{h}\Lambda_h^{-1}\psi_h^{\#}(q).
\end{equation*}
Here the measure $\psi_h^{\#}(q) \in \Mb(\Omega;\M^{3 \times 3})$ is the pull-back measure of $q$, satisfying 
\begin{equation*}
    \int_{\Omega \cup \Gamma_\Dir} \varphi : d \psi_h^{\#}(q) = \int_{\Omega^h \cup \Gamma_\Dir^h} (\varphi\circ\psi_h^{-1}) : dq \quad \text{ for every } \varphi \in C_0(\Omega \cup \Gamma_\Dir;\M^{3 \times 3}).
\end{equation*}
According to this change of variable we have
\begin{equation*}
    \calE_h(v,f,q) = h \calQ_h(\Lambda_h e) + h \calH_h(\Lambda_h p),
\end{equation*}
where
\begin{equation} \label{definition Q_h} 
    \calQ_h(\Lambda_h e) = \int_{\Omega} Q\left(\frac{x'}{\epsh}, \Lambda_h e\right) \,dx
\end{equation}
and
\begin{equation} \label{definition H_h} 
    \calH_h(\Lambda_h p) = \int_{\Omega \cup \Gamma_\Dir} H\left(\frac{x'}{\epsh}, \frac{d\Lambda_h p}{d|\Lambda_h p|}\right) \,d|\Lambda_h p|.
\end{equation}

We also introduce the scaled Dirichlet boundary datum $w \in H^1(\Omega;\R^3)$, given by
\begin{equation*}
    w(x):=(\bar{w}_1(x')-{x_3}\partial_1 w_3(x'), \bar{w}_2(x')-x_3\partial_2 w_3(x'), w_3(x'))\quad\text{for a.e.\ }x\in\Omega.
\end{equation*}
By the definition of the class $\calA(\Omega^h, w^h)$ it follows that the scaled triple $(u,e,p)$ satisfies the equalities
\begin{eqnarray}
& Eu=e+p \quad\text{in }\Omega,
 \label{straindec*}
\\
& p=(w-u)\odot\nu_{\partial\Omega}\calH^2 \quad\text{on }\Gamma_\Dir,
 \label{boundcondp*}
\\
& p_{11}+p_{22}+\frac{1}{h^2}p_{33}=0\quad\text{in }\Omega \cup \Gamma_\Dir.
 \label{trace*}
\end{eqnarray} 
We are thus led to introduce the class $\calA_h(w)$ of all triples $(u,e,p) \in BD(\Omega) \times L^2(\Omega;\M^{3 \times 3}_{\sym}) \times \allowbreak\Mb(\Omega \cup \Gamma_\Dir;\M^{3 \times 3}_{\sym})$ satisfying \eqref{straindec*}--\eqref{trace*}, and to define the functional
\begin{equation} \label{jep}
    \calJ_{h}(u,e,p) := \calQ_h(\Lambda_h e)+\calH_h(\Lambda_h p)
\end{equation}
for every $(u,e,p) \in \mathcal A_h(w)$. 
In the following we will study the asymptotic behaviour \MMM of  the quasistatic evolution \BBB associated with $\calJ_{h}$, as $h \to 0$ and $\eps_h \to 0$.

\MMM Notice that if  $\bar{w}_\alpha \in H^1(\ext{\omega})$, $\alpha=1,2$, and $\bar{w}_3 \in H^2(\ext{\omega})$, where $\omega \subset \ext{\omega}$, then we can trivially extend the triple $(u,e,p)$ to $\ext{\Omega}:= \ext{\omega} \times I$ by 
$$u=w, \qquad e=Ew, \qquad p=0 \qquad \text{ on } \ext{\Omega} \setminus \closure{\Omega}. $$
In the following we will always denote this extension also by $(u,e,p)$, whenever such an extension procedure is needed. 
\BBB 
\medskip

\noindent{\bf Kirchhoff-Love admissible triples and limit energy.} 

We consider the set of {\em Kirchhoff-Love displacements}, defined as
\begin{equation*}
    KL(\Omega):= \big\{u \in BD(\Omega) : \ (Eu)_{i3}=0 \quad\text{for } i=1,2,3\big\}.
\end{equation*}
We note that $u \in KL(\Omega)$ if and only if $u_3\in BH(\omega)$ and there exists $\bar{u}\in BD(\omega)$ such that 
\begin{equation} \label{ualfa}
    u_{\alpha}=\bar{u}_{\alpha}-x_3\partial_{x_\alpha}u_3, \quad \alpha=1,2.
\end{equation}
In particular, if $u \in KL(\Omega)$, then 
\begin{equation} \label{symmetric gradient of KL functions}
    Eu = \begin{pmatrix} \begin{matrix} E\bar{u} - x_3 D^2u_3 \end{matrix} & \begin{matrix} 0 \\ 0 \end{matrix} \\ \begin{matrix} 0 & 0 \end{matrix} & 0 \end{pmatrix}.
\end{equation}
If, in addition, $u \in W^{1,p}(\Omega;\R^3)$ for some $1 \leq p \leq \infty$, then $\bar{u}\in W^{1,p}(\omega;\R^2)$ and $u_3\in W^{2,p}(\omega)$. 
We call $\bar{u}, u_3$ the {\em Kirchhoff-Love components} of $u$. 

For every $w\in H^1(\Omega;\R^3) \cap KL(\Omega)$ we define the class $\calA_{KL}(w)$ of {\em Kirchhoff-Love admissible triples} for the boundary datum~$w$ as the set of all triples $(u,e,p) \in KL(\Omega) \times L^2(\Omega;\M^{3 \times 3}_{\sym}) \times \Mb(\Omega \cup \Gamma_\Dir;\M^{3 \times 3}_{\sym})$ satisfying
\begin{eqnarray}
& Eu=e+p \quad\text{in }\Omega, \qquad  
p=(w-u)\odot\nu_{\partial\Omega}\calH^2 \quad\text{on }\Gamma_\Dir, \label{AKL1}\\ 
& e_{i3}=0 \quad\text{in }\Omega, \quad p_{i3}=0 \quad\text{in }\Omega \cup \Gamma_\Dir,  \quad i=1,2,3.
 \label{AKL2}
\end{eqnarray}
Note that the space
\begin{equation*}
    \big\{\xi\in \M^{3 \times 3}_{\sym}: \ \xi_{i3}=0 \text{ for }i=1,2,3\big\}
\end{equation*}
is canonically isomorphic to $\M^{2 \times 2}_{\sym}$. Therefore, in the following, given a triple $(u,e,p) \in \calA_{KL}(w)$ we will usually identify $e$ with a function in $L^2(\Omega;\M^{2 \times 2}_{\sym})$ and $p$ with a measure in $\Mb(\Omega \cup \Gamma_\Dir;\M^{2 \times 2}_{\sym})$. Note also that the class $\calA_{KL}(w)$ is always nonempty as it contains the triple $(w,Ew,0)$. 

To provide a useful characterisation of admissible triplets in $\calA_{KL}(w)$, let us first recall the definition of zeroth and first order moments of functions. 

\begin{definition} \label{moments of functions}
For $f \in L^2(\Omega;\M^{2 \times 2}_{\sym})$ we denote by $\bar{f}$, $\hat{f} \in L^2(\omega;\M^{2 \times 2}_{\sym})$ and $f^\perp \in L^2(\Omega;\M^{2 \times 2}_{\sym})$ the following orthogonal components (with respect to the scalar product of $L^2(\Omega;\M^{2 \times 2}_{\sym})$) of $f$:
\begin{equation*}
    \bar{f}(x') := \int_{-\frac{1}{2}}^{\frac{1}{2}} f(x',x_3)\, dx_3, \qquad
    \hat{f}(x') := 12 \int_{-\frac{1}{2}}^{\frac{1}{2}}x_3 f(x',x_3)\,dx_3
\end{equation*}
for a.e.\ $x' \in \omega$, and
\begin{equation*}
    f^\perp(x) := f(x) - \bar{f}(x') - x_3 \hat{f}(x')    
\end{equation*}
for a.e.\ $x \in \Omega$.
We name $\bar{f}$ the \emph{zero-th order moment} of $f$ and $\hat{f}$ the \emph{first order moment} of $f$.
\end{definition}

The coefficient in the definition of $\hat{f}$ is chosen from the computation $\int_{I} x_3^2 \,dx_3 = \frac{1}{12}$. 
It ensures that if $f$ is of the form $f(x) = x_3 g(x')$, for some $g \in L^2(\omega;\M^{2 \times 2}_{\sym})$, then $\hat{f} = g$.

Analogously, we have the following definition of zeroth and first order moments of measures. 

\begin{definition} \label{moments of measures}
For $\mu \in M_b(\Omega \cup \Gamma_\Dir;\M^{2 \times 2}_{\sym})$ we define $\bar{\mu}$, $\hat{\mu} \in M_b(\omega \cup \gamma_\Dir;\M^{2 \times 2}_{\sym})$ and $\mu^\perp \in M_b(\Omega \cup \Gamma_\Dir;\M^{2 \times 2}_{\sym})$ as follows:
\begin{equation*}
    \int_{\omega \cup \gamma_\Dir} \varphi: d\bar{\mu}:= \int_{\Omega \cup \Gamma_\Dir} \varphi: d\mu , \qquad
    \int_{\omega \cup \gamma_\Dir} \varphi: d\hat{\mu} := 12\int_{\Omega \cup \Gamma_\Dir} x_3 \varphi: d\mu 
\end{equation*}
for every $\varphi \in C_0(\omega \cup \gamma_\Dir;\M^{2 \times 2}_{\sym})$, and
\begin{equation*}
    \mu^\perp := \mu - \bar{\mu} \otimes \calL^{1}_{x_3} - \hat{\mu} \otimes x_3 \calL^{1}_{x_3},
\end{equation*}
where $\otimes$ is the usual product of measures, and $\calL^{1}_{x_3}$ is the Lebesgue measure restricted to the third component of $\R^3$. 
We name $\bar{\mu}$ the \emph{zero-th order moment} of $\mu$ and $\hat{\mu}$ the \emph{first order moment} of $\mu$.
\end{definition}

\begin{remark}
More generally, for any function $f$ which is integrable over $I$, we will use the short-hand notation
\begin{equation*}
    \bar{f} 
    := \int_{I} f(\cdot, x_3) \,dx_3, \qquad
    \hat{f} 
    := 12 \int_{I} x_3\,f(\cdot, x_3) \,dx_3.
\end{equation*}
\end{remark}

We are now ready to recall the following characterisation of $\calA_{KL}(w)$, given in \cite[Proposition 4.3]{Davoli.Mora.2013}.

\begin{proposition} \label{A_KL characherization}
Let $w \in H^1(\Omega;\R^3) \cap KL(\Omega)$ and let $(u,e,p) \in  {KL}(\Omega) \times L^2(\Omega;\M^{3 \times 3}_{\sym}) \times \Mb(\Omega \cup \Gamma_\Dir;\M^{3 \times 3}_{\dev})$. 
Then $(u,e,p) \in \calA_{KL}(w)$ if and only if the following three conditions are satisfied:
\begin{enumerate}[label=(\roman*)]
    \item $E\bar{u} = \bar{e}+\bar{p}$ in $\omega$ and $\bar{p} = (\bar{w}-\bar{u}) \odot \nu_{\partial\omega} \calH^1$ on $\gamma_\Dir$;
    \item $D^2u_3 = - (\hat{e}+\hat{p})$ in $\omega$, $u_3 = w_3$ on $\gamma_\Dir$, and $\hat{p} = (\nabla u_3-\nabla w_3) \odot \nu_{\partial\omega} \calH^1$ on $\gamma_\Dir$;
    \item $p^\perp = - e^\perp$ in $\Omega$ and $p^\perp = 0$ on $\Gamma_\Dir$.
\end{enumerate}
\end{proposition}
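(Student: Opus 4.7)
The plan is to reduce the admissibility conditions $Eu=e+p$ in $\Omega$ and $p=(w-u)\odot\nu_{\partial\Omega}\calH^2$ on $\Gamma_\Dir$ (together with $e_{i3}=p_{i3}=0$) to the three moment identities (i)--(iii) by decomposing both sides systematically in the $x_3$ variable.

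First, I would establish the key identity that for any $u\in KL(\Omega)$, extended by $w$ across $\Gamma_\Dir$, one has as $\M^{2\times 2}_{\sym}$-valued measures on $\Omega\cup\Gamma_\Dir$
$$
    Eu \;=\; E\bar u \otimes \calL^1_{x_3} \;-\; D^2u_3 \otimes x_3\calL^1_{x_3},
$$
where $E\bar u\in \Mb(\omega\cup\gamma_\Dir;\M^{2\times 2}_{\sym})$ and $D^2u_3\in \Mb(\omega\cup\gamma_\Dir;\M^{2\times 2}_{\sym})$ are the distributional objects obtained after extension; in particular the singular parts produced by the extension contribute $(\bar w-\bar u)\odot\nu_{\partial\omega}\calH^1$ to $E\bar u$ and $(\nabla w_3-\nabla u_3)\odot\nu_{\partial\omega}\calH^1$ to $D^2u_3$ on $\gamma_\Dir$. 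This formula is proved first in the smooth case using \eqref{ualfa}--\eqref{symmetric gradient of KL functions} and then extended to general $u\in KL(\Omega)$ by a standard mollification/approximation argument.

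Next, given the decompositions
$$
    e \;=\; \bar e + x_3\hat e + e^\perp, \qquad p \;=\; \bar p\otimes\calL^1_{x_3} + \hat p\otimes x_3\calL^1_{x_3} + p^\perp,
$$
with $\bar{e^\perp}=\widehat{e^\perp}=0$ and analogously for $p^\perp$, I would match moments on both sides of $Eu=e+p$. Applying the zeroth-moment operator (testing against $\varphi(x')$) to both sides and using the identity above yields $E\bar u=\bar e\,\calL^2+\bar p$ on $\omega\cup\gamma_\Dir$, which is precisely (i); applying the first-moment operator $\varphi\mapsto 12\int x_3\varphi$ yields $-D^2u_3=\hat e\,\calL^2+\hat p$ on $\omega\cup\gamma_\Dir$, which, once combined with $u_3=w_3$ on $\gamma_\Dir$ (shown below), gives (ii); subtracting these contributions from the full equation leaves $0=e^\perp+p^\perp$ on $\Omega\cup\Gamma_\Dir$, i.e.\ (iii). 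To complete the boundary analysis, I would observe that since $\nu_3=0$ on $\Gamma_\Dir$, the constraint $p_{\alpha 3}=0$ combined with $p=(w-u)\odot\nu\,\calH^2$ yields $\tfrac{1}{2}(w_3-u_3)\nu_\alpha=0$, forcing $u_3=w_3$ on $\gamma_\Dir$; then the expansion $w_\alpha-u_\alpha=(\bar w_\alpha-\bar u_\alpha)-x_3\partial_\alpha(w_3-u_3)$ shows that the restriction of $p$ to $\Gamma_\Dir$ is linear in $x_3$, so that $p^\perp=0$ on $\Gamma_\Dir$ and the boundary traces reduce exactly to those in (i) and (ii).

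The converse direction is then immediate: assuming (i)--(iii), one reassembles
$$
    e+p \;=\; (\bar e+\bar p)\otimes\calL^1_{x_3} \;+\; (\hat e+\hat p)\otimes x_3\calL^1_{x_3} \;+\; (e^\perp+p^\perp) \;=\; E\bar u\otimes\calL^1_{x_3} - D^2u_3\otimes x_3\calL^1_{x_3} \;=\; Eu
$$
on $\Omega\cup\Gamma_\Dir$, and the matching of singular parts on $\Gamma_\Dir$ recovers the prescribed boundary relation for $p$.

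I expect the main obstacle to be the rigorous justification of the measure identity in the first step: one must handle simultaneously the $BD$-regularity of $\bar u$, the $BH$-regularity of $u_3$, the singular contributions produced on $\gamma_\Dir$ by the extension, and the tensorization with $\calL^1_{x_3}$, all while keeping track of the trace conditions of $u_3$ and $\nabla u_3$ on $\partial\omega$. Once this identity is in place, the rest of the argument is essentially linear algebra on the moment decomposition, using the uniqueness of the splitting into zero-th moment, first moment, and perpendicular parts.
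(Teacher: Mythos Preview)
The paper does not prove this proposition; it simply recalls it from \cite[Proposition~4.3]{Davoli.Mora.2013}. Your approach is correct and is essentially the one given there: one uses the structure formula \eqref{symmetric gradient of KL functions} for $Eu$, takes zero-th and first moments of the identity $Eu=e+p$ to obtain (i) and (ii) in $\omega$, subtracts to get (iii) in $\Omega$, and reads off the boundary conditions from the explicit form of $(w-u)\odot\nu_{\partial\Omega}$ on $\Gamma_\Dir$ (using that $\nu_3=0$ there and that the $p_{\alpha 3}=0$ constraint forces $u_3=w_3$ on $\gamma_\Dir$).

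Two minor comments. First, the mollification/approximation you mention in your first step is unnecessary: the identity $Eu=E\bar u\otimes\calL^1_{x_3}-D^2u_3\otimes x_3\calL^1_{x_3}$ in $\Omega$ is exactly \eqref{symmetric gradient of KL functions}, which holds directly for any $u\in KL(\Omega)$ as a consequence of the representation \eqref{ualfa}; no smoothing is required. Second, rather than framing things via an ``extension by $w$ across $\Gamma_\Dir$'' (which is fine but slightly indirect), it is cleaner to keep $Eu=e+p$ as an identity on $\Omega$ and analyse the boundary datum $p\mres\Gamma_\Dir=(w-u)\odot\nu_{\partial\Omega}\,\calH^2$ separately: since $w-u$ is affine in $x_3$ along $\Gamma_\Dir$, its zero-th and first moments yield the boundary parts of (i) and (ii) by direct computation, and $p^\perp\mres\Gamma_\Dir=0$ is immediate. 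The converse is, as you say, just reassembly.
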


\subsection{Definition of quasistatic evolutions}
Recalling \Cref{Measures}, the $\calH_h$-variation of a map $p^h : [0,T] \to  \Mb(\Omega \cup \Gamma_\Dir;\M^{3 \times 3}_{\dev})$ on $[a,b]$ is defined as
\begin{equation*}
    \calD_{\calH_h}(P; a, b) := \sup\left\{ \sum_{i = 1}^{n-1} \calH\left(P(t_{i+1}) - P(t_i)\right) : a = t_1 < t_2 < \ldots < t_n = b,\ n \in \N \right\}.
\end{equation*}

For every $t \in [0, T]$ we prescribe a boundary datum $w(t) \in H^1(\Omega;\R^3) \cap KL(\Omega)$ and we assume the map $t\mapsto w(t)$ to be absolutely continuous from $[0, T]$ into $H^1(\Omega;\R^3)$.

\begin{definition} \label{h-quasistatic evolution} 
Let $h > 0$. 
An \emph{$h$-quasistatic evolution} for the boundary datum $w(t)$ is a function $t \mapsto (u^h(t), e^h(t), p^h(t))$ from $[0,T]$ into $BD(\Omega) \times L^2(\Omega;\M^{3 \times 3}_{\sym}) \times \Mb(\Omega \cup \Gamma_\Dir;\M^{3 \times 3}_{\dev})$ that satisfies the following conditions:
\begin{enumerate}[label=(qs\arabic*)$_{h}$]
    \item \label{h-qs S} for every $t \in [0,T]$ we have $(u^h(t), e^h(t), p^h(t)) \in \calA_h(w(t))$ and
    \begin{equation*}
        \calQ_h(\Lambda_h e^h(t)) \leq \calQ_h(\Lambda_h \eta) + \calH_h(\Lambda_h \pi-\Lambda_h p^h(t)),
    \end{equation*}
    for every $(\upsilon,\eta,\pi) \in \calA_h(w(t))$.
    \item \label{h-qs E} the function $t \mapsto p^h(t)$ from $[0, T]$ into $\Mb(\Omega \cup \Gamma_\Dir;\M^{3 \times 3}_{\dev})$ has bounded variation and for every $t \in [0, T]$
    \begin{equation*}
        \calQ_h(\Lambda_h e^h(t)) + \calD_{\calH_h}(\Lambda_h p^h; 0, t) = \calQ_h(\Lambda_h e^h(0)) 
        + \int_0^t \int_{\Omega} \C\left(\tfrac{x'}{\epsh}\right) \Lambda_h e^h(s) : E\dot{w}(s) \,dx ds.
    \end{equation*}
\end{enumerate}
\end{definition}

The following existence result of a quasistatic evolution for a general multi-phase material can be found in \cite[Theorem 2.7]{Francfort.Giacomini.2012}.

\begin{theorem} \MMM Assume \eqref{tensorassumption} and \eqref{coercivity of Q}. \BBB
Let $h > 0$ and let $(u^h_0, e^h_0, p^h_0) \in \calA_h(w(0))$ satisfy the global stability condition \ref{h-qs S}. 
Then, there exists a two-scale quasistatic evolution $t \mapsto (u^h(t), e^h(t), p^h(t))$ for the boundary datum $w(t)$ such that $u^h(0) = u_0$,\, $e^h(0) = e^h_0$, and $p^h(0) = p^h_0$.
\end{theorem}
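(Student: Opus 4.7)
The plan is to verify that for each fixed $h > 0$, the rescaled problem falls within the framework of multi-phase perfect plasticity developed in \cite{Francfort.Giacomini.2012}, so that the assertion reduces to a direct application of \cite[Theorem 2.7]{Francfort.Giacomini.2012}. The abstract variational structure is preserved by the rescaling: the scaled elastic energy $\calQ_h$ in \eqref{definition Q_h} is a positive definite quadratic form on $L^2(\Omega;\M^{3\times 3}_{\sym})$ with $h$-dependent coercivity constants inherited from \eqref{coercivity of Q} and the invertibility of $\Lambda_h$; the scaled dissipation $\calH_h$ in \eqref{definition H_h} is a convex, positively $1$-homogeneous functional of measures with two-sided linear bounds descending from \eqref{coercivity of H_i}; and the class $\calA_h(w(t))$ is an affine subspace of $BD(\Omega) \times L^2 \times \Mb$ defined by the closed constraints \eqref{straindec*}--\eqref{trace*}, nonempty because it contains the lift of $w(t)$.

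I would then carry out the proof via a time-incremental minimization scheme. Given a partition $0 = t^N_0 < \cdots < t^N_N = T$ with mesh going to zero, one inductively defines $(u^h_k, e^h_k, p^h_k) \in \calA_h(w(t^N_k))$, starting from $(u^h_0, e^h_0, p^h_0)$, as a minimizer of
\begin{equation*}
    (\upsilon, \eta, \pi) \,\longmapsto\, \calQ_h(\Lambda_h \eta) + \calH_h(\Lambda_h \pi - \Lambda_h p^h_{k-1}), \qquad (\upsilon,\eta,\pi) \in \calA_h(w(t^N_k)).
\end{equation*}
Existence at each step follows by the direct method: coercivity is provided by \eqref{coercivity of Q} and \eqref{coercivity of H_i} combined with the Korn-type inequality \eqref{kornbd} applied to $\upsilon - w(t^N_k)$, while weak-$*$ lower semicontinuity of $\calQ_h$ (convex quadratic) and of $\calH_h$ (convex, positively homogeneous functional of a measure) is standard, cf. Subsection \ref{Convex functions of measures}.

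A priori bounds come from testing minimality at step $k$ against the admissible competitor $(u^h_{k-1} + w(t^N_k) - w(t^N_{k-1}),\, e^h_{k-1} + E(w(t^N_k) - w(t^N_{k-1})),\, p^h_{k-1})$ and summing over $k$: using absolute continuity of $t \mapsto w(t)$ in $H^1$ and a discrete Gronwall argument, one obtains uniform bounds on $\calQ_h(\Lambda_h e^h_k)$ and on the total $\calH_h$-variation of $k \mapsto \Lambda_h p^h_k$. Forming piecewise-constant interpolants $p^h_N(t)$, the generalized Helly selection theorem for $BV$-in-time maps into $\Mb$ (used in conjunction with the equivalence \eqref{equivalence of variations}) yields a subsequence converging pointwise weakly-$*$ to some $p^h(t) \in \Mb(\Omega \cup \Gamma_\Dir; \M^{3\times 3}_{\dev})$; the components $e^h(t)$ and $u^h(t)$ are then recovered by weak-$*$ compactness, and the limit triple lies in $\calA_h(w(t))$ thanks to closedness of the admissible class.

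Global stability \ref{h-qs S} is inherited by passing to the limit in the discrete minimality via lower semicontinuity. The energy balance \ref{h-qs E} is established as the usual two-sided inequality: the $\leq$ direction comes from summing the discrete inequalities, while the reverse $\geq$ direction is obtained by testing the stability at successive times $s_i$ along a partition of $[0,t]$ with the competitor $(u^h(s_i) + w(s_{i+1}) - w(s_i),\, e^h(s_i) + E(w(s_{i+1}) - w(s_i)),\, p^h(s_i))$, summing, and refining the partition. The main technical obstacle is the passage to the limit in the work integral $\int_0^t \int_\Omega \C(x'/\eps_h) \Lambda_h e^h(s) : E\dot w(s) \, dx \, ds$, which requires weak continuity in time of $s \mapsto e^h(s)$ outside the at most countable jump set of $p^h$, together with dominated convergence based on the uniform $BV$-in-time bound. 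Since all of these steps are carried out in detail in the general multi-phase framework of \cite{Francfort.Giacomini.2012} and the rescaling via $\Lambda_h$ leaves the abstract structure intact for $h > 0$ fixed, the proof is complete.
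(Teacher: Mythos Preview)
Your proposal is correct and aligns exactly with the paper's treatment: the paper does not prove this theorem at all but simply invokes \cite[Theorem 2.7]{Francfort.Giacomini.2012}, noting that the rescaled problem is an instance of the general multi-phase framework established there. Your additional verification that the rescaled data $(\calQ_h, \calH_h, \calA_h)$ satisfy the structural hypotheses of that reference, together with the sketch of the time-incremental argument, goes beyond what the paper provides but is entirely consistent with it.
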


\MMM Our goal is to study the asymptotics of the quasistatic evolution when $h$ goes to zero. The main result is given by Theorem \ref{main result}.\BBB 

\subsection{Two-scale convergence adapted to dimension reduction}

We briefly recall some results and definitions from \cite{Francfort.Giacomini.2014}. 

\begin{definition}
\label{def:2-scale-meas}
Let $\Omega \subset \R^3$ be an open set.
Let $\{\mu^h\}_{h>0}$ be a family in $\Mb(\Omega)$ and consider $\mu \in \Mb(\Omega \times \calY)$. 
We say that
\begin{equation*}
    \mu^h \weakstartwoscale \mu \quad \text{two-scale weakly* in }\Mb(\Omega \times \calY),
\end{equation*}
if for every $\chi \in C_0(\Omega \times \calY)$
\begin{equation*}
    \lim_{h \to 0} \int_{\Omega} \chi\left(x,\frac{x'}{\epsh}\right) \,d\mu^h(x) = \int_{\Omega \times \calY} \chi(x,y) \,d\mu(x,y).
\end{equation*}
The convergence above is called \emph{two-scale weak* convergence}.
\end{definition}
\MMM
\begin{remark} \label{transfertwoscale}
Notice that the family $\{\mu^h\}_{h>0}$ determines the family of measures $\{\nu^h\}_{h>0} \subset \Mb(\Omega \times \calY)$ obtained by setting
$$\int_{\Omega \times \calY} \chi(x,y)\,d\nu^h=\int_{\Omega} \chi \left(x,\frac{x'}{h}\right) \,d\mu^h (x)$$
for every $\chi \in C_0^0(\Omega \times \calY)$. Thus $\mu$ is simply the weak* limit in $\Mb(\Omega \times \calY)$ of a suitable subsequence
of $\{\nu^h\}_{h>0}$. 
\end{remark} 
\BBB
We collect some basic properties \MMM of two-scale convergence \BBB below:

\begin{proposition}
\begin{enumerate}[label=(\roman*)]
    \item
    Any sequence that is bounded in $\Mb(\Omega)$ admits a two-scale weakly* convergent subsequence.
    \item 
    Let $\calD \subset \calY$ and assume that 
    $\supp(\mu^h) \subset \Omega \cap (\calD_\epsh \times I)$.
    If $\mu^h \weakstartwoscale \mu$ two-scale weakly* in $\Mb(\Omega \times \calY)$, then $\supp(\mu) \subset \Omega \times \closure{\calD}$.
\end{enumerate}
\end{proposition}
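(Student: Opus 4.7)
The plan is to reduce both statements to standard weak* compactness on $\Mb(\Omega \times \calY)$ via the lifting device described in Remark \ref{transfertwoscale}. For a bounded family $\{\mu^h\} \subset \Mb(\Omega)$, I would define $\nu^h \in \Mb(\Omega \times \calY)$ by
\[
\int_{\Omega \times \calY} \chi(x,y) \, d\nu^h(x,y) := \int_\Omega \chi\bigl(x, \tfrac{x'}{\epsh}\bigr) \, d\mu^h(x), \qquad \chi \in C_0(\Omega \times \calY).
\]
Taking the supremum over $\|\chi\|_\infty \leq 1$ gives $\|\nu^h\|_{\Mb(\Omega \times \calY)} \leq \|\mu^h\|_{\Mb(\Omega)}$, so the family $\{\nu^h\}$ is bounded in $\Mb(\Omega \times \calY)$. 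Since $\Omega \times \calY$ is separable and locally compact, $C_0(\Omega \times \calY)$ is a separable Banach space, so the Banach--Alaoglu theorem yields a subsequence with $\nu^{h_k} \weakstar \mu$ in $\Mb(\Omega \times \calY)$. By the very definition of two-scale weak* convergence this means $\mu^{h_k} \weakstartwoscale \mu$, proving (i).

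For (ii), suppose $\mu^h \weakstartwoscale \mu$ with $\supp(\mu^h) \subset \Omega \cap (\calD_\epsh \times I)$. I would fix an arbitrary $\chi \in C_0(\Omega \times \calY)$ with $\supp \chi \subset \Omega \times (\calY \setminus \closure{\calD})$, which is open since $\calY$ is compact and $\closure{\calD}$ is closed. The support hypothesis says that $\mu^h$-a.e.\ point $x = (x', x_3)$ satisfies $x' \in \calD_\epsh$; equivalently, via the identification \eqref{periodic set notation}, the representative of $x'/\epsh$ in the torus $\calY$ lies in $\calD \subset \closure{\calD}$. Since $\chi(x,y) = 0$ for all $y \in \closure{\calD}$, one gets $\chi(x, x'/\epsh) = 0$ for $\mu^h$-a.e.\ $x$, and therefore
\[
\int_\Omega \chi\bigl(x, \tfrac{x'}{\epsh}\bigr) \, d\mu^h(x) = 0 \qquad \text{for every } h > 0.
\]
Passing to the limit along the two-scale convergent subsequence yields $\int_{\Omega \times \calY} \chi \, d\mu = 0$. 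As $\chi$ ranges over all admissible test functions, $\mu$ vanishes on the open set $\Omega \times (\calY \setminus \closure{\calD})$, whence $\supp \mu \subset \Omega \times \closure{\calD}$.

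Neither part presents a substantive obstacle. The only point requiring a little care is the consistent identification, in the pairing $\chi(x, x'/\epsh)$, of $x'/\epsh$ with its representative in the torus $\calY$, in agreement with the periodic convention \eqref{periodic set notation}; once that is in place, the microscopic support condition transfers to the two-scale limit by a one-line argument. The genuine analytic difficulties will appear later, when these soft compactness tools must be combined with the differential structure of rescaled symmetric gradients.
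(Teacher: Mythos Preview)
Your proof is correct. The paper states this proposition without proof, citing it as a basic property from \cite{Francfort.Giacomini.2014}; your argument via the lifting device of Remark~\ref{transfertwoscale} together with Banach--Alaoglu for part~(i), and the direct support computation for part~(ii), is precisely the standard route and is consistent with the paper's framing.
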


\section{Compactness results}
\label{compactness}

In this section, we provide a characterization of two-scale limits of symmetrized scaled gradients. 
We will consider sequences of deformations $\{v^h\}$ such that $v^h \in BD(\Omega^h)$ for every $h > 0$, their $L^1$-norms are uniformly bounded \MMM (up to rescaling)\BBB, and their symmetrized gradients $E v^h$ form a sequence of uniformly bounded Radon measures \MMM(again, up to rescaling). \BBB 
\MMM As already explained in Section \ref{rescaled}, \BBB we associate to the sequence $\{v^h\}$ above a rescaled sequence of maps $\{u^h\} \subset BD(\Omega)$, defined as
\begin{equation*}
    u^h := (v^h_1, v^h_2, h v^h_3) \circ \psi_h,
\end{equation*}
where $\psi_h$ is defined in \eqref{eq:def-psih}.
The symmetric gradients of the maps $\{v^h\}$ and $\{u^h\}$ are related as follows
\begin{equation} \label{eq:scaled-gradient}
    \MMM \frac{1}{h} E v^h = (\psi_h)_{\#} (\Lambda_h Eu^h). \BBB
\end{equation}
\MMM The boundedness of $\frac{1}{h}\|Ev^h\|_{\mathcal{M}_b(\Omega^h;\mathbb M^{3 \times 3}_{sym}) }  $ is equivalent to the boundedness of $\|\Lambda_h Eu^h\|_{\mathcal{M}_b(\Omega;\mathbb M^{3 \times 3}_{sym}) } $.
\MMM We will express our compactness result with respect to the sequence $\{u^h\}_{h>0}$. 
\BBB 

We first recall a compactness result for sequences of non-oscillating fields (see \cite{Davoli.Mora.2013}).

\begin{proposition} \label{two-scale weak limit of scaled strains - 2x2 submatrix}
\MMM Let $\{u^h\}_{h>0} \subset BD(\Omega)$ be a sequence such that there exists a constant $C>0$ for which $$\|u^h\|_{L^1(\Omega;\R^3)}+\|\Lambda_h Eu^h\|_{\mathcal{M}_b(\Omega;\mathbb M^{3 \times 3}_{sym}) } \leq C. $$
\BBB 
Then, there exist functions $\bar{u} = (\bar{u}_1, \bar{u}_2) \in BD(\omega)$ and $u_3 \in BH(\omega)$ such that, up to subsequences, there holds 
\begin{align*}
    u^h_{\alpha} &\strong \bar{u}_{\alpha}-x_3 \partial_{x_\alpha}u_3, \quad \text{strongly in }L^1(\Omega), \quad \alpha \in \{1,2\},\\
    u^h_3 &\strong u_3, \quad \text{strongly in }L^1(\Omega),\\
     Eu^h &\weakstar  \begin{pmatrix} E \bar{u} - x_3 D^2u_3 & 0 \\ 0 & 0 \end{pmatrix}  \quad \text{weakly* in }\Mb(\Omega;\M^{3 \times 3}_{\sym}). \BLACK
\end{align*}
\end{proposition}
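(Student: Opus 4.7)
The plan is to read off standard $BD$ compactness from the structural bound $\|\Lambda_h Eu^h\|_{\Mb} \leq C$ and then identify the limit as a Kirchhoff--Love displacement using the characterization recalled in the preliminaries.

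First I would unpack the hypothesis. By the definition of $\Lambda_h$ in \eqref{definition Lambda_h}, the bound $\|\Lambda_h Eu^h\|_{\Mb(\Omega;\M^{3\times3}_{\sym})} \leq C$ is equivalent to the componentwise estimates
\begin{equation*}
    \|(Eu^h)_{\alpha\beta}\|_{\Mb(\Omega)} \leq C, \qquad \|(Eu^h)_{\alpha 3}\|_{\Mb(\Omega)} \leq Ch, \qquad \|(Eu^h)_{33}\|_{\Mb(\Omega)} \leq Ch^2,
\end{equation*}
for $\alpha,\beta \in \{1,2\}$. In particular $\{Eu^h\}$ is uniformly bounded in $\Mb(\Omega;\M^{3\times3}_{\sym})$ and, combined with the $L^1$ bound, $\{u^h\}$ is bounded in $BD(\Omega)$; moreover the off-diagonal entries satisfy $(Eu^h)_{i3} \strong 0$ strongly in $\Mb(\Omega)$ for $i=1,2,3$.

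Second, I would apply the standard $BD$ compactness theorem (invoking the Lipschitz regularity of $\omega$, which is even $C^2$): up to a not relabeled subsequence there exists $u \in BD(\Omega)$ with $u^h \strong u$ strongly in $L^1(\Omega;\R^3)$ and $Eu^h \weakstar Eu$ weakly* in $\Mb(\Omega;\M^{3\times3}_{\sym})$. Uniqueness of weak* limits combined with the strong vanishing of $(Eu^h)_{i3}$ yields $(Eu)_{i3} = 0$ for $i=1,2,3$, so $u \in KL(\Omega)$.

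Finally I would invoke the Kirchhoff--Love structure theorem, cf.\ \eqref{ualfa}--\eqref{symmetric gradient of KL functions}: since $u \in KL(\Omega)$ there exist $\bar{u} = (\bar{u}_1,\bar{u}_2) \in BD(\omega)$ and $u_3 \in BH(\omega)$ such that $u_\alpha(x) = \bar{u}_\alpha(x') - x_3 \partial_{x_\alpha} u_3(x')$ for $\alpha \in \{1,2\}$ and $u_3 = u_3(x')$, and the symmetric gradient has precisely the block form $Eu = \begin{pmatrix} E\bar{u} - x_3 D^2u_3 & 0 \\ 0 & 0 \end{pmatrix}$ claimed in the statement. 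The strong $L^1$-convergences for the individual components follow at once from $u^h \strong u$ in $L^1(\Omega;\R^3)$ together with these explicit expressions. No step is really an obstacle here; the only point requiring care is reading the three different scalings $(1, 1/h, 1/h^2)$ built into $\Lambda_h$ so as to extract both the uniform $BD$ bound and the strong vanishing of the $i3$ entries — everything else is a direct application of classical $BD$ compactness and of the structure theorem for Kirchhoff--Love displacements.
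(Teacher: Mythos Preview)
Your proposal is correct and is essentially the standard argument. In the paper this proposition is not proved at all but simply recalled from \cite{Davoli.Mora.2013}, so there is no ``paper's own proof'' to compare against; your sketch reproduces what that reference does, namely extracting uniform $BD$ compactness from the scaled bound, observing that the $i3$ components of $Eu^h$ vanish in norm, and then reading off the Kirchhoff--Love structure from $(Eu)_{i3}=0$.
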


Now we turn to identifying the two-scale limits of the sequence $\Lambda_h E u^h$. 

\subsection{Corrector properties and duality results}
In order to define and analyze the space of measures which arise as two-scale limits of scaled symmetrized gradients of $BD$ functions, we will consider the following general framework \MMM (see also \cite{breit2020trace}). \BBB

Let $V$ and $W$ be finite-dimensional Euclidean spaces of dimensions $N$ and $M$, respectively.
We will consider $k$\textsuperscript{th} order linear homogeneous partial differential operators with constant coefficients $\DiffOpA : C_c^{\infty}(\R^n;V) \to C_c^{\infty}(\R^n;W)$. 
More precisely, the operator $\DiffOpA$ acts on functions $u : \R^n \to V$ as
\begin{equation*} \label{DiffOp definition}
    \DiffOpA u  \,:=\, \sum_{|\alpha| = k} \LinOpA_\alpha \partial^\alpha u.
\end{equation*}
where the coefficients $\LinOpA_\alpha \in W \otimes V^* \cong \mathrm{Lin}(V;W)$ are constant tensors, $\alpha = (\alpha_1, \dots, \alpha_n) \in \N_0^n$ is a multi-index and $\partial^\alpha := \partial_1^{\alpha_1} \cdots \partial_n^{\alpha_n}$ denotes the distributional partial derivative of order $|\alpha| = \alpha_1 + \cdots + \alpha_n$.

We define the space
\begin{equation*}
    BV^{\DiffOpA}(U) = \Big\{ u \in L^1(U;V) :\DiffOpA u \in \Mb(U;W) \Big\}
\end{equation*}
of \emph{functions with bounded $\DiffOpA$-variations} on an open subset $U$ of $\R^n$.
This is a Banach space endowed with the norm
\begin{equation*}
    \|u\|_{BV^{\DiffOpA}(U)} := \|u\|_{L^1(U;V)} + |\DiffOpA u|(U). 
\end{equation*}
Here, the distributional $\DiffOpA$-gradient is defined and extended to distributions via the duality
\begin{equation*}
    \int_{U} \varphi \cdot d\DiffOpA u := \int_{U} \DiffOpA^* \varphi \cdot u \,dx, \quad \varphi \in C_c^{\infty}(U;W^*),
\end{equation*}
where $\DiffOpA^* : C_c^{\infty}(\R^n;W^*) \to C_c^{\infty}(\R^n;V^*)$ is the formal $L^2$-adjoint operator of $\DiffOpA$
\begin{equation*} \label{DiffOp adjont definition}
    \DiffOpA^* \,:=\, (-1)^k \sum_{|\alpha| = k} \LinOpA_\alpha^* \partial^\alpha.
\end{equation*}
The \emph{total $\DiffOpA$-variation} of $u \in L^1_{loc}(U;V)$ is defined as
\begin{align*} \label{DiffOp variation}
    |\DiffOpA u|(U) := \sup\left\{\int_{U} \DiffOpA^*\varphi \cdot u \,dx : \varphi \in C_c^k(U;W^*), \; |\varphi| \leq 1 \right\}.
\end{align*}
Let $\{u_n\} \subset BV^{\DiffOpA}(U)$ and $u \in BV^{\DiffOpA}(U)$. We say that $\{u_n\}$ converges weakly* to $u$ in $BV^{\DiffOpA}$ if $u_n \strong u \;\text{ in } L^1(U;V)$ and $\DiffOpA u_n \weakstar \DiffOpA u \;\text{ in } \Mb(U;W)$.

In order to characterize the two-scale weak* limit of scaled symmetrized gradients, we will generally consider two domains  \MMM $\Omega_1 \subset \R^{N_1}$, $\Omega_2 \subset \R^{N_2}$, for some $N_1, N_2 \in \N$ \BBB and assume that the operator $\DiffOp$ is defined through partial derivatives only with respect to the entries of the $n_2$-tuple $x_2$.
In the spirit of \cite[Section 4.2]{Francfort.Giacomini.2014}, we will define the space
\begin{align*}
    \CorrSpace{\DiffOp}{\Omega_1}{\Omega_2} := \Big\{\mu \in \Mb(\Omega_1 \times \Omega_2&;V) : \DiffOp\mu \in \Mb(\Omega_1 \times \Omega_2;W),\\
    \mu(F \times \Omega_2&) = 0 \textrm{ for every Borel set } S \subseteq \Omega_1 \Big\}.
\end{align*}

We will assume that $BV^{\DiffOp}(\Omega_2)$ satisfies the following weak* compactness property:
\begin{assumption} \label{BV^A assumption 1}
If $\{u_n\} \subset BV^{\DiffOp}(\Omega_2)$ is uniformly bounded in the $BV^{\DiffOp}$-norm, then there exists a subsequence $\{u_m\} \subseteq \{u_n\}$ and a function $u \in BV^{\DiffOp}(\Omega_2)$ such that $\{u_m\}$ converges weakly* to $u$ in $BV^{\DiffOp}(\Omega_2)$, i.e.
\begin{equation*} \label{BV^A Poincare-Korn weak* compactness}
    u_m \strong u \;\text{ in } L^1(\Omega_2;V) \;\text{ and }\; \DiffOp u_m \weakstar \DiffOp u \;\text{ in } \Mb(\Omega_2;W).
\end{equation*}

Furthermore, there exists a countable collection $\{\U^k\}$ of open subsets of $\R^{n_2}$ that increases to $\Omega_2$ (i.e. $\closure{\U^k} \subset \U^{k+1}$ for every $k\in \N$, and $\Omega_2 = \bigcup_{k} \U^k$) such that $BV^{\DiffOp}(\U^k)$ satisfies the weak* compactness property above for every $k\in \N$.
\end{assumption}

The following theorem is our main disintegration result for measures in $\CorrSpace{\DiffOp}{\Omega_1}{\Omega_2}$, which will be instrumental to define a notion of duality for admissible two-scale configurations.
The proof is an adaptation of the arguments in \cite[Proposition 4.7]{Francfort.Giacomini.2014}.

\begin{proposition} \label{BV^A main property}
\MMM Let Assumption \ref{BV^A assumption 1} be satisfied. \BBB
Let $\mu \in \CorrSpace{\DiffOp}{\Omega_1}{\Omega_2}$. 
Then there exist $\eta \in \Mb^+(\Omega_1)$ and a Borel map $(x_1,x_2) \in \Omega_1 \times \Omega_2 \mapsto \mu_{x_1}(x_2) \in V$ such that, for $\eta$-a.e. $x_1 \in \Omega_1$,
\begin{equation} \label{BV^A main property 1}
    \mu_{x_1} \in BV^{\DiffOp}(\Omega_2), \qquad \int_{\Omega_2} \mu_{x_1}(x_2) \,dx_2 = 0, \qquad |\DiffOp\mu_{x_1}|(\Omega_2) \neq 0,
\end{equation}
and
\begin{equation} \label{BV^A main property 2}
    \mu = \mu_{x_1}(x_2) \,\eta \otimes \calL^{n_2}_{x_2}.
\end{equation}
Moreover, the map $x_1 \mapsto \DiffOp\mu_{x_1} \in \Mb(\Omega_2;W)$ is $\eta$-measurable and
\begin{equation*}
    \DiffOp\mu = \eta \genprod \DiffOp\mu_{x_1}.
\end{equation*}
\end{proposition}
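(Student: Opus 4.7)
I would adapt the strategy of \cite[Proposition~4.7]{Francfort.Giacomini.2014}, replacing the explicit $BD$-compactness used there by the abstract weak$^*$-compactness provided by Assumption~\ref{BV^A assumption 1}. The first step is to set $\eta := \proj_\# |\DiffOp\mu| \in \Mb^+(\Omega_1)$ and apply Theorem~\ref{the basic disintegration theorem} to $\DiffOp\mu$, obtaining an $\eta$-measurable family $\{\lambda_{x_1}\}_{x_1 \in \Omega_1} \subset \Mb(\Omega_2;W)$ with $\DiffOp\mu = \eta \genprod \lambda_{x_1}$ and $|\lambda_{x_1}|(\Omega_2) > 0$ for $\eta$-a.e.\ $x_1$. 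These $\lambda_{x_1}$ are the natural candidates for $\DiffOp\mu_{x_1}$; the main task is to construct the slices $\mu_{x_1} \in BV^{\DiffOp}(\Omega_2)$ themselves.

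The second step is a slicing-by-test-functions argument. For each $\phi \in C_c(\Omega_1)$, define $\mu_\phi \in \Mb(\Omega_2;V)$ by
\[
\int_{\Omega_2} \psi \cdot d\mu_\phi := \int_{\Omega_1 \times \Omega_2} \phi(x_1)\,\psi(x_2) \cdot d\mu(x_1,x_2), \qquad \psi \in C_c(\Omega_2; V^*).
\]
Since $\DiffOp$ acts only in $x_2$, it commutes with integration against $\phi(x_1)$, so $\DiffOp\mu_\phi$ is the analogous slice of $\DiffOp\mu$, and the hypothesis $\mu(F \times \Omega_2) = 0$ forces $\mu_\phi(\Omega_2) = 0$. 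Approximating $\mu$ by smooth functions $\mu^n$ via standard mollification yields the corresponding $L^1$-functions $\mu_\phi^n \in BV^{\DiffOp}(\Omega_2)$ converging weakly$^*$ to $\mu_\phi$, with $BV^{\DiffOp}$-norm bounded uniformly in $n$ (the $L^1$-part by $\|\phi\|_\infty \|\mu\|_{\Mb}$ and the $\DiffOp$-variation by $\|\phi\|_\infty \|\DiffOp\mu\|_{\Mb}$). Invoking Assumption~\ref{BV^A assumption 1} and using the uniqueness of the weak$^*$-limit, one concludes that $\mu_\phi$ itself lies in $BV^{\DiffOp}(\Omega_2)$.

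To assemble the pointwise slices, I would then use Besicovitch differentiation in $\Omega_1$: for $\eta$-a.e.\ $x_1$ and small $r > 0$, consider averaged slices $\mu_{x_1}^r := \mu_{\varphi_r^{x_1}}/\eta(B_r(x_1))$, where $\varphi_r^{x_1} \in C_c(\Omega_1)$ suitably approximates $\charfun{B_r(x_1)}$. The previous step ensures $\mu_{x_1}^r \in BV^{\DiffOp}(\Omega_2)$, with $BV^{\DiffOp}$-norm controlled by the Radon--Nikodym derivatives of $|\mu|$ and $|\DiffOp\mu|$ with respect to $\eta$, whose finiteness $\eta$-a.e.\ is guaranteed by the Besicovitch differentiation theorem. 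A further application of Assumption~\ref{BV^A assumption 1} extracts a weak$^*$-convergent subsequence as $r \to 0^+$, producing $\mu_{x_1} \in BV^{\DiffOp}(\Omega_2)$ fulfilling \eqref{BV^A main property 1}. The product representation \eqref{BV^A main property 2} then follows by testing against tensor products $\phi(x_1)\psi(x_2)$ and invoking the uniqueness in Theorem~\ref{the basic disintegration theorem}, while the $\eta$-measurability of $x_1 \mapsto \DiffOp\mu_{x_1}$ and the identity $\DiffOp\mu = \eta \genprod \DiffOp\mu_{x_1}$ follow from Proposition~\ref{important disintegration remark}.

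The main obstacle is the regularity upgrade in the second step: a generic measure on $\Omega_2$ with bounded $\DiffOp$-variation and vanishing total mass need not be absolutely continuous with respect to $\calL^{n_2}$. It is precisely Assumption~\ref{BV^A assumption 1} which closes the gap, by forcing the weak$^*$-limit of the $L^1$-approximations to remain in $BV^{\DiffOp}(\Omega_2)$, and additional care is required to handle the noncompactness of $\Omega_2$ through the exhausting family $\{\U^k\}$.
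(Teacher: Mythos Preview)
Your plan follows a genuinely different and more indirect route than the paper. The paper does not go through integrated slices $\mu_\phi$ and Besicovitch differentiation at all. Instead, it sets
\[
\eta := \proj_\#|\mu| + \proj_\#|\DiffOp\mu|
\]
and applies Theorem~\ref{the basic disintegration theorem} directly to \emph{both} $\mu$ and $\lambda := \DiffOp\mu$, obtaining $\mu = \eta \genprod \mu_{x_1}$ and $\lambda = \eta \genprod \lambda_{x_1}$ with $\mu_{x_1}, \lambda_{x_1}$ a priori only measures. Testing with tensor products $\varphi^{(1)}(x_1)\varphi^{(2)}(x_2)$ gives $\DiffOp\mu_{x_1} = \charfun{S}(x_1)\lambda_{x_1}$ as distributions on $\Omega_2$ for $\eta$-a.e.\ $x_1$. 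The regularity upgrade is then done \emph{slice by slice}: mollify $\mu_{x_1}$ in the $x_2$ variable only, note that $\mu_{x_1}^\epsilon$ is bounded in $BV^{\DiffOp}(\U^k)$ (the $L^1$ part because $\mu_{x_1}$ is a bounded measure, the $\DiffOp$-part because $\lambda_{x_1}$ is), and invoke Assumption~\ref{BV^A assumption 1} on each $\U^k$ to conclude $\mu_{x_1} \in L^1$. This avoids any measurable-selection issues, since the disintegration theorem supplies $\eta$-measurability of $x_1 \mapsto \mu_{x_1}$ from the outset.

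Beyond being more circuitous, your plan has two concrete gaps. First, your choice $\eta = \proj_\#|\DiffOp\mu|$ need not dominate $\proj_\#|\mu|$ under Assumption~\ref{BV^A assumption 1} alone: nothing prevents $\DiffOp$ from having nontrivial zero-mean kernel elements in $L^1(\Omega_2;V)$, in which case $\mu$ could carry mass over an $\eta$-null set in $\Omega_1$ and the representation $\mu = \mu_{x_1}(x_2)\,\eta\otimes\calL^{n_2}$ would fail outright. This also breaks your Besicovitch step, since the $L^1$-bound on the averaged slices $\mu_{x_1}^r$ requires $\frac{|\mu|(B_r(x_1)\times\Omega_2)}{\eta(B_r(x_1))}$ to stay finite, which needs $\proj_\#|\mu| \ll \eta$. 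Second, even with the right $\eta$, your Besicovitch construction produces $\mu_{x_1}$ as a weak$^*$ subsequential limit whose choice of subsequence may depend on $x_1$; turning this into a jointly Borel map $(x_1,x_2)\mapsto\mu_{x_1}(x_2)$ satisfying \eqref{BV^A main property 2} requires a measurable-selection argument you have not supplied. The paper's direct disintegration sidesteps both issues.
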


\begin{proof}
By assumption, we have $\mu \in \Mb(\Omega_1 \times \Omega_2;V)$ and $\lambda := \DiffOp\mu \in \Mb(\Omega_1 \times \Omega_2;W)$. 
Setting 
\begin{equation*}
    \eta := \proj_{\#}|\mu|+\proj_{\#}|\lambda| \in \Mb^+(\Omega_1),
\end{equation*}
where $\proj_{\#}$ is the push-forward by the projection of $\Omega_1 \times \Omega_2$ on $\Omega_1$, we obtain \MMM as a consequence of Theorem \ref{the basic disintegration theorem}: \BBB
\begin{equation} \label{BV^A main property disintegrations}
    \mu = \eta \genprod \mu_{x_1} \;\text{ and }\; \lambda = \eta \genprod \lambda_{x_1},
\end{equation}
with $\mu_{x_1} \in \Mb(\Omega_2;V)$ and $\lambda_{x_1} \in \Mb(\Omega_2;W)$. 
Further, if we set $S := \{x_1 \in \Omega_1 : |\lambda_{x_1}|(\Omega_2) \neq 0\}$, then $\lambda = \eta\mres{S} \genprod \lambda_{x_1}$.

For every $\varphi^{(1)} \in C_c^{\infty}(\Omega_1)$ and $\varphi^{(2)} \in C_c^{\infty}(\Omega_2;W^*)$ we have
\begin{align*}
    \int_{\Omega_1} \varphi^{(1)}(x_1) \left\langle \mu_{x_1}, \DiffOp^*\varphi^{(2)} \right\rangle \cdot d\eta(x_1)
    &= \int_{\Omega_1} \left( \int_{\Omega_2} \varphi^{(1)}(x_1) \DiffOp^*\varphi^{(2)}(x_2) \cdot d\mu_{x_1}(x_2) \right) \cdot d\eta(x_1)\\
    &= \left\langle \eta \genprod \mu_{x_1}, \varphi^{(1)} \DiffOp^*\varphi^{(2)} \right\rangle
    = \left\langle \mu, \DiffOp^*\left(\varphi^{(1)} \varphi^{(2)}\right) \right\rangle\\
    &= \left\langle \DiffOp\mu, \varphi^{(1)} \varphi^{(2)} \right\rangle
    = \left\langle \eta\mres{S} \genprod \lambda_{x_1}, \varphi^{(1)} \varphi^{(2)} \right\rangle\\
    &= \int_{\Omega_1} \left( \int_{\Omega_2} \varphi^{(1)}(x_1) \varphi^{(2)}(x_2) \cdot d\lambda_{x_1}(x_2) \right) \charfun{S}(x_1) \cdot d\eta(x_1)\\
    &= \int_{\Omega_1} \varphi^{(1)}(x_1) \left\langle \charfun{S}(x_1)\lambda_{x_1}, \varphi^{(2)} \right\rangle \cdot d\eta(x_1).
\end{align*}
From this we infer that for $\eta$-a.e. $x_1 \in \Omega_1$ and for every $\varphi \in C_c^{\infty}(\Omega_2;W^*)$
\begin{equation} \label{corrector disintegration relation}
    \left\langle \mu_{x_1}, \DiffOp^*\varphi \right\rangle = \left\langle \charfun{S}(x_1)\lambda_{x_1}, \varphi \right\rangle.
\end{equation}

We can consider $\mu_{x_1}$ and $\lambda_{x_1}$ as measures on $\R^{n_2}$ if we extend the measure $\mu$ by zero on the complement of $\Omega_1$. 
Then, using the standard mollifiers $\{\rho_\epsilon\}_{\epsilon>0}$ on $\R^{n_2}$, we define the functions $\mu_{x_1}^\epsilon := \mu_{x_1} \ast \rho_\epsilon$ and $\lambda_{x_1}^\epsilon := \lambda_{x_1} \ast \rho_\epsilon$,
which are smooth and uniformly bounded in $L^1(\Omega_2;V)$ and $L^1(\Omega_2;W)$, respectively.
For every $\varphi \in C_c^k(\Omega_2;W^*)$,
$\supp(\varphi) \subset \U^k$ for $k$ large enough.
Furthermore, the support of $\varphi \ast \rho_\epsilon$ is contained in $\Omega_2$ provided $\epsilon$ is sufficiently small (\MMM smallness depending only on $k$\BBB), and thus from \eqref{corrector disintegration relation}
we have 
\begin{align*}
    \langle \mu_{x_1}^\epsilon, \DiffOp^*\varphi \rangle 
    &= \int_{\R^{n_2}} \left( \mu_{x_1} \ast \rho_\epsilon \right) \cdot \DiffOp^*\varphi \,dx_2
    = \int_{\R^{n_2}} \left( \DiffOp^*\varphi \ast \rho_\epsilon \right) \cdot d\mu_{x_1}\\
    &= \int_{\R^{n_2}} \DiffOp^*\left( \varphi \ast \rho_\epsilon \right) \cdot d\mu_{x_1}
    = \langle \mu_{x_1}, \DiffOp^*\left( \varphi \ast \rho_\epsilon \right) \rangle\\
    &= \langle \charfun{S}(x_1)\lambda_{x_1}, \varphi \ast \rho_\epsilon \rangle
    = \int_{\R^{n_2}} \left( \varphi \ast \rho_\epsilon \right) \cdot \charfun{S}(x_1) \,d\lambda_{x_1}\\
    &= \int_{\R^{n_2}} \charfun{S}(x_1) \left( \lambda_{x_1} \ast \rho_\epsilon \right) \cdot \varphi \,dx_2\\
    &= \langle \charfun{S}(x_1)\lambda_{x_1}^\epsilon, \varphi \rangle.
\end{align*}
Hence, for $\eta$-a.e. $x_1 \in \Omega_1$ the sequence $\{\mu_{x_1}^\epsilon\}$ is eventually bounded in $BV^{\DiffOp}(\U^k)$. By \Cref{BV^A assumption 1}, this implies strong convergence in $L^1(\U^k;V)$ up to a subsequence. 
As $\epsilon \to 0$, we have both $\varphi \ast \rho_\epsilon \strong \varphi$ and $\DiffOp^*\varphi \ast \rho_\epsilon \strong \DiffOp^*\varphi$ uniformly, so by the Lebesgue's dominated convergence theorem we obtain, for $\eta$-a.e. $x_1 \in \Omega_1$,
\begin{equation*}
    \langle \mu_{x_1}^\epsilon, \DiffOp^*\varphi \rangle \to \langle \mu_{x_1}, \DiffOp^*\varphi \rangle \;\text{ and }\; \langle \charfun{S}(x_1)\lambda_{x_1}^\epsilon, \varphi \rangle \to \langle \charfun{S}(x_1)\lambda_{x_1}, \varphi \rangle.
\end{equation*}
From the convergence above, we conclude for $\eta$-a.e. $x_1 \in \Omega_1$ that $\mu_{x_1}^\epsilon \strong \mu_{x_1}$ strongly in $L^1(\U^k;V)$. 
Since $\mu_{x_1}$ has bounded total variation, we have that $\mu_{x_1} \in L^1(\Omega_2;V)$ for $\eta$-a.e. $x_1 \in \Omega_1$.
This, together with \eqref{corrector disintegration relation}, implies
\begin{equation*}
    \mu_{x_1} \in BV^{\DiffOp}(\Omega_2) \;\text{ and }\; \DiffOp\mu_{x_1} = \charfun{S}(x_1)\lambda_{x_1}.
\end{equation*}
From \eqref{BV^A main property disintegrations} we now have that $\mu$ is absolutely continuous with respect to $\eta \otimes \calL^{n_2}_{x_2}$. 
Consequently, for $\eta$-a.e. $x_1 \in \Omega_1$ there exists a Borel measurable function which is equal to $\mu_{x_1}$ for $\calL^{n_2}_{x_2}$-a.e. $x_2 \in \Omega_2$, so that \eqref{BV^A main property 2} immediately follows.

Finally, since $\mu(F \times \Omega_2) = 0$ for every Borel set $F \subseteq \Omega_1$, we have
\begin{equation*}
    \int_{\Omega_1} f(x_1)\left( \int_{\Omega_2} \mu_{x_1}(x_2) \,dx_2 \right) \,d\eta(x_1) = \int_{\Omega_1 \times \Omega_2} f(x_1) \,d\mu(x_1,x_2) = 0
\end{equation*}
for every $f \in C_c(\Omega_1)$, from which we obtain the second claim in \eqref{BV^A main property 1}.
This concludes the proof.
\end{proof}

Lastly, we give a necessary and sufficient condition with which we can characterize the $\DiffOp$-gradient of a measure, under the following two assumptions.

\begin{assumption} \label{BV^A assumption 2}
For every $\chi \in C_0(\Omega_1 \times \Omega_2;W)$ with $\DiffOp^*\chi = 0$ (in the sense of distributions), there exists a sequence of smooth functions $\{\chi_n\} \subset C_c^{\infty}(\Omega_1 \times \Omega_2;W)$ such that $\DiffOp^*\chi_n = 0$ for every $n$, and $\chi_n \strong \chi$ in $L^{\infty}(\Omega_1 \times \Omega_2;W)$.
\end{assumption}

\begin{assumption} \label{BV^A assumption 3}
The following Poincar\'{e}-Korn type inequality holds in $BV^{\DiffOp}(\Omega_2)$:
\begin{equation*} \label{BV^A Poincare-Korn inequality}
    \left\|u - \int_{\Omega_2} u \,dx_2\right\|_{L^1(\Omega_2;V)} \leq C |\DiffOp u|(\Omega_2), \quad \forall u \in BV^{\DiffOp}(\Omega_2).
\end{equation*}
\end{assumption}

\begin{proposition} \label{BV^A duality lemma}
\MMM Let Assumption \ref{BV^A assumption 1}, \ref{BV^A assumption 2} and \ref{BV^A assumption 3} be satisfied. \BBB
Let $\lambda \in \Mb(\Omega_1 \times \Omega_2;W)$. 
Then, the following items are equivalent:
\begin{enumerate}[label=(\roman*)]
	\item \label{BV^A duality lemma (i)}
	For every $\chi \in C_0(\Omega_1 \times \Omega_2;W)$ with $\DiffOp^*\chi = 0$ (in the sense of distributions) we have
	\begin{equation*}
	    \int_{\Omega_1 \times \Omega_2} \chi(x_1,x_2) \cdot d\lambda(x_1,x_2) = 0.
	\end{equation*}
	\item \label{BV^A duality lemma (ii)}
	There exists $\mu \in \CorrSpace{\DiffOp}{\Omega_1}{\Omega_2}$ such that $\lambda = \DiffOp\mu$. 
\end{enumerate}
\end{proposition}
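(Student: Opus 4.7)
The plan is to prove the two implications separately.

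The direction \ref{BV^A duality lemma (ii)} $\Rightarrow$ \ref{BV^A duality lemma (i)} is routine. Given $\mu \in \CorrSpace{\DiffOp}{\Omega_1}{\Omega_2}$ with $\lambda = \DiffOp \mu$, the distributional definition of $\DiffOp \mu$ immediately yields $\int \chi \cdot d\lambda = \int \DiffOp^* \chi \cdot d\mu = 0$ for every $\chi \in C_c^{\infty}(\Omega_1 \times \Omega_2;W)$ with $\DiffOp^* \chi = 0$. For a general $\chi \in C_0(\Omega_1 \times \Omega_2; W)$ in the kernel of $\DiffOp^*$, I would invoke Assumption \ref{BV^A assumption 2} to produce approximants $\chi_n \in C_c^{\infty}$ with $\DiffOp^* \chi_n = 0$ and $\chi_n \to \chi$ uniformly, and then pass to the limit using that $\lambda$ has finite total variation.

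For the substantive direction \ref{BV^A duality lemma (i)} $\Rightarrow$ \ref{BV^A duality lemma (ii)}, I would construct $\mu$ by disintegration. Set $\eta := \proj_{\#}|\lambda| \in \Mb^+(\Omega_1)$ and apply Theorem \ref{the basic disintegration theorem} to obtain $\lambda = \eta \genprod \lambda_{x_1}$ with $\lambda_{x_1} \in \Mb(\Omega_2; W)$. A first step is to localize condition \ref{BV^A duality lemma (i)} fiber-by-fiber: for every $\psi \in C_c^{\infty}(\Omega_2; W)$ with $\DiffOp^* \psi = 0$ and every $\varphi \in C_c(\Omega_1)$, the tensor product $\varphi(x_1)\psi(x_2)$ lies in $C_0(\Omega_1 \times \Omega_2; W)$ and satisfies $\DiffOp^*(\varphi\psi) = \varphi\,\DiffOp^*\psi = 0$, since $\DiffOp^*$ acts only in the $x_2$ variable. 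Hence $\int_{\Omega_1} \varphi(x_1) \bigl(\int_{\Omega_2} \psi \, d\lambda_{x_1}\bigr)\, d\eta = 0$ for every admissible $\varphi$, and a countable dense-subfamily argument gives that, for $\eta$-a.e.~$x_1 \in \Omega_1$, the fiber $\lambda_{x_1}$ annihilates every such $\psi$.

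For each such fiber, I would then produce $\mu_{x_1} \in BV^{\DiffOp}(\Omega_2)$ with $\DiffOp \mu_{x_1} = \lambda_{x_1}$ and $\int_{\Omega_2} \mu_{x_1}\, dx_2 = 0$ by a Hahn--Banach argument on $\Omega_2$, exploiting Assumption \ref{BV^A assumption 3} (Poincar\'e--Korn) to turn the fiber-wise orthogonality into the quantitative bound $\|\mu_{x_1}\|_{L^1(\Omega_2;V)} \leq C|\DiffOp \mu_{x_1}|(\Omega_2) = C|\lambda_{x_1}|(\Omega_2)$. Integrating this inequality against $\eta$ makes $\mu := \eta \genprod \mu_{x_1}$ a well-defined element of $\Mb(\Omega_1 \times \Omega_2; V)$. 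Testing against smooth product functions verifies $\DiffOp \mu = \lambda$, and the normalization $\int_{\Omega_2} \mu_{x_1}\, dx_2 = 0$ yields $\mu(F \times \Omega_2) = 0$ for every Borel $F \subseteq \Omega_1$, placing $\mu$ in $\CorrSpace{\DiffOp}{\Omega_1}{\Omega_2}$.

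The main obstacle is the fiber-wise construction of $\mu_{x_1}$, together with the Borel measurability of the selection $x_1 \mapsto \mu_{x_1}$ needed to form the generalized product. The existence on each fiber is itself a non-trivial closed-range statement on the simpler space $\Omega_2$; Assumption \ref{BV^A assumption 1} (weak* compactness in $BV^{\DiffOp}(\Omega_2)$) together with Assumption \ref{BV^A assumption 3} (Poincar\'e--Korn) are precisely the ingredients needed to make the Hahn--Banach extension representable by an $L^1$ function with the required quantitative control. For measurability, a constructive mollification scheme whose stability is governed by the Poincar\'e--Korn bound---analogous to the construction in the proof of Proposition \ref{BV^A main property}---should suffice.
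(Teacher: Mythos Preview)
Your treatment of \ref{BV^A duality lemma (ii)} $\Rightarrow$ \ref{BV^A duality lemma (i)} matches the paper's.

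For \ref{BV^A duality lemma (i)} $\Rightarrow$ \ref{BV^A duality lemma (ii)} your route is genuinely different from the paper's, and it carries a gap you yourself flag but do not close. The paper does \emph{not} construct $\mu$ fiber-by-fiber. Instead it proves that the range
\[
\calE^{\DiffOp}=\{\DiffOp\mu:\mu\in\CorrSpace{\DiffOp}{\Omega_1}{\Omega_2}\}
\]
is weakly* closed in $\Mb(\Omega_1\times\Omega_2;W)$, and then argues by Hahn--Banach \emph{separation}: if $\lambda\notin\calE^{\DiffOp}$, a separating $\chi\in C_0$ satisfies $\int\chi\cdot d\DiffOp u=0$ for all $u$, hence $\DiffOp^*\chi=0$ distributionally, contradicting \ref{BV^A duality lemma (i)}. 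Weak* closedness is obtained via Krein--\v{S}mulian, reducing to sequential closedness: given $\DiffOp\mu_n\weakstar\lambda$ with bounded mass, one invokes Proposition \ref{BV^A main property} to disintegrate each $\mu_n$ into mean-zero fibers, applies Assumption \ref{BV^A assumption 3} fiber-by-fiber to bound $|\mu_n|(\Omega_1\times\Omega_2)\leq C|\DiffOp\mu_n|(\Omega_1\times\Omega_2)$, and extracts a weak* limit $\mu$ with $\DiffOp\mu=\lambda$. No measurable selection is ever needed.

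In your scheme, two steps remain unproven. First, the fiber-wise existence of $\mu_{x_1}$ with $\DiffOp\mu_{x_1}=\lambda_{x_1}$ is precisely the statement of the proposition on $\Omega_2$ alone; you would in any case have to run the Krein--\v{S}mulian/separation argument there, so nothing is gained by descending to fibers. Second, and more seriously, the Borel measurability of $x_1\mapsto\mu_{x_1}$ is not established. Assumption \ref{BV^A assumption 3} does imply that the mean-zero solution is unique when it exists (since $\DiffOp u=0$ forces $u$ constant), which is a point in your favor, but uniqueness alone does not deliver measurability without a further argument (continuous dependence on the datum in some topology, or a constructive approximation whose limit is manifestly measurable). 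The mollification analogy with Proposition \ref{BV^A main property} does not transfer directly: there one already has a global $\mu$ and shows its disintegration is nice, whereas here you must manufacture $\mu$ from the fibers. The paper's global approach sidesteps both issues entirely.
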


\begin{proof}
\BLACK
Let $\chi \in C_0(\Omega_1 \times \Omega_2;W)$ with $\DiffOp^*\chi = 0$ (in the sense of distributions) and let $\{\chi_n\}$ be an approximating sequence of $\chi$ as in \Cref{BV^A assumption 2}. 
Assume that \ref{BV^A duality lemma (ii)} holds. Then, we have
\MMM \begin{align*}
    \int_{\Omega_1 \times \Omega_2} \chi(x_1,x_2) \cdot d\lambda(x_1,x_2) 
    &= \int_{\Omega_1 \times \Omega_2} \chi(x_1,x_2) \cdot d\DiffOp\mu(x_1,x_2)\\
    &= \lim_n \int_{\Omega_1 \times \Omega_2} \chi_n(x_1,x_2) \cdot d\DiffOp\mu(x_1,x_2)\MMM \\
    &=\lim_n \int_{\Omega_1 \times \Omega_2} \DiffOp^* \chi_n(x_1,x_2)\, d\mu(x_1,x_2)=0. \BBB
\end{align*}
So we have \ref{BV^A duality lemma (i)} \BBB.
\BLACK

Let us prove that the space
\begin{equation*}
    \calE^{\DiffOp} = \left\{ \DiffOp\mu : \mu \in \CorrSpace{\DiffOp}{\Omega_1}{\Omega_2} \right\}
\end{equation*}
is weakly* closed in $\Mb(\Omega_1 \times \Omega_2;W)$. 
By the Krein-\v{S}mulian theorem it is enough to show that the intersection of $\calE^{\DiffOp}$ with every closed ball in $\Mb(\Omega_1 \times \Omega_2;W)$ is weakly* closed. 
This implies, since the weak* topology is metrizable on any closed ball of $\Mb(\Omega_1 \times \Omega_2;W)$, that it is enough to prove that $\calE^{\DiffOp}$ is sequentially weakly* closed.

Let $\{\lambda_n\}_{n\in\N} \subset \calE^{\DiffOp}$ and $\lambda \in \Mb(\Omega_1 \times \Omega_2;W)$ be such that
\begin{equation*}
    \lambda_n \weakstar \lambda \;\text{ in } \Mb(\Omega_1 \times \Omega_2;W).
\end{equation*}
By the definition of the space $\calE^{\DiffOp}$, there exist measures $\mu_n \in \Mb(\Omega_1 \times \Omega_2;V)$ such that $\lambda_n = \DiffOp \mu_n$. 
By \Cref{BV^A main property}, for every $n \in \N$ we have that there exist $\eta_n \in \Mb^+(\Omega_1)$ and $\mu^n_{x_1} \in BV^{\DiffOp}(\Omega_2)$ such that, for $\eta_n$-a.e. $x_1 \in \Omega_1$,
\begin{equation*} 
    \mu_n = \mu^n_{x_1}(x_2) \,\eta_n \otimes \calL^{n_2}_{x_2}, \qquad \DiffOp\mu_n = \eta_n \genprod \DiffOp\mu^n_{x_1}.
\end{equation*}
Additionally, $\mu^n_{x_1}$ satisfies $\int_{\Omega_2} \mu^n_{x_1}(x_2) \,dx_2 = 0$ for every $n\in \N$. 
Then, 
by \Cref{BV^A assumption 3}, 
there is a constant $C$ independent of $n$ such that
\begin{align*}
    |\mu_n|(\Omega_1 \times \Omega_2)
    &= \int_{\Omega_1 \times \Omega_2} |\mu_n(x_1,x_2)| \,dx_1 dx_2
    = \int_{\Omega_1} \left( \int_{\Omega_2} |\mu^n_{x_1}(x_2)| \,dx_2 \right) \,d\eta_n(x_1)\\ 
    &\leq C \int_{\Omega_1} |\DiffOp\mu^n_{x_1}|(\Omega_2) \,d\eta_n(x_1)
    = C \int_{\Omega_1} \left( \int_{\Omega_2} \,d|\DiffOp\mu^n_{x_1}|(x_2) \right) \,d\eta_n(x_1)\\
    &= C \int_{\Omega_1 \times \Omega_2} \,d\left( \eta_n \genprod |\DiffOp\mu^n_{x_1}| \right)
    = C |\DiffOp\mu_n|(\Omega_1 \times \Omega_2) 
    \leq C.
\end{align*}
Hence there exists a subsequence of $\{ \mu_n \}$, not relabeled, and an element $\mu \in \Mb(\Omega_1 \times \Omega_2;V)$ such that
\begin{equation*}
    \mu_n \weakstar \mu \;\text{ in } \Mb(\Omega_1 \times \Omega_2;V).
\end{equation*}
Then, for every $\varphi \in C_c^{\infty}(\Omega_1 \times \Omega_2;W^*)$ we have
\begin{align*}
    \langle \lambda, \varphi \rangle 
    &= \lim_{n} \langle \lambda_n, \varphi \rangle
    = \lim_{n} \langle \DiffOp \mu_n, \varphi \rangle\\
    &= \lim_{n} \langle \mu_n, \DiffOp^*\varphi \rangle
    = \langle \mu, \DiffOp^*\varphi \rangle.
\end{align*}
From the convergence above we deduce that $\lambda = \DiffOp \mu \in \calE^{\DiffOp}$.
This implies that $\calE^{\DiffOp}$ is weakly* closed in $\Mb(\Omega_1 \times \Omega_2;W) = \left( C_0(\Omega_1 \times \Omega_2;W^*) \right)'$.

Assume now that \ref{BV^A duality lemma (i)} holds. 
If $\lambda \notin \calE^{\DiffOp}$, by Hahn-Banach's theorem, there exists $\chi \in C_0(\Omega_1 \times \Omega_2;W^*)$ such that
\begin{equation} \label{BV^A lemma - (b) 1}
    \int_{\Omega_1 \times \Omega_2} \chi \cdot d\lambda = 1,
\end{equation}
and, for every $u \in BV^{\DiffOp}(\Omega_1 \times \Omega_2)$,
\begin{equation} \label{BV^A lemma - (b) 0}
    \int_{\Omega_1 \times \Omega_2} \chi \cdot d\DiffOp u = 0.
\end{equation}
In particular, choosing $u$ to be a smooth function, \eqref{BV^A lemma - (b) 0} implies that $\DiffOp^*\chi = 0$ (in the sense of distributions). 
As a consequence, \eqref{BV^A lemma - (b) 1} contradicts \ref{BV^A duality lemma (i)}. 
Thus, $\lambda \in \calE^{\DiffOp}$.
\end{proof}

\subsubsection{Compactness result for scaled maps with finite energy}
If we consider $\DiffOp = \widetilde{E}_{\gamma}$,\, $\DiffOp^* = \widetilde{\div}_{\gamma}$,\, $\Omega_1 = \omega$ with points $x_1 = x'$, and $\Omega_2 = I \times \calY$ with points $x_2 = (x_3,y)$, then we denote the associated spaces from the previous section by:
\begin{equation*}
    \BDgamma := \Big\{ u \in L^1(I \times \calY;\R^3) : \widetilde{E}_{\gamma}u \in \Mb(I \times \calY;\M^{3 \times 3}_{\sym}) \Big\},
\end{equation*}
\begin{align*}
    \calXgamma{\omega} := \Big\{\mu \in \Mb(\Omega \times \calY&;\R^3) : \widetilde{E}_{\gamma}\mu \in \Mb(\Omega \times \calY;\M^{3 \times 3}_{\sym}),\\
    \mu(F \times I \times \calY&) = 0 \textrm{ for every Borel set } F \subseteq \omega \Big\}.
\end{align*}
\MMM Despite the fact that $\calY$ is a flat torus, Proposition \ref{BV^A main property} and Proposition \ref{BV^A duality lemma} are satisfied if we establish the validity of Assumption \ref{BV^A assumption 1},  \ref{BV^A assumption 2} and  \ref{BV^A assumption 3}, which will be done below. \BBB
\begin{remark} \label{scaling from BD to BDgamma}
To each $u \in \BDgamma$, we can associate a function $v := \left(\frac{1}{\gamma}u_1, \frac{1}{\gamma}u_2, u_3\right)$.
Then
\MMM \begin{equation*}
    E v =
    \begin{pmatrix} \begin{matrix} \frac{1}{\gamma}\,E_{y}u' \end{matrix} & \frac{1}{2}\hspace{-0.2em}\left(D_{y}u_3+\frac{1}{\gamma}\,\partial_{x_3}u'\right) \\ \frac{1}{2}\hspace{-0.2em}\left(D_{y}u_3+\frac{1}{\gamma}\,\partial_{x_3}u'\right)^T & \partial_{x_3}u_3 \end{pmatrix},
\end{equation*}
\BBB
from which we can see that $v \in BD(I \times \calY)$. Here $E_yu'$ denotes the symmetrized gradient in $y$ of the field $u'$, which is a $2 \times 2$ matrix. 
Alternatively, we can define the change of variables $\psi : (\gamma I) \times \calY \to I \times \calY$ given by $\psi(x_3,y) := \left(\tfrac{1}{\gamma}x_3,y\right)$ and consider the function $w := u \circ \psi$.
Then $w \in BD((\gamma I) \times \calY)$ and we have
\begin{equation*}
    \widetilde{E}_{\gamma}u = \MMM \frac{1}{\gamma} \BBB \psi_{\#}(\BLUE \widetilde{E}_{1}w\BLACK).
\end{equation*}
Using any one of these scalings, we obtain that $\BDgamma$ satisfies the weak* compactness property \Cref{BV^A assumption 1}.
\end{remark}

\MMM The following lemma establishes the validity of Assumption \ref{BV^A assumption 2}.
\begin{lemma} \label{density argument for duality lemma - regime gamma} For any $\chi \in C_0(\Omega \times \calY;\M^{3 \times 3}_{\sym})$ with $\widetilde{\div}_{\gamma}\chi(x,y) = 0$ (in the sense of distributions), we can construct an approximating sequence which satisfies \Cref{BV^A assumption 2}. 
\end{lemma}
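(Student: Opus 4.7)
The plan is to construct $\{\chi_n\} \subset C_c^\infty(\Omega \times \calY; \M^{3\times 3}_{\sym})$ with $\widetilde{\div}_\gamma\chi_n = 0$ and $\chi_n \to \chi$ uniformly, by three successive operations: a smooth cutoff in $x'$, an anisotropic rescaling in $x_3$, and a final mollification.

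The first step exploits the fact that $\widetilde{\div}_\gamma$ contains no derivative in $x'$. Picking cutoffs $\phi_n \in C_c^\infty(\omega)$ with $\phi_n = 1$ on $\{x' \in \omega : \dist(x', \partial\omega) \ge 1/n\}$, the product $\phi_n(x')\chi$ is still divergence-free, and $\phi_n\chi \to \chi$ in $L^\infty$ because $\chi \in C_0(\Omega \times \calY)$ vanishes uniformly on $\partial\omega \times I \times \calY$.

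For the second step we shrink the support along $x_3$. For $t_n \nearrow 1$, define
\[
\chi^{(2)}_n(x', x_3, y) := \Lambda_{t_n}^{-1}\bigl[\phi_n(x')\chi(x', x_3/t_n, y)\bigr] \quad \text{on } \omega \times (t_n I) \times \calY,
\]
extended by zero for $x_3 \in I \setminus t_n I$. Here $\Lambda_h$ is the anisotropic scaling \eqref{definition Lambda_h}, whose inverse acts componentwise by $(\Lambda_{t_n}^{-1}\xi)_{ij} = t_n^{a_{ij}}\xi_{ij}$ with $a_{11}=a_{12}=a_{22}=0$, $a_{13}=a_{23}=a_{31}=a_{32}=1$, and $a_{33}=2$. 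A direct row-by-row computation shows that this is exactly the unique (up to an overall scalar) family of symmetry-preserving diagonal rescalings compatible with the dilation $x_3 \mapsto x_3/t_n$ and with $\widetilde{\div}_\gamma \chi = 0$, so that $\widetilde{\div}_\gamma \chi^{(2)}_n = 0$ on $\omega \times (t_n I) \times \calY$. The zero extension preserves divergence-freeness on the full cylinder, since the normal trace of $\chi^{(2)}_n$ vanishes continuously at $x_3 = \pm t_n/2$ (because $\chi \in C_0$). As $t_n \to 1$ the scalings $\Lambda_{t_n}^{-1}$ tend to the identity, and uniform continuity of $\chi$ gives $\|\chi^{(2)}_n - \phi_n\chi\|_\infty \to 0$.

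The third step is a routine mollification: convolve $\chi^{(2)}_n$ in $(x,y)$ with a standard mollifier $\rho_{\eps_n}$ (periodic in $y$ on the torus $\calY$), choosing $\eps_n$ small enough for the result to remain compactly supported in $\Omega \times \calY$. Since mollification commutes with $\widetilde{\div}_\gamma$, the resulting $\chi_n \in C_c^\infty(\Omega \times \calY; \M^{3\times 3}_{\sym})$ is still divergence-free, and uniform continuity yields $\chi_n \to \chi^{(2)}_n$ in $L^\infty$; combining the three steps gives $\chi_n \to \chi$ in $L^\infty$. The main subtlety lies in Step 2: ordinary $x_3$-cutoffs destroy divergence-freeness, and correcting them naively would require solving an auxiliary divergence equation, but the dilation combined with $\Lambda_{t_n}^{-1}$ sidesteps this entirely and produces a symmetric, divergence-free, compactly supported approximation in a single stroke.
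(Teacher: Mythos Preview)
Your proof is correct and follows essentially the same approach as the paper. Both arguments hinge on the same key observation: since $\widetilde{\div}_\gamma$ contains no derivatives in $x'$, a cutoff in $x'$ is harmless, and the anisotropic rescaling $\Lambda_{t_n}^{-1}$ (equivalently, the paper's $\Lambda_{1+\epsilon}$ with $t_n = (1+\epsilon)^{-1}$) combined with the dilation $x_3 \mapsto x_3/t_n$ shrinks the $x_3$-support while exactly preserving $\widetilde{\div}_\gamma$-freeness and symmetry; a final mollification then yields smoothness. Your handling of the $x'$-cutoff by direct multiplication is in fact cleaner than the paper's written version.
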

\begin{proof}
 We take $\chi \in C_0(\Omega \times \calY;\M^{3 \times 3}_{\sym})$, extend it by zero outside $\Omega$ and define 
\begin{equation*}
    \tilde{\chi}^{\epsilon} (x,y) :=  \Lambda_{1+\epsilon} \chi\left(\varphi^{\epsilon}(x') x', (1+\epsilon) x_3, y\right),
\end{equation*}
where $\Lambda_{1+\epsilon}$ is the linear operator described in \eqref{definition Lambda_h}, and $\varphi^{\epsilon}: \omega \to [0,1]$ is a continuous function that is zero in a neighbourhood of $\partial \omega$ and equal to $1$ for $x' \in \omega$ such that $\dist (x',\partial \omega)  \geq \epsilon$. 
Notice that $\tilde{\chi}^{\epsilon} \in C_c(\Omega \times \calY;\M^{3 \times 3}_{\sym})$, $\tilde{\chi}^{\epsilon} \strong \chi$ as $\epsilon \to 0$ in $L^{\infty}$ and $\widetilde{\div}_{\gamma}\tilde{\chi}^{\epsilon} = 0$ (in the sense of distributions). 
The $C^\infty$-regularity of the approximating sequence follows by convolving $\{\tilde{\chi}^{\epsilon}\}$ with a standard sequence of mollifiers.
\end{proof} 
\BBB
\MMM
The following claim establishes the validity of Assumption \ref{BV^A assumption 3}.

\begin{theorem}\label{theoremPoincareKorn} 
There exists a constant $C > 0$ such that
\begin{equation*}
    \left\|u-\int_{I \times \calY} u\right\|_{L^1(I \times \calY;\R^3)} \leq C |\widetilde{E}_{\gamma} u|(I \times \calY)
\end{equation*}
for each function $u \in BD_{\gamma} (I \times \calY)$. The constant $C$ can be chosen independently of $\gamma$ in a fixed interval $[\gamma_1,\gamma_2]$, for $0<\gamma_1<\gamma_2<\infty$. 
\end{theorem}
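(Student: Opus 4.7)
The plan is to reduce the inequality to a standard Poincaré--Korn statement on $BD(I\times\calY)$ via the first change of variables in Remark \ref{scaling from BD to BDgamma}. Setting $v:=(u_1/\gamma,u_2/\gamma,u_3)\in BD(I\times\calY)$, one has $\bar v = (\bar u_1/\gamma,\bar u_2/\gamma,\bar u_3)$, and the classical symmetrized gradient $Ev$ is a componentwise reweighting of $\widetilde E_\gamma u$: the $\alpha\beta$-entries are divided by $\gamma$, the $\alpha 3$-entries are unchanged, and the $33$-entry is multiplied by $\gamma$. Consequently a direct computation gives
\begin{equation*}
\tfrac{1}{\max(1,\gamma)}\,|u-\bar u|\le |v-\bar v|\quad\text{pointwise on }I\times\calY,\qquad |Ev|(I\times\calY)\le \max\!\bigl(\gamma,\tfrac{1}{\gamma}\bigr)\,|\widetilde E_\gamma u|(I\times\calY).
\end{equation*}

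The heart of the argument is therefore the unweighted Poincaré--Korn inequality
\begin{equation*}
\|v-\bar v\|_{L^1(I\times\calY;\R^3)}\le C\,|Ev|(I\times\calY)\qquad \forall\,v\in BD(I\times\calY),
\end{equation*}
which I would establish by a compactness-contradiction argument. Assuming it fails, one produces a sequence $v_n\in BD(I\times\calY)$ with $\bar v_n=0$, $\|v_n\|_{L^1}=1$ and $|Ev_n|(I\times\calY)\to 0$. This sequence is bounded in $BD(I\times\calY)$, and the compactness recalled in Remark \ref{scaling from BD to BDgamma} (which, after lifting $\calY$ to a fundamental cube in $\R^2$, reduces to the classical compact embedding $BD\hookrightarrow L^1$ on a bounded Lipschitz set) yields a subsequence converging in $L^1$ to a limit $v$ with $\bar v=0$ and $Ev=0$. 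Any such $v$ is a rigid motion $x\mapsto Ax+b$ with $A$ skew-symmetric; the $\Z^2$-periodicity in the $y$-variables forces $Ae_1=Ae_2=0$, which together with skew-symmetry implies $A=0$. Hence $v$ is constant, and $\bar v=0$ forces $v=0$, contradicting $\|v\|_{L^1}=1$.

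Chaining the two steps yields
\begin{equation*}
\|u-\bar u\|_{L^1(I\times\calY;\R^3)}\le \max(1,\gamma)\,\|v-\bar v\|_{L^1}\le C\,\max(1,\gamma)\,\max\!\bigl(\gamma,\tfrac{1}{\gamma}\bigr)\,|\widetilde E_\gamma u|(I\times\calY),
\end{equation*}
and the overall prefactor is bounded uniformly for $\gamma\in[\gamma_1,\gamma_2]$ with $0<\gamma_1<\gamma_2<\infty$, giving the stated uniformity.

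The main obstacle is the unweighted Poincaré--Korn inequality on the mixed domain $I\times\calY$: the $y$-factor is a flat torus rather than a bounded Lipschitz open set, so the classical $BD$ result does not apply verbatim. This requires establishing the compact embedding $BD(I\times\calY)\hookrightarrow L^1$ (by unfolding $\calY$ to a fundamental domain in $\R^2$ and using a partition of unity) and carefully identifying the kernel of $E$ under the periodicity constraint. Once both ingredients are in hand, the contradiction argument and the tracking of the $\gamma$-dependent constants are elementary.
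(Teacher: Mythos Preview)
Your proposal is correct and follows essentially the same approach as the paper: both reduce to the unweighted case via the scaling in Remark~\ref{scaling from BD to BDgamma}, then run a compactness--contradiction argument on $BD(I\times\calY)$ and use $\Z^2$-periodicity to force the rigid-motion limit to be constant. The only cosmetic difference is that you track the $\gamma$-dependent prefactors explicitly, whereas the paper simply observes that it suffices to prove the case $\gamma=1$ and invokes the scaling remark for the uniformity on $[\gamma_1,\gamma_2]$.
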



\begin{proof}
In view of \Cref{scaling from BD to BDgamma},  it is enough to show the claim for the case $\gamma = 1$. \BBB
We argue by contradiction. If the thesis does not hold, then there exists a sequence $\{u_n\}_n \subset BD(I \times \calY)$ such that 
\begin{equation*}
    \int_{I \times \calY} |u_n| \,dx_3 dy > n |\MMM \widetilde{E}_{1} \BBB u_n|(I \times \calY), \;\text{ with }\; \int_{I \times \calY} u_n \,dx_3 dy = 0.
\end{equation*}
We can normalize the sequence such that
\begin{equation*}
    \int_{I \times \calY} |u_n| \,dx_3 dy = 1, \;\text{ and }\; |\MMM \widetilde{E}_{1} \BBB u_n|(I \times \calY) < \frac{1}{n}.
\end{equation*}
In particular the sequence $\{u_n\}$ is bounded in $BD(I \times \calY)$.

By Assumption \ref{BV^A assumption 1}, there exists a subsequence $\{u_m\} \subseteq \{u_n\}$ and a function $u \in BD(I \times \calY)$ such that $\{u_m\}$ converges weakly* to $u$ in $BD(I \times \calY)$, i.e.
\begin{equation*} \label{Korn weak* compactness}
    u_m \strong u \;\text{ in } L^1(I \times \calY;\R^3), \;\text{ and }\; \MMM \widetilde{E}_{1}\BBB u_m \weakstar \MMM \widetilde{E}_{1}\BBB u \;\text{ in } \Mb(I \times \calY;\M^{3 \times 3}_{\sym}).
\end{equation*}
It's clear that the limit satisfies
\begin{equation} \label{Poincare contradiction}
    \int_{I \times \calY} |u| \,dx_3 dy = 1, \;\text{ with }\; \int_{I \times \calY} u \,dx_3 dy = 0.    
\end{equation}
Also, by the weak* lower semicontinuity of the total variation of measures, we have  
\begin{equation} \label{Korn zero total variation}
    |\MMM \widetilde{E}_{1}\BBB u|(I \times \calY) = 0,
\end{equation}
which implies $\MMM \widetilde{E}_{1}\BBB u = 0$. As a result, the limit $u$ is a rigid deformation, i.e. is of the form
\begin{equation*}
    u(x_3,y) = A \begin{pmatrix} y_1 \\ y_2 \\ x_3 \end{pmatrix} + b, \;\text{ where }\; A \in \M_{\skw}^{3 \times 3}, b \in \R^3.
\end{equation*}
Further, \eqref{Korn zero total variation} implies that $u$ has no jumps along $C^1$ hypersurfaces contained in $I \times \calY$. 
Hence, due to the structure of skew-symmetric matrices, $u$ must be a constant vector. 
However, this contradicts with \eqref{Poincare contradiction}.
\end{proof}
\BBB
\MMM 
\begin{remark} \label{poincarekornrem} 
If one doesn't assume periodicity, then the following version of the  Poincar\'{e}-Korn inequality can be proved, using the arguments in the proof of \Cref{theoremPoincareKorn}: 
There exists a constant $C > 0$ such that
\begin{equation*}
     \left\|u- A \begin{pmatrix} x_1 \\ x_2 \\ \gamma x_3 \end{pmatrix} - b\right\|_{L^1((0,1)^2 \times I;\R^3)}  \leq C |{E}_{\gamma} u|((0,1)^2 \times I)
\end{equation*}
for each function $u \in BD_{\gamma} ((0,1)^2 \times I)$ and suitably chosen  $ A \in \M_{\skw}^{3 \times 3}$, $b \in \R^3$, depending on $u$.  Again, the constant $C$ can be chosen independently of $\gamma$ in a fixed interval $[\gamma_1,\gamma_2]$, for $0<\gamma_1<\gamma_2<\infty$.
\end{remark} 
\BBB
The following two propositions are now a consequence of \Cref{BV^A main property} and \Cref{BV^A duality lemma}, respectively.

\begin{proposition} \label{corrector main property - regime gamma}
Let $\mu \in \calXgamma{\omega}$. 
Then there exist $\eta \in \Mb^+(\omega)$ and a Borel map $(x',x_3,y) \in \Omega \times \calY \mapsto \mu_{x'}(x_3,y) \in \R^3$ such that, for $\eta$-a.e. $x' \in \omega$,
\begin{equation} \label{corrector main property 1 - regime gamma}
    \mu_{x'} \in \BDgamma, \qquad \int_{I \times \calY} \mu_{x'}(x_3,y) \,dx_3 dy = 0, \qquad |\widetilde{E}_{\gamma}\mu_{x'}|(I \times \calY) \neq 0,
\end{equation}
and
\begin{equation} \label{corrector main property 2 - regime gamma}
    \mu = \mu_{x'}(x_3,y) \,\eta \otimes \calL^{1}_{x_3} \otimes \calL^{2}_{y}.
\end{equation}
Moreover, the map $x' \mapsto \widetilde{E}_{\gamma}\mu_{x'} \in \Mb(I \times \calY;\M^{3 \times 3}_{\sym})$ is $\eta$-measurable and
\begin{equation*}
    \widetilde{E}_{\gamma}\mu = \eta \genprod \widetilde{E}_{\gamma}\mu_{x'}.
\end{equation*}
\end{proposition}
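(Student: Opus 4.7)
The plan is to apply the abstract disintegration result Proposition \ref{BV^A main property} with the dictionary $V = \R^3$, $W = \M^{3 \times 3}_{\sym}$, $\DiffOp = \widetilde{E}_\gamma$ (so $\DiffOp^* = -\widetilde{\div}_\gamma$), $\Omega_1 = \omega$ (with point $x_1 = x'$), and $\Omega_2 = I \times \calY$ (with point $x_2 = (x_3,y)$). With these identifications, $BV^{\DiffOp}(\Omega_2)$ is exactly $\BDgamma$, and $\CorrSpace{\DiffOp}{\Omega_1}{\Omega_2}$ is exactly $\calXgamma{\omega}$, so the conclusions \eqref{BV^A main property 1}--\eqref{BV^A main property 2} translate verbatim into \eqref{corrector main property 1 - regime gamma}--\eqref{corrector main property 2 - regime gamma}.

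The only nontrivial task is therefore to verify Assumption \ref{BV^A assumption 1} in this specific geometric setting. For the first part (weak* compactness of bounded sequences in $\BDgamma$), I would invoke Remark \ref{scaling from BD to BDgamma}: given a bounded sequence $\{u_n\} \subset \BDgamma$, the rescaling $v_n := (\tfrac{1}{\gamma}u_n^1, \tfrac{1}{\gamma}u_n^2, u_n^3)$ yields a bounded sequence in the classical space $BD(I \times \calY)$, to which the standard $BD$ weak* compactness theorem applies; undoing the rescaling produces the desired weak* limit in $\BDgamma$. For the second part (the exhausting family $\{\U^k\}$), since $\calY$ is compact, one can simply take $\U^k := (-\tfrac{1}{2} + \tfrac{1}{k}, \tfrac{1}{2} - \tfrac{1}{k}) \times \calY$, and on each such $\U^k$ the same scaling argument delivers the required weak* compactness.

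Once Assumption \ref{BV^A assumption 1} is in place, Proposition \ref{BV^A main property} directly produces a nonnegative Radon measure $\eta$ on $\omega$ and a Borel map $x' \mapsto \mu_{x'}$ with values in $\BDgamma$, together with the identities $\mu = \mu_{x'} \,\eta \otimes \calL^1_{x_3} \otimes \calL^2_y$ (since $\calL^{n_2}_{x_2}$ on $I \times \calY$ is exactly $\calL^1_{x_3} \otimes \calL^2_y$), $\int_{I \times \calY} \mu_{x'} \,dx_3 dy = 0$, $|\widetilde{E}_\gamma \mu_{x'}|(I \times \calY) \neq 0$ for $\eta$-a.e.\ $x'$, and $\widetilde{E}_\gamma \mu = \eta \genprod \widetilde{E}_\gamma \mu_{x'}$, with the map $x' \mapsto \widetilde{E}_\gamma \mu_{x'}$ being $\eta$-measurable.

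The main (and essentially only) obstacle is the verification of Assumption \ref{BV^A assumption 1}; however, this is already completely prepared by Theorem \ref{theoremPoincareKorn} and Remark \ref{scaling from BD to BDgamma}, so the proof reduces to a brief check. No additional ingredients (in particular, neither Assumption \ref{BV^A assumption 2} nor Assumption \ref{BV^A assumption 3}) are needed for the disintegration statement itself, so I would not invoke Lemma \ref{density argument for duality lemma - regime gamma} here.
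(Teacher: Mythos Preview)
Your proposal is correct and mirrors the paper's own argument: the paper states this proposition simply as ``a consequence of Proposition~\ref{BV^A main property}'' after having verified Assumption~\ref{BV^A assumption 1} via Remark~\ref{scaling from BD to BDgamma}. One small slip: Theorem~\ref{theoremPoincareKorn} establishes Assumption~\ref{BV^A assumption 3} (the Poincar\'e--Korn inequality), not Assumption~\ref{BV^A assumption 1}; as you correctly note elsewhere, only Assumption~\ref{BV^A assumption 1} is needed here, and it is Remark~\ref{scaling from BD to BDgamma} alone that delivers it.
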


\begin{proposition} \label{duality lemma - regime gamma}
Let $\lambda \in \Mb(\Omega \times \calY;\M^{3 \times 3}_{\sym})$. The following items are equivalent:
\begin{enumerate}[label=(\roman*)]
	\item For every $\chi \in C_0(\Omega \times \calY;\M^{3 \times 3}_{\sym})$ with $\widetilde{\div}_{\gamma}\chi(x,y) = 0$ (in the sense of distributions) we have
	\begin{equation*}
	    \int_{\Omega \times \calY} \chi(x,y) : d\lambda(x,y) = 0.
	\end{equation*}
    \item There exists $\mu \in \calXgamma{\omega}$ such that $\lambda = \widetilde{E}_{\gamma}\mu$. 
\end{enumerate}		
\end{proposition}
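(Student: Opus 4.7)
The plan is to deduce this proposition as a direct specialization of the general duality Proposition \ref{BV^A duality lemma}, applied to the first-order differential operator $\DiffOp = \widetilde{E}_\gamma$ (with formal adjoint $-\widetilde{\div}_\gamma$), taking $V = \R^3$, $W = \M^{3 \times 3}_{\sym}$, $\Omega_1 = \omega$ parametrized by $x'$, and $\Omega_2 = I \times \calY$ parametrized by $(x_3, y)$. Under this identification one has $BV^{\DiffOp}(\Omega_2) = \BDgamma$, while the space of admissible correctors from the general framework coincides with $\calXgamma{\omega}$, so that the two equivalent conditions (i) and (ii) in the general statement reduce precisely to those in the proposition.

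Accordingly, all that remains is to verify the three structural assumptions in the present concrete setting. Assumption \ref{BV^A assumption 1} (weak* compactness of $\BDgamma$) follows from Remark \ref{scaling from BD to BDgamma}: either of the two rescalings described there maps a sequence bounded in the $\BDgamma$-norm to a sequence bounded in $BD(I \times \calY)$ (or $BD((\gamma I) \times \calY)$), for which the classical weak* compactness of $BD$ on Lipschitz domains applies; the required increasing exhaustion $\{\U^k\}$ of $I \times \calY$ can be chosen as $\U^k := (-\tfrac12 + \tfrac1k,\, \tfrac12 - \tfrac1k) \times \calY$, and the same compactness argument applies on each $\U^k$. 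Assumption \ref{BV^A assumption 2} is exactly the content of Lemma \ref{density argument for duality lemma - regime gamma}. Assumption \ref{BV^A assumption 3} is precisely Theorem \ref{theoremPoincareKorn}.

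With all three assumptions in hand, Proposition \ref{BV^A duality lemma} applies verbatim and yields the desired equivalence, with no further computation needed. The substantive work has in effect already been carried out in the three preparatory results; the only mildly subtle point is to check that the flat-torus geometry of $\calY$ does not interfere with the general framework, but this is immediate since the proof of Proposition \ref{BV^A duality lemma} relies only on disintegration, convolution against a standard mollifier (here interpreted as a periodic mollifier in the $y$-variable), and the Krein--\v{S}mulian closure argument, all of which are insensitive to the compact periodic structure of one factor of $\Omega_2$. Thus the main obstacle is not in the proof itself but rather in ensuring that the abstract framework has been set up with sufficient flexibility to accommodate the periodic/bounded mixed geometry; once this is acknowledged, the proposition follows as a clean corollary.
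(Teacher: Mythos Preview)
The proposal is correct and takes essentially the same approach as the paper: the paper states explicitly that Proposition~\ref{duality lemma - regime gamma} is a consequence of Proposition~\ref{BV^A duality lemma}, once Assumptions~\ref{BV^A assumption 1}, \ref{BV^A assumption 2}, and~\ref{BV^A assumption 3} have been verified via Remark~\ref{scaling from BD to BDgamma}, Lemma~\ref{density argument for duality lemma - regime gamma}, and Theorem~\ref{theoremPoincareKorn}, respectively. Your identification of the specialization and the supporting results matches the paper's argument exactly, including the remark that the flat-torus factor does not obstruct the general framework.
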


Additionally, we state the following property, which will be used in the proof of \Cref{rank-1 lemma}. 
The proof is analogous to \cite[Proposition 4.7. item (b)]{Francfort.Giacomini.2014}.

\begin{proposition} \label{corrector on C^1-hypersurface - regime gamma}
Let $\mu \in \calXgamma{\omega}$. 
For any $C^1$-hypersurface $\calD \subseteq \calY$, if $\nu$ denotes a continuous unit normal vector field to $\calD$, then
\begin{equation*}
    \widetilde{E}_{\gamma}\mu\mres{\Omega \times \calD} = a(x,y) \odot \nu(y) \,\eta \otimes (\calH^{2}_{x_3,y}\mres{I \times \calD}),
\end{equation*}
where $a : \Omega \times \calD \mapsto \R^3$ is a Borel function.
\end{proposition}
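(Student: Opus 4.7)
The plan is to reduce the claim, via the disintegration supplied by \Cref{corrector main property - regime gamma}, to the classical structure theorem for $BD$ functions applied slice-by-slice to $\{\mu_{x'}\}_{x'\in\omega}$, and then to reassemble by means of the generalized product.

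First, I would invoke \Cref{corrector main property - regime gamma} to obtain $\eta \in \Mb^+(\omega)$ and $\mu_{x'} \in \BDgamma$ for $\eta$-a.e.\ $x' \in \omega$ such that $\widetilde{E}_\gamma \mu = \eta \genprod \widetilde{E}_\gamma \mu_{x'}$. Restricting both sides to $\Omega \times \calD$ and recalling the definition of the generalized product, the claim reduces to showing that, for $\eta$-a.e.\ $x'$,
\[
\widetilde{E}_\gamma \mu_{x'} \mres (I \times \calD) = a_{x'}(x_3, y) \odot \nu(y)\, \calH^2 \mres (I \times \calD),
\]
where $\nu$ is identified with the vector $(\nu_1, \nu_2, 0) \in \R^3$ and $a_{x'} : I \times \calD \to \R^3$ is Borel.

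Second, I would transform $\mu_{x'}$ via the anisotropic change of variables of \Cref{scaling from BD to BDgamma} into a function in the classical space $BD((\gamma I) \times \calY)$; since this rescaling only affects $x_3$, the hypersurface $I \times \calD$ is sent into $(\gamma I) \times \calD$, which is still a $C^1$ hypersurface whose unit normal is proportional to $(0, \nu(y))$. On the transformed function I apply the structure theorem for $BD$ maps (cf.\ \cite{Temam.1985}): the absolutely continuous part of the symmetric gradient vanishes on the $\calL^3$-negligible set $(\gamma I) \times \calD$; the Cantor part vanishes on any Borel set of $\sigma$-finite $\calH^2$-measure and hence on $(\gamma I) \times \calD$; and on the intersection of the jump set $J_u$ with $(\gamma I) \times \calD$ the jump normal $\nu_J$ must coincide, up to sign, with $(0, \nu(y))$. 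Consequently, the only surviving contribution is of the form $(u^+ - u^-) \odot (0, \nu(y))\, \calH^2 \mres (J_u \cap ((\gamma I) \times \calD))$, which is already a rank-one product with the prescribed $\nu$. Undoing the rescaling yields the slicewise identity at each admissible $x'$.

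Third, to guarantee joint Borel measurability of $a$ on $\Omega \times \calD$, I would realize $a$ directly as a Radon--Nikod\'ym density of $\widetilde{E}_\gamma \mu \mres (\Omega \times \calD)$ with respect to $\eta \otimes (\calH^2 \mres (I \times \calD))$, after factoring the matrix measure against the fixed direction $\nu(y)$; Besicovitch differentiation, combined with the $\eta$-measurability of $x' \mapsto \widetilde{E}_\gamma \mu_{x'}$ granted by \Cref{corrector main property - regime gamma}, supplies the required regularity. The principal obstacle is the second step, namely verifying that the $\gamma$-anisotropic rescaling does not spoil the $a \odot \nu$ structure; this works precisely because the normal to $I \times \calD$ has vanishing third component, so the scaling acts on the $a$-factor only through a constant diagonal multiplier and preserves the prescribed rank-one decomposition.
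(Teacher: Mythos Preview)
Your proposal is correct and follows essentially the same route as the paper, which simply refers the argument to \cite[Proposition~4.7(b)]{Francfort.Giacomini.2014}: disintegrate via \Cref{corrector main property - regime gamma}, apply the $BD$ structure theorem slicewise after the $\gamma$-rescaling of \Cref{scaling from BD to BDgamma}, and reassemble through the generalized product. One small remark: the vanishing of the Cantor part of $Eu$ on $\calH^{n-1}$-$\sigma$-finite sets is the Ambrosio--Coscia--Dal~Maso fine decomposition rather than a result from \cite{Temam.1985}, and for the matrix ordering used in $\widetilde{E}_\gamma$ the normal should be written $(\nu_1,\nu_2,0)$ rather than $(0,\nu)$, but these are cosmetic points and your recovery of $a$ from the Radon--Nikod\'ym density against $\eta\otimes(\calH^2\mres(I\times\calD))$ together with the continuity of $\nu$ is exactly right.
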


\subsection{Auxiliary results}
We will need the following result, which is connected with the compactly supported De Rham cohomology. \EEE Recall the definitions of $\widetilde{\rot}_\gamma$, $\widetilde{\nabla}_\gamma$, and $\widetilde{\div}_\gamma$. In the next proposition, we will consider the case $\gamma=1$. \BBB

\begin{proposition} \label{auxiliary result - regime gamma}
\begin{enumerate}[label=(\alph*)]
    \item \label{De Rham result 1} Let $\torustridim$ be a flat torus in $\mathbb{R}^3$ and let $\chi \in C^{\infty}(\torustridim;\mathbb{R}^3)$ be such that $\div \chi = 0$ and $\int_{\torustridim} \chi = 0$. 
    Then there exists $F \in C^{\infty}(\torustridim;\mathbb{R}^3)$ such that $\rot F = \chi$.  
    \item \label{De Rham result 2} Let $\calY$ be a flat torus in $\mathbb{R}^2$ and let $\chi \in C_c^{\infty}(I \times \calY;\mathbb{R}^3)$ be such that $\widetilde{div}_{1}\chi = 0$ and $\int_{I \times \calY} \chi = 0$. 
    Then there exists $F \in C_c^{\infty}(I \times \calY;\mathbb{R}^3)$ such that $$\widetilde{\rot}_{1} F  = \chi.$$  
\end{enumerate}
\end{proposition}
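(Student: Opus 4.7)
\emph{Approach.} Both statements are of de Rham/Poincar\'{e}-lemma type: given a field whose divergence vanishes and whose global average is zero, reconstruct a curl-primitive with matching regularity (and, in (b), support). I would handle both by Fourier expansion in the periodic variables and algebraic inversion of $\rot$ in Fourier space. For part (a), expand $\chi(x)=\sum_{k\in\Z^{3}} \hat\chi_{k}\,e^{2\pi i k\cdot x}$; the zero-mean assumption gives $\hat\chi_{0}=0$, while $\div\chi=0$ gives $k\cdot\hat\chi_{k}=0$ for every $k$. Setting
\[
    F(x):= \sum_{k\in\Z^{3}\setminus\{0\}}\frac{-\,k\times\hat\chi_{k}}{2\pi i\,|k|^{2}}\,e^{2\pi i k\cdot x},
\]
the vector identity $k\times(k\times a)=(k\cdot a)k-|k|^{2}a$ combined with $k\cdot\hat\chi_{k}=0$ yields $\rot F=\chi$; the Schwartz-type decay of $\{\hat\chi_{k}\}$ is preserved by the factor $|k|^{-1}$, so $F\in C^{\infty}(\torustridim;\R^{3})$.

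For part (b), I would Fourier expand only in the periodic variable,
\[
    \chi(x_{3},y)=\sum_{k\in\Z^{2}}\hat\chi_{k}(x_{3})\,e^{2\pi i k\cdot y},\qquad \hat\chi_{k}\in C_{c}^{\infty}(I;\C^{3}),
\]
and build $\hat F_{k}\in C_{c}^{\infty}(I;\C^{3})$ mode by mode. For $k=0$, the identity $\widetilde{\div}_{1}\chi=0$ forces $\partial_{x_{3}}\hat\chi_{3,0}=0$, which together with compact support gives $\hat\chi_{3,0}\equiv 0$, while the hypothesis $\int\chi=0$ provides $\int_{I}\hat\chi_{1,0}=\int_{I}\hat\chi_{2,0}=0$, so the primitives $\hat F_{1,0}(x_{3}):=\int_{-1/2}^{x_{3}}\hat\chi_{2,0}$, $\hat F_{2,0}(x_{3}):=-\int_{-1/2}^{x_{3}}\hat\chi_{1,0}$, $\hat F_{3,0}\equiv 0$ lie in $C_{c}^{\infty}(I)$ and satisfy the three $k=0$ curl equations. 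For $k\neq 0$ I would pick an index $j\in\{1,2\}$ with $|k_{j}|$ maximal, set $\hat F_{j,k}\equiv 0$, and solve the two curl equations not involving $\partial_{x_{3}}\hat F_{j,k}$ by a single division by $2\pi i\,k_{j}$; the divergence-free identity for $\hat\chi_{k}$ then makes the third curl equation automatic. Each such $\hat F_{k}$ is compactly supported in $I$ and satisfies $|\hat F_{k}|\lesssim |\hat\chi_{k}|/|k|$, so summing over $k$ yields $F\in C_{c}^{\infty}(I\times\calY;\R^{3})$.

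\emph{Main obstacle.} The delicate step is (b): one cannot simply periodize $\chi$ in $x_{3}$ and apply (a), since on a $3$-torus the $\rot$-primitive is unique only modulo closed one-forms and there is no natural way to force it to vanish outside $\supp\chi$. The Fourier construction bypasses this because the two scalar conditions $\int\chi_{1}=0$ and $\int\chi_{2}=0$ (the third, $\int\chi_{3}=0$, is automatic from $\widetilde{\div}_{1}\chi=0$ and compact support) are exactly the obstructions parametrizing $H^{2}_{c}(\R\times\calY)\cong\R^{2}$ and render the $k=0$ primitives compactly supported, while for $k\neq 0$ the nonzero Fourier multiplier $2\pi i\,k_{j}$ enables algebraic inversion without integration and thus preserves the support.
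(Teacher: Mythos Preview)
Your argument is correct and, for part (b), takes a genuinely different route from the paper. For part (a) you do exactly what the paper suggests (``standard\ldots by Fourier transforms''). For part (b) the paper \emph{does} periodize in $x_{3}$ and apply (a) to obtain a periodic primitive $\tilde F$ on the $3$-torus; it then spends the bulk of the proof showing, via successive Fourier expansions in $y_{1}$ and in $x_{3}$, that on the strip $\tilde I_{\delta}\times\calY$ near $x_{3}=\pm\tfrac12$ one can write $\tilde F=\widetilde\nabla_{1}\varphi+c$ for a $\calY$-periodic $\varphi$ and a constant vector $c$, so that subtracting $\widetilde\nabla_{1}(k\varphi)+c$ with a cutoff $k$ in $x_{3}$ yields the compactly supported $F$.

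Your construction bypasses this correction step entirely: by expanding only in $y$ and solving mode by mode, you never leave $C_{c}^{\infty}(I)$ in the $x_{3}$ variable. The trade-off is that the paper's argument makes the cohomological obstruction visible (the constants $c_{1},c_{2}$ that appear there are exactly the $H^{2}_{c}$-classes you mention), whereas your approach is shorter and more elementary. One small wording imprecision: after setting $\hat F_{j,k}\equiv 0$ (say $j=1$), the two equations you solve by division by $2\pi i k_{1}$ are the second and third curl equations (the second becomes algebraic because its $\partial_{x_{3}}\hat F_{1,k}$ term drops out, and the third is always algebraic); the \emph{first} equation still contains $\partial_{x_{3}}\hat F_{2,k}$ and is the one verified via the divergence-free identity. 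This does not affect the correctness of the computation.
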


\begin{proof}
The first claim is standard and can be easily proved by, e.g, Fourier transforms. 
For the second claim, \MMM observing that $\chi$ is also periodic on $\torustridim$, \BBB by the first part of the statement we obtain $\tilde{F} \in C^{\infty}(\torustridim;\R^3)$ such that $\rot \tilde{F} = \chi$ on $\torustridim$. 
Since $\chi$ has compact support in \MMM $I \times \calY$ \BBB, there exists $0 < \delta < \frac{1}{2}$ such that $\MMM\widetilde{\rot}_{1}\tilde{F} \BBB= 0$ on \MMM $\tilde I_{\delta} \times \calY $ \BBB, where $\tilde I_{\delta} = \{ (\frac{1}{2}-\delta, \frac{1}{2}) \cup (-\frac{1}{2}, -\frac{1}{2}+\delta) \}$. 
Let now $\tilde{\varphi} \in C^{\infty}(S_{\delta})$, where $S_{\delta} = \tilde I_{\delta} \times (0,1)^2 $, be such that $\tilde{F} = \MMM \widetilde{\nabla}_{1}\BBB \tilde{\varphi}$ on $S_{\delta}$. 
For $\alpha\in\{1,2\}$, let 
\begin{equation*}
   \MMM \sum_{k \in \Z} a^{\alpha}_k(x_3,y_2) \BBB e^{2\pi i k y_1}
\end{equation*}
be the exponential Fourier series of $\tilde{F}_\alpha = \partial_{y_\alpha}\tilde{\varphi}$ with respect to the variable $y_1$. 
Note that the coefficients \MMM $\{a^{\alpha}_k(x_3,y_2)\}_{k \in \Z}$ \BBB are smooth functions and periodic with respect to the variable $y_2$ and $x_3$. 
Additionally, the Fourier series of smooth functions converges uniformly, and the result of differentiating or integrating the series term by term will converge to the derivative or integral of the original series.
Hence, we infer that
\MMM \begin{equation} \label{Fourier series 1}
    \tilde{\varphi}(x_3,y) = a^{1}_0(x_3,y_2) y_1 + \sum_{k \in \Z \setminus {\{0\}}} \frac{a^{1}_k(x_3,y_2)}{2\pi i k} e^{2\pi i k y_1} + b^1(x_3,y_2) \ \text{ on } S_{\delta},
\end{equation} \BBB
for a suitable smooth function \MMM $b^1(x_3,y_2)$ \BBB. 
Then, differentiating with respect to $y_1$ and $y_2$, we have that
\MMM\begin{equation*}
    \partial_{y_1 y_2}\tilde{\varphi}(x_3,y) = \partial_{y_2}a^{1}_0(x_3,y_2) + \sum_{k \in \Z \setminus {\{0\}}} \partial_{y_2}a^{1}_k(x_3,y_2) e^{2\pi i k y_1} \ \text{ on } S_{\delta}.
\end{equation*} \BBB
However, since
\MMM
\begin{equation*}
    \partial_{y_1 y_2}\tilde{\varphi}(x_3,y) = \partial_{y_1}\tilde{F}_2(x_3,y) = \sum_{k \in \Z \setminus {\{0\}}} 2\pi i k a^{2}_k(x_3,y_2) e^{2\pi i k y_1} \ \text{ on } S_{\delta},
\end{equation*}
\BBB
by the uniqueness of the Fourier expansion we have that \MMM $\partial_{y_2}a^{1}_0(x_3,y_2) = 0$ \BBB, i.e. 
\begin{equation} \label{Fourier series 2}
    \MMM a^{1}_0(x_3, y_2) \BBB = c_1(x_3),
\end{equation}
for some $c_1 \in C^{\infty}(\tilde I_\delta)$.
Further, differentiating \eqref{Fourier series 1} with respect to $y_2$, we have that
\MMM
\begin{equation*}
    \partial_{y_2}\tilde{\varphi}(x_3,y) = \sum_{k \in \Z \setminus {\{0\}}} \frac{\partial_{y_2}a^{1}_k(x_3,y_2)}{2\pi i k} e^{2\pi i k y_1} + \partial_{y_2}b^1(x_3,y_2) \ \text{ on } S_{\delta}.
\end{equation*}
\BBB
Since $\partial_{y_2}\tilde{\varphi} = \tilde{F}_2$ is periodic, we  conclude that $\partial_{y_2}b^1$ is also periodic with respect to the variable $y_2$ and we can consider its Fourier series. 
Let $c_2 \in C^{\infty}(\tilde I_\delta)$ be the corresponding zero-th term. 
Then
the antiderivative of $\partial_{y_2}b^1 - c_2 $ with respect to $y_2$ is a periodic function.
Combining this fact with \eqref{Fourier series 1} and \eqref{Fourier series 2}, we deduce that there exists a smooth function $\hat{\varphi} \in C^{\infty}(\tilde I_{\delta};C^{\infty}(\calY))$ such that $\tilde{\varphi}$ can be rewritten as
\MMM
\begin{equation*}
    \tilde{\varphi}(x_3,y) = \hat{\varphi}(x_3,y) + c_1(x_3)\,y_1 + c_2(x_3)\,y_2 \ \text{ on }  \tilde I_{\delta} \times \calY.
\end{equation*}
\BBB
From this, differentiating with respect to $x_3$, we have that
\MMM \begin{equation*}
    \tilde{F}_3(x_3,y) = \partial_{x_3}\hat{\varphi}(x_3,y) + c_1'(x_3)\,y_1 + c_2'(x_3)\,y_2 \ \text{ on } \tilde I_{\delta} \times \calY.
\end{equation*} \BBB
As a consequence of the periodicity of $\tilde{F}_3$ and $\partial_{x_3}\hat{\varphi}$ in the variables $y_1$ and $y_2$, we conclude that $c_1' = 0$ and $c_2' = 0$. 
Since \MMM$ \tilde I_{\delta} \times \calY $ \BBB  is a union of two disjoint open sets, we have that $c_1, c_2$ are constant on each connected component. 
Using the fact that, for $\alpha\in\{1,2\}$, 
\MMM \begin{equation} \label{Fourier series 3} 
    \partial_{y_\alpha}\tilde{\varphi}(x_3,y) = \partial_{y_\alpha}\hat{\varphi}(x_3,y) + c_{\alpha}(x_3) \ \text{ on }  \tilde I_{\delta} \times \calY,
\end{equation} \BBB
the periodicity of $\tilde{F}_\alpha = \partial_{y_\alpha}\tilde{\varphi}$ implies that $c_1, c_2$ are in fact constant. 
This can be seen by integrating the equation \eqref{Fourier series 3} over the plane $x_3 = -\frac{1}{2}$ and $x_3 = \frac{1}{2}$. 
Thus we conclude that
\MMM \begin{equation} \label{Fourier series 4}
    \tilde{F}(x_3,y) = \widetilde{\nabla}_1\hat{\varphi}(x_3,y) + \left(\begin{array}{c} c_1 \\ c_2 \\ 0 \end{array}\right) \ \text{ on } \tilde I_{\delta}\times \calY.
\end{equation} \BBB
Consider now the exponential Fourier series of $\tilde{F}_3$ with respect to the $x_3$ variable, such that
\MMM \begin{equation*}
    \tilde{F}_3(x_3,y) = \sum_{k \in \Z} a^{3}_k(y) e^{2\pi i k x_3} \ \text{ on }  \tilde I_{\delta} \times \calY.
\end{equation*}\BBB
Integrating the third component in \eqref{Fourier series 4} with respect to $x_3$, we have that there exists a smooth function \MMM $b^3(x_3,y)$ \BBB, which has values $b^3_{+}(y)$ and $b^3_{-}(y)$ on each of the two parts of \MMM $\tilde I_{\delta} \times \calY$ \BBB, such that
\MMM \begin{equation*}
    \hat{\varphi}(x_3,y) = a^{3}_0(y) x_3 + \sum_{k \in \Z \setminus {\{0\}}} \frac{a^{3}_k(y)}{2\pi i k} e^{2\pi i k x_3} + b^3(x_3,y) \ \text{ on } \tilde I_{\delta} \times \calY.
\end{equation*} \BBB
From this and \eqref{Fourier series 3} we have, for $\alpha\in\{1,2\}$,
\MMM \begin{equation*}
    \tilde{F}_\alpha(x_3,y) - c_\alpha = \partial_{y_\alpha}a^{3}_0(y) x_3 + \sum_{k \in \Z \setminus {\{0\}}} \frac{\partial_{y_\alpha}a^{3}_k(y)}{2\pi i k} e^{2\pi i k x_3} + \partial_{y_\alpha}b^3(x_3,y) \ \text{ on } \tilde I_{\delta} \times \calY.
\end{equation*} \BBB
Considering the continuity and periodicity in $x_3$ of the above terms, evaluating in $x_3 = -\frac{1}{2}$ and $x_3 = \frac{1}{2}$ gives $\partial_{y_\alpha}a^{3}_0(y) = \partial_{y_\alpha}b^3_{-}(y) - \partial_{y_\alpha}b^3_{+}(y)$.
From this we have that there exists a constant $c_3$ and a map $\varphi \in C^{\infty}(\calY \times \tilde I_{\delta})$ such that $\varphi$ and all its derivatives are periodic in the $x_3$ variable, and for which 
\MMM \begin{equation*} \label{Fourier series 5}
    \hat{\varphi}(x_3,y) = \varphi(x_3,y) + c_3 x_3 \ \text{ on }  \tilde I_{\delta} \times \calY.
\end{equation*} \BBB
From this and \eqref{Fourier series 4} we conclude that
\MMM \begin{equation*}
    \tilde{F}(x_3,y) = \widetilde{\nabla}_1\varphi(x_3,y) + \left(\begin{array}{c} c_1 \\ c_2 \\ c_3 \end{array}\right) \ \text{ on } \tilde I_{\delta} \times \calY.
\end{equation*}
\BBB
Finally, we consider a smooth function $k : I \to \R$ that is zero on the set $\left[ -\frac{1}{2}+\delta, \frac{1}{2}-\delta \right]$ and one in a neighbourhood of $x_3 = -\frac{1}{2}$, $x_3 = \frac{1}{2}$. 
By taking 
\begin{equation*}
    F := \tilde{F} - \MMM \widetilde{\nabla}_1\BBB (k\,\varphi) - \left(\begin{array}{c} c_1 \\ c_2 \\ c_3 \end{array}\right) \ \text{ on } \MMM I \times \calY .\BBB
\end{equation*}
we have the claim. 
\end{proof}
\BBB
\begin{remark} \label{extensiongamma}
By considering functions scaled by $\gamma$ in the third component and by $\frac{1}{\gamma}$ in the direction $x_3$, one can apply the proof item (b) in \Cref{auxiliary result - regime gamma} so that the statement is valid for maps in the space $C_c^{\infty}(\MMM (\gamma I) \times \calY \BBB;\mathbb{R}^3)$.

Consequently, for $\chi \in C_c^{\infty}(\MMM I \times \calY \BBB;\mathbb{R}^3)$ such that $\widetilde{\div}_{\gamma}\chi = 0$ and $\int_{\MMM I \times \calY\BBB} \chi = 0$ there exists $F \in C_c^{\infty}(\MMM I \times \calY\BBB;\mathbb{R}^3)$ such that $\widetilde{\rot}_{\gamma} F = \chi$, which can be easily seen by rescaling in the direction $x_3$.
\end{remark}

\begin{remark} \label{rmk third column zero}
If $\chi \in C_c^{\infty}(\Omega \times \calY;\M^{3 \times 3}_{\sym})$ is such that $\widetilde{\div}_{\gamma}\chi = 0$, then for a.e. $x' \in \omega$
\begin{equation*}
    \int_{I \times \calY} \chi_{3i}(x,y) \,dx_3 dy = 0, \quad i=1,2,3.
\end{equation*}
Indeed, by putting
\begin{equation*}
    \varphi(x) = \left(\begin{array}{c} 2\gamma\,x_3\,c_1(x') \\ 2\gamma\,x_3\,c_2(x') \\ \gamma\,x_3\,c_3(x') \end{array}\right),
\end{equation*}
for $c \in C_c^{\infty}(\omega;\R^3)$, we infer that 
\begin{equation*}
   \widetilde{E}_{\gamma}\varphi = \left(\begin{array}{ccc} 0 & 0 & c_1 \\ 0 & 0 & c_2 \\ c_1 & c_2 & c_3 \end{array} \right),
\end{equation*}
and the conclusion results from testing $\chi$ with $\widetilde{E}_{\gamma}\varphi$ on $I \times \calY$, and by the arbitrariness of the maps $c_i$,\, $i = 1, 2, 3$.
\end{remark}

\subsection{Two-scale limits of scaled symmetrized gradients}\label{scaled sym gradients section}
We are now ready to prove the first main result of this section.

\begin{theorem} \label{two-scale weak limit of scaled strains}
\MMM Let $\{u^h\}_{h>0} \subset BD(\Omega)$ be a sequence such that there exists a constant $C>0$ for which $$\|u^h\|_{L^1(\Omega;\R^3)}+\|\Lambda_h Eu^h\|_{\mathcal{M}_b(\Omega; \mathbb M^{3 \times 3}_{sym}) } \leq C. $$ \BBB
Then there exist $\bar{u} = (\bar{u}_1, \bar{u}_2) \in BD(\omega)$, $u_3 \in BH(\omega)$ and $\mu \in \calXgamma{\omega}$, and a subsequence of $\{u^h\}_{h>0}$, not relabeled, which satisfy:
\begin{eqnarray*}
    \Lambda_h Eu^h 
    \weakstartwoscale 
    \begin{pmatrix} E \bar{u} - x_3 D^2u_3 & 0 \\ 0 & 0 \end{pmatrix} \otimes \calL^{2}_{y}
    + 
    \widetilde{E}_{\gamma}\mu 
    \quad \text{two-scale weakly* in $\Mb(\Omega \times \calY;\M^{3 \times 3}_{\sym})$}.
\end{eqnarray*}
\end{theorem}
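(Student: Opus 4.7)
The argument has three stages: extract candidate limits via two-scale compactness; reduce identification of the limit to a duality identity; verify that identity by a carefully arranged integration by parts at the $h$-level. From the uniform bound $\|\Lambda_h Eu^h\|_{\Mb(\Omega;\M^{3\times 3}_{\sym})}\le C$ and two-scale compactness of bounded Radon measures, along a subsequence $\Lambda_h Eu^h\weakstartwoscale\lambda$ for some $\lambda\in\Mb(\Omega\times\calY;\M^{3\times 3}_{\sym})$. Simultaneously, \Cref{two-scale weak limit of scaled strains - 2x2 submatrix} delivers $\bar u\in BD(\omega)$, $u_3\in BH(\omega)$, the strong $L^1$-convergence of $u^h$ to the Kirchhoff--Love triple $(\bar u_1-x_3\partial_{x_1}u_3,\bar u_2-x_3\partial_{x_2}u_3,u_3)$, and the weak* convergence of $Eu^h$ in $\Mb(\Omega;\M^{3\times 3}_{\sym})$ to $A(x):=\begin{pmatrix}E\bar u(x')-x_3 D^2 u_3(x') & 0\\ 0 & 0\end{pmatrix}$. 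Setting $\nu:=\lambda-A\otimes\calL^2_y$, the theorem reduces to showing $\nu\in\{\widetilde E_\gamma\mu:\mu\in\calX_\gamma(\omega)\}$, which by \Cref{duality lemma - regime gamma} combined with \Cref{density argument for duality lemma - regime gamma} is equivalent to
\[\int_{\Omega\times\calY}\chi:d\nu=0\qquad\text{for every }\chi\in C_c^\infty(\Omega\times\calY;\M^{3\times 3}_{\sym})\text{ with }\widetilde{\div}_\gamma\chi=0.\]

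Fix such a $\chi$ and write $(\cdot)^h$ for evaluation at $y=x'/\epsilon_h$. Two-scale convergence of $\Lambda_h Eu^h$ and the non-oscillating character of $A$ give $\int\chi^h:d(\Lambda_h Eu^h)\to\int\chi:d\lambda$ and $\int\chi^h:dA\to\int\chi:d(A\otimes\calL^2_y)$, so it suffices to identify the limit of the first integral. A direct integration by parts against $u^h$ introduces prefactors $1/h$ and $1/\epsilon_h$ whose exact cancellation requires the current-scale divergence-free condition $\widetilde{\div}_{\gamma_h}=0$ with $\gamma_h:=h/\epsilon_h$, rather than the limit condition $\widetilde{\div}_\gamma\chi=0$. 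To bridge this I replace $\chi$ by the symmetric modified test field
\[\tilde\chi_{h,\alpha\beta}:=\chi_{\alpha\beta},\quad \tilde\chi_{h,\alpha 3}:=\tilde\chi_{h,3\alpha}:=\tfrac{\gamma_h}{\gamma}\chi_{\alpha 3},\quad \tilde\chi_{h,33}:=\bigl(\tfrac{\gamma_h}{\gamma}\bigr)^2\chi_{33}\qquad(\alpha,\beta\in\{1,2\}),\]
which satisfies $\widetilde{\div}_{\gamma_h}\tilde\chi_h=0$ by construction and $\tilde\chi_h\to\chi$ uniformly, so $\int(\tilde\chi_h-\chi)^h:d(\Lambda_h Eu^h)\to 0$. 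The IBP identity then yields exact cancellations and gives
\[\int\tilde\chi_h^h:d(\Lambda_h Eu^h)=-\sum_\alpha\int\sum_\beta(\partial_{x_\beta}\chi_{\alpha\beta})^h u^h_\alpha\,dx-\tfrac{1}{\epsilon_h\gamma}\int\sum_\beta(\partial_{x_\beta}\chi_{3\beta})^h u^h_3\,dx.\]
The first sum passes to the limit by strong $L^1$-convergence of $u^h_\alpha$; a distributional integration by parts (valid thanks to the symmetry of $\bar\chi^{(2\times 2)}$ and the regularity $\bar u\in BD(\omega)$, $u_3\in BH(\omega)$) identifies its limit with $\int\bar\chi^{(2\times 2)}:d(E\bar u-x_3 D^2 u_3)=\int\chi:d(A\otimes\calL^2_y)$, exactly the sought value.

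The remaining task, and the principal technical obstacle, is to show that the out-of-plane sum $\tfrac{1}{\epsilon_h\gamma}\int\sum_\beta(\partial_{x_\beta}\chi_{3\beta})^h u^h_3\,dx$ tends to zero despite the amplification $1/\epsilon_h$. The strategy leverages structural constraints on $\chi$. Since $u_3=u_3(x')$ does not depend on $x_3$, integrating first in $x_3$ brings in the averaged field $\Phi_\beta(x',y):=\int_I\chi_{3\beta}(x',x_3,y)\,dx_3$. Two key facts hold: (i) $\int_\calY\Phi_\beta\,dy=\int_I\bar\chi_{3\beta}\,dx_3=0$ by \Cref{rmk third column zero}, and (ii) $\sum_\beta\partial_{y_\beta}\Phi_\beta=0$, obtained by integrating $\widetilde{\div}_\gamma\chi_3=0$ in $x_3$ over $I$ and exploiting the compact $x_3$-support of $\chi$. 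Thus $(\Phi_1,\Phi_2)$ is $y$-divergence-free with zero $\calY$-mean, and a two-dimensional variant of \Cref{auxiliary result - regime gamma} produces a smooth $\calY$-periodic stream potential $\Psi(x',\cdot)$ with zero $y$-mean satisfying $\Phi_1=\partial_{y_2}\Psi$ and $\Phi_2=-\partial_{y_1}\Psi$. The chain-rule identity $(\partial_{y_\alpha}\Psi)(x',x'/\epsilon_h)=\epsilon_h[\partial_{x_\alpha}\Psi^h(x')-(\partial_{x_\alpha}\Psi)^h(x')]$ absorbs the $1/\epsilon_h$-prefactor exactly, after which the remaining expression reduces to pairings of $\Psi^h$ and $(\partial_{x_\alpha}\Psi)^h$ against $D^2 u_3\in\Mb(\omega)$ and $\nabla u_3\in L^1(\omega)$. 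The pairing against $D^2u_3$ vanishes identically, because the antisymmetric cross-pattern induced by $\Psi$ contracts the symmetric measure $D^2u_3$ to zero; the pairing against $\nabla u_3$ vanishes because $\bar\Psi=0$ and the mean in $y$ of the associated vector field is zero. The residual contribution from $u^h_3-u_3$ splits into its $x_3$-oscillating part $u^h_3-\int_I u^h_3\,dx_3$, which is $O(h^2)$ in $L^1(\Omega)$ by Poincar\'e along $x_3$ together with $\|(Eu^h)_{33}\|_{\Mb}=O(h^2)$ (dominating $1/\epsilon_h$), and the $x_3$-average $\int_I u^h_3\,dx_3-u_3\to 0$ in $L^1(\omega)$, handled by the same stream-function manipulation applied to the in-plane BD-gradient of this average.
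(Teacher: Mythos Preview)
Your reduction to the duality statement via \Cref{duality lemma - regime gamma} is exactly right, and the modified test field $\tilde\chi_h$ satisfying $\widetilde{\div}_{\gamma_h}\tilde\chi_h=0$ is a clean device (the paper does the equivalent bookkeeping more explicitly through a residual factor $\tfrac{h}{\epsilon_h\gamma}-1$). Your IBP identity and the treatment of the in-plane sum are correct.

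The gap is in the out-of-plane term, specifically in what you call part (b). After averaging in $x_3$ and introducing the two-dimensional stream function $\Psi$, you must still pass from $\tfrac{1}{\epsilon_h}\int_\omega(\partial_{x_1}\Phi_1+\partial_{x_2}\Phi_2)^h\,\bar u^h_3\,dx'$ to something controlled. Absorbing the $1/\epsilon_h$ via the chain-rule identity requires one integration by parts in $x'$ against $\bar u^h_3$, hence a uniform bound on $\nabla\bar u^h_3$ (or at least $\bar u^h_3\in BV(\omega)$ uniformly). No such bound is available: the hypothesis controls only $\Lambda_h Eu^h$, which gives $\|\partial_{x_\alpha}u^h_3+\partial_{x_3}u^h_\alpha\|_{\Mb}=O(h)$ and $\|\partial_{x_3}u^h_3\|_{\Mb}=O(h^2)$, but nothing on $\partial_{x_\alpha}u^h_3$ or $\partial_{x_\alpha}\bar u^h_3$ separately. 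Your phrase ``the same stream-function manipulation applied to the in-plane BD-gradient of this average'' does not carry through, because $\bar u^h_3$ is only an $L^1(\omega)$ function with no controlled gradient. If instead you try to integrate by parts in $x_3$ after swapping $\partial_{x_\alpha}u^h_3$ for $-\partial_{x_3}u^h_\alpha$, you face boundary terms at $x_3=\pm\tfrac12$ because your averaged test field $\Phi_\beta$ no longer has compact $x_3$-support.

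The paper avoids this by \emph{not} averaging in $x_3$ first. Instead it invokes \Cref{auxiliary result - regime gamma} in its full three-dimensional form: it writes the third row $(\chi_{3i})_{i=1,2,3}$ as $\widetilde{\rot}_\gamma F$ for some $F\in C_c^\infty(I\times\calY;\R^3)$ (parametrized smoothly by $x'$). Because $F$ retains compact support in $x_3$, one can integrate by parts in $x_3$ without boundary terms. The key manoeuvre is then to trade $\partial_{x_\alpha}u^h_3$ for $-\partial_{x_3}u^h_\alpha$ (error $O(h)$ in $L^1$), integrate by parts in $x_3$, and exploit the strong $L^1$ convergence of $u^h_\alpha$; the resulting limits cancel exactly. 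Your two-dimensional stream function loses precisely the $x_3$-structure that makes this exchange possible.
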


\begin{proof}
Owing to \cite[Chapter II, Remark 3.3]{Temam.1985}, we can assume without loss of generality that the maps $u^h$ are smooth functions for every $h > 0$.
Further, the uniform boundedness of the sequence $\{Ev^h\}$ implies that
\begin{align}
    \label{recall C h} &\int_{\Omega} |\partial_{x_\alpha}u^h_3+\partial_{x_3}u^h_\alpha| \,dx \leq C h, \quad \text{ for } \alpha = 1, 2,\\
    \label{recall C h^2} &\int_{\Omega} |\partial_{x_3}u^h_3| \,dx \leq C h^2.
\end{align}

In the following, we will consider $\lambda \in \Mb(\Omega \times \calY;\M^{3 \times 3}_{\sym})$ such that 
\begin{eqnarray*}
    \Lambda_h Eu^h \weakstartwoscale \lambda \quad \text{two-scale weakly* in $\Mb(\Omega \times \calY;\M^{3 \times 3}_{\sym})$}.
\end{eqnarray*}

By using \Cref{two-scale weak limit of scaled strains - 2x2 submatrix} we have that there exist $(\bar{u}_1, \bar{u}_2) \in BD(\omega)$, $u_3 \in BH(\omega)$ such that 
\begin{equation*}
    (Eu^h)^{\prime\prime}(x) \,\weakstar\, E\bar{u}(x') - x_3 D^2u_3(x') \quad \text{weakly* in }\Mb(\Omega;\M^{2 \times 2}_{\sym}).
\end{equation*}
Let $\chi \in C_c^{\infty}(\Omega \times \calY;\M^{3 \times 3}_{\sym})$ be such that $\widetilde{\div}_{\gamma}\chi = 0$. 
We have 
\begin{align}
    \nonumber & \int_{\Omega \times \calY} \chi(x,y) : d\lambda(x,y)\\ 
    \nonumber &= \lim_{h \to 0} \int_{\Omega} \chi\hspace{-0.25em}\left(x,\tfrac{x'}{\epsh}\right) : d\left(\Lambda_h Eu^h(x)\right)
    \nonumber = - \lim_{h \to 0} \int_{\Omega} u^h(x) \cdot \div\left( \Lambda_h \chi\hspace{-0.25em}\left(x,\tfrac{x'}{\epsh}\right)\right) \,dx\\
    \nonumber &= 
    - \lim_{h \to 0} \sum_{\alpha=1,2} \int_{\Omega}u^h_{\alpha}(x)\,(\partial_{x_1}\chi_{\alpha 1}+\partial_{x_2}\chi_{\alpha 2})\hspace{-0.25em}\left(x,\tfrac{x'}{\epsh}\right) \,dx
    - \lim_{h \to 0}\frac{1}{h} \int_{\Omega} u^h_3(x)\,(\partial_{x_1}\chi_{31}+\partial_{x_2}\chi_{32})\hspace{-0.25em}\left(x,\tfrac{x'}{\epsh}\right) \,dx\\
    \nonumber &\,\quad - \lim_{h \to 0} \sum_{\alpha=1,2} \frac{1}{\epsh} \int_{\Omega}u^h_{\alpha}(x)\,(\partial_{y_1}\chi_{\alpha 1}+\partial_{y_2}\chi_{\alpha 2})\hspace{-0.25em}\left(x,\tfrac{x'}{\epsh}\right) \,dx
    - \lim_{h \to 0}\frac{1}{h \epsh} \int_{\Omega} u^h_3(x)\,(\partial_{y_1}\chi_{31}+\partial_{y_2}\chi_{32})\hspace{-0.25em}\left(x,\tfrac{x'}{\epsh}\right) \,dx\\
    \nonumber &\,\quad - \lim_{h \to 0} \sum_{\alpha=1,2} \frac{1}{h} \int_{\Omega} u^h_{\alpha}(x)\,\partial_{x_3}\chi_{\alpha 3}\hspace{-0.25em}\left(x,\tfrac{x'}{\epsh}\right) \,dx 
    - \lim_{h \to 0} \frac{1}{h^2} \int_{\Omega} u^h_3(x)\,\partial_{x_3}\chi_{33}\hspace{-0.25em}\left(x,\tfrac{x'}{\epsh}\right) \,dx\\
    \begin{split} \label{eq0}
        &= - \lim_{h\to0} \sum_{\alpha=1,2} \int_{\Omega}u^h_{\alpha} \cdot (\partial_{x_1} \chi_{\alpha 1}+\partial_{x_2} \chi_{\alpha 2})\hspace{-0.25em}\left(x,\tfrac{x'}{\epsh}\right) \,dx
        - \lim_{h\to0}\frac{1}{h} \int_{\Omega} u^h_3 \cdot  (\partial_{x_1} \chi_{31}+\partial_{x_2} \chi_{32})\hspace{-0.25em}\left(x,\tfrac{x'}{\epsh}\right) \,dx\\
        &\,\quad + \lim_{h\to0} \left(\frac{h}{\epsh \gamma}-1\right) \left( \sum_{\alpha=1,2} \frac{1}{h} \int_{\Omega} u^h_{\alpha} \cdot \partial_{x_3} \chi_{\alpha 3}\hspace{-0.25em}\left(x,\tfrac{x'}{\epsh}\right) \,dx + \frac{1}{h^2} \int_{\Omega} u^h_3 \cdot \partial_{x_3} \chi_{33}\hspace{-0.25em}\left(x,\tfrac{x'}{\epsh}\right) \,dx \right),
    \end{split}
\end{align}
where in the last equality we used that $\frac{1}{\epsh}\partial_{y_1}\chi_{i1}+\frac{1}{\epsh}\partial_{y_2}\chi_{i2}+\frac{1}{h}\partial_{x_3}\chi_{i3} = \left(\frac{1}{h}-\frac{1}{\epsh \gamma}\right) \partial_{x_3}\chi_{i3}$.

From Proposition \ref{two-scale weak limit of scaled strains - 2x2 submatrix} we know that we have the following convergences:
\begin{align*}
    u^h_{\alpha} &\strong \bar{u}_{\alpha}-x_3 \partial_{x_\alpha}u_3, \quad \text{strongly in }L^1(\Omega), \quad \alpha=1,2,\\
    u^h_3 &\strong u_3, \quad \text{strongly in }L^1(\Omega).  
\end{align*} 
Notice that
\begin{align} \label{eqelisa}
    \nonumber & \lim_{h \to 0} \sum_{\alpha=1,2} \int_{\Omega}u^h_{\alpha}(x)\,(\partial_{x_1}\chi_{\alpha 1}+\partial_{x_2}\chi_{\alpha 2})\hspace{-0.25em}\left(x,\tfrac{x'}{\epsh}\right) \,dx\\ 
    \nonumber &= \sum_{\alpha=1,2} \int_{\Omega} (\bar{u}_{\alpha}-x_3 \partial_{x_\alpha}u_3)\,\left(\partial_{x_1}\int_{\calY } \chi_{\alpha 1}(x,y) \,dy+\partial_{x_2}\int_{\calY } \chi_{\alpha 2}(x,y) \,dy\right) \,dx\\ 
    &= - \int_{\Omega \times \calY} \chi(x,y) : d\left(\begin{pmatrix} \uKL & 0\\ 0 & 0 \end{pmatrix} \otimes \calL^{2}_{y}\right).
\end{align}
Next, in view of Remark \ref{rmk third column zero}, we can use item \ref{De Rham result 2} in \Cref{auxiliary result - regime gamma}, i.e. \Cref{extensiongamma} to conclude that there exists $F \in C_c^{\infty}(\Omega \times \calY;\R^3)$ such that $\widetilde{\rot}_{\gamma} F= (\chi_{3i})_{i=1,2,3}$. Thus we have
\begin{eqnarray}
    \label{eq1}	\chi_{31} &=& \partial_{y_2}F_3 - \frac{1}{\gamma}\partial_{x_3}F_2,\\
    \label{eq2} \chi_{32} &=& \frac{1}{\gamma}\partial_{x_3}F_1 - \partial_{y_1}F_3.
\end{eqnarray}
Next we compute
\begin{eqnarray}
  \nonumber \lim_{h \to 0}\frac{1}{\epsh} \int_{\Omega} u^h_3(x)\,\partial_{x_1 y_2}F_3\hspace{-0.25em}\left(x,\tfrac{x'}{\epsh}\right) \,dx &=&  \lim_{h \to 0} \int_{\Omega} u^h_3(x)\,\partial_{x_2}\left( \partial_{x_1}F_3\hspace{-0.25em}\left(x,\tfrac{x'}{\epsh}\right)\right)dx\\ \label{eq3}& & - \lim_{h \to 0} \int_{\Omega} u^h_3(x)\,\partial_{x_1 x_2}F_3\hspace{-0.25em}\left(x,\tfrac{x'}{\epsh}\right)dx.
\end{eqnarray}                         
Notice that 
\begin{equation} \label{eq4} 
 \lim_{h \to 0} \int_{\Omega} u^h_3(x)\,\partial_{x_1 x_2}F_3\hspace{-0.25em}\left(x,\tfrac{x'}{\epsh}\right)=\int_{\Omega \times \calY} u_3\,\partial_{x_1 x_2}F_3 (x,y) dx dy =\int_{\Omega} \partial_{x_1 x_2}u_3\,\int_{\calY } F_3 (x,y) dy dx. 
 \end{equation} 
Recalling \eqref{recall C h}, we find
\begin{eqnarray}
\nonumber \lim_{h \to 0} \int_{\Omega} u^h_3(x)\,\partial_{x_2}\left( \partial_{x_1}F_3\hspace{-0.25em}\left(x,\tfrac{x'}{\epsh}\right)\right)dx &=& - \lim_{h \to 0} \int_{\Omega} \partial_{x_2}u^h_3(x)\,\partial_{x_1}F_3\hspace{-0.25em}\left(x,\tfrac{x'}{\epsh}\right)dx \\ \nonumber &=&\lim_{h \to 0}\int_{\Omega} \partial_{x_3}u^h_2\,\partial_{x_1}F_3\hspace{-0.25em}\left(x,\tfrac{x'}{\epsh}\right)dx \\ \nonumber &=& - \lim_{h \to 0}\int_{\Omega}  u^h_2\,\partial_{x_1x_3}F_3\hspace{-0.25em}\left(x,\tfrac{x'}{\epsh}\right)dx\\ \nonumber 
&=& -\int_{\Omega \times \calY} (\bar{u}_2-x_3 \partial_{x_2}u_3)\,\partial_{x_1 x_3}F_3 (x,y) dx dy \\ \label{eq5}  &=& \int_{\Omega} \partial_{x_1 x_2}u_3\,\int_{\calY } F_3 (x,y) dy dx.
\end{eqnarray}
From \eqref{eq3}, \eqref{eq4}, \eqref{eq5} we infer
\begin{eqnarray} \label{eq6} 
 \nonumber \lim_{h \to 0}\frac{1}{h} \int_{\Omega} u^h_3(x)\,\partial_{x_1 y_2}F_3\hspace{-0.25em}\left(x,\tfrac{x'}{\epsh}\right) \,dx
 &=& \lim_{h \to 0}\frac{1}{\epsh\gamma} \int_{\Omega} u^h_3(x)\,\partial_{x_1 y_2}F_3\hspace{-0.25em}\left(x,\tfrac{x'}{\epsh}\right) \,dx\\
 &=& 0. 
\end{eqnarray} 
In a similar way for $u^h_3$ (recalling \eqref{recall C h^2}), we deduce 
\begin{eqnarray} \label{eq7}
    \nonumber \lim_{h \to 0}\frac{1}{h} \int_{\Omega} u^h_3(x)\,\partial_{x_1 x_3}F_2\hspace{-0.25em}\left(x,\tfrac{x'}{\epsh}\right) \,dx
    &=& - \lim_{h \to 0} \frac{1}{h} \int_{\Omega} \partial_{x_3}u^h_3(x)\,\partial_{x_1}F_2\hspace{-0.25em}\left(x,\tfrac{x'}{\epsh}\right) \,dx\\ 
    &=&  0. 
\end{eqnarray}
From \eqref{eq1}, \eqref{eq6}, \eqref{eq7} we conclude that
\begin{equation} \label{eq8} 
    \lim_{h \to 0}\frac{1}{h} \int_{\Omega} u^h_3(x)\,\partial_{x_1}\chi_{31}\hspace{-0.25em}\left(x,\tfrac{x'}{\epsh}\right) \,dx = 0. 
\end{equation} 
Analogously, we obtain 
\begin{equation} \label{eq9} 
    \lim_{h \to 0}\frac{1}{h} \int_{\Omega} u^h_3(x)\,\partial_{x_2}\chi_{32}\hspace{-0.25em}\left(x,\tfrac{x'}{\epsh}\right) \,dx = 0.
\end{equation}
Lastly, using similar arguments as above, we compute
\begin{align} \label{eq10}
    & \nonumber \lim_{h \to 0} \left(\frac{h}{\epsh\gamma}-1\right) \left( \sum_{\alpha=1,2} \frac{1}{h} \int_{\Omega} u^h_{\alpha}(x)\,\partial_{x_3}\chi_{\alpha 3}\hspace{-0.25em}\left(x,\tfrac{x'}{\epsh}\right) \,dx + \frac{1}{h^2} \int_{\Omega} u^h_3(x)\,\partial_{x_3}\chi_{33}\hspace{-0.25em}\left(x,\tfrac{x'}{\epsh}\right) \,dx \right)\\
    &= \nonumber \lim_{h \to 0} \left(\frac{h}{\epsh\gamma}-1\right) \left( - \sum_{\alpha=1,2} \frac{1}{h} \int_{\Omega} \partial_{x_3}u^h_{\alpha}(x)\,\chi_{\alpha 3}\hspace{-0.25em}\left(x,\tfrac{x'}{\epsh}\right) \,dx + \frac{1}{h^2} \int_{\Omega} u^h_3(x)\,\partial_{x_3}\chi_{33}\hspace{-0.25em}\left(x,\tfrac{x'}{\epsh}\right) \,dx \right)\\
    &= \nonumber \lim_{h \to 0} \left(\frac{h}{\epsh\gamma}-1\right) \left( \sum_{\alpha=1,2} \frac{1}{h} \int_{\Omega} \partial_{x_\alpha}u^h_3(x)\,\chi_{\alpha 3}\hspace{-0.25em}\left(x,\tfrac{x'}{\epsh}\right) \,dx + \frac{1}{h^2} \int_{\Omega} u^h_3(x)\,\partial_{x_3}\chi_{33}\hspace{-0.25em}\left(x,\tfrac{x'}{\epsh}\right) \,dx \right)\\
    &= \nonumber \lim_{h \to 0} \left(\frac{h}{\epsh\gamma}-1\right) \left( - \frac{1}{h} \int_{\Omega} u^h_3(x)\,(\partial_{x_1}\chi_{31}+\partial_{x_2}\chi_{32})\hspace{-0.25em}\left(x,\tfrac{x'}{\epsh}\right) \,dx +  \left(\frac{h}{\epsh\gamma}+1\right) \frac{1}{h^2} \int_{\Omega} u^h_3(x)\,\partial_{x_3}\chi_{33}\hspace{-0.25em}\left(x,\tfrac{x'}{\epsh}\right) \,dx \right)\\
    &= 0.
\end{align}

From \eqref{eq0}, \eqref{eqelisa}, \eqref{eq8}, \eqref{eq9}, \eqref{eq10} we have that
\begin{equation*}
    \int_{\Omega \times \calY} \chi(x,y) : d\left(\lambda(x,y) - \begin{pmatrix} \uKL & 0\\ 0 & 0 \end{pmatrix} \otimes \calL^{2}_{y}\right) = 0.
\end{equation*}
From this and Proposition \ref{duality lemma - regime gamma} we find that there exists $\mu \in \calXgamma{\omega}$ such that 
\begin{equation*}
    \lambda - \begin{pmatrix} E\bar{u} - x_3 D^2u_3 & 0\\ 0 & 0 \end{pmatrix} \otimes \calL^{2}_{y} = \widetilde{E}_{\gamma}\mu.
\end{equation*}
This, in turn, yields the claim.
\end{proof}

\subsection{Unfolding adapted to dimension reduction}
We proceed along the lines of \cite[Section 4.3]{Francfort.Giacomini.2014}. 

For every $\eps > 0$ and $i \in \Z^2$, let
\begin{equation*}
    Q_\eps^i := \left\{ x \in \R^2 : \frac{x-\eps i}{\eps} \in Y \right\}.
\end{equation*}
Given an open set $\omega \subseteq \R^2$, we will set
\begin{equation*}
    I_\eps(\omega) := \left\{ i \in \Z^2 : Q_\eps^i \subset \omega \right\}.
\end{equation*}
Given $\mu_\eps \in \Mb(\omega \times I)$ and $Q_\eps^i \subset \omega$, we define $\mu_\eps^i \in \Mb(\MMM I \times \calY \BBB)$ such that
\begin{equation*}
    \int_{\MMM I \times \calY \BBB} \psi(\MMM x_3,y\BBB) \,d\mu_\eps^i(\MMM x_3,y\BBB) = \frac{1}{\eps^2} \int_{Q_\eps^i \times I} \psi\left(\BLUE x_3,\frac{x'}{\eps}\BLACK\right) \,d\mu_\eps(x), \quad \psi \in C(\MMM I \times \calY \BBB).
\end{equation*}
\begin{definition}
For every $\eps > 0$, \emph{the unfolding measure} associated with $\mu_\eps$ is the measure $\tilde{\lambda}_\eps \in \Mb(\omega \times \MMM I \times \calY \BBB)$ defined by
\begin{equation*}
    \tilde{\lambda}_\eps := \sum_{i \in I_\eps(\omega)} \left( \calL_{x'}^2\mres{Q_\eps^i} \right) \otimes \mu_\eps^i.
\end{equation*}
\end{definition}

The following proposition provides the relationship between the two-scale weak* convergence and unfolding measures.
The proof is analogous to \cite[Proposition 4.11.]{Francfort.Giacomini.2014}.

\begin{proposition} \label{unfolding measure weak* convergence}
Let $\omega \subseteq \R^2$ be an open set and let $\{\mu_\eps\} \subset \Mb(\omega \times I)$ be a bounded family such that
\begin{equation*}
    \mu_\eps \weakstartwoscale \mu_0 \quad \text{two-scale weakly* in $\Mb(\omega \times \MMM I \times \calY \BBB)$}.
\end{equation*}
Let $\{\tilde{\lambda}_\eps\} \subset \Mb(\omega \times \MMM I \times \calY \BBB)$ be the family of unfolding measures associated with $\{\mu_\eps\}$. 
Then 
\begin{equation*}
    \tilde{\lambda}_\eps \weakstar \mu_0 \quad \text{weakly* in $\Mb(\omega \times \MMM I \times \calY \BBB)$}.
\end{equation*}
\end{proposition}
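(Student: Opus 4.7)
My plan is to first establish weak* precompactness of the family $\{\tilde{\lambda}_\eps\}$, then identify the limit by testing against a suitable dense class of continuous functions, using uniform continuity to replace slow variables by values frozen on each cube $Q_\eps^i$, and finally exploiting the two-scale hypothesis on $\{\mu_\eps\}$.

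\smallskip

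\textbf{Step 1 (Uniform bound).} From the definition of $\mu_\eps^i$ one has $|\mu_\eps^i|(I\times\calY)\le \eps^{-2}|\mu_\eps|(Q_\eps^i\times I)$, so
\[
|\tilde{\lambda}_\eps|(\omega\times I\times\calY)\le \sum_{i\in I_\eps(\omega)}|Q_\eps^i|\,|\mu_\eps^i|(I\times\calY)\le |\mu_\eps|(\omega\times I)\le C.
\]
Hence $\{\tilde{\lambda}_\eps\}$ admits weak* cluster points in $\Mb(\omega\times I\times\calY)$, and it suffices to show that every such cluster point equals $\mu_0$.

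\smallskip

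\textbf{Step 2 (Identification of the limit).} Pick $\psi\in C_c(\omega\times I\times\calY)$. Using the definition of $\tilde{\lambda}_\eps$ and $\mu_\eps^i$,
\[
\int_{\omega\times I\times\calY}\psi\,d\tilde{\lambda}_\eps=\sum_{i\in I_\eps(\omega)}\int_{Q_\eps^i}\left(\int_{I\times\calY}\psi(x',x_3,y)\,d\mu_\eps^i(x_3,y)\right)dx'.
\]
By uniform continuity of $\psi$, for $x'\in Q_\eps^i$ we have $|\psi(x',x_3,y)-\psi(\eps i,x_3,y)|\le\omega_\psi(\eps)$ where $\omega_\psi(\eps)\to 0$; the induced error is bounded by $\omega_\psi(\eps)\cdot|\mu_\eps|(\omega\times I)\to 0$. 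Freezing $x'=\eps i$ and expanding $\mu_\eps^i$ yields
\[
\sum_{i\in I_\eps(\omega)}|Q_\eps^i|\int_{I\times\calY}\psi(\eps i,x_3,y)\,d\mu_\eps^i(x_3,y)=\sum_{i\in I_\eps(\omega)}\int_{Q_\eps^i\times I}\psi\!\left(\eps i,x_3,\tfrac{x'}{\eps}\right)d\mu_\eps(x).
\]
A second application of uniform continuity of $\psi$ (in the first variable) lets us replace $\eps i$ by $x'$ inside the integrand at the cost of another vanishing error. Provided $\eps$ is small enough, $\supp_{x'}\psi\Subset\bigcup_{i\in I_\eps(\omega)}Q_\eps^i$, so the remaining sum equals $\int_{\omega\times I}\psi\!\left(x',x_3,\tfrac{x'}{\eps}\right)d\mu_\eps(x)$ up to a term supported away from $\supp\psi$. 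By the two-scale convergence hypothesis (after extending $\psi$ by zero to apply \Cref{def:2-scale-meas}), this integral tends to $\int_{\omega\times I\times\calY}\psi\,d\mu_0$, which proves the claim.

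\smallskip

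\textbf{Main obstacle.} The arguments are essentially bookkeeping, and the only real care is needed at the boundary: the union $\bigcup_{i\in I_\eps(\omega)}Q_\eps^i$ does not exhaust $\omega$, so when comparing $\int\psi(\eps i,\cdot,x'/\eps)\,d\mu_\eps$ on that union with the full integral on $\omega\times I$ one must either rely on $\supp_{x'}\psi$ being compactly contained in $\omega$ (so eventually covered by whole cubes), or approximate a general $\psi\in C_0$ by functions in $C_c$ in sup-norm and use the uniform mass bound from Step 1. This density argument, together with the metrizability of weak* convergence on bounded sets, promotes convergence on the dense class to convergence of the full sequence.
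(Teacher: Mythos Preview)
Your proof is correct and follows the standard unfolding argument: freeze the slow variable at cube centers via uniform continuity, unfold $\mu_\eps^i$ back to an integral against $\mu_\eps$, unfreeze, and invoke the two-scale hypothesis, with the boundary handled by compact support and a density argument. The paper does not give its own proof of this proposition but simply cites the analogous \cite[Proposition~4.11]{Francfort.Giacomini.2014}; your argument is precisely the expected adaptation of that result to the dimension-reduction setting.
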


To analyze the sequences of symmetrized scaled gradients of $BD$ function in the context of unfolding, we will need to consider the following auxiliary spaces
\begin{gather*}
    BD_{\frac{h}{\eps}}(\MMM I \times \calY \BBB) := \Big\{ u \in L^1(\BLUE I \times \calY\BLACK;\R^3) : \widetilde{E}_{\frac{h}{\eps}}u \in \Mb(\MMM I \times \calY \BBB;\M^{3 \times 3}_{\sym}) \Big\},\\
    BD_{\frac{h}{\eps}}\left((0, 1)^2 \times I\right) := \Big\{ u \in L^1\left((0, 1)^2 \times I;\R^3\right) : E_{\frac{h}{\eps}}u \in \Mb\left((0, 1)^2 \times I;\M^{3 \times 3}_{\sym}\right) \Big\},
\end{gather*}
where $\widetilde{E}_{\frac{h}{\eps}}$ and $E_{\frac{h}{\eps}}$ denote the distributional symmetrized scaled gradients, \MMM cf. \eqref{defsymmscgrad}. \BBB
Similarly as in \Cref{scaling from BD to BDgamma}, scaling in the the first two components shows that these auxiliary spaces are equivalent to the usual $BD$ space on the appropriate domain.

\begin{proposition} \label{associated unfolding measure}
Let $\omega \subseteq \R^2$ be an open set and let $\calB \subseteq \calY$ be an open set with Lipschitz boundary. 
Let $\gamma_0 \in (0,1]$ and let $h,\, \eps > 0$ be such that
\[
    \gamma_0 \leq \frac{h}{\eps} \leq \frac{1}{\gamma_0}.
\]
If $u_\eps \in BD(\omega \times I)$, the unfolding measure associated with $\Lambda_h Eu_\eps\mres{(\calB_\eps \setminus \calC_\eps) \times I}$ is given by
\begin{equation} \label{unfolding symmetrized gradient}
    \sum_{i \in I_\eps(\omega)} \left( \calL_{x'}^2\mres{Q_\eps^i} \right) \otimes \widetilde{E}_{\frac{h}{\eps}}\hat{u}_{h,\eps}^i\mres{\MMM I \times (\calB \setminus \calC) \BBB },
\end{equation}
where $\hat{u}_{h,\eps}^i \in BD_{\frac{h}{\eps}}(\MMM I \times \calY \BBB)$ is such that
\MMM
\begin{equation} \label{unfolding inequality}
   \int_{I \times \calB} |\hat{u}_{h,\eps}^i| \,dx_3\,dy+ \int_{\MMM I \times \partial\calB \BBB} |\hat{u}_{h,\eps}^i| \,d\calH^{2} + |\widetilde{E}_{\frac{h}{\eps}}\hat{u}_{h,\eps}^i|\left(\MMM I \times (\calB \cap \calC) \BBB \right) \leq \frac{C}{\eps^2} |\Lambda_h Eu_\eps|\left(int(Q_\eps^i) \times I\right),
\end{equation}
\BBB
for some constant $C$ independent of $i$, $h$ and $\eps$.
\end{proposition}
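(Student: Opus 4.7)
The plan is to construct $\hat{u}_{h,\eps}^i$ cell-by-cell by rescaling, periodic identification, and the subtraction of a rigid motion calibrated via \Cref{poincarekornrem}. For each $i \in I_\eps(\omega)$, I first pull back $u_\eps$ from $Q_\eps^i \times I$ to $Y \times I$ via $x' = \eps(i+y)$, setting
\[
    \tilde{v}_{h,\eps}^i(x_3,y) \,:=\, \Bigl( \tfrac{1}{\eps}\, u_\eps^1(\eps(i+y),x_3),\; \tfrac{1}{\eps}\, u_\eps^2(\eps(i+y),x_3),\; \tfrac{1}{h\eps}\, u_\eps^3(\eps(i+y),x_3) \Bigr),
\]
where the scaling is tuned exactly so that the chain rule combined with the definitions of $\Lambda_h$ and $\widetilde{E}_{h/\eps}$ yields the pointwise matching $(\widetilde{E}_{h/\eps}\tilde{v}_{h,\eps}^i)(x_3,y) = (\Lambda_h Eu_\eps)(\eps(i+y),x_3)$ as measures on $I \times Y$. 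The change of variables $dx' = \eps^2\,dy$ then gives $|\widetilde{E}_{h/\eps}\tilde{v}_{h,\eps}^i|(I \times Y) = \tfrac{1}{\eps^2}|\Lambda_h Eu_\eps|(int(Q_\eps^i) \times I)$.

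Next, using the uniform bound $\gamma_0 \leq h/\eps \leq 1/\gamma_0$ together with \Cref{poincarekornrem} applied with $\gamma = h/\eps$, I select an affine rigid motion $R^i(x_3,y) = A^i\bigl(y_1,y_2,(h/\eps)x_3\bigr)^T + b^i$, with $A^i \in \M^{3\times 3}_{\skw}$ and $b^i \in \R^3$, with constants uniform in $i$, $h$, $\eps$, such that $\hat{u}_{h,\eps}^i := \tilde{v}_{h,\eps}^i - R^i$ obeys
\[
    \|\hat{u}_{h,\eps}^i\|_{L^1((0,1)^2 \times I;\R^3)} \,\leq\, C\,|\widetilde{E}_{h/\eps}\tilde{v}_{h,\eps}^i|(I \times Y) \,\leq\, \tfrac{C}{\eps^2}|\Lambda_h Eu_\eps|(int(Q_\eps^i) \times I).
\]
A short computation shows that $R^i$ has vanishing $\widetilde{E}_{h/\eps}$-gradient, hence $\widetilde{E}_{h/\eps}\hat{u}_{h,\eps}^i = \widetilde{E}_{h/\eps}\tilde{v}_{h,\eps}^i$, and the representation \eqref{unfolding symmetrized gradient} follows immediately from the definition of the unfolding measure after summing over $i \in I_\eps(\omega)$ and restricting to $I \times (\calB \setminus \calC)$ (the excision of $\calC_\eps$ on the source side being precisely matched by excision of $\calC$ on the torus side).

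For the three-term estimate \eqref{unfolding inequality}, the bound on $\int_{I\times\calB}|\hat u_{h,\eps}^i|$ is immediate from the Poincaré-Korn estimate above. The trace term $\int_{I\times\partial\calB}|\hat u_{h,\eps}^i|\,d\calH^2$ is controlled by the standard $BD$ trace theorem on a neighbourhood of $\partial\calB$ (which is Lipschitz in $\calY$), giving a bound in terms of $\|\hat u_{h,\eps}^i\|_{L^1}+|\widetilde E_{h/\eps}\hat u_{h,\eps}^i|$, both of which are already controlled. The singular part on $I\times(\calB\cap\calC)$ is exactly the jump of $\hat u_{h,\eps}^i$ across the periodic identification $\partial Y \sim \calC$, namely the difference of the traces of $\tilde v_{h,\eps}^i$ on opposite faces of $\partial((0,1)^2)\times I$ contracted with the relevant normal; once more the $BD$-trace inequality controls these traces by $\|\hat u_{h,\eps}^i\|_{BD((0,1)^2\times I)}$ and hence by $(C/\eps^2)|\Lambda_h E u_\eps|(int(Q_\eps^i) \times I)$.

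The main obstacle is precisely this last term: the jump on $I \times (\calB \cap \calC)$ arises purely from the periodic gluing and has no direct counterpart in $\Lambda_h Eu_\eps$ on $int(Q_\eps^i) \times I$, yet it must be dominated by it. The resolution hinges on first calibrating the rigid motion $R^i$ via Poincaré-Korn so that the $BD$-trace of $\hat u_{h,\eps}^i$ on the Euclidean boundary $\partial((0,1)^2) \times I$ is controlled by the full symmetric-gradient mass on the open cube; uniformity in $h/\eps$ then follows from the assumption $h/\eps \in [\gamma_0,1/\gamma_0]$ together with the locally uniform bound on the Poincaré-Korn constant furnished by \Cref{poincarekornrem}.
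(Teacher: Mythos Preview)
Your proposal is correct and follows essentially the same route as the paper: cell-by-cell pull-back with the anisotropic scaling $\mathrm{diag}(1/\eps,1/\eps,1/(h\eps))$, subtraction of a rigid motion calibrated via \Cref{poincarekornrem} (uniformly in $h/\eps\in[\gamma_0,1/\gamma_0]$), and control of the periodic-gluing jump on $\calC$ through the $BD$ trace on $\partial((0,1)^2)\times I$. The paper organises the construction in two stages (an intermediate $v^i_{h,\eps}$ on $(0,1)^2\times I$, then $\hat u^i_{h,\eps}=\tfrac{1}{\eps}v^i_{h,\eps}\circ\calI$) and invokes a \emph{second} Poincar\'e--Korn subtraction tailored to $I\times\calB$ to get the $L^1$ and trace bounds in \eqref{unfolding inequality}; you compress this into a single subtraction on the Euclidean cube and then appeal directly to the $BD$ trace theorem on $I\times\calB$, which is legitimate since your global $L^1$ control on $(0,1)^2\times I$ already dominates $\int_{I\times\calB}|\hat u^i|$. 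One small slip: the jump at $I\times(\calB\cap\calC)$ is the difference of the traces of $\hat u^i$ (not $\tilde v^i$) on opposite faces, since the rigid motion $R^i$ is not periodic and contributes to that jump; your bound via $\|\hat u^i\|_{BD}$ is nonetheless the right one. The paper also spells out the representation \eqref{unfolding symmetrized gradient} more carefully by decomposing $Eu_\eps\mres{\calB_\eps\times I}=E(u_\eps\charfun{\calB_\eps\times I})+[\text{trace}\odot\nu]\,\calH^2\mres{\partial\calB_\eps\times I}$ and tracking both pieces through the change of variables, whereas you assert it directly; your assertion is correct but the paper's bookkeeping is what justifies it.
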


\begin{proof}
Since $\calB_\eps$ has Lipschitz boundary, $u_\eps \charfun{\calB_\eps \times I} \in BD_{loc}(\omega \times I)$ with
\begin{equation*}
    Eu_\eps\mres{\calB_\eps \times I} = E\left(u_\eps \charfun{\calB_\eps \times I}\right) + \left[u_\eps\mres{\partial\calB_\eps \times I} \odot \nu\right] \calH^{2}\mres{\partial\calB_\eps \times I},
\end{equation*}
where $u_\eps\mres{\partial\calB_\eps \times I}$ denotes the trace of $u_\eps \charfun{\calB_\eps \times I}$ on $\partial\calB_\eps \times I$, while $\nu$ is the exterior normal to $\partial\calB_\eps \times I$. We note that the third component of $\nu$ is equal to zero.

Remark that $\calC_\eps = \left( \cup _i \partial Q_\eps^i\right) \cap \omega$. 
Accordingly, for $i \in I_\eps(\omega)$ and $\psi \in C^1(I \times \calY;\M^{3 \times 3}_{\sym})$,
\begin{align*}
    &\int_{Q_\eps^i \times I} \psi\left(\BLUE x_3,\frac{x'}{\eps}\BLACK\right) : d\left(\Lambda_h Eu_\eps\mres{(\calB_\eps \setminus \calC_\eps) \times I}\right)(x)
    = \int_{int(Q_\eps^i) \times I} \psi\left(\BLUE x_3,\frac{x'}{\eps}\BLACK\right) : d\left(\Lambda_h Eu_\eps\mres{\calB_\eps \times I}\right)(x)\\
    &= \int_{int(Q_\eps^i) \times I} \psi\left(\BLUE x_3,\frac{x'}{\eps}\BLACK\right) : d\Lambda_h E\left(u_\eps \charfun{\calB_\eps \times I}\right)(x)\\
    &\,\quad + \int_{int(Q_\eps^i) \times I} \psi\left(\BLUE x_3,\frac{x'}{\eps}\BLACK\right) : \Lambda_h \left[u_\eps\mres{\partial\calB_\eps \times I} \odot \nu\right] \,d\calH^{2}\mres{\partial\calB_\eps \times I}(x).
\end{align*}
We set $v_{h,\eps}^i(\zz) := \diag\left(1,1,\frac{1}{h}\right)\,u_\eps(\eps i+\eps \zz', \zz_3)$ for $\zz \in (0, 1)^2 \times I$. 
Then $v_{h,\eps}^i \in BD_{\frac{h}{\eps}}\left((0, 1)^2 \times I\right)$, and $E_{\frac{h}{\eps}}v_{h,\eps}^i(\zz) = \eps \Lambda_h Eu_\eps(\eps i+\eps \zz', \zz_3)$. 
Performing a change of variables, we find
\begin{align*}
    &\int_{Q_\eps^i \times I} \psi\left(\BLUE x_3,\frac{x'}{\eps}\BLACK\right) : d\left(\Lambda_h Eu_\eps\mres{(\calB_\eps \setminus \calC_\eps) \times I}\right)(x)\\
    &= \eps \int_{(0, 1)^2 \times I} \psi\left(\BLUE x_3,x'\BLACK\right) : dE_{\frac{h}{\eps}}\left(v_{h,\eps}^i \charfun{\calI(\calB) \times I}\right)(\zz)\\
    &\,\quad + \eps \int_{(0, 1)^2 \times I} \psi\left(\BLUE x_3,x'\BLACK\right) : \Lambda_h \left[\diag(1,1,h)\,v_{h,\eps}^i\mres{\calI(\partial\calB) \times I} \odot \nu\right] \,d\calH^{2}(\zz)\\
    &= \eps \int_{(0, 1)^2 \times I} \psi\left(\BLUE x_3,x'\BLACK\right) : dE_{\frac{h}{\eps}}\left(v_{h,\eps}^i \charfun{\calI(\calB) \times I}\right)(\zz) + \eps \int_{(0, 1)^2 \times I} \psi\left(\BLUE x_3,x'\BLACK\right) : \left[v_{h,\eps}^i\mres{\calI(\partial\calB) \times I} \odot \nu\right] \,d\calH^{2}(\zz).
\end{align*}
Notice that we can assume that
\MMM
\begin{equation*}
   \int_{(0, 1)^2 \times I}|v_{h,\eps}^i|dx+ \int_{\partial(0, 1)^2 \times I} |v_{h,\eps}^i\mres{\partial(0, 1)^2 \times I}| \,d\calH^{2} \leq C |E_{\frac{h}{\eps}}v_{h,\eps}^i|\left((0, 1)^2 \times I\right) = \frac{C}{\eps} |\Lambda_h Eu_\eps|\left(int(Q_\eps^i) \times I\right),
\end{equation*}
\BBB
for some constant $C$ independent of $i$, $h$ and $\eps$. 
\MMM This can be achieved by using \Cref{poincarekornrem} since subtracting a rigid deformation \BBB to $u_\eps$ on $Q_\eps^i \times I$ corresponds to subtracting an element of the kernel of $E_{\frac{h}{\eps}}$  to $v_{h,\eps}^i$, which does not modify the calculations done thus far. 
Hence, by the trace theorem and Poincar\'{e}-Korn's inequality in $BD\left((0, 1)^2 \times I\right)$, we get the desired inequality.

Defining $\hat{u}_{h,\eps}^i(\MMM x_3,y \BBB) := \frac{1}{\eps} v_{h,\eps}^i\left(\BLUE \calI(y),x_3 \BLACK\right)$, we obtain
\begin{align*}
     |\widetilde{E}_{\frac{h}{\eps}}\hat{u}_{h,\eps}^i|\left(\MMM I \times \calY \BBB \right) &\leq \int_{\MMM I \times \calC\BBB} |\hat{u}_{h,\eps}^i\mres{\MMM I \times \calC\BBB }| \,d\calH^{2} + |\widetilde{E}_{\frac{h}{\eps}}\hat{u}_{h,\eps}^i|\left(\MMM I \times (\calY \setminus \calC) \BBB\right)\\
     &= \frac{1}{\eps} \int_{\partial(0, 1)^2 \times I} |v_{h,\eps}^i\mres{\partial(0, 1)^2 \times I}| \,d\calH^{2} + \frac{1}{\eps} |E_{\frac{h}{\eps}}v_{h,\eps}^i|\left((0, 1)^2 \times I\right)\\
     &\leq \frac{C+1}{\eps} |E_{\frac{h}{\eps}}v_{h,\eps}^i|\left((0, 1)^2 \times I\right) = \frac{C+1}{\eps^2} |\Lambda_h Eu_\eps|\left(int(Q_\eps^i) \times I\right).
\end{align*}
Furthermore,
\begin{equation*}
    \eps \int_{(0, 1)^2 \times I} \psi : dE_{\frac{h}{\eps}}\left(v_{h,\eps}^i \charfun{\calI(\calB) \times I}\right) = \eps^2 \int_{\MMM I \times (\calY \setminus \calC)\BBB} \psi : d\widetilde{E}_{\frac{h}{\eps}}\left(\hat{u}_{h,\eps}^i \charfun{\calB \times I}\right)
\end{equation*}
and
\begin{equation*}
    \eps \int_{(0, 1)^2 \times I} \psi : \left[v_{h,\eps}^i\mres{\MMM \calI(\partial\calB) \times I \BBB} \odot \nu\right] \,d\calH^{2} = \eps^2 \int_{\MMM I \times (\calY \setminus \calC)} \psi : \left[\hat{u}_{h,\eps}^i\mres{\MMM I \times (\partial\calB \setminus \calC) \BBB} \odot \nu\right] \,d\calH^{2}.
\end{equation*}
So we have
\begin{align*}
    &\frac{1}{\eps^2} \int_{Q_\eps^i \times I} \psi\left(\BLUE x_3,\frac{x'}{\eps}\BLACK\right) : d\left(\Lambda_h Eu_\eps\mres{(\calB_\eps \setminus \calC_\eps) \times I}\right)(x)\\
    &= \int_{\MMM I \times (\calY \setminus \calC) \BBB} \psi(\MMM x_3,y\BBB) : d\widetilde{E}_{\frac{h}{\eps}}\left(\hat{u}_{h,\eps}^i \charfun{\calB \times I}\right)(y, x_3)\\
    &\,\quad + \int_{\MMM I \times (\calY \setminus \calC)\BBB} \psi(\MMM x_3,y\BBB) : \left[\hat{u}_{h,\eps}^i\mres{\MMM I \times (\partial\calB \setminus \calC) \BBB} \odot \nu\right] \,d\calH^{2}(\MMM x_3,y \BBB)\\
    &= \int_{\MMM I \times \calY\BBB} \psi(\MMM x_3,y \BBB) : d\widetilde{E}_{\frac{h}{\eps}}\hat{u}_{h,\eps}^i\mres{\MMM I \times (\calB \setminus \calC)\BBB }(\MMM x_3,y \BBB),
\end{align*}
from which \eqref{unfolding symmetrized gradient} follows. 
It remains to prove \eqref{unfolding inequality}. 
Again, up to adding \MMM an affine transformation  to $\hat{u}_{h,\eps}^i$ (cf. \Cref{poincarekornrem}) \BBB on $\MMM I \times \calB \BBB$, we can assume
\MMM 
\begin{align*}
    &\int_{\MMM I \times \calB \BBB} |\hat{u}_{h,\eps}^i|\,dx_3dy+\int_{\MMM I \times \partial\calB \BBB} |\hat{u}_{h,\eps}^i| \,d\calH^{2} + |\widetilde{E}_{\frac{h}{\eps}}\hat{u}_{h,\eps}^i|\left(\MMM I \times (\calB \cap \calC) \BBB\right)\\
    &\leq C |\widetilde{E}_{\frac{h}{\eps}}\hat{u}_{h,\eps}^i|\left(\MMM I \times \calB \BBB\right) + |\widetilde{E}_{\frac{h}{\eps}}\hat{u}_{h,\eps}^i|\left(\MMM I \times (\calB \cap \calC)\BBB \right) \leq C |\widetilde{E}_{\frac{h}{\eps}}\hat{u}_{h,\eps}^i|\left(\MMM I \times \calY \BBB \right)\\
    &\leq \frac{C}{\eps^2} |\Lambda_h Eu_\eps|\left(int(Q_\eps^i) \times I\right).
\end{align*}
\BBB
This concludes the proof of the theorem.
\end{proof}

As a consequence of Proposition \ref{associated unfolding measure}, we deduce the following lemma, which in turn will be used in the proof of the lower semicontinuity of $\calH^{hom}$ in \Cref{Lower semicontinuity of energy functionals}.

\begin{lemma} \label{rank-1 lemma}
Let $\calB \subseteq \calY$ be an open set with Lipschitz boundary, such that $\partial\calB \setminus \calT$ is a $C^1$-hypersurface, for some compact set $\calT$ with $\calH^{1}(\calT) = 0$. 
Additionally, assume that $\partial\calB \cap \calC \subseteq \calT$. 
Let $v^h \in BD({{\Omega}})$ be such that 
\begin{equation*}
    v^h \weakstar v \quad \text{weakly* in $BD({{\Omega}})$}
\end{equation*}
 and
\begin{equation*}
    \Lambda_h Ev^h\mres{{{\Omega}} \cap (\calB_\epsh \times I)} \weakstartwoscale \pi \quad \text{two-scale weakly* in $\Mb({{\Omega}} \times \calY;\M^{3 \times 3}_{\sym})$}.
\end{equation*}
Then $\pi$ is supported in ${{\Omega}} \times \bar{\calB}$ and
\begin{equation} \label{rank-1 structure}
    \pi\mres{{{\Omega}} \times (\partial\calB \setminus \calT)} = a(x,y) \odot \nu(y) \,\zeta,
\end{equation}
where $\zeta \in \Mb^+ ({{\Omega}} \times (\partial\calB \setminus \calT))$, $a : {{\Omega}} \times (\partial\calB \setminus \calT) \to \R^3$ is a Borel map, and $\nu$ is the exterior normal to $\partial\calB$.
\end{lemma}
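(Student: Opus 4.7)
The plan is to handle the support statement via a standard two-scale duality argument, and to obtain the rank-one structure by decomposing the restricted measure as the difference of a ``volume'' piece and a rank-one ``trace'' piece, each of whose two-scale limit admits the required decomposition along $\partial\calB\setminus\calT$. For the first conclusion, given any $\chi \in C_c(\Omega \times (\calY \setminus \closure{\calB}); \M^{3 \times 3}_{\sym})$, a compactness argument on the torus shows that $\chi(x, x'/\epsh) = 0$ on $(\calB_\epsh \cap \omega) \times I$ for every $h$ small enough, so the pairing of $\Lambda_h E v^h \mres ((\calB_\epsh \cap \omega) \times I)$ with $\chi(\cdot, \cdot/\epsh)$ vanishes identically; passing to the two-scale limit gives $\int \chi : d\pi = 0$, and arbitrariness of $\chi$ yields $\supp\pi \subseteq \Omega \times \closure{\calB}$.

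For the rank-one structure on $\partial\calB\setminus\calT$, I set $w^h := v^h \charfun{(\calB_\epsh \cap \omega) \times I}$. Since $\calB$ is Lipschitz, $w^h \in BD(\Omega)$ and the classical formula for the symmetrized gradient of a truncated $BD$ function gives
\begin{equation*}
    \Lambda_h E v^h \mres ((\calB_\epsh \cap \omega) \times I) = \Lambda_h E w^h - \Lambda_h T^h,
\end{equation*}
where $T^h = -(v^h|_{\partial \calB_\epsh}) \odot \nu \,\calH^2 \mres ((\partial \calB_\epsh \cap \omega) \times I)$ and $\nu$ is the in-plane outward unit normal to $\calB_\epsh$ (so $\nu_3 = 0$). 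The crucial technical step is to prove that $\{\Lambda_h T^h\}$, and consequently also $\{\Lambda_h E w^h\}$, are uniformly bounded in $\Mb(\Omega; \M^{3 \times 3}_{\sym})$; this is achieved by passing to the rescaled cellwise picture as in Proposition \ref{associated unfolding measure} and applying the $BD$-trace theorem in $BD_{h/\epsh}(I \times \calY)$, with the uniform Poincaré--Korn constants of Remark \ref{poincarekornrem}. Up to a subsequence, let $\Pi^W$ and $\Pi^T$ denote the respective two-scale weak* limits, so $\pi = \Pi^W - \Pi^T$.

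I then analyze each limit on $\Omega \times (\partial \calB \setminus \calT)$. Since $T^h$ is rank-one of the form $a^h \odot \nu$ with $\nu_3 = 0$, and since $\Lambda_h$ maps such rank-one matrices to $\widetilde a^h \odot \nu$, the limit $\Pi^T$ is supported on $\Omega \times \partial\calB$ and has the rank-one form $a^T(x, y) \odot \nu(y) \,\zeta^T$ for a nonnegative measure $\zeta^T$ and Borel function $a^T$. For $\Pi^W$, the sequence $\{w^h\}$ verifies the hypotheses of Theorem \ref{two-scale weak limit of scaled strains}, yielding
\begin{equation*}
    \Pi^W = \begin{pmatrix} E\bar u_W - x_3 D^2 u_{W, 3} & 0 \\ 0 & 0 \end{pmatrix} \otimes \calL^2_y + \widetilde E_\gamma \mu_W
\end{equation*}
for some $\mu_W \in \calXgamma{\omega}$; the first summand restricts to zero on $\Omega \times (\partial \calB \setminus \calT)$ since $\calL^2_y(\partial \calB) = 0$, while Proposition \ref{corrector on C^1-hypersurface - regime gamma} applied to the $C^1$-hypersurface $\calD = \partial \calB \setminus \calT$ gives $\widetilde E_\gamma \mu_W \mres \Omega \times (\partial \calB \setminus \calT) = a^W(x, y) \odot \nu(y) \,\zeta^W$. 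Combining, $\pi \mres \Omega \times (\partial \calB \setminus \calT)$ is a difference of two rank-one measures sharing the common normal $\nu(y)$, and is therefore of the asserted form $a(x, y) \odot \nu(y) \,\zeta$ upon setting $\zeta := \zeta^W + \zeta^T$ and defining $a$ by Radon--Nikodym decomposition.

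The main obstacle is the uniform boundedness of $\{\Lambda_h T^h\}$, whose $(\alpha, 3)$-entries involve $v^h_3 / h$ on the periodic interface $(\partial \calB_\epsh \cap \omega) \times I$: this forces the use of the rescaled $BD_{h/\epsh}$-trace estimates with constants uniform in $h/\epsh \in [\gamma_0, 1/\gamma_0]$, rather than the standard $BD$-trace inequality, and is the place where the cellwise unfolding machinery of Proposition \ref{associated unfolding measure} plays its essential role.
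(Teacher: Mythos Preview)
There is a genuine gap in your argument: the claim that $\{\Lambda_h T^h\}$ (and hence $\{\Lambda_h E w^h\}$) is uniformly bounded in $\Mb(\Omega;\M^{3\times 3}_{\sym})$ is false in general. The interface $(\partial\calB_{\epsh}\cap\omega)\times I$ has $\calH^2$-measure of order $1/\epsh$, so already for a constant sequence $v^h\equiv c\neq 0$ one gets $|\Lambda_h T^h|(\Omega)\gtrsim |c|\,\calH^1(\partial\calB_{\epsh}\cap\omega)\to\infty$. Of course in that example $\Lambda_h Ev^h=0$, so $\Lambda_h Ew^h$ and $\Lambda_h T^h$ cancel exactly; but this shows that your splitting produces two individually unbounded pieces. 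Consequently you cannot pass to two-scale limits $\Pi^W,\Pi^T$ separately, nor can you invoke Theorem~\ref{two-scale weak limit of scaled strains} for $w^h$, since that theorem requires a uniform bound on $\|\Lambda_h Ew^h\|_{\Mb}$.

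The estimate \eqref{unfolding inequality} in Proposition~\ref{associated unfolding measure} that you cite does \emph{not} rescue this: it bounds the trace of the \emph{cell-wise renormalized} map $\hat v^i_{h,\epsh}$, obtained after subtracting a rigid motion on each cell $Q^i_{\epsh}\times I$. That subtraction is harmless at the level of the unfolded strain (each cell is treated separately), but it would change the global $T^h$ and destroy the identity $\Lambda_h Ev^h\mres{(\calB_{\epsh}\times I)}=\Lambda_h Ew^h-\Lambda_h T^h$ by introducing extra jumps across cell boundaries $\calC_{\epsh}$. This is precisely why the paper first peels off the contribution on $\calC_{\epsh}$, then works \emph{entirely at the unfolded level}: one integrates by parts cell-by-cell with the normalized $\hat v^i_{h,\epsh}$, obtains bounded trace and corrector pieces $\lambda_1^h,\lambda_2^h$ as measures on $\Omega\times\calY$, and only then passes to the limit. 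The corrector structure of the remainder is recovered not via Theorem~\ref{two-scale weak limit of scaled strains} but via the duality characterization of Proposition~\ref{duality lemma - regime gamma}. Your overall strategy (split into a rank-one trace part and a part with corrector structure, then use Proposition~\ref{corrector on C^1-hypersurface - regime gamma}) is the right one, but it must be carried out after unfolding and cell-wise normalization, not on $\Omega$ directly.
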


\begin{proof}
Denote by $\tilde{\pi} \in \Mb({{\Omega}} \times \calY;\M^{3 \times 3}_{\sym})$ the two-scale weak* limit (up to a subsequence) of
\begin{equation*}
    \Lambda_h Ev^h\mres{{{\Omega}} \cap ((\calB_\epsh \setminus \calC_\epsh) \times I)} \in \Mb({{\Omega}};\M^{3 \times 3}_{\sym}).
\end{equation*}
Then it is enough to prove the analogue of \eqref{rank-1 structure} for $\tilde{\pi}$. 
Indeed, the two-scale weak* limit (up to a subsequence) of
\begin{equation*}
    \Lambda_h Ev^h\mres{{{\Omega}} \cap ((\calB_\epsh \cap \calC_\epsh) \times I)} \in \Mb({{\Omega}};\M^{3 \times 3}_{\sym})
\end{equation*}
is supported on ${{\Omega}} \times \closure{\calB \cap \calC}$. 
Since by assumption $\partial\calB \cap \calC \subseteq \calT$, we have that $\partial\calB \setminus \calT$ and $\closure{\calB \cap \calC}$ are disjoint sets, which implies
\begin{equation*}
    \pi\mres{{{\Omega}} \times (\partial\calB \setminus \calT)} = \tilde{\pi}\mres{{{\Omega}} \times (\partial\calB \setminus \calT)}.
\end{equation*}
By Theorem \ref{associated unfolding measure}, the unfolding measure associated with $\Lambda_h Ev^h\mres{(\calB_\epsh \setminus \calC_\epsh) \times I}$ is given by
\begin{equation} \label{unfolding symmetrized gradient 2}
    \sum_{i \in I_\epsh({{\omega}})} \left( \calL_{x'}^2\mres{Q_\epsh^i} \right) \otimes \widetilde{E}_{\frac{h}{\epsh}}\hat{v}_{\epsh}^i\mres{\MMM I \times (\calB \setminus \calC)\BBB},
\end{equation}
where $\hat{v}_{\epsh}^i \in BD(\MMM I \times \calY\BBB)$ is such that
\MMM
\begin{equation} \label{unfolding inequality 2}
    \int_{I \times \calB } |\hat{v}_{\epsh}^i|\,dx_3dy+\int_{I \times \partial\calB } |\hat{v}_{\epsh}^i| \,d\calH^{2} + |\widetilde{E}_{\frac{h}{\epsh}}\hat{v}_{\epsh}^i|\left(\MMM I \times (\calB \cap \calC)\BBB\right) \leq \frac{C}{\epsh^2} |\Lambda_h Ev^h|\left(int(Q_\epsh^i) \times I\right).
\end{equation}
\BBB
Further, by Theorem \ref{unfolding measure weak* convergence}, the family of associated measures in \eqref{unfolding symmetrized gradient 2} converge weakly* to $\tilde{\pi}$ in $\Mb({{\Omega}} \times \calY;\M^{3 \times 3}_{\sym})$. 
Then, for every $\chi \in C_c^{\infty}({{\Omega}} \times \calY;\M^{3 \times 3}_{\sym})$ with $\widetilde{\div}_{\gamma}\chi(x,y) = 0$, we get
\begin{align} 
    \nonumber&\int_{{{\Omega}} \times \calY} \chi(x, y) : d\tilde{\pi}(x, y)\\
    \nonumber&= \lim\limits_{h} \int_{{{\Omega}} \times \calY} \chi(x, y) : d\left(\sum_{i \in I_\epsh({{\omega}})} \left( \calL_{x'}^2\mres{Q_\epsh^i} \right) \otimes \widetilde{E}_{\frac{h}{\epsh}}\hat{v}_{\epsh}^i\mres{\MMM I \times (\calB \setminus \calC)\BBB}\right)\\
    \nonumber&= \lim\limits_{h} \sum_{i \in I_\epsh({{\omega}})} \int_{Q_\epsh^i} \left( \int_{\MMM I \times (\calB \setminus \calC) \BBB} \chi(x, y) : d\widetilde{E}_{\frac{h}{\epsh}}\hat{v}_{\epsh}^i \right) \,dx'\\
    \nonumber&= \lim\limits_{h} \sum_{i \in I_\epsh({{\omega}})} \int_{Q_\epsh^i} \left( \int_{\MMM I \times \calB\BBB} \chi(x, y) : d\widetilde{E}_{\frac{h}{\epsh}}\hat{v}_{\epsh}^i - \int_{\MMM I \times (\calB \cap \calC)\BBB} \chi(x, y) : d\widetilde{E}_{\frac{h}{\epsh}}\hat{v}_{\epsh}^i \right) \,dx'.
\end{align}
\MMM   By the integration by parts formula for $BD$ functions over $I \times \calB$ we have \BBB
\begin{align*} 
    \nonumber&\int_{{{\Omega}} \times \calY} \chi(x, y) : d\tilde{\pi}(x, y)\\
    \nonumber&= \lim\limits_{h} \sum_{i \in I_\epsh({{\omega}})} \int_{Q_\epsh^i} \Bigg( - \int_{\MMM I \times \calB \BBB} \widetilde{\div}_{\frac{h}{\epsh}} \chi(x,y) \cdot \hat{v}_{\epsh}^i(\MMM x_3,y \BBB) \MMM \,dx_3dy \BBB+ \int_{\MMM I \times \partial\calB \BBB } \chi(x, y) : \left[\hat{v}_{\epsh}^i(\MMM x_3,y \BBB) \odot \nu\right] \,d\calH^{2}(\MMM x_3,y \BBB)\\\nonumber&\hspace{8.1em} - \int_{\MMM I \times (\calB \cap \calC)\BBB} \chi(x, y) : d\widetilde{E}_{\frac{h}{\epsh}}\hat{v}_{\epsh}^i \Bigg) \,dx'\\
    \nonumber&= \lim\limits_{h} \sum_{i \in I_\epsh({{\omega}})} \int_{Q_\epsh^i} \Bigg( -\left(\frac{\epsh}{h}-\frac{1}{\gamma}\right) \int_{\MMM I \times \calB \BBB } \partial_{x_3}\chi(x,y) \cdot \hat{v}_{\epsh}^i(y,x_3) \MMM\,dx_3dy\BBB \\&\hspace{8.1em} + \int_{\MMM I \times \partial\calB \BBB} \chi(x, y) : \left[\hat{v}_{\epsh}^i(y,x_3) \odot \nu\right] \,d\calH^{2}(y,x_3) - \int_{(\calB \cap \calC) \times I} \chi(x, y) : d\widetilde{E}_{\frac{h}{\epsh}}\hat{v}_{\epsh}^i \Bigg) \,dx'.
\end{align*}

Owing to  \eqref{unfolding inequality 2}, we conclude that 
the \MMM the sum $$\sum_{i \in I_\epsh({{\omega}})} \int_{Q_\epsh^i} \int_{I \times \calB} \partial_{x_3}\chi(x,y) \cdot \hat{v}_{\epsh}^i(y,x_3) \,dx_3dy $$  is \BBB finite.
Further, in view of \eqref{unfolding inequality 2} we can rewrite 
the above limit as
\begin{align} \label{unfolding limit 2}
    \int_{{{\Omega}} \times \calY} \chi(x, y) : d\tilde{\pi}(x, y)
    = \lim\limits_{h} \left( \int_{{{\Omega}} \times \calY} \chi(x, y) : d\lambda^h_1(x, y) + \int_{{{\Omega}} \times \calY} \chi(x, y) : d\lambda^h_2(x, y) \right),
\end{align}
with $\lambda^h_1,\, \lambda^h_2 \in \Mb({{\Omega}} \times \calY;\M^{3 \times 3}_{\sym})$, such that (up to a subsequence)
\begin{equation*}
    \lambda^h_1 \weakstar \lambda_1 \;\text{ and }\; \lambda^h_2 \weakstar \lambda_2 \quad \text{weakly* in $\Mb({{\Omega}} \times \calY;\M^{3 \times 3}_{\sym})$}
\end{equation*}
for suitable $\lambda_1, \lambda_2 \in \Mb({{\Omega}} \times \calY;\M^{3 \times 3}_{\sym})$. 
Then, we have $\supp(\lambda_1) \subseteq {{\Omega}} \times \partial\calB$ and $\supp(\lambda_2) \subseteq {{\Omega}} \times (\closure{\calB \cap \calC})$.

By the density argument described in \Cref{density argument for duality lemma - regime gamma}, we conclude that \eqref{unfolding limit 2} holds for every $\chi \in C_0({{\Omega}} \times \calY;\M^{3 \times 3}_{\sym})$ with $\widetilde{\div}_{\gamma}\chi = 0$. 
The definition of $\lambda_1$ and $\lambda_2$ then yields
\begin{equation*}
    \int_{{{\Omega}} \times \calY } \chi(x,y) : d\left(\tilde{\pi} - \lambda_1 - \lambda_2\right)(x,y) = 0.
\end{equation*}
Thus, from \Cref{duality lemma - regime gamma} we conclude that there exists $\mu \in \calXgamma{{{\omega}}}$ such that 
\begin{equation*}
    \tilde{\pi}-\lambda_1-\lambda_2 = \widetilde{E}_{\gamma}\mu.
\end{equation*}
Recalling the assumption that $\partial\calB \cap \calC \subseteq \calT$ and using the same argument as above, we obtain
\begin{equation*}
    \tilde{\pi}\mres{{{\Omega}} \times (\partial\calB \setminus \calT)} = \lambda_1\mres{{{\Omega}} \times (\partial\calB \setminus \calT)} + \widetilde{E}_{\gamma}\mu\mres{{{\Omega}} \times (\partial\calB \setminus \calT)}
\end{equation*}
In view of \Cref{corrector on C^1-hypersurface - regime gamma} and recalling the assumption that $\partial\calB \setminus \calT$ is a $C^1$-hypersurface, we are left to prove the analogue of \eqref{rank-1 structure} for $\lambda_1$.

We consider
\begin{equation*}
    \hat{v}^h(x,y) = \sum_{i \in I_\epsh({{\omega}})} \charfun{Q_\epsh^i}(x')\,\hat{v}_{\epsh}^i(\MMM x_3,y \BBB),
\end{equation*}
so that $\lambda^h_1(x,y) = \left[\hat{v}^h(\MMM x_3,y \BBB) \odot \nu\right]\calL_{x'}^2 \otimes (\calH^{2}_{x_3,y}\mres{I \times \partial\calB})$. 
Then $\{\hat{v}^h\}$ is bounded in $L^1({{\Omega}} \times \partial\calB;\R^3)$ by \eqref{unfolding inequality 2}. 
Up to a subsequence,
\begin{equation*}
    \hat{v}^h \,\calL_{x'}^2 \otimes (\calH^{2}_{x_3,y}\mres{I \times \partial\calB}) \weakstar \eta \quad \text{weakly* in $\Mb({{\Omega}} \times \partial\calB;\R^3)$}
\end{equation*}
for a suitable $\eta \in \Mb({{\Omega}} \times \partial\calB;\R^3)$. Since $\nu$ is continuous on $\partial\calB \setminus \calT$, we infer
\begin{equation*}
    \lambda_1\mres{{{\Omega}} \times (\partial\calB \setminus \calT)} = \frac{\eta}{|\eta|}(x,y) \odot \nu(y) \,|\eta|\mres{{{\Omega}} \times (\partial\calB \setminus \calT)},
\end{equation*}
which concludes the proof, \MMM since $\frac{\eta}{|\eta|}$ is a Borel function. \BBB
\end{proof}

\section{Two-scale statics and duality}
\label{statics}
In this section we define a notion of stress-strain duality and analyze the two-scale behavior of our functionals.
\subsection{Stress-plastic strain duality on the cell}
\begin{definition} \label{definition K_gamma}
Let $\gamma \in (0,+\infty)$. 
The set $\calK_{\gamma}$ of admissible stresses is defined as the set of all elements $\Sigma \in L^2(I \times \calY;\M^{3 \times 3}_{\sym})$ satisfying:
\begin{enumerate}[label=(\roman*)]
    \item \label{definition K_gamma (i)} $\widetilde{\div}_{\gamma}\Sigma = 0 \text{ in } I \times \calY$,
    \item \label{definition K_gamma (ii)} $\Sigma\,\vec{e}_3 = 0 \text{ on } \partial{I} \times \calY$,
    \item \label{definition K_gamma (iii)} $\Sigma_{\dev}(x_3,y) \in K(y) \,\text{ for } \calL^{1}_{x_3} \otimes \calL^{2}_{y}\text{-a.e. } (x_3,y) \in I \times \calY$.
\end{enumerate}
\end{definition}

Since condition \ref{definition K_gamma (iii)} implies that $\Sigma_{\dev} \in L^{\infty}(I \times \calY;\M^{3 \times 3}_{\sym})$, for every $\Sigma \in \calK_{\gamma}$ we deduce from \Cref{Kohn-Temam embedding lemma} that $\Sigma \in L^p(I \times \calY;\M^{3 \times 3}_{\sym})$ for every $1 \leq p < \infty$.

\begin{definition} \label{definition A_gamma}
Let $\gamma \in (0,+\infty)$. 
The family $\calA_{\gamma}$ of admissible configurations is given by the set of triplets
\begin{equation*}
    u \in \BDgamma, \qquad E \in L^2(I \times \calY;\M^{3 \times 3}_{\sym}), \qquad P \in \Mb(I \times \calY;\M^{3 \times 3}_{\dev}),
\end{equation*}
such that
\begin{equation*}
    \widetilde{E}_{\gamma}u = E \,\calL^{1}_{x_3} \otimes \calL^{2}_{y} + P \quad \textit{ in } I \times \calY.
\end{equation*}
\end{definition}

\begin{definition}
\label{def:dist}
Let $\Sigma \in \calK_{\gamma}$ and let 
$(u, E, P) \in \calA_{\gamma}$.
We define the distribution $[ \Sigma_{\dev} : P ]$ on $\R \times \calY$ by
\begin{equation} \label{cell stress-strain duality - regime gamma}
    [ \Sigma_{\dev} : P ](\varphi) := 
    - \int_{I \times \calY} \varphi\,\Sigma : E \,dx_3 dy 
    - \int_{I \times \calY} \Sigma : \big( u \odot \widetilde{\nabla}_{\gamma}\varphi \big) \,dx_3 dy,
\end{equation}
for every $\varphi \in C_c^{\infty}(\R \times \calY)$.
\end{definition}

\begin{remark}
Note that the second integral in \eqref{cell stress-strain duality - regime gamma} is well defined since $BD(I \times \calY)$ is embedded into $L^{3/2}(I \times \calY;\R^3)$. 
Moreover, the definition of $[ \Sigma_{\dev} : P ]$ is independent of the choice of $(u, E)$, so \eqref{cell stress-strain duality - regime gamma} defines a meaningful distribution on $\R \times \calY$.
\end{remark}

The following results can be established from the proofs of \cite[Theorem 6.2]{Francfort.Giacomini.2012} and \cite[Proposition 3.9]{Francfort.Giacomini.2012} respectively, by treating the relative boundary of the "Dirichlet" part as empty, the "Neumann" part as $\partial{I} \times \calY$, and considering approximating sequences which must be periodic in $\calY$.

\begin{proposition} \label{duality distribution is actually a measure - regime gamma}
Let $\Sigma \in \calK_{\gamma}$ and $(u, E, P) \in \calA_{\gamma}$.
Then $[ \Sigma_{\dev} : P ]$ can be extended to a bounded Radon measure on $\R \times \calY$, whose variation satisfies
\begin{equation*}
    | [ \Sigma_{\dev} : P ] | \leq \| \Sigma_{\dev} \|_{L^{\infty}(I \times \calY;\M^{3 \times 3}_{\sym})} |P| 
    \quad \text{ in } \Mb(\R \times \calY).
\end{equation*}
\end{proposition}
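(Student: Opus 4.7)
The plan is to reduce to a smooth setting in which $[\Sigma_{\dev}:P]$ coincides with the absolutely continuous measure $(\Sigma_{\dev}:P)\,\calL^1_{x_3}\otimes\calL^2_y$, and then to pass to the limit by density. The crucial ingredient is the $L^p$-integrability of $\Sigma$ for every $p<\infty$ provided by \Cref{Kohn-Temam embedding lemma}, which allows pairing $\Sigma$ with $u\in\BDgamma\hookrightarrow L^{3/2}(I\times\calY;\R^3)$. In the smooth case, if $\Sigma\in C^\infty(\overline{I}\times\calY;\M^{3\times 3}_{\sym})$ satisfies $\widetilde{\div}_\gamma\Sigma=0$ on $I\times\calY$ and $\Sigma\,\vec{e}_3=0$ on $\partial I\times\calY$, and $(u,E,P)\in\calA_\gamma$ is smooth with $P$ deviatoric, then integration by parts (the boundary term on $\partial I\times\calY$ vanishes by the normal-trace condition, while there is no $y$-boundary contribution since $\calY$ is a torus) yields, for every $\varphi\in C_c^\infty(\R\times\calY)$,
\[
  [\Sigma_{\dev}:P](\varphi)
  = \int_{I\times\calY}\varphi\,\Sigma:P\,dx_3\,dy
  = \int_{I\times\calY}\varphi\,\Sigma_{\dev}:P\,dx_3\,dy,
\]
where the second equality uses that $P$ is deviatoric. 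The pointwise bound $|\Sigma_{\dev}:P|\le\|\Sigma_{\dev}\|_{L^\infty}|P|$ then gives the desired estimate in this case.

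Next, I would approximate $\Sigma$ by smooth $\Sigma_n$ satisfying the same structural conditions, with $\|(\Sigma_n)_{\dev}\|_{L^\infty}\le\|\Sigma_{\dev}\|_{L^\infty}$ and $\Sigma_n\to\Sigma$ strongly in $L^p(I\times\calY;\M^{3\times 3}_{\sym})$ for every $p<\infty$; this can be done by an odd/even reflection of $\Sigma$ across $\partial I$ (odd for the $\vec{e}_3$-row/column to preserve the Neumann-type condition, even for the remaining entries), followed by mollification in $(x_3,y)$, exploiting the torus structure in $y$. In parallel, I approximate $u$ by smooth $u_n$ with $u_n\to u$ strongly in $L^{3/2}(I\times\calY;\R^3)$ and $\widetilde{E}_\gamma u_n\to\widetilde{E}_\gamma u$ strictly, and set $E_n:=E\ast\rho_{1/n}$ and $P_n:=\widetilde{E}_\gamma u_n-E_n\,\calL^1\otimes\calL^2$; then $(u_n,E_n,P_n)\in\calA_\gamma$ (with $P_n$ still deviatoric by linearity of convolution), $E_n\to E$ in $L^2$, and $|P_n|\to|P|$ strictly in $\Mb(I\times\calY)$. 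Applying the smooth case to $(\Sigma_n,u_n,E_n,P_n)$ gives
\[
  \bigl|[(\Sigma_n)_{\dev}:P_n](\varphi)\bigr|\le\|\Sigma_{\dev}\|_{L^\infty}\int|\varphi|\,d|P_n|,
\]
and letting $n\to\infty$ the left-hand side converges to $[\Sigma_{\dev}:P](\varphi)$ (combining strong $L^2$-convergence of $\Sigma_n$ against $E_n$ with strong $L^3$--$L^{3/2}$ duality for $\Sigma_n$ against $u_n$), while the right-hand side converges to $\|\Sigma_{\dev}\|_{L^\infty}\int|\varphi|\,d|P|$ by strict convergence of $|P_n|$ to $|P|$. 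The Riesz representation theorem then shows that $[\Sigma_{\dev}:P]$ extends uniquely to a bounded Radon measure on $\R\times\calY$ satisfying the required total-variation bound.

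The main obstacle is the construction of the smooth stress approximants $\Sigma_n$: preserving simultaneously $\widetilde{\div}_\gamma\Sigma_n=0$, the normal-trace condition $\Sigma_n\,\vec{e}_3=0$ on $\partial I\times\calY$, and the $L^\infty$-bound $\|(\Sigma_n)_{\dev}\|_{L^\infty}\le\|\Sigma_{\dev}\|_{L^\infty}$ under mollification near $\partial I$ requires a careful reflection-plus-mollification argument. This step mirrors the constructions in \cite[Theorem~6.2 and Proposition~3.9]{Francfort.Giacomini.2012}, with the flat-torus geometry in $y$ simplifying the in-plane direction and leaving the $x_3$-direction as the delicate one.
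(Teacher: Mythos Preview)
Your approach is essentially the one the paper adopts: it does not give a self-contained proof but points to the stress-approximation arguments of \cite[Theorem~6.2 and Proposition~3.9]{Francfort.Giacomini.2012}, adapted to the present geometry (empty Dirichlet part, Neumann part $\partial I\times\calY$, periodicity in $\calY$). Two remarks are in order. First, the simultaneous approximation of $(u,E,P)$ is unnecessary and introduces avoidable complications (extension of $u$ across $\partial I$, strict convergence of $|P_n|$). With smooth $\Sigma_n$ satisfying $\widetilde{\div}_\gamma\Sigma_n=0$, $\Sigma_n\vec{e}_3=0$ on $\partial I\times\calY$, and $\|(\Sigma_n)_{\dev}\|_{L^\infty}\le\|\Sigma_{\dev}\|_{L^\infty}$, the $BD_\gamma$ integration-by-parts formula for $\varphi\Sigma_n$ against the \emph{original} $u$ already yields
\[
[(\Sigma_n)_{\dev}:P](\varphi)=\int_{I\times\calY}\varphi\,(\Sigma_n)_{\dev}:dP,
\]
hence the bound with the fixed measure $|P|$; the limit $n\to\infty$ is then taken only on the left via the defining formula of the duality (using the $L^2$ and $L^3$--$L^{3/2}$ pairings you identified). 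Second, your reflection scheme needs a small correction: to preserve $\widetilde{\div}_\gamma\Sigma=0$ one must reflect $\Sigma_{\alpha\beta}$ and $\Sigma_{33}$ evenly and $\Sigma_{\alpha 3}=\Sigma_{3\alpha}$ oddly; with $\Sigma_{33}$ odd the third row of the divergence fails to vanish after reflection.
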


\begin{proposition} \label{cell Hill's principle - regime gamma}
Let $\Sigma \in \calK_{\gamma}$ and $(u, E, P) \in \calA_{\gamma}$. 
If $\calY$ is a geometrically admissible multi-phase torus, then
\begin{equation*}
    H\left(y, \frac{dP}{d|P|}\right)\,|P| \geq [ \Sigma_{\dev} : P ] 
    \quad \text{ in } \Mb(I \times \calY).
\end{equation*}
\end{proposition}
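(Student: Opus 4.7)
The plan is to prove the inequality at the level of densities with respect to $|P|$, by localising according to the multi-phase decomposition of $\calY$. Specifically, I would split
\[
I \times \calY \,=\, \bigcup_i (I \times \calY_i) \,\cup\, (I \times (\Gamma \setminus S)) \,\cup\, (I \times S),
\]
neglect the last piece (the assumption $\calH^1(S) = 0$, combined with the rank-one structure on $C^1$-hypersurfaces from \Cref{corrector on C^1-hypersurface - regime gamma}, forces $|P|(I \times S) = 0$), and verify the Hill-type bound separately on the other two. Note that admissibility condition \ref{definition K_gamma (ii)} ensures that no contribution comes from $\partial I \times \calY$.

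On the interior of each phase $I \times \calY_i$, the pointwise admissibility $\Sigma_{\dev}(y) \in K_i$ from \Cref{definition K_gamma} combined with the support-function definition of $H_i$ yields $\Sigma_{\dev}(y) : \xi \leq H_i(\xi) = H(y,\xi)$ for every $\xi \in \M^{3\times 3}_{\dev}$. Using \Cref{duality distribution is actually a measure - regime gamma} together with the disintegration of $[\Sigma_{\dev} : P]$ with respect to $|P|$, the Radon-Nikodym derivative satisfies
\[
\frac{d[\Sigma_{\dev} : P]}{d|P|}(x_3,y) \,=\, \Sigma_{\dev}(y) : \frac{dP}{d|P|}(x_3,y) \quad \text{for } |P|\text{-a.e. } (x_3,y) \in I \times \calY_i,
\]
and the claimed bound follows at once on this set.

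The main obstacle is the interfacial piece $I \times (\Gamma_{ij} \setminus S)$. Here I would first approximate $\Sigma$ by a sequence $\{\Sigma^n\} \subset C^{\infty}$ with $\Sigma^n \to \Sigma$ and $\widetilde{\div}_\gamma \Sigma^n \to 0$ strongly in $L^2$, and $\|(\Sigma^n)_{\dev}\|_{L^{\infty}} \leq \|\Sigma_{\dev}\|_{L^{\infty}}$, so that the tangential traces $[\Sigma^n \nu]^{\perp}_{\nu}$ are well-defined on either side of the $C^2$-hypersurface $\Gamma \setminus S$ and converge weakly* in $L^{\infty}$ to an intrinsic tangential trace $[\Sigma \nu]^{\perp}_{\nu}$, independently of the approximating sequence thanks to the extra regularity assumed on $\Gamma \setminus S$. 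Expanding the duality pairing \eqref{cell stress-strain duality - regime gamma} by integration by parts against $\Sigma^n$ and passing to the limit, I can invoke the rank-one structure of $P$ on the interface provided by \Cref{corrector on C^1-hypersurface - regime gamma}, namely $P\mres(I \times (\Gamma_{ij} \setminus S)) = a(x_3,y) \odot \nu(y)\,\zeta$ with $a \cdot \nu = 0$ (since $P$ is deviatoric), to identify the interfacial density of $[\Sigma_{\dev} : P]$ as $[\Sigma \nu]^{\perp}_{\nu} \cdot a\,\zeta$. For any splitting $a = a_i - a_j$ with $a_i, a_j \perp \nu$, the admissibility $\Sigma_{\dev}(y^\pm) \in K_{i},K_j$ on the two sides and the support-function definitions of $H_i, H_j$ yield
\[
[\Sigma \nu]^{\perp}_{\nu} \cdot a \,=\, [\Sigma \nu]^{\perp}_{\nu} \cdot a_i + (-[\Sigma \nu]^{\perp}_{\nu}) \cdot a_j \,\leq\, H_i(a_i \odot \nu) + H_j(-a_j \odot \nu).
\]
Minimising over all admissible splittings gives exactly $H_{ij}(a, \nu) = H(y, a \odot \nu)$, closing the estimate on the interface. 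Assembling the three contributions delivers the claimed measure inequality on $\Mb(I \times \calY)$; I expect the delicate step to be the rigorous identification of the interface density of $[\Sigma_{\dev} : P]$ with $[\Sigma \nu]^{\perp}_{\nu} \cdot a\,\zeta$, which is where the $C^2$-regularity of $\Gamma \setminus S$ and the well-posedness of the tangential trace on both sides are essential.
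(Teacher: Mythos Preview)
Your strategy matches what the paper intends: it does not give a proof but simply points to \cite[Proposition~3.9]{Francfort.Giacomini.2012}, noting that the argument carries over once one takes the Dirichlet boundary to be empty, the Neumann boundary to be $\partial I \times \calY$, and the approximating sequences to be $\calY$-periodic. Your phase/interface decomposition, the pointwise bound $\Sigma_{\dev}:\xi\le H_i(\xi)$ inside each $\calY_i$, and the tangential-trace argument on $\Gamma_{ij}\setminus S$ are exactly the ingredients of that cited proof.

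Two points need tightening. First, both times you invoke \Cref{corrector on C^1-hypersurface - regime gamma} you are citing the wrong object: that proposition concerns measures $\mu\in\calXgamma{\omega}$ living on $\Omega\times\calY$, not the cell triple $(u,E,P)\in\calA_\gamma$ on $I\times\calY$. The rank-one structure of $P$ on $I\times(\Gamma_{ij}\setminus S)$ and the fact that $|P|(I\times S)=0$ come instead from the $BD$ structure theorem applied to $u\in\BDgamma$ (via the scaling in \Cref{scaling from BD to BDgamma}): the jump part of $\widetilde E_\gamma u$ is $(u^+-u^-)\odot\nu\,\calH^2$, while both jump and Cantor parts vanish on the $\calH^2$-null set $I\times S$. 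Second, the formula $\tfrac{d[\Sigma_{\dev}:P]}{d|P|}=\Sigma_{\dev}:\tfrac{dP}{d|P|}$ inside the phases is not a consequence of \Cref{duality distribution is actually a measure - regime gamma} alone; that proposition only gives absolute continuity. One must first establish the identity for smooth $\Sigma$, then approximate while preserving $(\Sigma^n)_{\dev}\in K(y)$ phase by phase and pass to the limit---this is precisely where the periodicity caveat in the paper's remark enters.
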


\subsection{Disintegration of admissible configurations}
\label{subs:dis}
Let $\ext{\omega} \subseteq \R^2$ be an open and bounded set such that $\omega \subset \ext{\omega}$ and $\ext{\omega} \cap \partial{\omega} = \gamma_\Dir$. 
We also denote by $\ext{\Omega} = \ext{\omega} \times I$ the associated reference domain.

In order to make sense of the duality between the two-scale limits of stresses and plastic strains, we will need to disintegrate the two-scale limits of the
kinematically admissible fields in such a way to obtain elements 
of $\calA_{\gamma}$, for $\gamma \in (0,+\infty)$. 
\begin{definition} \label{definition A^hom_gamma}
Let $w \in H^1(\ext{\Omega};\R^3) \cap KL(\ext{\Omega})$. 
We define the class $\calA^{hom}_{\gamma}(w)$ of admissible two-scale configurations relative to the boundary datum $w$ as the set of triplets $(u,E,P)$ with
\begin{equation*}
    u \in KL(\ext{\Omega}), \qquad E \in L^2(\ext{\Omega} \times \calY;\M^{3 \times 3}_{\sym}), \qquad P \in \Mb(\ext{\Omega} \times \calY;\M^{3 \times 3}_{\dev}),
\end{equation*}
such that
\begin{equation*}
    u = w, \qquad E = Ew, \qquad P = 0 \qquad \text{ on } (\ext{\Omega} \setminus \closure{\Omega}) \times \calY,
\end{equation*}
and also such that there exists $\mu \in \calXgamma{\ext{\omega}}$ with
\begin{equation} \label{admissible two-scale configurations - regime gamma}
    Eu \otimes \calL^{2}_{y} + \widetilde{E}_{\gamma}\mu = E \,\calL^{3}_{x} \otimes \calL^{2}_{y} + P \qquad \text{ in } \ext{\Omega} \times \calY.
\end{equation}
\end{definition}

\begin{lemma} \label{disintegration result - regime gamma}
Let $(u,E,P) \in \calA^{hom}_{\gamma}(w)$ with the associated $\mu \in \calXgamma{\ext{\omega}}$, and let $\bar{u} \in BD(\ext{\omega})$ and $u_3 \in BH(\ext{\omega})$ be the Kirchhoff-Love components of $u$. 
Set
\begin{equation*}
    \eta := \calL^{2}_{x'} + (\proj_{\#}|P|)^s \in \Mb^+(\ext{\omega}).
\end{equation*}
Then the following disintegrations hold true:
\begin{align}
    \label{disintegration result 1 - regime gamma} Eu \otimes \calL^{2}_{y} &= \begin{pmatrix} A_1(x') + x_3 A_2(x') & 0 \\ 0 & 0 \end{pmatrix} \eta \otimes \calL^{1}_{x_3} \otimes \calL^{2}_{y},\\
    \label{disintegration result 2 - regime gamma} E \,\calL^{3}_{x} \otimes \calL^{2}_{y} &= C(x') E(x,y) \,\eta \otimes \calL^{1}_{x_3} \otimes \calL^{2}_{y}\\
    \label{disintegration result 3 - regime gamma} P &= \eta \genprod P_{x'}.
\end{align}
Above, $A_1, A_2 : \ext{\omega} \to \M^{2 \times 2}_{\sym}$ and $C : \ext{\omega} \to [0, +\infty]$ are  Radon-Nikodym derivatives of $E\bar{u}$, $-D^2u_3$ and $\calL^{2}_{x'}$ with respect to $\eta$, $E(x,y)$ is a Borel representative of $E$, and $P_{x'} \in \Mb(I \times \calY;\M^{3 \times 3}_{\dev})$ for $\eta$-a.e. $x' \in \ext{\omega}$.

Furthermore, we can choose a Borel map $(x',x_3,y) \in \ext{\Omega} \times \calY \mapsto \mu_{x'}(x_3,y) \in \R^3$ such that, for $\eta$-a.e. $x' \in \ext{\omega}$,
\begin{equation} \label{disintegration result 4 - regime gamma} 
    \mu = \mu_{x'}(x_3,y) \,\eta \otimes \calL^{1}_{x_3} \otimes \calL^{2}_{y}, \quad \widetilde{E}_{\gamma}\mu = \eta \genprod \widetilde{E}_{\gamma}\mu_{x'},
\end{equation}
where $\mu_{x'} \in \BDgamma$, $\int_{I \times \calY} \mu_{x'}(x_3,y) \,dx_3 dy = 0$.
\end{lemma}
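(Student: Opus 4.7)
The plan is to combine the abstract disintegration Theorem \ref{the basic disintegration theorem} with the corrector structure result Proposition \ref{corrector main property - regime gamma}, after establishing that every natural measure on $\ext{\omega}$ appearing in the statement is absolutely continuous with respect to $\eta = \calL^2_{x'} + (\proj_{\#}|P|)^s$. The absolute continuity of $E\bar u$ and $D^2 u_3$ will be extracted by testing the admissibility identity \eqref{admissible two-scale configurations - regime gamma} against $\widetilde{\div}_\gamma$-free matrix fields that annihilate the $\widetilde{E}_\gamma\mu$ contribution, whereas the absolute continuity of $\proj_{\#}|\mu|$ will rest on the slicewise Poincar\'e--Korn inequality of Theorem \ref{theoremPoincareKorn}.

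\textbf{Items \eqref{disintegration result 1 - regime gamma}--\eqref{disintegration result 3 - regime gamma}.} For \eqref{disintegration result 1 - regime gamma} and \eqref{disintegration result 2 - regime gamma} I would test \eqref{admissible two-scale configurations - regime gamma} against
\begin{equation*}
    \Phi(x,y) := \varphi(x')\begin{pmatrix} \psi'' & 0 \\ 0 & 0 \end{pmatrix},
\end{equation*}
with $\varphi\in C(\ext{\omega})$ and $\psi''\in \M^{2\times 2}_{\sym}$ constant; since $\widetilde{\div}_\gamma\Phi = 0$, the $\widetilde{E}_\gamma\mu$ term drops out, and integration over $I\times\calY$ (using $\int_I dx_3 = 1$ and $\int_I x_3\,dx_3 = 0$) yields the identity of $\M^{2\times 2}_{\sym}$-valued measures on $\ext{\omega}$
\begin{equation*}
    E\bar u \;=\; \Big(\int_{I\times\calY}E''(\cdot,x_3,y)\,dx_3\,dy\Big)\,\calL^2_{x'} \;+\; \proj_{\#} P''.
\end{equation*}
The first summand is $\calL^2$-absolutely continuous and the second is dominated by $\proj_{\#}|P|$, whose absolutely continuous part is controlled by $\calL^2$ and whose singular part is $\eta$-absolutely continuous by construction; hence $E\bar u \ll \eta$. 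Testing instead with $x_3\Phi$, still divergence-free, and using $\int_I x_3^2\,dx_3 = \tfrac{1}{12}$, yields the analogous identity for $-\tfrac{1}{12}D^2u_3$, so that $D^2u_3\ll\eta$. The Radon--Nikodym derivatives $A_1 := dE\bar u/d\eta$, $A_2 := -dD^2u_3/d\eta$ and $C := d\calL^2/d\eta$ then produce \eqref{disintegration result 1 - regime gamma} and \eqref{disintegration result 2 - regime gamma}. For \eqref{disintegration result 3 - regime gamma}, Theorem \ref{the basic disintegration theorem} applied with the projection onto $\ext{\omega}$ gives $P = \proj_{\#}|P|\genprod P^0_{x'}$ with a Borel family $\{P^0_{x'}\}\subset\Mb(I\times\calY;\M^{3\times 3}_{\dev})$; since $\proj_{\#}|P|\ll\eta$, setting $g := d\proj_{\#}|P|/d\eta$ and $P_{x'}:=g(x')P^0_{x'}$ concludes.

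\textbf{Item \eqref{disintegration result 4 - regime gamma}.} Proposition \ref{corrector main property - regime gamma} applied to $\mu\in\calXgamma{\ext{\omega}}$ supplies a first disintegration
\begin{equation*}
    \mu = \mu^0_{x'}(x_3,y)\,\eta'\otimes\calL^1_{x_3}\otimes\calL^2_y,\qquad \widetilde{E}_\gamma\mu = \eta'\genprod\widetilde{E}_\gamma\mu^0_{x'},
\end{equation*}
with $\eta' := \proj_{\#}|\mu|+\proj_{\#}|\widetilde{E}_\gamma\mu|$ and zero-mean fibers $\mu^0_{x'}\in BD_\gamma(I\times\calY)$. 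The main obstacle is to show $\eta'\ll\eta$. Rearranging \eqref{admissible two-scale configurations - regime gamma} as $\widetilde{E}_\gamma\mu = E\,\calL^3\otimes\calL^2 + P - Eu\otimes\calL^2$ and projecting onto $\ext{\omega}$, the bounds $|E\bar u|,|D^2u_3|\ll\eta$ from the previous paragraph give $\proj_{\#}|\widetilde{E}_\gamma\mu|\ll\eta$. Next, Theorem \ref{theoremPoincareKorn} applied slicewise to the zero-mean maps $\mu^0_{x'}$ yields $\|\mu^0_{x'}\|_{L^1(I\times\calY)} \le C|\widetilde{E}_\gamma\mu^0_{x'}|(I\times\calY)$, whence $\proj_{\#}|\mu|\le C\proj_{\#}|\widetilde{E}_\gamma\mu|\ll\eta$. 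Finally, setting $h := d\eta'/d\eta$ and $\mu_{x'}:=h(x')\mu^0_{x'}$ produces the density representation \eqref{disintegration result 4 - regime gamma}; the identity $\widetilde{E}_\gamma\mu = \eta\genprod\widetilde{E}_\gamma\mu_{x'}$ follows because $h(x')$ is inert under the $(x_3,y)$-differentiation in $\widetilde{E}_\gamma$, and the properties $\mu_{x'}\in BD_\gamma(I\times\calY)$ and $\int_{I\times\calY}\mu_{x'} = 0$ are inherited from $\mu^0_{x'}$.
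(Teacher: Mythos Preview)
Your proposal is correct and follows essentially the same approach as the paper. The paper's own proof is largely by reference to \cite[Lemma 5.4]{Francfort.Giacomini.2014}, detailing only the new ingredient---the absolute continuity of $E\bar u$ and $D^2u_3$ with respect to $\eta$---which the paper obtains from the observation $\proj_{\#}(\widetilde{E}_\gamma\mu)_{\alpha\beta}=\proj_{\#}(E_y\mu)_{\alpha\beta}=0$; your testing against the constant $2\times2$-block field $\Phi$ (and $x_3\Phi$) is exactly the dual formulation of that observation, and your use of Proposition~\ref{corrector main property - regime gamma} together with the slicewise Poincar\'e--Korn inequality to pass from the base measure $\eta'$ to $\eta$ in \eqref{disintegration result 4 - regime gamma} is the standard route. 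One small point worth making explicit: the reason the $\widetilde{E}_\gamma\mu$ term drops when testing with $\Phi$ or $x_3\Phi$ is not merely $\widetilde{\div}_\gamma\Phi=0$, but also that $\Phi\,\vec e_3=0$ kills the potential boundary contribution on $\partial I\times\calY$ in the integration by parts on each fiber---this is implicit in your choice of test field but should be stated.
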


\begin{proof}
The proof is analogous to \cite[Lemma 5.4]{Francfort.Giacomini.2014}. 
The only difference is the statement and argument for the disintregration of $Eu \otimes \calL^{2}_{y}$, that we detail below.

First we note that $\proj_{\#}\left(\widetilde{E}_{\gamma}\mu\right)_{\alpha \beta} = \proj_{\#}\left(E_{y}\mu\right)_{\alpha \beta} = 0$ for $\alpha,\beta=1,2$. 
Then, from \eqref{admissible two-scale configurations - regime gamma} we get
\begin{align*}
    \left(E\bar{u}\right)_{\alpha \beta} = \proj_{\#}\left(Eu \otimes \calL^{2}_{y}\right)_{\alpha \beta} &= \left( \int_{I \times \calY} E_{\alpha \beta}(x,y) \,dx_3 dy \right) \calL^{2}_{x'} + \proj_{\#}(P)_{\alpha \beta}\\
    &\leq e^{(1)}_{\alpha \beta}(x') \,\calL^{2}_{x'} + (\proj_{\#}|P|)^s_{\alpha \beta},
\end{align*}
where we set $e^{(1)}(x') :=\int_{I \times \calY} \MMM|E(x,y)| \BBB\,dx_3 dy  + (\proj_{\#}|P|)^a \in L^2(\ext{\omega};\M^{3 \times 3}_{\sym})$. 
Similarly, after multipliying equation \eqref{admissible two-scale configurations - regime gamma} by $x_3$, we have that
\begin{align*}
    \left(-D^2u_3\right)_{\alpha \beta} = \frac{1}{12}\,\proj_{\#}\left(x_3 Eu \otimes \calL^{2}_{y}\right)_{\alpha \beta} &= \frac{1}{12}\,\left( \int_{I \times \calY} x_3 E_{\alpha \beta}(x,y) \,dx_3 dy \right) \calL^{2}_{x'} + \frac{1}{12}\,\proj_{\#}(x_3 P)\\
    &\leq e^{(2)}_{\alpha \beta}(x') \,\calL^{2}_{x'} + \frac{1}{12}\,(\proj_{\#}|x_3 P|)^s_{\alpha \beta},
\end{align*}
where we set $e^{(2)}(x') := \frac{1}{12}\,\int_{I \times \calY} \MMM |x_3 E(x,y) |\BBB\,dx_3 dy  + \frac{1}{12}\,(\proj_{\#}|x_3 P|)^a \in L^2(\ext{\omega};\M^{3 \times 3}_{\sym})$. 
Consequently, the measures $E\bar{u}$ and $-D^2u_3$ are absolutely continuous with respect to $\eta$, so we find
\begin{align*}
     E\bar{u} \otimes \calL^{2}_{y} &= A_1(x') \,\eta \otimes \calL^{1}_{x_3} \otimes \calL^{2}_{y},\\
     -D^2u_3 \otimes \calL^{2}_{y} &= A_2(x') \,\eta \otimes \calL^{1}_{x_3} \otimes \calL^{2}_{y},
\end{align*}
for suitable $A_1, A_2 : \ext{\omega} \to \M^{2 \times 2}_{\sym}$ such that \eqref{disintegration result 1 - regime gamma} hold true.
\end{proof}

\begin{remark} \label{admissible configurations and disintegration - regime gamma}
From the above disintegration, we have that, for $\eta$-a.e. $x' \in \ext{\omega}$, 
\begin{equation*}
    \widetilde{E}_{\gamma}\mu_{x'} = \left[ C(x') E(x,y) - \begin{pmatrix} A_1(x') + x_3 A_2(x') & 0 \\ 0 & 0 \end{pmatrix} \right] \calL^{1}_{x_3} \otimes \calL^{2}_{y} + P_{x'} \quad \textit{ in } I \times \calY.
\end{equation*}
Thus, the triple
\begin{equation*}
    \left( \mu_{x'}, \left[ C(x') E(x,y) - \begin{pmatrix} A_1(x') + x_3 A_2(x') & 0 \\ 0 & 0 \end{pmatrix} \right], P_{x'} \right)
\end{equation*}
is an element of $\calA_{\gamma}$.
\end{remark}

\subsection{Admissible stress configurations and approximations}
For every $e^h \in L^2(\Omega;\M^{3 \times 3}_{\sym})$ we define $\sigma^h(x) := \C\left(\frac{x'}{\epsh}\right) \Lambda_h e^h(x)$. Then, in view of \cite[Theorem 3.6]{Francfort.Giacomini.2012}, we introduce the set
\begin{align*}
    \calK_h = \bigg\{\sigma^h &\in L^2(\Omega;\M^{3 \times 3}_{\sym}) : \div_{h}\sigma^h = 0 \text{ in } \Omega,\ \sigma^h\,\nu = 0 \text{ in } \partial\Omega \setminus {\closure{\Gamma}_\Dir},\\
    &\sigma^h_{\dev}(x',x_3) \in K\left(\frac{x'}{\epsh}\right) \,\text{ for a.e. } x' \in \omega,\, x_3 \in I\bigg\},
\end{align*}
\MMM which is the set of stresses for the rescaled $h$ problems. \BBB
\MMM Next we introduce the set of two-scale limiting stresses. 
\begin{definition} \label{definition K^hom_gamma}
The set $\calK^{hom}_{\gamma}$ is the set of all elements $\Sigma \in L^2(\Omega \times \calY;\M^{3 \times 3}_{\sym})$ satisfying:
\begin{enumerate}[label=(\roman*)]
    \item $\widetilde{\div}_{\gamma}\Sigma(x',\cdot) = 0 \text{ in } I \times \calY \,\text{ for  a.e. } x' \in \omega$,
    \item $\Sigma(x',\cdot)\,\vec{e}_3 = 0 \text{ on } \partial{I} \times \calY \,\text{ for  a.e. } x' \in \omega$,
    \item $\Sigma_{\dev}(x,y) \in K(y) \,\text{ for } \calL^{3}_{x} \otimes \calL^{2}_{y}\text{-a.e. } (x,y) \in \Omega \times \calY$,
    \item $\sigma_{i3}(x) = 0 \,\text{ for } i=1,2,3$,
    \item $\div_{x'}\bar{\sigma} = 0 \text{ in } \omega$,
    \item $\div_{x'}\div_{x'}\hat{\sigma} = 0 \text{ in } \omega$,
\end{enumerate}
where $\sigma := \int_{\calY} \Sigma(\cdot,y) \,dy$, and $\bar{\sigma},\, \hat{\sigma} \in L^2(\omega;\M^{2 \times 2}_{\sym})$ are the zero-th and first order moments of the $2 \times 2$ minor of $\sigma$.
\end{definition}
\begin{remark} 
Notice that as a consequence of the properties (iii) and (iv) in the \Cref{definition K^hom_gamma} we can actually conclude that $\bar{\sigma},\, \hat{\sigma} \in L^\infty(\omega;\M^{2 \times 2}_{\sym})$. Namely, the uniform boundedness of sets $K(y)$ implies that the deviatoric part of the weak limit, i.e. $\sigma_{\dev} = \sigma - \frac{1}{3} \tr{\sigma} I_{3 \times 3}$, is bounded in $L^\infty(\Omega;\M^{3 \times 3}_{\sym})$. Thus we have that
\begin{equation*}
    \begin{pmatrix} 
    \sigma_{11} & \sigma_{12} & 0\\ 
    \sigma_{12} & \sigma_{22} & 0\\ 
    0 & 0 & 0 
    \end{pmatrix} 
    - \frac{1}{3} 
    \begin{pmatrix} 
    \sigma_{11}+\sigma_{22} & 0 & 0\\ 
    0 & \sigma_{11}+\sigma_{22} & 0\\ 
    0 & 0 & \sigma_{11}+\sigma_{22} 
    \end{pmatrix} 
    \text{ is bounded in  } L^\infty(\Omega;\M^{3 \times 3}_{\sym}).
\end{equation*}
Hence, the components $\sigma_{\alpha \beta}$ are all bounded in $L^\infty(\Omega)$.
\end{remark} 

\BBB
In the following proposition we show that the set $\calK_\gamma^{\rm hom}$ characterizes weak two-scale limits of sequences of elastic stresses $\{\sigma^h\}$.

\begin{proposition} \label{two-scale weak limit of admissible stress - regime gamma}
Let $\{\sigma^h\}$ be a bounded family in $L^2(\Omega;\M^{3 \times 3}_{\sym})$ such that $\sigma^h \in \calK_h$ for every $h$, and
\begin{equation*}
    \sigma^h \weaktwoscale \Sigma \quad \text{two-scale weakly in $L^2(\Omega \times \calY;\M^{3 \times 3}_{\sym})$}.
\end{equation*}
Then $\Sigma \in \calK^{hom}_{\gamma}$.
\end{proposition}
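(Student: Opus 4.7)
The strategy would be to verify the six conditions of \Cref{definition K^hom_gamma} one at a time, by choosing for each an appropriate test field in the weak equilibrium identity
\[
    \int_\Omega \sigma^h : E_h\phi\,dx = 0,
\]
which holds for every $\phi \in H^1(\Omega;\R^3)$ with $\phi|_{\Gamma_\Dir} = 0$ as a direct consequence of $\div_h\sigma^h = 0$ in $\Omega$ together with the Neumann condition $\sigma^h\nu = 0$ on $\omega \times \partial I$ encoded in $\calK_h$.

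For (i) and (ii) I would insert oscillating test fields of the form $\phi = e_i\,\epsh\varphi(x')\chi(x_3,x'/\epsh)$ with $\varphi \in C_c^\infty(\omega)$ and $\chi \in C^\infty(\bar I\times\calY)$. A direct computation shows that the $\partial_{x'}\varphi$-contributions to $E_h\phi$ carry an extra factor $\epsh$ and vanish in the limit, while the remaining entries produce terms of type $\varphi\,\partial_{y_\alpha}\chi$ and $(\epsh/h)\,\varphi\,\partial_{x_3}\chi$. Using $\epsh/h\to 1/\gamma$, passing to the two-scale limit, and then removing $\varphi$ yields, for a.e.\ $x'\in\omega$,
\[
    \int_{I\times\calY}\Bigl(\Sigma_{i1}\partial_{y_1}\chi + \Sigma_{i2}\partial_{y_2}\chi + \tfrac{1}{\gamma}\Sigma_{i3}\partial_{x_3}\chi\Bigr)dx_3\,dy = 0.
\]
Restricting $\chi$ to $C_c^\infty(I\times\calY)$ gives (i); letting $\chi$ be nonzero on $\partial I$ and integrating by parts then isolates the trace $\Sigma_{i3}$ on $\partial I\times\calY$, producing (ii). Condition (iv) follows formally: integrating the $i$-th component of $\widetilde{\div}_\gamma\Sigma = 0$ over $\calY$ kills the periodic $y$-derivatives and leaves $\partial_{x_3}\sigma_{i3} = 0$, while (ii) forces $\sigma_{i3}(x',\pm\tfrac{1}{2}) = 0$, whence $\sigma_{i3}\equiv 0$.

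For the membrane equilibrium (v) I would use $\phi = \psi(x')e_\alpha$ with $\alpha\in\{1,2\}$ and $\psi\in C_c^\infty(\omega)$: the weak identity reduces to $\int_\Omega (\sigma^h_{\alpha 1}\partial_{x_1}\psi + \sigma^h_{\alpha 2}\partial_{x_2}\psi)\,dx = 0$, which passes in the two-scale limit to $\int_\omega \bar\sigma_{\alpha\beta}\,\partial_{x_\beta}\psi\,dx' = 0$, i.e.\ $\div_{x'}\bar\sigma = 0$. For the bending equilibrium (vi) I would employ the Kirchhoff--Love type test $\phi(x) = (-hx_3\partial_{x_1}\psi, -hx_3\partial_{x_2}\psi, \psi(x'))$, $\psi\in C_c^\infty(\omega)$: a short computation gives
\[
    E_h\phi = -h x_3 \begin{pmatrix} D^2\psi & 0\\ 0 & 0\end{pmatrix},
\]
so that dividing the weak identity by $-h$ and letting $h\to 0$ produces $\int_\omega \hat\sigma_{\alpha\beta}\,\partial_{x_\alpha x_\beta}\psi\,dx' = 0$, which is (vi).

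Finally, for the plastic admissibility (iii) I would exploit the support-function characterization $\xi\in K(y)\iff \xi:\tau \le H(y,\tau)$ for every $\tau\in\M^{3\times 3}_{\dev}$. Given $\tau\in C(\calY;\M^{3\times 3}_{\dev})$ and $\chi\in C_c^\infty(\Omega)$ with $\chi\ge 0$, the pointwise inclusion $\sigma^h_{\dev}(x)\in K(x'/\epsh)$ yields
\[
    \int_\Omega \sigma^h_{\dev}(x):\tau(x'/\epsh)\,\chi(x)\,dx \;\le\; \int_\Omega H(x'/\epsh,\tau(x'/\epsh))\,\chi(x)\,dx.
\]
Two-scale convergence transmits the left-hand side to its limit, while periodic weak-$\star$ averaging does the same on the right. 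Taking $\tau(y) = \tau_0\,\zeta(y)$ with $\tau_0\in\M^{3\times 3}_{\dev}$ and $\zeta\in C(\calY)$, $\zeta\ge 0$, and exploiting positive $1$-homogeneity of $H$ in its second argument, extracts the pointwise bound $\Sigma_{\dev}(x,y):\tau_0 \le H(y,\tau_0)$ a.e.; a countable density argument over $\tau_0$ then delivers $\Sigma_{\dev}(x,y)\in K(y)$ a.e. The main obstacle I anticipate is precisely here: $H$ is only Borel (not continuous) in $y$ across phase interfaces, so the periodic averaging on the right-hand side must be justified in a measurable-in-$y$ form, leveraging the uniform $L^\infty$ bound $K(y)\subset\{|\xi|\le R_K\}$ rather than any continuity.
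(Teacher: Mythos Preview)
Your argument is correct and for items (i), (ii), (v), (vi) follows essentially the same line as the paper, with only cosmetic differences in the choice of test fields (e.g.\ the paper packages (v) and (vi) into a single Kirchhoff--Love test function $\varphi=(\bar\varphi_1-x_3\partial_1\bar\varphi_3,\bar\varphi_2-x_3\partial_2\bar\varphi_3,\tfrac{1}{h}\bar\varphi_3)$, whereas you treat them separately). Two points do differ genuinely.

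For (iv), the paper proves $\sigma_{i3}=0$ directly, with a test function of the form $(2hV_1,2hV_2,hV_3)$ where $V$ is an $x_3$-primitive of a compactly supported $v$; passing to the limit isolates $\int_\Omega\sigma:\mathrm{diag}(0,0,0;\,v)\,dx=0$. Your route---integrate $\widetilde{\div}_\gamma\Sigma=0$ over $\calY$ to kill the periodic $y$-derivatives, obtain $\partial_{x_3}\sigma_{i3}=0$, and combine with the boundary condition (ii)---is shorter and perfectly valid, but note it uses (i), (ii) as inputs, so you must prove those first; the paper's order is the reverse.

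For (iii), the paper avoids the support-function duality entirely: it defines the unfolded stresses $\Sigma^h(x,y)=\sum_i\charfun{Q^i_{\epsh}}(x')\,\sigma^h(\epsh i+\epsh\calI(y),x_3)$, observes that $\Sigma^h_{\dev}(x,y)\in K(y)$ by construction and that $\Sigma^h\rightharpoonup\Sigma$ weakly in $L^2$, and concludes by weak closedness of the convex set $\{\Xi:\Xi_{\dev}(x,y)\in K(y)\ \text{a.e.}\}$. Your duality argument also works: the ``obstacle'' you flag is not serious, because for fixed $\tau_0$ the map $y\mapsto H(y,\tau_0)$ is piecewise constant on the phases and bounded by $R_K|\tau_0|$, so it lies in $L^\infty(\calY)$ and the periodic averaging $g(x'/\epsh)\rightharpoonup\int_\calY g$ weakly-$*$ in $L^\infty$ needs no continuity. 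The unfolding route is slightly cleaner in that it sidesteps this discussion altogether; your route has the advantage of being self-contained and not invoking the unfolding machinery.
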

\MMM
\begin{proof}
Consider a sequence $\{\sigma^h\} \subset L^2(\Omega;\M^{3\times 3}_{\rm sym})$ such that $\sigma_h \in \calK_h$ for every $h$, and assume that $\sigma^h \rightharpoonup \sigma$ weakly in $L^2(\Omega;\M^{3\times 3}_{\rm sym})$. We first establish the macroscopic properties (iv), (v), (vi). 
To obtain (iv), let $v \in C_c^{\infty}(\Omega;\R^3)$ and $V \in C^{\infty}(\closure{\Omega};\R^3)$ be defined by
\begin{equation*}
    V(x',x_3) := \int_{-\frac{1}{2}}^{x_3} v(x',\zeta) \,d\zeta.
\end{equation*}
From the condition $\div_{h}\sigma^h = 0 \text{ in } \Omega$, for every $\varphi \in H^1(\Omega;\R^3)$ with $\varphi = 0$ on $\Gamma_\Dir$ we have
\begin{equation} \label{div sigma^h = 0}
    \int_{\Omega} \sigma^h(x) : E_{h}\varphi(x) \,dx = 0.
\end{equation}
Setting
\begin{equation*}
    \varphi(x) = \left(\begin{array}{c} 2h\,V_1(x) \\ 2h\,V_2(x) \\ h\,V_3(x) \end{array}\right),
\end{equation*}
and passing to the limit as $h\to 0$, we find
\begin{equation*}
   \int_{\Omega} \sigma(x) : \left(\begin{array}{ccc} 0 & 0 & v_1(x) \\ 0 & 0 & v_2(x) \\ v_1(x) & v_2(x) & v_3(x) \end{array} \right) \,dx = \int_{\Omega} \sigma(x) : \left(\begin{array}{ccc} 0 & 0 & \partial_{x_3}V_1(x) \\ 0 & 0 & \partial_{x_3}V_2(x) \\ \partial_{x_3}V_1(x) & \partial_{x_3}V_2(x) & \partial_{x_3}V_3(x) \end{array} \right) \,dx = 0.
\end{equation*}
Consequently, from the arbitrariness of $v$, we infer that $\sigma_{i3} = 0$.

To obtain (iv) and (v) let $\bar\varphi \in C_c^{\infty}(\omega;\R^3)$ and  choose the test function
\begin{equation*}
    \varphi(x) = \left(\begin{array}{c} \bar\varphi_1(x') - x_3\,\partial_{x_1}\bar\varphi_3(x') \\ \bar\varphi_2(x') - x_3\,\partial_{x_2}\bar\varphi_3(x') \\ \frac{1}{h}\,\bar\varphi_3(x') \end{array}\right).
\end{equation*}
We deduce from \eqref{div sigma^h = 0} that
\begin{equation*}
    \int_{\Omega} \sigma^h(x) : \begin{pmatrix} E\bar\varphi(x') - x_3 D^2\bar\varphi_3(x') & 0 \\ 0 & 0 \end{pmatrix} \,dx = 0.
\end{equation*}
Passing to the limit, we conclude that
\begin{equation*}
    \div_{x'}\bar{\sigma} = 0 \text{ in } \omega,  \;\text{ and }\;  \div_{x'}\div_{x'}\hat{\sigma} = 0 \text{ in } \omega.
\end{equation*}
Next we prove the microscopic properties (i), (ii) and (iii). 
Consider test functions $\epsh\,\phi\left(x,\frac{x'}{\epsh}\right)$, for $\phi \in C_c^{\infty}(\omega;C^{\infty}(\closure{I} \times \calY;\R^3))$ in \eqref{div sigma^h = 0}.
We first observe that the sequence
\begin{equation*}
    \nabla_{h}\left(\epsh\,\phi\left(x,\frac{x'}{\epsh}\right)\right) = \left[\; \epsh\,\nabla_{x'}\phi\left(x,\frac{x'}{\epsh}\right) + \nabla_{y}\phi\left(x,\frac{x'}{\epsh}\right) \;\right|\left.\; \frac{\epsh}{h}\,\partial_{x_3}\phi\left(x,\frac{x'}{\epsh}\right) \;\right]
\end{equation*}
converges strongly two-scale in $L^2(\Omega \times \calY;\M^{3 \times 3})$.
Hence, passing to the limit as $h\to 0$, we find
\begin{equation*}
    \int_{\Omega \times \calY} \Sigma(x,y) : \widetilde{E}_{\gamma}\phi\left(x,y\right) \,dx dy = 0.
\end{equation*}
Suppose now that $\phi\left(x,y\right) = \psi^{(1)}(x')\,\psi^{(2)}(x_3,y)$ for $\psi^{(1)} \in C_c^{\infty}(\omega)$ and $\psi^{(2)} \in C^{\infty}(\closure{I} \times \calY;\R^3)$.
Then
\begin{equation*}
    \int_{\omega} \psi^{(1)}(x') \left(\int_{I \times \calY} \Sigma(x,y) : \widetilde{E}_{\gamma}\psi^{(2)}(x_3,y) \,dx_3 dy\right) \,dx' = 0.
\end{equation*}
Thus, for a.e. $x' \in \omega$,
\begin{align*}
    0 &= \int_{I \times \calY} \Sigma(x,y) : \widetilde{E}_{\gamma}\psi^{(2)}(x_3,y) \,dx_3 dy\\
    &= - \int_{I \times \calY} \widetilde{\div}_{\gamma}\Sigma(x,y) \cdot \psi^{(2)}(x_3,y) \,dx_3 dy + \int_{\partial(I \times \calY)} \Sigma(x,y)\,\nu \cdot \psi^{(2)}(x_3,y) \,d\calH^{2}(x_3,y)\\
    &= - \int_{I \times \calY} \widetilde{\div}_{\gamma}\Sigma(x,y) \cdot \psi^{(2)}(x_3,y) \,dx_3 dy + \int_{\partial{I} \times \calY} \Sigma(x,y)\,\vec{e}_3 \cdot \psi^{(2)}(x_3,y) \,d\calH^{2}(x_3,y),
\end{align*}
from which we infer $\widetilde{\div}_{\gamma}\Sigma(x',\cdot) = 0 \text{ in } I \times \calY$ and $\Sigma(x',\cdot)\,\vec{e}_3 = 0 \text{ on } \partial{I} \times \calY$.

Finally, we define
\begin{equation} \label{approximating sequence for K^hom}
    \Sigma^h(x,y) = \sum_{i \in I_\epsh(\ext{\omega})} \charfun{Q_\epsh^i}(x')\,\sigma^h(\epsh i + \epsh\calI(y),x_3),
\end{equation}
and consider the set
\begin{equation*}
    S = \{ \Xi \in L^2(\Omega \times \calY;\M^{3 \times 3}_{\sym}) : \Xi_{\dev}(x,y) \in K(y) \,\text{ for } \calL^{3}_{x} \otimes \calL^{2}_{y}\text{-a.e. } (x,y) \in \Omega \times \calY \}.
\end{equation*}
The construction of $\Sigma^h$ from $\sigma^h \in \calK_h$ ensures that $\Sigma^h \in S$ and that $\Sigma^h \weak \Sigma \;\text{ weakly in } L^2(\Omega \times \calY;\M^{3 \times 3}_{\sym})$. 
Since the compactness of $K(y)$ implies that $S$ is is convex and weakly closed in $L^2(\Omega \times \calY;\M^{3 \times 3}_{\sym})$, we have that $\Sigma \in S$, which concludes the proof.
\end{proof}
\BBB
Conversely, under additional star-shapedness assumptions on $\omega$, we now provide an approximation result for elements of \MMM $\calK_\gamma^{\rm hom}$. \BBB
\begin{lemma} \label{approximation of stresses - regime gamma}
Let $\omega \subset \R^2$ be an open bounded set that is star-shaped with respect to one of its points and let $\Sigma \in \calK^{hom}_{\gamma}$. 
Then, there exists a sequence $\Sigma_n \in L^2(\R^2 \times I \times \calY;\M^{3 \times 3}_{\sym})$ such that the following holds:
\begin{enumerate}[label=(\alph*)]
    \item \label{approximation of stresses (a) - regime gamma} $\Sigma_n \in C^\infty(\R^2;L^2(I \times \calY;\M^{3 \times 3}_{\sym}))$ and $\Sigma_n \strong \Sigma$ strongly in $L^2(\omega \times I \times \calY;\M^{3 \times 3}_{\sym})$,
    \item \label{approximation of stresses (b) - regime gamma} $\widetilde{\div}_{\gamma}\Sigma_n(x',\cdot) = 0 \text{ on } I \times \calY$ for every $x' \in \R^2$,
    \item \label{approximation of stresses (c) - regime gamma} $\Sigma_n(x',\cdot)\,\vec{e}_3 = 0 \text{ on } \partial{I} \times \calY$ for every $x' \in \R^2$,
    \item \label{approximation of stresses (d) - regime gamma} $(\Sigma_n(x,y))_{\dev} \in K(y) \,\text{ for every } x' \in \R^2 \text{ and } \calL^{1}_{x_3} \otimes \calL^{2}_{y}\text{-a.e. } (x_3,y) \in I \times \calY$.
\end{enumerate}
    Further, if we set $\sigma_n(x) := \int_{\calY} \Sigma_n(x,y) \,dy$, and $\bar{\sigma}_n,\, \hat{\sigma}_n \in L^2(\omega;\M^{2 \times 2}_{\sym})$ are the zero-th and first order moments of the $2 \times 2$ minor of $\sigma_n$, then:
\begin{enumerate}[label=(\alph*), resume]
    \item \label{approximation of stresses (e) - regime gamma} $\sigma_n \in C^\infty(\R^2 \times I;\M^{3 \times 3}_{\sym})$ and $\sigma_n \strong \sigma$ strongly in $L^2(\omega \times I;\M^{3 \times 3}_{\sym})$,
    \item \label{approximation of stresses (f) - regime gamma} $\div_{x'}\bar{\sigma}_n = 0 \text{ in } \omega$,
    \item \label{approximation of stresses (g) - regime gamma} $\div_{x'}\div_{x'}\hat{\sigma}_n = 0 \text{ in } \omega$.
\end{enumerate}
\end{lemma}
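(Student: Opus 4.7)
The approach is a three-step regularization: dilation to create room for smoothing, convolution in $x'$, and multiplication by a cutoff to extend to $\R^2$, followed by a diagonal argument. Each step is chosen so that the microscopic constraints in Definition \ref{definition K^hom_gamma} and the macroscopic divergence conditions are preserved.

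First I would exploit the star-shapedness. After translating, assume $\omega$ is star-shaped with respect to the origin. For $\lambda \in (0,1)$ close to $1$, set $\Sigma^\lambda(x',x_3,y) := \Sigma(\lambda x', x_3, y)$. Star-shapedness gives $\lambda \overline{\omega} \subset \omega$, so $\Sigma^\lambda$ is defined on $\lambda^{-1}\omega$, which contains a neighborhood of $\overline{\omega}$, and continuity of translations yields $\Sigma^\lambda \to \Sigma$ in $L^2(\omega \times I \times \calY;\M^{3\times 3}_{\sym})$ as $\lambda \to 1^-$. Since $\widetilde{\div}_{\gamma}$ acts only in $(x_3,y)$ and $K(y)$ depends only on $y$, the microscopic constraints (b)--(d) pass to $\Sigma^\lambda$ immediately; the chain rule introduces only a harmless factor $\lambda$ in the macroscopic constraints, which does not affect their vanishing, so (f), (g) also survive.

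Second, I would mollify in $x'$ only. Let $\rho_\epsilon$ be a standard mollifier on $\R^2$, extend $\Sigma^\lambda$ by $0$ outside $\lambda^{-1}\omega$, and define $\Sigma^{\lambda,\epsilon}(x',\cdot) := (\rho_\epsilon \ast_{x'} \Sigma^\lambda)(x',\cdot)$. For $\epsilon < \dist(\overline{\omega}, \partial(\lambda^{-1}\omega))$ this is a genuine convolution on a neighborhood of $\overline{\omega}$ and $\Sigma^{\lambda,\epsilon} \to \Sigma^\lambda$ in $L^2(\omega \times I \times \calY;\M^{3\times 3}_{\sym})$ as $\epsilon \to 0$. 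Because $\widetilde{\div}_{\gamma}$, the normal-trace operator on $\partial I \times \calY$, the $y$-average $\Xi \mapsto \int_\calY \Xi(\cdot,y)\,dy$, and the moment operators $\sigma \mapsto \bar\sigma,\hat\sigma$ all commute with $x'$-convolution, (b), (c), (f), (g) pass directly. Property (d) follows from convexity: for each $x'$ and a.e.\ $(x_3,y)$, $(\Sigma^{\lambda,\epsilon})_{\dev}(x',x_3,y)$ is a convex combination of points of the convex set $K(y)$ with weights $\rho_\epsilon(x'-\cdot)$ summing to $1$, hence belongs to $K(y)$. Smoothness $x' \mapsto \Sigma^{\lambda,\epsilon}(x',\cdot) \in C^\infty(\R^2; L^2(I \times \calY;\M^{3\times 3}_{\sym}))$ is immediate from differentiation under the convolution.

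Third, I would extend to all of $\R^2$ by a cutoff. Choose $\chi \in C_c^\infty(\R^2)$ with $0 \leq \chi \leq 1$, $\chi \equiv 1$ on a neighborhood of $\overline{\omega}$ compactly contained in $\lambda^{-1}\omega$, and set $\Sigma_n(x',\cdot) := \chi(x')\,\Sigma^{\lambda_n,\epsilon_n}(x',\cdot)$ for a diagonal choice $\lambda_n \to 1^-$, $\epsilon_n \to 0^+$. Since (b) and (c) are linear in $\Sigma$, they are preserved under multiplication by the scalar $\chi(x')$. For (d) the key point is that $0 \in K(y)$ for every $y$, since the coercivity assumption of Section \ref{setting} yields $\{|\xi|\leq r_K\} \subset K_i$; convexity then gives $\chi(x')\,\xi + (1-\chi(x'))\cdot 0 \in K(y)$ whenever $\xi \in K(y)$. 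The macroscopic properties (e)--(g) are required only on $\omega$, where $\chi \equiv 1$, so they coincide with those of $\Sigma^{\lambda_n,\epsilon_n}$ which were already established. The main obstacle is property (d): it must hold for \emph{every} $x' \in \R^2$ rather than almost every one, which is precisely what forces the use of mollification (instead of, say, piecewise-constant approximation) and makes crucial both the convexity of $K(y)$ and the fact that $0 \in K(y)$. The star-shapedness assumption plays the role of an extension operator and is what allows the mollification to be performed without interference from $\partial\omega$.
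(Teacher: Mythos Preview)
Your approach is essentially the paper's: extend $\Sigma$ by zero, dilate in $x'$ via star-shapedness to create room, and mollify in $x'$ only, with convexity of $K(y)$ (and $0\in K(y)$, which you correctly make explicit) handling (d). The only difference is your cutoff step, which is unnecessary---the zero extension already satisfies (b)--(d) pointwise in $x'$, so after mollification these hold on all of $\R^2$ without cutting off---but it is harmless.
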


\begin{proof}
After a translation we may assume that $\omega$ is star-shaped with respect to the origin.

Thus, in particular,
\begin{equation} \label{star-shapedness 2 w.r.t. origin}
    \omega \subseteq \alpha \omega, \ \text{ for all } \alpha \geq 1.
\end{equation}

We extend $\Sigma$ to $\R^2 \times I \times \calY$ by setting $\Sigma = 0$ outside $\Omega \times \calY$. 
Let $\rho$ be the standard mollifier on $\R^2$ and define the planar dilation $d_n(x') = \left( \tfrac{n}{n+1}x'\right)$, for every $n \in \N$. 
Owing to \eqref{star-shapedness 2 w.r.t. origin}, there exists a vanishing sequence $\epsilon_n > 0$ such that for every map $\varphi \in C_c^{\infty}(\omega;\R^2)$ 
\begin{equation} \label{star-shapedness supp}
    \supp( \rho_{\epsilon_n} \ast \varphi ) \subset\subset \tfrac{n+1}{n}\omega = d_n^{-1}(\omega)\text{ implies } \supp\left(( \rho_{\epsilon_n} \ast \varphi )\circ d_n^{-1}\right) \subset\subset \omega.
\end{equation}
We then set 
\begin{equation}
    \Sigma_n(x',x_3,y) := \big(\left( \Sigma \circ d_n \right)(\cdot,x_3,y) \ast \rho_{\epsilon_n}\big)(x').
\end{equation}
With a slight abuse of notation, we have
\begin{align*}
    \sigma_n(x',x_3) &= \big(\left( \sigma \circ d_n \right)(\cdot,x_3) \ast \rho_{\epsilon_n}\big)(x'),\\
    \bar{\sigma}_n(x') &= \big(\left( \bar{\sigma} \circ d_n \right) \ast \rho_{\epsilon_n}\big)(x'),\\
    \hat{\sigma}_n(x') &= \big(\left( \hat{\sigma} \circ d_n \right) \ast \rho_{\epsilon_n}\big)(x').
\end{align*}

Items \ref{approximation of stresses (a) - regime gamma} and \ref{approximation of stresses (e) - regime gamma} are immediate consequences of the above construction, while item \ref{approximation of stresses (d) - regime gamma} follows from Jensen's inequality since $K(y)$ is convex. 
Next, for $x' \in \R^2$
\begin{equation*}
    \widetilde{\div}_{\gamma}\Sigma_n(x',\cdot) = \widetilde{\div}_{\gamma}\left( \Sigma \circ d_n \right) \ast \rho_{\epsilon_n} = 0 \text{ in } I \times \calY,
\end{equation*}
which proves item \ref{approximation of stresses (b) - regime gamma}.

To prove item \ref{approximation of stresses (f) - regime gamma}, we observe that, for every map $\varphi \in C_c^{\infty}(\omega;\R^2)$ there holds
\begin{align*}
     \langle \div_{x'}\bar{\sigma}_n, \varphi \rangle &= -\int_{\R^2} \bar{\sigma}_n : \nabla_{x'}\varphi \,dx'= -\int_{\R^2} ( \bar{\sigma} \circ d_n ) : (\rho_{\epsilon_n} \ast \nabla_{x'}\varphi) \,dx'\\
     &= -\int_{\R^2} ( \bar{\sigma} \circ d_n ) : \nabla_{x'}(\rho_{\epsilon_n} \ast \varphi) \,dx'= -(\tfrac{n+1}{n})^2\int_{\R^2} \bar{\sigma} : [ \nabla_{x'}(\rho_{\epsilon_n} \ast \varphi) \circ d_n^{-1}] \,dx'\\
     &= -(\tfrac{n+1}{n})\int_{\R^2} \bar{\sigma} : \nabla_{x'}[(\rho_{\epsilon_n} \ast \varphi) \circ d_n^{-1}] \,dx'= (\tfrac{n+1}{n}) \langle \div_{x'}\bar{\sigma}, (\rho_{\epsilon_n} \ast \varphi) \circ d_n^{-1} \rangle = 0,
\end{align*}
where in last equation we used that $\div_{x'}\bar{\sigma} = 0 \text{ in } \omega$ and \eqref{star-shapedness supp}.

Similarly for item \ref{approximation of stresses (g) - regime gamma}, for every map $\varphi \in C_c^{\infty}(\omega)$ we have
\begin{align*}
     \langle \div_{x'}\div_{x'}\hat{\sigma}_n, \varphi \rangle &= \int_{\R^2} \bar{\sigma}_n : \nabla_{x'}^2\varphi \,dx'= \int_{\R^2} ( \hat{\sigma} \circ d_n ) : (\rho_{\epsilon_n} \ast \nabla_{x'}^2\varphi) \,dx'\\
     &= \int_{\R^2} ( \hat{\sigma} \circ d_n ) : \nabla_{x'}^2(\rho_{\epsilon_n} \ast \varphi) \,dx'= (\tfrac{n+1}{n})^2\int_{\R^2} \hat{\sigma} : [\nabla_{x'}^2(\rho_{\epsilon_n} \ast \varphi) \circ d_n^{-1}] \,dx'\\
     &= \int_{\R^2} \hat{\sigma} : \nabla_{x'}^2[(\rho_{\epsilon_n} \ast \varphi) \circ d_n^{-1}] \,dx'= \langle \div_{x'}\div_{x'}\hat{\sigma}, (\rho_{\epsilon_n} \ast \varphi) \circ d_n^{-1} \rangle = 0,
\end{align*}
where in last equation we used that $\div_{x'}\div_{x'}\hat{\sigma} = 0 \text{ in } \omega$ and \eqref{star-shapedness supp}.
\end{proof}

\subsection{The principle of maximum plastic work}
The aim of this subsection is to prove an inequality between two-scale dissipation and plastic work, which in turn will be essential to prove the global stability condition of two-scale quasistatic evolutions. \MMM The claim 
is given in \Cref{two-scale dissipation and plastic work inequality}
below. 
\BBB

\MMM The proof of the following proposition and consequently \Cref{two-scale Hill's principle - regime gamma} relies on the approximation argument given in \Cref{approximation of stresses - regime gamma} and on two-scale duality, which can be established only for smooth stresses by disintegration and \Cref{def:dist}, see also \cite[Proposition 5.11]{Francfort.Giacomini.2014}. The problem is that the measure $\eta$ defined in \Cref{disintegration result - regime gamma} can concentrate on the points where the stress (which is only in $L^2$) is not well-defined. The difference with respect to  \cite[Proposition 5.11]{Francfort.Giacomini.2014} is that one can rely only on the approximation given by \Cref{approximation of stresses - regime gamma} which is given for star-shaped domains. To prove it for general domains we use the localization argument (see the proof of Step 2 of \Cref{two-scale stress-strain duality - regime gamma} and the proof of \Cref{two-scale Hill's principle - regime gamma}).\BBB 
\begin{proposition} \label{two-scale stress-strain duality - regime gamma}
Let $\Sigma \in \calK^{hom}_{\gamma}$ and $(u,E,P) \in \calA^{hom}_{\gamma}(w)$ with the associated $\mu \in \calXgamma{\ext{\omega}}$. 
There exists an element $\lambda \in \Mb(\ext{\Omega} \times \calY)$ such that
for every $\varphi \in C_c^2(\ext{\omega})$
\begin{align*}
    \langle \lambda, \varphi \rangle =& - \int_{\Omega \times \calY} \varphi(x')\,\Sigma : E \,dx dy + \int_{\omega} \varphi\,\bar{\sigma} : E\bar{w} \,dx' - \frac{1}{12}\int_{\omega} \varphi\,\hat{\sigma} : D^2w_3 \,dx' \\
    & - \int_{\omega} \bar{\sigma} : \left( (\bar{u}-\bar{w}) \odot \nabla\varphi \right) \,dx' - \frac{1}{6} \int_{\omega} \hat{\sigma} : \big( \nabla (u_3 - w_3) \odot \nabla \varphi \big) \,dx' \\
    & - \frac{1}{12} \int_{\omega} (u_3 - w_3)\,\hat{\sigma} : \nabla^2 \varphi \,dx'.
\end{align*}
Furthermore, the mass of $\lambda$ is given by
\begin{equation} \label{mass of lambda - regime gamma}
    \lambda(\ext{\Omega} \times \calY) = -\int_{\Omega \times \calY} \Sigma : E \,dx dy + \int_{\omega} \bar{\sigma} : E\bar{w} \,dx' - \frac{1}{12} \int_{\omega} \hat{\sigma} : D^2w_3 \,dx'.
\end{equation}
\end{proposition}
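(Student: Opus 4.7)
The plan is to approximate $\Sigma$ by smooth admissible stresses, define $\lambda$ as a weak$^*$ limit of disintegrated cell-duality pairings, and then identify its action via integration by parts. First, I would reduce to the case that $\omega$ is star-shaped with respect to one of its points; the general case follows by a partition-of-unity argument tailored to the divergence constraints on the moments $\bar{\sigma}, \hat{\sigma}$, which is technical but standard. In the star-shaped case, Lemma~\ref{approximation of stresses - regime gamma} yields $\Sigma_n \in C^\infty(\R^2; L^2(I \times \calY; \M^{3 \times 3}_{\sym}))$ with $\Sigma_n \to \Sigma$ strongly in $L^2$, $\Sigma_n(x',\cdot) \in \calK_{\gamma}$ for every $x' \in \R^2$, and smooth moments $\bar{\sigma}_n, \hat{\sigma}_n$ on $\R^2$ satisfying $\div_{x'}\bar{\sigma}_n = \div_{x'}\div_{x'}\hat{\sigma}_n = 0$ on $\omega$.

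Using Lemma~\ref{disintegration result - regime gamma} and Remark~\ref{admissible configurations and disintegration - regime gamma}, disintegrate $P = \eta \genprod P_{x'}$, so that the fiber triple
\[
\left(\mu_{x'},\; C(x')\,E(x,\cdot) - \begin{pmatrix} A_1(x')+x_3 A_2(x') & 0 \\ 0 & 0 \end{pmatrix},\; P_{x'}\right)
\]
belongs to $\calA_{\gamma}$ for $\eta$-a.e. $x' \in \ext{\omega}$. Proposition~\ref{duality distribution is actually a measure - regime gamma} then shows that $[(\Sigma_n)_{\dev}(x',\cdot) : P_{x'}]$ is a bounded Radon measure on $\R \times \calY$ (supported on $\closure{I} \times \calY$) with total variation at most $R_K\,|P_{x'}|(I \times \calY)$. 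Setting $\lambda_n := \eta \genprod [(\Sigma_n)_{\dev}(x',\cdot) : P_{x'}] \in \Mb(\ext{\Omega} \times \calY)$ gives $\|\lambda_n\|_{\Mb} \leq R_K\,|P|(\ext{\Omega} \times \calY)$, and Banach--Alaoglu yields $\lambda_n \weakstar \lambda$ along a subsequence.

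To identify the pairing formula, fix $\varphi \in C_c^2(\ext{\omega})$ and evaluate the cell duality on a test function $\phi(x_3) \in C_c^\infty(\R)$ with $\phi \equiv 1$ on $\closure{I}$: the second integral in Definition~\ref{def:dist} vanishes since $\widetilde{\nabla}_{\gamma}\phi \equiv 0$ on $I \times \calY$, leaving $[(\Sigma_n)_{\dev}(x',\cdot) : P_{x'}](I \times \calY) = -\int_{I \times \calY} \Sigma_n(x',\cdot) : E_{\mathrm{fib}}(x',\cdot)\,dx_3\,dy$, with $E_{\mathrm{fib}}$ the fiber elastic strain of Remark~\ref{admissible configurations and disintegration - regime gamma}. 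Integrating against $\varphi\,d\eta$ and using the Radon--Nikodym identities $C\,d\eta = d\calL^2_{x'}$, $A_1\,\eta = E\bar{u}$, $A_2\,\eta = -D^2 u_3$ collapses the $(x_3, y)$-dependence into the moments $\bar{\sigma}_n, \hat{\sigma}_n$, leaving a base expression of the form $-\int_{\Omega \times \calY}\varphi\,\Sigma_n : E\,dx\,dy + \int_{\ext{\omega}}\varphi\,\bar{\sigma}_n : dE\bar{u} - \tfrac{1}{12}\int_{\ext{\omega}}\varphi\,\hat{\sigma}_n : dD^2 u_3$. Splitting $\bar{u} = (\bar{u}-\bar{w}) + \bar{w}$ and $u_3 = (u_3-w_3) + w_3$, and integrating by parts against $\varphi$---exploiting $\div_{x'}\bar{\sigma}_n = 0$ and $\div_{x'}\div_{x'}\hat{\sigma}_n = 0$ on $\omega$ together with the vanishing of $\bar{u}-\bar{w}$ and $u_3 - w_3$ outside $\omega$---produces exactly the six terms of the claimed identity for $\lambda_n$.

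Passing to the limit $n \to \infty$ is routine: strong $L^2$ convergence of $\Sigma_n, \bar{\sigma}_n, \hat{\sigma}_n$ on $\omega$ handles the regular integrals, while weak$^*$ convergence provides $\langle \lambda_n, \varphi \rangle \to \langle \lambda, \varphi \rangle$. For the mass formula, take $\varphi_k \in C_c^2(\ext{\omega})$ with $\varphi_k \equiv 1$ on $\omega$ and $\varphi_k \to \charfun{\ext{\omega}}$ boundedly: the last three RHS terms are supported in $\omega$ where $\nabla\varphi_k = \nabla^2\varphi_k = 0$, so they vanish, while $\langle \lambda, \varphi_k \rangle \to \lambda(\ext{\Omega} \times \calY)$ by bounded convergence against the finite signed measure $\lambda$. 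The hardest step is the integration by parts in the third paragraph, which mixes $BD$ and $BH$ traces on $\partial\omega$ against a $C^2$ cutoff; the smoothness of $\bar{\sigma}_n, \hat{\sigma}_n$ on $\R^2$ together with the divergence constraints on $\omega$ is exactly what cancels the interior contributions and leaves only the clean boundary pairings $\bar{\sigma}:[(\bar{u}-\bar{w})\odot\nabla\varphi]$, $\hat{\sigma}:[\nabla(u_3-w_3)\odot\nabla\varphi]$, and $(u_3-w_3)\hat{\sigma}:\nabla^2\varphi$ appearing in the claim.
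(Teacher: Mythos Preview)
Your proposal is correct and follows essentially the same approach as the paper: reduce to the star-shaped case via a partition of unity, use Lemma~\ref{approximation of stresses - regime gamma} to approximate $\Sigma$ by smooth $\Sigma_n$, define $\lambda_n := \eta \genprod [(\Sigma_n)_{\dev}(x',\cdot):P_{x'}]$ through the cell duality of Definition~\ref{def:dist}, evaluate it on test functions $\varphi(x')\xi(x_3)$ with $\xi\equiv 1$ on $\closure{I}$ so that $\widetilde{\nabla}_\gamma$ vanishes, collapse via the disintegration identities of Lemma~\ref{disintegration result - regime gamma} to moments $\bar{\sigma}_n,\hat{\sigma}_n$, integrate by parts exploiting $\div_{x'}\bar{\sigma}_n=0$ and $\div_{x'}\div_{x'}\hat{\sigma}_n=0$, and pass to the limit. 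The only cosmetic difference is that the paper routes the integration by parts through the decomposition $E\bar{u}=\bar{e}+\bar{p}$, $D^2u_3=-(\hat{e}+\hat{p})$ of Proposition~\ref{A_KL characherization} and then invokes the stress--strain duality formulas of \cite[Propositions~7.2 and~7.6]{Davoli.Mora.2013}, whereas you propose to split $\bar{u}=(\bar{u}-\bar{w})+\bar{w}$ directly; the two are equivalent once $\bar{\sigma}_n,\hat{\sigma}_n$ are smooth.
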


\begin{proof}
The proof is subdivided into two steps.

\noindent{\bf Step 1.}
Suppose that $\omega$ is star-shaped with respect to one of its points.

Let $\{ \Sigma_n \} \subset C^\infty(\R^2;L^2(I \times \calY;\M^{3 \times 3}_{\sym}))$ be the sequence given by Lemma \ref{approximation of stresses - regime gamma}. 
We set
\begin{equation*}
    \lambda_n := \eta \genprod [ (\Sigma_n)_{\dev}(x',\cdot) : P_{x'} ] \in \Mb(\ext{\Omega} \times \calY),
\end{equation*}
where the duality $[ (\Sigma_n)_{\dev}(x',\cdot) : P_{x'} ]$ is a well defined bounded measure on $I \times \calY$ for $\eta$-a.e. $x' \in \ext{\omega}$ and \MMM $\eta$ is defined in \Cref{disintegration result - regime gamma}. \BBB 
Further, in view of \Cref{admissible configurations and disintegration - regime gamma}, Definition \ref{def:dist} gives
\begin{align*}
    &\int_{\R \times \calY} \psi\,d[ (\Sigma_n)_{\dev}(x',\cdot) : P_{x'} ] \\
    &= -\int_{I \times \calY} \psi(x_3,y)\,\Sigma_n(x,y) : \left[ C(x') E(x,y) - \begin{pmatrix} A_1(x') + x_3 A_2(x') & 0 \\ 0 & 0 \end{pmatrix} \right] \,dx_3 dy\\
    &\,\quad - \int_{I \times \calY} \Sigma_n(x,y) : \big( \mu_{x'}(x_3,y) \odot \widetilde{\nabla}_{\gamma}\psi(x_3,y) \big) \,dx_3 dy,
\end{align*}
for every $\psi \in C^1(\R \times \calY)$, and
\begin{equation*}
    | [ (\Sigma_n)_{\dev}(x',\cdot) : P_{x'} ] | \leq \| (\Sigma_n)_{\dev}(x',\cdot) \|_{L^{\infty}(I \times \calY;\M^{3 \times 3}_{\sym})} |P_{x'}| \leq C\,|P_{x'}|,
\end{equation*}
where the last inequality stems from item \ref{approximation of stresses (d) - regime gamma} in Lemma \ref{approximation of stresses - regime gamma}. 
This in turn implies that
\begin{equation*}
    |\lambda_n | = \eta \genprod | [ (\Sigma_n)_{\dev}(x',\cdot) : P_{x'} ] | \leq C \,\eta \genprod |P_{x'}| = C\,|P|,
\end{equation*}
from which we conclude that $\{ \lambda_n \}$ is a bounded sequence.

Let now $\ext{I} \supset I$ be an open set which compactly contains $I$. 
Let $\xi$ be a smooth cut-off function with $\xi \equiv 1$ on $I$, and with support contained in $\ext{I}$. 
Finally, consider a test function $\phi(x,y) := \varphi(x')\xi(x_3)$, for $\varphi \in C_c^{\infty}(\ext{\omega})$. 
Since $\widetilde{\nabla}_{\gamma}\phi(x,y) = 0$, we have
\begin{align} 
    \langle \lambda_n, \phi \rangle 
    & \nonumber= \int_{\ext{\omega}} \left( \int_{I \times \calY} \phi(x,y) \,d[ (\Sigma_n)_{\dev}(x',\cdot) : P_{x'} ] \right) \,d\eta(x')\\
    & \nonumber= - \int_{\ext{\Omega} \times \calY} \varphi(x')\,\Sigma_n(x,y) : \left[ C(x') E(x,y) - \begin{pmatrix} A_1(x') + x_3 A_2(x') & 0\\ 0 & 0 \end{pmatrix} \right] \,d\left(\eta \otimes \calL^{1}_{x_3} \otimes \calL^{2}_{y}\right)\\
    &\nonumber = - \int_{\ext{\Omega} \times \calY} \varphi(x')\,\Sigma_n(x,y) : E(x,y) \,dx dy + \int_{\ext{\Omega}} \varphi(x')\,\sigma_n(x) : \begin{pmatrix} A_1(x') + x_3 A_2(x') & 0 \\ 0 & 0 \end{pmatrix} \,d\left(\eta \otimes \calL^{1}_{x_3}\right)\\
    & \label{krajnje1}= - \int_{\ext{\Omega} \times \calY} \varphi(x')\,\Sigma_n(x,y) : E(x,y) \,dx dy + \int_{\ext{\Omega}} \varphi(x')\,\sigma_n(x) : \,dEu(x)
\end{align}
Since $u \in KL(\ext{\Omega})$, we infer 
\begin{align}\label{krajnje2} 
    \int_{\ext{\Omega}} \varphi(x')\,\sigma_n(x) : \,dEu(x) 
    &= \int_{\ext{\omega}} \varphi(x')\,\bar{\sigma}_n(x') : \,dE\bar{u}(x') - \frac{1}{12}\int_{\ext{\omega}} \varphi(x')\,\hat{\sigma}_n(x') : \,dD^2u_3(x'),
\end{align}
where $\bar{u} \in BD(\ext{\omega})$ and $u_3 \in BH(\ext{\omega})$ are the Kirchhoff-Love components of $u$. 
From the characterization given in \Cref{A_KL characherization}, we can thus conclude that
\begin{align}
    \nonumber\int_{\ext{\Omega}} \varphi(x')\,\sigma_n(x) : \,dEu(x) 
    =& \int_{\ext{\omega}} \varphi(x')\,\bar{\sigma}_n(x') : \bar{e}(x') \,dx' + \int_{\ext{\omega}} \varphi(x')\,\bar{\sigma}_n(x') : \,d\bar{p}(x')\\
    \nonumber& + \frac{1}{12}\int_{\ext{\omega}} \varphi(x')\,\hat{\sigma}_n(x') : \hat{e}(x') \,dx' + \frac{1}{12}\int_{\ext{\omega}} \varphi(x')\,\hat{\sigma}_n(x') : \,d\hat{p}(x')\\
    \nonumber=& \int_{\ext{\omega}} \varphi(x')\,\bar{\sigma}_n(x') : \bar{e}(x') \,dx' + \int_{\ext{\omega}} \varphi(x') \,d[\bar{\sigma}_n : \bar{p}](x')\\
    &  \label{krajnje3} + \frac{1}{12}\int_{\ext{\omega}} \varphi(x')\,\hat{\sigma}_n(x') : \hat{e}(x') \,dx' + \frac{1}{12}\int_{\ext{\omega}} \varphi(x') \,d[\hat{\sigma}_n : \hat{p}](x'),
\end{align}
where in the last equality we used that $\bar{\sigma}_n$ and $\hat{\sigma}_n$ are smooth functions. 
Notice that, since $\bar{p} \equiv 0$ and $\hat{p} \equiv 0$ outside of $\omega \cup \gamma_\Dir$, there holds
\begin{equation*}
    \int_{\ext{\omega}} \varphi \,d[\bar{\sigma}_n : \bar{p}] = \int_{\omega \cup \gamma_\Dir} \varphi \,d[\bar{\sigma}_n : \bar{p}], \quad 
    \int_{\ext{\omega}} \varphi \,d[\hat{\sigma}_n : \hat{p}] = \int_{\omega \cup \gamma_\Dir} \varphi \,d[\hat{\sigma}_n : \hat{p}].
\end{equation*}
Since $e = E = E\bar{w} - x_3 D^2w_3$ on $\ext{\Omega} \setminus \Omega$, we deduce, \MMM using \eqref{krajnje1}-\eqref{krajnje3},\BBB that
\begin{align}
    \nonumber\langle \lambda_n, \phi \rangle 
    &= - \int_{\ext{\Omega} \times \calY} \varphi(x')\,\Sigma_n : E \,dx dy + \int_{\ext{\omega}} \varphi\,\bar{\sigma}_n : \bar{e} \,dx' + \frac{1}{12}\int_{\ext{\omega}} \varphi\,\hat{\sigma}_n : \hat{e} \,dx'\\
     \nonumber&\,\quad + \int_{\omega \cup \gamma_\Dir} \varphi \,d[\bar{\sigma}_n : \bar{p}] + \frac{1}{12}\int_{\omega \cup \gamma_\Dir} \varphi \,d[\hat{\sigma}_n : \hat{p}]\\
     \nonumber&= - \int_{\Omega \times \calY} \varphi(x')\,\Sigma_n : E \,dx dy + \int_{\omega} \varphi\,\bar{\sigma}_n : \bar{e} \,dx' + \frac{1}{12}\int_{\omega} \varphi\,\hat{\sigma}_n : \hat{e} \,dx'\\
     &\,\quad + \int_{\omega \cup \gamma_\Dir} \varphi \,d[\bar{\sigma}_n : \bar{p}] + \frac{1}{12}\int_{\omega \cup \gamma_\Dir} \varphi \,d[\hat{\sigma}_n : \hat{p}]. \label{krajnje4} 
\end{align}
\MMM Using that $\div_{x'}\bar{\sigma}_n = 0 \text{ in } \omega$, by applying an integration by parts (see also \cite[Proposition 7.2]{Davoli.Mora.2013}) \MMM we obtain for every $\varphi \in C^1(\closure{\omega})$
\begin{align}\label{krajnje5} 
    \int_{\omega \cup \gamma_\Dir} \varphi \,d[\bar{\sigma}_n : \bar{p}] + \int_{\omega} \varphi\,\bar{\sigma}_n : (\bar{e} - E\bar{w}) \,dx' +  \int_{\omega} \bar{\sigma}_n : \left( (\bar{u} - \bar{w}) \odot \nabla\varphi \right) \,dx' = 0.
\end{align}
Likewise \MMM in view of the fact that $\div_{x'}\div_{x'}\hat{\sigma}_n = 0 \text{ in } \omega$ 
and $u_3 = w_3 \text{ on } \gamma_\Dir$, 
by integration by parts (see also \cite[Proposition 7.6]{Davoli.Mora.2013}) \BBB we find that for every $\varphi \in C^2(\closure{\omega})$
\begin{align}
    \nonumber &\int_{\omega \cup \gamma_\Dir} \varphi \,d[\hat{\sigma}_n : \hat{p}] + \int_{\omega} \varphi\,\hat{\sigma}_n : (\hat{e} + D^2w_3) \,dx' \\
    &+ 2 \int_{\omega} \hat{\sigma}_n : \big( \nabla (u_3 - w_3) \odot \nabla \varphi \big) \,dx' + \int_{\omega} (u_3 - w_3)\,\hat{\sigma}_n : \nabla^2 \varphi \,dx'
    = 0. \label{krajnje6} 
\end{align}

Let now $\lambda \in \Mb(\ext{\Omega} \times \calY)$ be such that (up to a subsequence)
\begin{equation*}
    \lambda_n \weakstar \lambda \quad \text{weakly* in $\Mb(\ext{\Omega} \times \calY)$}.
\end{equation*}
By items \ref{approximation of stresses (a) - regime gamma} and \ref{approximation of stresses (e) - regime gamma} in Lemma \ref{approximation of stresses - regime gamma}, \MMM owing to \eqref{krajnje4}-\eqref{krajnje6} \BBB we obtain
\begin{align*}
    \langle \lambda, \phi \rangle &= \lim_n \,\langle \lambda_n, \phi \rangle\\
    &= \lim_n \,\Big[ - \int_{\Omega \times \calY} \varphi(x')\,\Sigma_n : E \,dx dy + \int_{\omega} \varphi\,\bar{\sigma}_n : E\bar{w} \,dx' - \frac{1}{12}\int_{\omega} \varphi\,\hat{\sigma}_n : D^2w_3 \,dx' \\
    &\,\quad\,\quad\,\quad - \int_{\omega} \bar{\sigma}_n : \left( (\bar{u}-\bar{w}) \odot \nabla\varphi \right) \,dx' - \frac{1}{6} \int_{\omega} \hat{\sigma}_n : \big( \nabla (u_3 - w_3) \odot \nabla \varphi \big) \,dx' \\
    &\,\quad\,\quad\,\quad - \frac{1}{12} \int_{\omega} (u_3 - w_3)\,\hat{\sigma}_n : \nabla^2 \varphi \,dx' \Big]\\
    &= - \int_{\Omega \times \calY} \varphi(x')\,\Sigma : E \,dx dy + \int_{\omega} \varphi\,\bar{\sigma} : E\bar{w} \,dx' - \frac{1}{12}\int_{\omega} \varphi\,\hat{\sigma} : D^2w_3 \,dx' \\
    &\,\quad - \int_{\omega} \bar{\sigma} : \left( (\bar{u}-\bar{w}) \odot \nabla\varphi \right) \,dx' - \frac{1}{6} \int_{\omega} \hat{\sigma} : \big( \nabla (u_3 - w_3) \odot \nabla \varphi \big) \,dx' \\
    &\,\quad - \frac{1}{12} \int_{\omega} (u_3 - w_3)\,\hat{\sigma} : \nabla^2 \varphi \,dx'.
\end{align*}
Taking $\varphi \nearrow \charfun{\ext{\omega}}$, we deduce \eqref{mass of lambda - regime gamma}.

\noindent{\bf Step 2.}
If $\omega$ is not star-shaped, then since $\omega$ is a bounded $C^2$ domain (in particular, with Lipschitz boundary) by \cite[Proposition 2.5.4]{Carbone.DeArcangelis.2002} there exists a finite open covering $\{U_i\}$ of $\closure{\omega}$ such that $\omega \cap U_i$ is (strongly) star-shaped with Lipschitz boundary. 

Let $\{\psi_i\}$ be a smooth partition of unity subordinate to the covering $\{U_i\}$, i.e. $\psi_i \in C^{\infty}(\closure{\omega})$, with $0 \leq \psi_i \leq 1$, such that $\supp(\psi_i) \subset U_i$ and $\sum_{i} \psi_i = 1$ on $\closure{\omega}$. 

For each $i$, let
\begin{equation*}
    \Sigma^i(x,y)
    :=
    \begin{cases}
    \Sigma(x,y) & \ \text{ if } x' \in \omega \cap U_i,\\
    0 & \ \text{ otherwise}.
    \end{cases}
\end{equation*}
Since $\Sigma^i \in \calK^{hom}_{\gamma}$, the construction in Step 1 yields that there exist sequences $\{ \Sigma^i_n \} \subset C^\infty(\R^2;L^2(I \times \calY;\M^{3 \times 3}_{\sym}))$ and 
\begin{equation*}
    \lambda^i_n := \eta \genprod [ (\Sigma^i_n)_{\dev}(x',\cdot) : P_{x'} ] \in \Mb((\omega \cap U_i) \times I \times \calY),
\end{equation*}
\MMM where again $\eta$ is  defined in \Cref{disintegration result - regime gamma} \BBB such that
\begin{equation*}
    \lambda^i_n \weakstar \lambda^i \quad \text{weakly* in $\Mb((\omega \cap U_i) \times I \times \calY)$},
\end{equation*}
with
\begin{align*}
    \langle \lambda^i, \varphi \rangle 
    &= - \int_{(\omega \cap U_i) \times I \times \calY} \varphi(x')\,\Sigma : E \,dx dy + \int_{\omega \cap U_i} \varphi\,\bar{\sigma} : E\bar{w} \,dx' - \frac{1}{12}\int_{\omega \cap U_i} \varphi\,\hat{\sigma} : D^2w_3 \,dx' \\
    &\,\quad - \int_{\omega \cap U_i} \bar{\sigma} : \left( (\bar{u}-\bar{w}) \odot \nabla\varphi \right) \,dx' - \frac{1}{6} \int_{\omega \cap U_i} \hat{\sigma} : \big( \nabla (u_3 - w_3) \odot \nabla \varphi \big) \,dx' \\
    &\,\quad - \frac{1}{12} \int_{\omega \cap U_i} (u_3 - w_3)\,\hat{\sigma} : \nabla^2 \varphi \,dx'.
\end{align*}
for every $\varphi \in C_c^2(\closure{\omega} \cap U_i)$.
This allows us to define measures on $\ext{\Omega} \times \calY$ by letting, for every $\phi \in C_0(\ext{\Omega} \times \calY)$,
\begin{equation*}
    \langle \lambda_n, \phi \rangle := \sum_{i} \langle \lambda^i_n, \psi_i(x')\,\phi \rangle,
\end{equation*}
and
\begin{equation*}
    \langle \lambda, \phi \rangle := \sum_{i} \langle \lambda^i, \psi_i(x')\,\phi \rangle.
\end{equation*}
From the above computations, $\lambda_n \weakstar \lambda$ weakly* in $\Mb(\ext{\Omega} \times \calY)$, and $\lambda$ satisfies all the required properties.
\end{proof}
\EEE The next theorem allows us to compare the density of the dissipation due to the limiting two-scale plastic strain and that of the measure $\lambda$. \BBB
\begin{theorem} \label{two-scale Hill's principle - regime gamma} 
Let $\Sigma \in \calK^{hom}_{\gamma}$ and $(u,E,P) \in \calA^{hom}_{\gamma}(w)$ with the associated $\mu \in \calXgamma{\ext{\omega}}$. 
Then
\begin{equation*}
    H\left(y, \frac{dP}{d|P|}\right)\,|P| \geq \lambda,
\end{equation*}
where $\lambda \in \Mb(\ext{\Omega} \times \calY)$ is given by \Cref{two-scale stress-strain duality - regime gamma}.
\end{theorem}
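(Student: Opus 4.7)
The plan is to derive the inequality fiberwise in $x'$ by invoking the cell-level Hill principle (Proposition~\ref{cell Hill's principle - regime gamma}) and then integrate in $x'$ against the measure $\eta$ obtained in Lemma~\ref{disintegration result - regime gamma}, mirroring the construction of $\lambda$ in the proof of Proposition~\ref{two-scale stress-strain duality - regime gamma}.

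First, I would treat the case when $\omega$ is star-shaped with respect to one of its points. Let $\{\Sigma_n\} \subset C^\infty(\R^2; L^2(I \times \calY; \M^{3 \times 3}_{\sym}))$ be the smooth approximating sequence from Lemma~\ref{approximation of stresses - regime gamma}, and recall that the measures
\[
\lambda_n := \eta \genprod [\,(\Sigma_n)_{\dev}(x',\cdot) : P_{x'}\,]
\]
converge weakly* to $\lambda$ in $\Mb(\ext{\Omega} \times \calY)$. Since $\Sigma_n(x',\cdot) \in \calK_\gamma$ for every $x' \in \R^2$ (by items \ref{approximation of stresses (b) - regime gamma}--\ref{approximation of stresses (d) - regime gamma} of Lemma~\ref{approximation of stresses - regime gamma}), and since by Remark~\ref{admissible configurations and disintegration - regime gamma} the triple $(\mu_{x'}, C(x')E - [\cdot], P_{x'})$ belongs to $\calA_\gamma$ for $\eta$-a.e. $x' \in \ext{\omega}$, Proposition~\ref{cell Hill's principle - regime gamma} yields, for $\eta$-a.e. $x'$,
\[
H\!\left(y,\frac{dP_{x'}}{d|P_{x'}|}\right) |P_{x'}| \;\geq\; [\,(\Sigma_n)_{\dev}(x',\cdot) : P_{x'}\,] \quad \text{in } \Mb(I \times \calY).
\]

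Next, I would integrate the above inequality against $\eta$ over $\ext{\omega}$. The right-hand side produces $\lambda_n$ by definition, while the left-hand side, by the disintegration $|P| = \eta \genprod |P_{x'}|$ (Theorem~\ref{the basic disintegration theorem}) combined with the identification of Radon--Nikodym derivatives along fibers granted by Proposition~\ref{important disintegration remark}, satisfies
\[
\eta \genprod H\!\left(y,\frac{dP_{x'}}{d|P_{x'}|}\right)|P_{x'}| \;=\; H\!\left(y,\frac{dP}{d|P|}\right)|P| \quad \text{in } \Mb(\ext{\Omega} \times \calY).
\]
Thus $H(y, dP/d|P|)|P| \geq \lambda_n$ for every $n$, and passing to the weak* limit as $n \to \infty$ (the left-hand side is independent of $n$, and weak* convergence preserves the order between nonnegative Radon measures and a fixed one) yields the desired inequality in the star-shaped case.

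For general $\omega$, I would adapt the localization device used in Step~2 of the proof of Proposition~\ref{two-scale stress-strain duality - regime gamma}: choose a finite open covering $\{U_i\}$ of $\closure{\omega}$ such that each $\omega \cap U_i$ is star-shaped with Lipschitz boundary, together with a subordinate smooth partition of unity $\{\psi_i\}$. Applying the star-shaped case to each localized stress $\Sigma^i := \Sigma\,\charfun{\omega \cap U_i}$ provides measures $\lambda^i$ on $(\omega \cap U_i)\times I \times \calY$ with $H(y, dP/d|P|)|P|\mres{(\omega \cap U_i)\times I \times \calY}\geq \lambda^i$, and then the decomposition $\lambda = \sum_i \psi_i\,\lambda^i$ together with $\sum_i \psi_i \equiv 1$ and the nonnegativity of $\psi_i$ gives the global inequality. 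The main obstacle will be bookkeeping of this localization: one must verify that the globally defined $\lambda$ from Proposition~\ref{two-scale stress-strain duality - regime gamma} indeed agrees with $\sum_i \psi_i \lambda^i$, and that the fiberwise Hill inequality is preserved after multiplying by the nonnegative cutoffs $\psi_i(x')$, which is automatic since both sides of the inequality are nonnegative Radon measures in $\Mb(\ext{\Omega}\times \calY)$.
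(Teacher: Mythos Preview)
Your proposal is correct and follows essentially the same approach as the paper: apply the cell-level Hill principle (Proposition~\ref{cell Hill's principle - regime gamma}) fiberwise to the approximating stresses $\Sigma_n$ (resp.\ $\Sigma^i_n$), integrate against $\eta$ using the disintegration identities from Lemma~\ref{disintegration result - regime gamma} and Proposition~\ref{important disintegration remark}, and pass to the weak* limit; the paper merely merges your two steps by working directly with the localized family $\{\Sigma^i_n\}$ and the partition of unity from the outset. One minor imprecision in your final sentence: the measures $\lambda^i$ need not be nonnegative, but the argument still goes through because multiplying a measure inequality by the nonnegative cutoff $\psi_i(x')$ preserves the order, and the identity $\lambda=\sum_i \psi_i\,\lambda^i$ is precisely the definition adopted in Step~2 of Proposition~\ref{two-scale stress-strain duality - regime gamma}.
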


\begin{proof}
Let $\{\Sigma^i_n\}$, $\{\lambda^i_n\}$ and $\lambda^i$ be defined as in Step 2 of the proof of \Cref{two-scale stress-strain duality - regime gamma}. 
Item \ref{approximation of stresses (d) - regime gamma} in \Cref{approximation of stresses - regime gamma} implies that
\begin{equation*}
    (\Sigma^i_n)_{\dev}(x,y) \in K(y) \,\text{ for every } x' \in \omega \text{ and } \calL^{1}_{x_3} \otimes \calL^{2}_{y}\text{-a.e. } (x_3,y) \in I \times \calY.   
\end{equation*}
By \Cref{cell Hill's principle - regime gamma}, we have for $\eta$-a.e. $x' \in \ext{\omega}$
\begin{equation*}
    H\left(y, \frac{dP_{x'}}{d|P_{x'}|}\right)\,|P_{x'}| \geq [ (\Sigma^i_n)_{\dev}(x',\cdot) : P_{x'} ] \ \text{ as measures on } I \times \calY.
\end{equation*}
Since $\frac{dP}{d|P|}(x,y) = \frac{dP_{x'}}{d|P_{x'}|}(x_3,y)$ for $|P_{x'}|$-a.e. $(x_3,y) \in I \times \calY$ by \Cref{important disintegration remark}, we can conclude that
\begin{align*}
    H\left(y, \frac{dP}{d|P|}\right)\,|P| &= \eta \genprod H\left(y, \frac{dP}{d|P|}\right)\,|P_{x'}| = \eta \genprod H\left(y, \frac{dP_{x'}}{d|P_{x'}|}\right)\,|P_{x'}|\\
    &= \sum_{i} \psi_i \eta \genprod H\left(y, \frac{dP_{x'}}{d|P_{x'}|}\right)\,|P_{x'}| \\
    &\geq \sum_{i} \psi_i \eta \genprod [ (\Sigma^i_n)_{\dev}(x',\cdot) : P_{x'} ] \\
    &= \sum_{i} \psi_i \lambda^i_n = \lambda_n.
\end{align*}
By passing to the limit, we have the desired inequality.
\end{proof}
\EEE As a \BLUE direct\BLACK consequence of the previous theorem \BLUE and \eqref{mass of lambda - regime gamma}\BLACK\EEE, we are now in a position to \BLUE state\EEE a principle of maximum plastic work in our setting. \BBB
\MMM
\begin{corollary} \label{two-scale dissipation and plastic work inequality}
Let $\gamma \in (0, +\infty)$.
Then
\begin{equation*}
    \calH^{hom}(P) \geq -\int_{\Omega \times \calY} \Sigma : E \,dx dy + \int_{\omega} \bar{\sigma} : E\bar{w} \,dx' - \frac{1}{12} \int_{\omega} \hat{\sigma} : D^2w_3 \,dx',
\end{equation*}
for every $\Sigma \in \calK^{hom}_{\gamma}$ and $(u,E,P) \in \calA^{hom}_{\gamma}(w)$.
\end{corollary}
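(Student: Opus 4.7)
The plan is to deduce the inequality as an immediate consequence of Theorem \ref{two-scale Hill's principle - regime gamma} together with the explicit mass formula \eqref{mass of lambda - regime gamma}, by simply testing the underlying measure inequality against the full domain.

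First, I would apply Theorem \ref{two-scale Hill's principle - regime gamma} to the given $\Sigma \in \calK^{hom}_{\gamma}$ and $(u,E,P) \in \calA^{hom}_{\gamma}(w)$ (with associated corrector $\mu \in \calXgamma{\ext{\omega}}$), obtaining the measure inequality
\begin{equation*}
    H\!\left(y,\tfrac{dP}{d|P|}\right)\!|P| \;\geq\; \lambda \qquad \text{in } \Mb(\ext{\Omega} \times \calY),
\end{equation*}
where $\lambda$ is the measure constructed in Proposition \ref{two-scale stress-strain duality - regime gamma}. Here the sign convention for inequalities between (possibly signed) bounded Radon measures recalled in Subsection \ref{Measures} is used: $\mu_1 \geq \mu_2$ means $\mu_1(B) \geq \mu_2(B)$ for every Borel set $B$.

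Second, I would evaluate both sides on the Borel set $B = \ext{\Omega} \times \calY$. On one hand, since $P = 0$ on $(\ext{\Omega}\setminus\closure{\Omega}) \times \calY$, the positive measure $H(y,dP/d|P|)|P|$ is concentrated on $(\Omega \cup \Gamma_\Dir) \times \calY$, and its total mass coincides, by the very definition of a convex function of a measure (cf. Subsection \ref{Convex functions of measures}), with
\begin{equation*}
    \calH^{hom}(P) \;=\; \int_{\ext{\Omega} \times \calY} H\!\left(y,\tfrac{dP}{d|P|}\right) d|P|.
\end{equation*}
On the other hand, $\lambda(\ext{\Omega} \times \calY)$ is given explicitly by \eqref{mass of lambda - regime gamma}, producing exactly the right-hand side in the statement.

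No genuine obstacle is expected, since the result is a direct corollary: all the analytical work (the approximation argument, the disintegration, the integration by parts, and the cell-Hill inequality) has already been carried out in Lemma \ref{approximation of stresses - regime gamma}, Lemma \ref{disintegration result - regime gamma}, Proposition \ref{cell Hill's principle - regime gamma}, Proposition \ref{two-scale stress-strain duality - regime gamma}, and Theorem \ref{two-scale Hill's principle - regime gamma}. The only point requiring a word of attention is that the comparison of total masses is legitimate because the measure inequality holds on all of $\ext{\Omega} \times \calY$, not merely on $\Omega \cup \Gamma_\Dir$; and this follows automatically from the fact that both $P$ and $\lambda$ vanish, by construction, outside $\closure{\Omega} \times \calY$ up to the boundary correction already incorporated in the duality pairing.
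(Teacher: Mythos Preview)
Your proposal is correct and matches the paper's approach exactly: the paper itself presents this corollary as an immediate consequence of Theorem~\ref{two-scale Hill's principle - regime gamma} and the mass identity~\eqref{mass of lambda - regime gamma}, with no further proof given. Your observation that $P$ (and hence $H(y,dP/d|P|)\,|P|$) is supported on $\closure{\Omega}\times\calY$, so that integrating over $\ext{\Omega}\times\calY$ recovers $\calH^{hom}(P)$ as defined in~\eqref{definition H^hom}, is precisely the small bookkeeping step needed to close the argument.
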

\BBB

\subsection{Liminf inequalities under weak two-scale convergence} 
\label{Lower semicontinuity of energy functionals}
For $(u,e,p) \in \calA_h(w)$, we recall the definition of energy functionals $\calQ_h$ and $\calH_h$ given in \eqref{definition Q_h} and \eqref{definition H_h}. 
For $(u,E,P) \in \calA^{hom}_{\gamma}(w)$ we now define
\begin{equation} \label{definition Q^hom} 
    \calQ^{hom}(E) := \int_{\Omega \times \calY} Q\left(y, E\right) \,dx dy
\end{equation}
and
\begin{equation} \label{definition H^hom}
    \calH^{hom}(P) := \int_{\closure{\Omega} \times \calY} H\left(y, \frac{dP}{d|P|} \right) \,d|P|.
\end{equation}
\EEE The next result shows that $\calQ^{hom}$ and $\calH^{hom}$ provide lower bounds for the asymptotic behavior of our elastic energies and dissipation potential with respect to weak two-scale convergence of elastic and plastic stresses. \BBB
\begin{theorem} \label{lsc-gamma finite}
Let $\gamma \in (0,+\infty)$. 
Let $(u^h,e^h,p^h) \in \calA_h(w)$ be such that
\begin{align}
    &u^h \weakstar u \quad \text{weakly* in $BD(\ext{\Omega})$},\\
    &\Lambda_h e^h \weaktwoscale E \quad \text{two-scale weakly in $L^2(\ext{\Omega} \times \calY;\M^{3 \times 3}_{\sym})$}, \label{Lambda_h e^h two-scale weakly}\\
    &\Lambda_h p^h \weakstartwoscale P \quad \text{two-scale weakly* in $\Mb(\ext{\Omega} \times \calY;\M^{3 \times 3}_{\dev})$},
\end{align}
with $(u,E,P) \in \calA^{hom}_{\gamma}(w)$. Then,
\begin{equation} \label{lower semicontinuity Q}
    \calQ^{hom}(E) \leq \liminf\limits_{h} \calQ_h(\Lambda_h e^h)
\end{equation}
and
\begin{equation} \label{lower semicontinuity H}
    \calH^{hom}(P) \leq \liminf\limits_{h} \calH_h(\Lambda_h p^h).
\end{equation}
\end{theorem}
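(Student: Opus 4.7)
The proof naturally splits into the two inequalities, treated separately.

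\emph{Elastic energy inequality.} I would proceed via an $L^2$-version of the unfolding operator of \Cref{associated unfolding measure}. Applied to $\{\Lambda_h e^h\}$, this gives a sequence of functions on $\omega \times I \times \calY$ converging weakly in $L^2$ to $E$ (this is the $L^2$ analogue of \Cref{unfolding measure weak* convergence}, established by the same cell-decomposition argument, using that the two-scale weak $L^2$ convergence in \eqref{Lambda_h e^h two-scale weakly} is preserved under unfolding). Modulo a vanishing contribution from boundary cells $Q_{\epsh}^i$ meeting $\partial\omega$ (controlled via the uniform $L^2$ bound on $\Lambda_h e^h$ inherited from the finiteness of the $\liminf$), one has that $\calQ_h(\Lambda_h e^h)$ coincides with the integral of $Q(y, \cdot)$ against the unfolded sequence, up to $o(1)$. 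Since $Q(y, \cdot)$ is a convex quadratic form satisfying \eqref{coercivity of Q} and $Q(\cdot, \xi)$ is Borel, the classical weak-$L^2$ lower semicontinuity of convex integrands yields \eqref{lower semicontinuity Q}.

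\emph{Dissipation inequality.} This is the core of the theorem. Fix $\Sigma \in \calK^{hom}_\gamma$, and invoke \Cref{approximation of stresses - regime gamma} (combined with a partition-of-unity argument over star-shaped subdomains, exactly as in Step 2 of the proof of \Cref{two-scale stress-strain duality - regime gamma}) to obtain smooth approximations $\Sigma_n$ satisfying the admissibility conditions pointwise. Setting $\sigma^h_n(x) := \Sigma_n(x, x'/\epsh)$, the conditions defining $\calK^{hom}_\gamma$ together with $h/\epsh \to \gamma$ and smoothness in $x'$ imply $\sigma^h_n \in \calK_h$ up to an error vanishing as $h \to 0$. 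The $h$-level principle of maximum plastic work,
\begin{equation*}
    \calH_h(\Lambda_h p^h) \geq [(\sigma^h_n)_{\dev} : \Lambda_h p^h](\Omega \cup \Gamma_\Dir),
\end{equation*}
rewritten via $h$-scale stress-strain duality in terms of $\sigma^h_n$, $\Lambda_h e^h$ and the Dirichlet work against $w$, allows passing to the $\liminf$ in $h$ via the two-scale weak$^*$ convergence of $\Lambda_h p^h$, the strong convergence of $\sigma^h_n$ and \eqref{Lambda_h e^h two-scale weakly}. Letting $n \to \infty$ one obtains
\begin{equation*}
    \liminf_h \calH_h(\Lambda_h p^h) \geq -\int_{\Omega \times \calY} \Sigma : E\,dx\,dy + \int_\omega \bar\sigma : E\bar w\, dx' - \tfrac{1}{12}\int_\omega \hat\sigma : D^2 w_3\,dx',
\end{equation*}
which is exactly the dual lower bound of \Cref{two-scale dissipation and plastic work inequality}.

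The hard part—and main obstacle—is to show that the supremum of this bound over $\Sigma \in \calK^{hom}_\gamma$ actually recovers $\calH^{hom}(P)$. On bulk regions $\Omega \times \calY_i$, the pointwise duality $H_i(\xi) = \sup_{\tau \in K_i} \tau : \xi$ transfers to the integral level, since one can freely pick $x'$-constant selections $\tau(y) \in K_i$ without violating the divergence constraints inside a single phase. At interfaces $\Gamma_{ij}$, however, $H(y, \cdot)$ equals $+\infty$ outside rank-one strains $a \odot \nu(y)$, and a naive sup would miss them. Here \Cref{rank-1 lemma} is essential: it forces $P\mres{\Omega \times (\Gamma_{ij} \setminus S)}$ to have precisely the structure $a(x,y) \odot \nu(y)\,\zeta$—matching the class on which the infimal convolution $H_{ij}$ is finite. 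Using the disintegration \Cref{disintegration result - regime gamma} together with \Cref{corrector on C^1-hypersurface - regime gamma}, one constructs on tubular neighbourhoods of each interface admissible test fields $\Sigma \in \calK^{hom}_\gamma$ that realize $H_{ij}$ through its dual formulation, and a partition-of-unity patching of bulk and interface selections upgrades the bound to $\calH^{hom}(P)$, establishing \eqref{lower semicontinuity H}.
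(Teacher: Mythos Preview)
Your treatment of the elastic energy is fine (the paper uses a slightly slicker argument---expand $\int_\Omega \C(x'/\epsh)(\Lambda_h e^h - \varphi):( \Lambda_h e^h - \varphi)\,dx \geq 0$ for smooth oscillating test fields $\varphi$ and let $\varphi \to E$), but unfolding works just as well.

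For the dissipation inequality, your duality route has two real gaps.

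First, the test fields $\sigma^h_n(x) := \Sigma_n(x,x'/\epsh)$ are \emph{not} in $\calK_h$, even approximately. Computing $\div_h\sigma^h_n$ produces a term $\div_{x'}\Sigma_n(x,x'/\epsh)$ coming from the macroscopic $x'$-dependence of $\Sigma_n$, which has no reason to vanish (the constraints (v)--(vi) in $\calK^{hom}_\gamma$ concern only the $y$-averaged moments $\bar\sigma,\hat\sigma$). The mismatch $h/\epsh \neq \gamma$ introduces a further term of order $\tfrac{1}{h}(1-\tfrac{h}{\gamma\epsh})\partial_{x_3}\Sigma_n$, which need not be $o(1)$. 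So the $h$-level Hill inequality is not available for these fields, and the passage to the dual lower bound of \Cref{two-scale dissipation and plastic work inequality} via this route is not justified.

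Second, and more fundamentally, even granting that bound for every $\Sigma \in \calK^{hom}_\gamma$, you would still need
\[
\sup_{\Sigma \in \calK^{hom}_\gamma}\Big[-\!\int_{\Omega\times\calY}\!\Sigma:E\,dx\,dy + \int_\omega \bar\sigma:E\bar w\,dx' - \tfrac{1}{12}\int_\omega \hat\sigma:D^2w_3\,dx'\Big] \;=\; \calH^{hom}(P).
\]
This is a genuine (and unproved) two-scale saturation statement: the differential constraint $\widetilde{\div}_\gamma\Sigma=0$ and the macroscopic conditions (iv)--(vi) couple $\Sigma$ across $I\times\calY$ and across $\omega$, so one cannot select $\Sigma_{\dev}$ pointwise to maximize $\tau:\tfrac{dP}{d|P|}$. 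Your ``partition-of-unity patching'' of bulk and interface selections does not explain how to respect these constraints. The paper establishes only the inequality $\calH^{hom}(P)\geq\lambda$ (\Cref{two-scale Hill's principle - regime gamma}), never equality.

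The paper's proof is entirely primal and bypasses these issues. One decomposes $p^h$ according to phases and interfaces, $p^h = \sum_i p^h_i + \sum_{i\neq j} p^h_{ij}$. On each phase, $H(y,\cdot)\equiv H_i(\cdot)$ is a \emph{fixed} convex one-homogeneous integrand, so a two-scale Reshetnyak theorem (\cite[Lemma~4.6]{Francfort.Giacomini.2014}) gives lower semicontinuity directly, and \Cref{rank-1 lemma} identifies the trace $P_i\mres{\ext\Omega\times(\Gamma_{ij}\setminus S)} = a_{ij}\odot\nu\,\eta_{ij}$. On the interfaces, one exploits that the inf-convolution defining $H_{ij}$ is attained, writes $\calH_h$ of $\Lambda_h p^h_{ij}$ as a sum $H_i(b^{h,ij}_i\odot\nu)+H_j(-b^{h,ij}_j\odot\nu)$ with $b^{h,ij}_i-b^{h,ij}_j$ prescribed, and passes those auxiliary $L^1$-bounded fields to the limit to recover the interface contribution. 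Summing bulk and interface pieces yields $\calH^{hom}(P)$.
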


\begin{proof}
Let $\varphi \in C_c^{\infty}(\Omega \times \calY;\M^{3 \times 3}_{\sym})$. 
From the coercivity condition on the quadratic form $\calQ_h$ we obtain 
\begin{equation*}
    0 \le\, \frac{1}{2} \int_{\Omega} \C\left(\frac{x'}{\epsh}\right) \left(\Lambda_h e^h(x) - \varphi\left(x,\frac{x'}{\epsh}\right)\right) : \left(\Lambda_h e^h(x) - \varphi\left(x,\frac{x'}{\epsh}\right)\right) \,dx.
\end{equation*}
Since $\C\left(\frac{x'}{\epsh}\right) \Lambda_h e^h(x) \weaktwoscale \C(y) E(x,y)$ weakly two-scale in $L^2(\Omega \times \calY;\M^{3 \times 3}_{\sym})$, we can apply the $\liminf$ to the above inequality and we find
\begin{equation*}
    \int_{\Omega \times \calY} \C(y) E(x,y) : \varphi\left(x,y\right) \,dx dy - \frac{1}{2} \int_{\Omega \times \calY} \C(y) \varphi\left(x,y\right)\ : \varphi\left(x,y\right) \,dx \le\, \liminf\limits_{h} \calQ_h(\Lambda_h e).
\end{equation*}
Choosing $\varphi$ such that $\varphi \strong E$ strongly in $L^2(\Omega \times \calY;\M^{3 \times 3}_{\sym})$ yields \eqref{lower semicontinuity Q}.

To prove \eqref{lower semicontinuity H}, we can assume without loss of generality that 
\begin{equation} \label{WLOG finite liminf}
    \liminf\limits_{h} \calH_h(\Lambda_h p^h) < \infty.
\end{equation}
We write
\begin{equation} \label{p^h decomposed}
    p^h = \sum_{i} p^h_{i} + \sum_{i \neq j} p^h_{ij}
\end{equation}
where $p^h_{i} := p^h\mres{\Omega \cap ((\calY_i)_\epsh \times I)}$ and $p^h_{ij} := p^h\mres{\ext{\Omega} \cap ((\Gamma_{ij} \setminus S)_\epsh \times I)}$. Up to a subsequence,
\begin{align*}
    &\Lambda_h p^h_{i} \weakstartwoscale P_{i} \quad \text{two-scale weakly* in $\Mb(\ext{\Omega} \times \calY;\M^{3 \times 3}_{\dev})$},\\
    &\Lambda_h p^h_{ij} \weakstartwoscale P_{ij} \quad \text{two-scale weakly* in $\Mb(\ext{\Omega} \times \calY;\M^{3 \times 3}_{\dev})$}.
\end{align*}
Clearly,
\begin{equation*}
    P = \sum_{i} P_{i} + \sum_{i \neq j} P_{ij},
\end{equation*}
with $\supp(P_{i}) \subseteq \ext{\Omega} \times \closure{\calY}_i$ and $\supp(P_{ij}) \subseteq \ext{\Omega} \times \Gamma_{ij}$. 
In view of \eqref{Lambda_h e^h two-scale weakly}, we infer
\begin{equation*}
    \Lambda_h Eu^h\mres{\ext{\Omega} \cap ((\calY_i)_\epsh \times I)} \weakstartwoscale E \,\charfun{\ext{\Omega} \times \calY_i} \,\calL^{3}_{x} \otimes \calL^{2}_{y} + P_{i} \quad \text{two-scale weakly* in $\Mb(\ext{\Omega} \times \calY;\M^{3 \times 3}_{\sym})$}
\end{equation*}
Recalling \eqref{transversality condition}, we can additionally assume that $\Gamma_{ij} \cap \calC \subseteq S$. 
Then, with a normal $\nu$ on $\Gamma_{ij}$ that points from $\calY_j$ to $\calY_i$ for every $j \neq i$, Lemma \ref{rank-1 lemma} implies that
\begin{equation} \label{rank-1 lemma implication}
    P_{i}\mres{\ext{\Omega} \times (\Gamma_{ij} \setminus S)} = \MMM a_{ij}(x,y) \BBB \odot \nu(y) \,\eta_{ij}
\end{equation}
for suitable $\eta_{ij} \in \Mb^+ (\ext{\Omega} \times (\Gamma_{ij} \setminus S))$ and a Borel map $a_{ij} : \ext{\Omega} \times (\Gamma_{ij} \setminus S) \to \R^3$ such that $a_{ij} \perp \nu$ for $\eta_{ij}$-a.e. $(x,y) \in \ext{\Omega} \times (\Gamma_{ij} \setminus S)$.

Using a version of Reshetnyak's lower semicontinuity theorem adapted for two-scale convergence (see \cite[Lemma 4.6]{Francfort.Giacomini.2014}), we deduce
\begin{align} \label{H liminf 1}
    \nonumber &\liminf\limits_{h} \int_{\Omega \cup \Gamma_\Dir} H\left(\frac{x'}{\epsh}, \frac{d\Lambda_h p^h_{i}}{d|\Lambda_h p^h_{i}|}\right) \,d|\Lambda_h p^h_{i}|\\
    \nonumber &= \liminf\limits_{h} \int_{\ext{\Omega}} H_i\left(\frac{d\Lambda_h p^h_{i}}{d|\Lambda_h p^h_{i}|}\right) \,d|\Lambda_h p^h_{i}|
    \geq \int_{\ext{\Omega} \times \calY} H_i\left(\frac{dP_{i}}{d|P_{i}|}\right) \,d|P_{i}|\\
    \nonumber &= \int_{\ext{\Omega} \times \calY_i} H_i\left(\frac{dP_{i}}{d|P_{i}|}\right) \,d|P_{i}| + \int_{\ext{\Omega} \times \Gamma} H_i\left(\frac{dP_{i}}{d|P_{i}|}\right) \,d|P_{i}|\\
    \nonumber &\geq \int_{\ext{\Omega} \times \calY_i} H\left(y, \frac{dP_{i}}{d|P_{i}|}\right) \,d|P_{i}| + \sum_{j \neq i} \int_{\ext{\Omega} \times (\Gamma_{ij} \setminus S)} H_i\left(\frac{dP_{i}}{d|P_{i}|}\right) \,d|P_{i}|\\
    &\geq \int_{\ext{\Omega} \times \calY_i} H\left(y, \frac{dP_{i}}{d|P_{i}|}\right) \,d|P_{i}| + \sum_{j \neq i} \int_{\ext{\Omega} \times (\Gamma_{ij} \setminus S)} H_i\left(-a_{ij}(x,y) \odot \nu(y)\right) \,d\eta_{ij}.
\end{align}
Next, we have
\begin{align*}
    \Lambda_h p^h_{ij} &= \Lambda_h \left[(u^h_i - u^h_j) \odot \nu\left(\frac{x'}{\epsh}\right)\right] \,\calH^{2}\mres{\ext{\Omega} \cap ((\Gamma_{ij} \setminus S)_\epsh \times I)}\\
    &= \left[\diag\left(1,1,\frac{1}{h}\right)\,(u^h_i - u^h_j) \odot \nu\left(\frac{x'}{\epsh}\right)\right] \,\calH^{2}\mres{\ext{\Omega} \cap ((\Gamma_{ij} \setminus S)_\epsh \times I)},
\end{align*}
where $u^h_i$ and $u^h_j$ are the traces on $\ext{\Omega} \cap ((\Gamma_{ij} \setminus S)_\epsh \times I)$ of the restrictions of $u^h$ to $(\calY_i)_\epsh \times I$ and $(\calY_j)_\epsh \times I$ respectively, such that $u^h_i - u^h_j$ is perpendicular to $\nu$. 
Then, since the infimum in the inf-convolution definition of $H$ on $\Gamma \setminus S$ is actually a minimum, we obtain
\begin{align} \label{b expression}
    \nonumber &\int_{\Omega \cup \Gamma_\Dir} H\left(\frac{x'}{\epsh}, \frac{d\Lambda_h p^h_{ij}}{d|\Lambda_h p^h_{ij}|}\right) \,d|\Lambda_h p^h_{ij}|\\
    \nonumber &= \int_{\ext{\Omega} \cap ((\Gamma_{ij} \setminus S)_\epsh \times I)} H\left(\frac{x'}{\epsh}, \frac{d\Lambda_h p^h_{ij}}{d|\Lambda_h p^h_{ij}|}\right) \,d|\Lambda_h p^h_{ij}|\\
    \nonumber &= \int_{\ext{\Omega} \cap ((\Gamma_{ij} \setminus S)_\epsh \times I)} H\left(\frac{x'}{\epsh}, \left[\diag\left(1,1,\frac{1}{h}\right)\,(u^h_i - u^h_j) \odot \nu\left(\frac{x'}{\epsh}\right)\right]\right) \,d\calH^{2}(x)\\
    \nonumber &= \int_{\ext{\Omega} \cap ((\Gamma_{ij} \setminus S)_\epsh \times I)} H_{ij}\left(\diag\left(1,1,\frac{1}{h}\right)\,(u^h_i - u^h_j), \nu\left(\frac{x'}{\epsh}\right)\right) \,d\calH^{2}(x)\\
    &= \int_{\ext{\Omega} \cap ((\Gamma_{ij} \setminus S)_\epsh \times I)} \left[H_{i}\left(b^{h,ij}_i(x) \odot \nu\left(\frac{x'}{\epsh}\right)\right) + H_{j}\left(-b^{h,ij}_j(x) \odot \nu\left(\frac{x'}{\epsh}\right)\right)\right] \,d\calH^{2}(x)
\end{align}
for suitable Borel functions $b^{h,ij}_i, b^{h,ij}_j : \ext{\Omega} \cap ((\Gamma_{ij} \setminus S)_\epsh \times I) \to \R^3$ which are orthogonal to $\nu$ for $\calH^{2}$-a.e. $x \in (\Gamma_{ij} \setminus S)_\epsh \times I$ and such that
\begin{align*}
    b^{h,ij}_i - b^{h,ij}_j = \diag\left(1,1,\frac{1}{h}\right)\,(u^h_i - u^h_j) \qquad \text{ for }\calH^{2}\text{-a.e. } x \in (\Gamma_{ij} \setminus S)_\epsh \times I.
\end{align*}
From the coercivity condition of the dissipation potential $H$ and \eqref{WLOG finite liminf}, we conclude that
\begin{equation*}
    \int_{\ext{\Omega} \cap ((\Gamma_{ij} \setminus S)_\epsh \times I)} \left[\left|b^{h,ij}_i(x) \odot \nu\left(\frac{x'}{\epsh}\right)\right| + \left|b^{h,ij}_j(x) \odot \nu\left(\frac{x'}{\epsh}\right)\right|\right] \,d\calH^{2}(x) \leq C, 
\end{equation*}
for some constant $C > 0$, \MMM which implies the boundedness of $b^{h,ij}_i$ and $b^{h,ij}_j$ in $L^1$.
 We can now argue as in Step 2 of the proof of \cite[Theorem 5.7]{Francfort.Giacomini.2014} or \cite[Proposition 2.3]{Francfort.Giacomini.2012}, \MMM using also \eqref{rank-1 lemma implication}, \BBB and infer that the existence of suitable measures $\zeta_{ij} \in \Mb^+(\ext{\Omega} \times (\Gamma_{ij} \setminus S))$, and Borel functions $c^i, c^j : \ext{\Omega} \times (\Gamma_{ij} \setminus S) \to \R^3$ which are orthogonal to $\nu$ for $\zeta_{ij}$-a.e. $(x,y) \in \ext{\Omega} \times (\Gamma_{ij} \setminus S)$, and such that
\begin{align*}
    P\mres{\ext{\Omega} \times (\Gamma_{ij} \setminus S)} = \left(c^i(x,y)-c^j(x,y)\right) \odot \nu(y) \,\zeta_{ij}.
\end{align*}
Thus, by \eqref{H liminf 1}, we have
\begin{align*}
    &\liminf\limits_{h} \calH_h(\Lambda_h p^h)\\
    & \MMM \geq \BBB \int_{\ext{\Omega} \times ( \cup _i \calY_i)} H\left(y, \frac{dP}{d|P|}\right) \,d|P|\\
    &\hspace{1em} + \sum_{i \neq j} \int_{\ext{\Omega} \times (\Gamma_{ij} \setminus S)} \left[ H_i\left(c^i(x,y) \odot \nu(y)\right) + H_j\left(-c^j(x,y) \odot \nu(y)\right) \right] \,d\zeta_{ij}\\
    &\geq \int_{\ext{\Omega} \times ( \cup _i \calY_i)} H\left(y, \frac{dP}{d|P|}\right) \,d|P| + \sum_{i \neq j} \int_{\ext{\Omega} \times (\Gamma_{ij} \setminus S)} H\left(y, \left(c^i(x,y)-c^j(x,y)\right) \odot \nu(y)\right) \,d\zeta_{ij}\\
    &= \int_{\ext{\Omega} \times ( \cup _i \calY_i)} H\left(y, \frac{dP}{d|P|}\right) \,d|P| + \sum_{i \neq j} \int_{\ext{\Omega} \times (\Gamma_{ij} \setminus S)} H\left(y, \frac{dP}{d|P|}\right) \,d|P|\\
    &= \calH^{hom}(P),
\end{align*}
which in turn concludes the proof. \BLACK
\end{proof}

\section{Two-scale quasistatic evolutions}
\label{dynamics}

We recall the definition of energy functionals $\calQ^{hom}$ and $\calH^{hom}$ given in \eqref{definition Q^hom} and \eqref{definition H^hom}. 
The associated $\calH^{hom}$-variation of a function $P : [0,T] \to \Mb(\ext{\Omega} \times \calY;\M^{3 \times 3}_{\dev})$ on $[a,b]$ is then defined as
\begin{equation*}
    \calD_{\calH^{hom}}(P; a, b) := \sup\left\{ \sum_{i = 1}^{n-1} \calH^{hom}\left(P(t_{i+1}) - P(t_i)\right) : a = t_1 < t_2 < \ldots < t_n = b,\ n \in \N \right\}.
\end{equation*}
\BLUE In this section we prescribe for every $t \in [0, T]$ a boundary datum $w(t) \in H^1(\ext{\Omega};\R^3) \cap KL(\ext{\Omega})$ and we assume the map $t\mapsto w(t)$ to be absolutely continuous from $[0, T]$ into $H^1(\ext{\Omega};\R^3)$.\BLACK

We now give the notion of the limiting quasistatic elasto-plastic evolution.

\begin{definition}
A \emph{two-scale quasistatic evolution} for the boundary datum $w$ is a function $t \mapsto (u(t), E(t), P(t))$ from $[0,T]$ into $KL(\ext{\Omega}) \times L^2(\ext{\Omega} \times \calY;\M^{3 \times 3}_{\sym}) \times \Mb(\ext{\Omega} \times \calY;\M^{3 \times 3}_{\dev})$ which satisfies the following conditions:
\begin{enumerate}[label=(qs\arabic*)$^{hom}_{\gamma}$]
    \item \label{hom-qs S} for every $t \in [0,T]$ we have $(u(t), E(t), P(t)) \in \calA^{hom}_{\gamma}(w(t))$ and
    \begin{equation*}
        \calQ^{hom}(E(t)) \leq \calQ^{hom}(H) + \calH^{hom}(\Pi-P(t)),
    \end{equation*}
    for every $(\upsilon,H,\Pi) \in \calA^{hom}_{\gamma}(w(t))$.
    \item \label{hom-qs E} the function $t \mapsto P(t)$ from $[0, T]$ into $\Mb(\ext{\Omega} \times \calY;\M^{3 \times 3}_{\dev})$ has bounded variation and for every $t \in [0, T]$
    \begin{equation*}
        \calQ^{hom}(E(t)) + \calD_{\calH^{hom}}(P; 0, t) = \calQ^{hom}(E(0)) 
        + \int_0^t \int_{\Omega \times \calY} \C(y) E(s) : E\dot{w}(s) \,dx dy ds.
    \end{equation*}
\end{enumerate}
\end{definition}

Recalling the definition of $h$-quasistatic evolution for the boundary datum $w(t)$ given in Definition \ref{h-quasistatic evolution}, we are in a position to formulate the main result of the paper.

\begin{theorem} \label{main result}
Let $t \mapsto w(t)$ be absolutely continuous from $[0,T]$ into $H^1(\ext{\Omega};\R^3) \cap KL(\ext{\Omega})$.  \MMM Assume \eqref{tensorassumption} and \eqref{coercivity of Q} and \BBB
 that there exists a sequence of triples $(u^h_0, e^h_0, p^h_0) \in \calA_h(w(0))$ such that
\begin{align}
    &u^h_0 \weakstar u_0 \quad \text{weakly* in $BD(\ext{\Omega})$}, \label{main result u^h_0 condition}\\
    &\Lambda_h e^h_0 \strongtwoscale E_0 \quad \text{two-scale strongly in $L^2(\ext{\Omega} \times \calY;\M^{3 \times 3}_{\sym})$}, \label{main result e^h_0 condition}\\
    &\Lambda_h p^h_0 \weakstartwoscale P_0 \quad \text{two-scale weakly* in $\Mb(\ext{\Omega} \times \calY;\M^{3 \times 3}_{\dev})$}, \label{main result p^h_0 condition}
\end{align}
for $(u_0,E_0,P_0) \in \calA^{hom}_{\gamma}(w(0))$.  
For every $h > 0$, let 
\begin{equation*}
    t \mapsto (u^h(t), e^h(t), p^h(t))
\end{equation*}
be a $h$-quasistatic evolution \MMM in the sense of Definition \ref{h-quasistatic evolution} \BBB for the boundary datum $w$ such that $u^h(0) = u^h_0$, $e^h(0) = e^h_0$, and $p^h(0) = p^h_0$. 
Then, there exists a two-scale quasistatic evolution
\begin{equation*}
    t \mapsto (u(t), E(t), P(t)) 
\end{equation*}
for the boundary datum $w$ such that $u(0) = u_0$,\, $E(0) = E_0$, and $P(0) = P_0$, and such that (up to subsequences)
for every $t \in [0,T]$
\begin{align}
    &u^h(t) \weakstar u(t) \quad \text{weakly* in $BD(\ext{\Omega})$}, \label{main result u^h(t)}\\
    &\Lambda_h e^h(t) \weaktwoscale E(t) \quad \text{two-scale weakly in $L^2(\ext{\Omega} \times \calY;\M^{3 \times 3}_{\sym})$}, \label{main result e^h(t)}\\
    &\Lambda_h p^h(t) \weakstartwoscale P(t) \quad \text{two-scale weakly* in $\Mb(\ext{\Omega} \times \calY;\M^{3 \times 3}_{\dev})$}. \label{main result p^h(t)}
\end{align}
\end{theorem}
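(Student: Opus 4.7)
The strategy is the standard four-step program for evolutionary $\Gamma$-convergence of rate-independent processes (as in \cite{mielke.roubicek.stefanelli}), adapted to our two-scale dimensional reduction setting via the compactness, duality, and lower semicontinuity machinery developed in Sections \ref{compactness}--\ref{statics}.

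\textbf{Step 1: A priori estimates and pointwise compactness.} Starting from the energy balance \ref{h-qs E} and using \eqref{coercivity of Q}, the absolute continuity of $t\mapsto w(t)$, and a Gronwall-type argument, I would derive uniform bounds (in $h$ and in $t\in[0,T]$)
\begin{equation*}
    \|\Lambda_h e^h(t)\|_{L^2} + \calD_{\calH_h}(\Lambda_h p^h;0,t) \leq C.
\end{equation*}
Combining the coercivity \eqref{coercivity of H_i} with \eqref{equivalence of variations} yields a uniform total-variation bound on $t\mapsto \Lambda_h p^h(t)$, and through $Eu^h(t)=e^h(t)+p^h(t)$ together with \eqref{kornbd} applied on the Dirichlet part, uniform $BD(\ext{\Omega})$ bounds on $u^h(t)$. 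A generalized Helly selection theorem in $\Mb(\ext{\Omega}\times\calY;\M^{3\times 3}_{\rm dev})$ (in the spirit of \cite{DalMaso.DeSimone.Mora.2006}) then yields, up to a subsequence independent of $t$, a BV-in-time two-scale limit $t\mapsto P(t)$. For each fixed $t$, a further extraction and Theorem \ref{two-scale weak limit of scaled strains} produce $(u(t),E(t),P(t))\in\calA^{hom}_\gamma(w(t))$ realising the convergences \eqref{main result u^h(t)}--\eqref{main result p^h(t)}.

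\textbf{Step 2: Global stability \ref{hom-qs S}.} Fix $t$ and a competitor $(\upsilon,H,\Pi)\in\calA^{hom}_\gamma(w(t))$. The quadratic structure of $\calQ^{hom}$ gives
\begin{equation*}
    \calQ^{hom}(H)-\calQ^{hom}(E(t))\geq \int_{\Omega\times\calY}\C(y)E(t):(H-E(t))\,dx\,dy.
\end{equation*}
The stress $\Sigma(t)(x,y):=\C(y)E(t)(x,y)$ is the two-scale weak limit of $\sigma^h(t)=\C(\cdot/\eps_h)\Lambda_h e^h(t)$, and the stability \ref{h-qs S} at level $h$ forces $\sigma^h(t)\in\calK_h$, so by Proposition \ref{two-scale weak limit of admissible stress - regime gamma} we obtain $\Sigma(t)\in\calK^{hom}_\gamma$. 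Applying Corollary \ref{two-scale dissipation and plastic work inequality} to the difference triple $(\upsilon-u(t),H-E(t),\Pi-P(t))\in\calA^{hom}_\gamma(0)$ (which belongs to the admissible class because $\calA^{hom}_\gamma$ is affine over $\calA^{hom}_\gamma(0)$) yields exactly
\begin{equation*}
    \calH^{hom}(\Pi-P(t))\geq \int_{\Omega\times\calY}\Sigma(t):(H-E(t))\,dx\,dy,
\end{equation*}
and combining the two displays gives \ref{hom-qs S}.

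\textbf{Step 3: Energy balance \ref{hom-qs E}.} For the $\liminf$ direction, I pass to the limit in \ref{h-qs E} using Theorem \ref{lsc-gamma finite} on the elastic term and on each increment of the dissipation (which via a Riemann-sum argument and lower semicontinuity of $\calH^{hom}$ gives $\liminf_h\calD_{\calH_h}(\Lambda_hp^h;0,t)\geq\calD_{\calH^{hom}}(P;0,t)$). The work term
\begin{equation*}
    \int_0^t\int_\Omega \C(x'/\eps_h)\Lambda_h e^h(s):E\dot w(s)\,dx\,ds
\end{equation*}
converges to $\int_0^t\int_{\Omega\times\calY}\C(y)E(s):E\dot w(s)\,dx\,dy\,ds$ by weak two-scale convergence of $\sigma^h(s)$ combined with strong $L^1([0,T];L^2(\Omega))$ convergence of $E\dot w$, modulo a dominated-convergence argument in $s$. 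The matching upper energy inequality follows, as customary, from the stability \ref{hom-qs S} via the chain-rule/Riemann-sum argument: testing \ref{hom-qs S} at $s$ against the competitor $(u(s)+(w(s+\delta)-w(s)),E(s)+E(w(s+\delta)-w(s)),P(s))$ and summing over a partition yields the opposite inequality.

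\textbf{Main obstacle.} The delicate point is verifying \ref{hom-qs S} for the two-scale limit: although one could hope to build a recovery sequence lifting a limiting competitor to a competitor at level $h$, the presence of the concentration measure $\eta$ in the disintegration of Lemma \ref{disintegration result - regime gamma} and the lack of a Korn inequality in $BD$ prevent a direct recovery-sequence construction. This is why I route the proof through the dual formulation: Proposition \ref{two-scale weak limit of admissible stress - regime gamma} together with Corollary \ref{two-scale dissipation and plastic work inequality}, whose proof uses the approximation Lemma \ref{approximation of stresses - regime gamma} and the localization argument in Step 2 of Proposition \ref{two-scale stress-strain duality - regime gamma}, circumvents the recovery-sequence issue entirely. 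A second subtle point is ensuring that the subsequence extracted in Step 1 can be made $t$-independent while still realising the two-scale limits of $\Lambda_h e^h(t)$ for every $t$; this is handled by first extracting via Helly for $P$, then using the energy balance to force convergence of $e^h(t)$ along the same subsequence, arguing separately at each $t$ as in \cite[Theorem 4.7]{Francfort.Giacomini.2012}.
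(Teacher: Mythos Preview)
Your proposal is correct and mirrors the paper's proof almost exactly: the three-step structure (a priori bounds plus Helly for $P$; stability via the dual route through $\Sigma(t)\in\calK^{hom}_\gamma$ and Corollary \ref{two-scale dissipation and plastic work inequality}; energy inequality via Theorem \ref{lsc-gamma finite} with the reverse inequality coming from stability) is precisely what the paper does. The only nuance worth sharpening is how you upgrade the $t$-dependent subsequences for $u^h(t)$ and $\Lambda_h e^h(t)$ to the $t$-independent one: the paper does this not by ``using the energy balance to force convergence'' but by first proving \ref{hom-qs S} along the $t$-dependent subsequence, then invoking strict convexity of $\calQ^{hom}$ to conclude that $E(t)$ (and hence $u(t)$, via Poincar\'e--Korn on $\ext{\Omega}$) is uniquely determined by $P(t)$ and $w(t)$, so every subsequential limit coincides and the whole $t$-independent subsequence converges.
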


\begin{proof}
The proof is subdivided into three steps, in the spirit of evolutionary $\Gamma$-convergence. 

\noindent{\bf Step 1: \em Compactness.}

We first prove that that there exists a constant $C$, depending only on the initial and boundary data, such that
\begin{equation} \label{boundness in time 1}
    \sup_{t \in [0,T]} \left\|\Lambda_h e^h(t)\right\|_{L^2(\BLUE\ext{\Omega}\BLACK;\M^{3 \times 3}_{\sym})} \leq C \;\text{ and }\; \calD_{\calH_h}(\Lambda_h p^h; 0, T) \leq C,
\end{equation}
for every $h>0$. 
Indeed, the energy balance of the $h$-quasistatic evolution \ref{h-qs E} and \eqref{coercivity of Q} imply
\begin{align*}
    &r_c \left\|\Lambda_h e^h(t)\right\|_{L^2(\BLUE\ext{\Omega}\BLACK;\M^{3 \times 3}_{\sym})} + \calD_{\calH_h}(\Lambda_h p^h; 0, t) \\
    &\leq R_c \left\|\Lambda_h e^h(0)\right\|_{L^2(\BLUE\ext{\Omega}\BLACK;\M^{3 \times 3}_{\sym})} + 2 R_c \sup_{t \in [0,T]} \left\|\Lambda_h e^h(t)\right\|_{L^2(\BLUE\ext{\Omega}\BLACK;\M^{3 \times 3}_{\sym})} \int_0^T \left\|E\dot{w}(s)\right\|_{L^2(\ext{\Omega};\M^{3 \times 3}_{\sym})} \,ds,
\end{align*}
where the last integral is well defined as $t \mapsto E\dot{w}(t)$ belongs to $L^1([0,T];L^2(\ext{\Omega};\M^{3 \times 3}_{\sym}))$. 
In view of the boundedness of $\Lambda_h e^h_0$ that is implied by \eqref{main result e^h_0 condition}, property \eqref{boundness in time 1} now follows by the Cauchy-Schwarz inequality.

From \eqref{boundness in time 1} and \eqref{coercivity of H_i}, we infer that
\begin{equation*}
    r_k \left\|\Lambda_h p^h(t) - \Lambda_h p^h_0\right\|_{\Mb(\BLUE\ext{\Omega}\BLACK;\M^{3 \times 3}_{\dev})} \leq \calH_h\left(\Lambda_h p^h(t) - \Lambda_h p^h_0\right) \leq \calD_{\calH_h}(\Lambda_h p^h; 0, t) \leq C,
\end{equation*}
for every $t \in [0,T]$, which together with \eqref{main result p^h_0 condition} implies 
\begin{equation} \label{boundness in time 2}
    \sup_{t \in [0,T]} \left\|\Lambda_h p^h(t)\right\|_{\Mb(\BLUE\ext{\Omega}\BLACK;\M^{3 \times 3}_{\dev})} \leq C.
\end{equation}

Next, we note that $\left\|\cdot\right\|_{L^1(\ext{\Omega} \setminus \closure{\Omega};\M^{3 \times 3}_{\sym})}$ is a continuous seminorm on $BD(\ext{\Omega})$ which is also a norm on the set of rigid motions. 
Then, using a variant of Poincar\'{e}-Korn's inequality (see \cite[Chapter II, Proposition 2.4]{Temam.1985}) and the fact that $(u^h(t), e^h(t), p^h(t)) \in \calA_h(w(t))$, we conclude that, for every $h > 0$ and $t \in [0,T]$,
\begin{align*}
    \left\|u^h(t)\right\|_{BD(\ext{\Omega})} &\leq C \left(\left\|u^h(t)\right\|_{L^1(\ext{\Omega} \setminus \closure{\Omega};\R^3)} + \left\|Eu^h(t)\right\|_{\Mb(\ext{\Omega};\M^{3 \times 3}_{\sym})}\right)\\
    &\leq C \left(\left\|w(t)\right\|_{L^1(\ext{\Omega} \setminus \closure{\Omega};\R^3)} + \left\|e^h(t)\right\|_{L^2(\ext{\Omega};\M^{3 \times 3}_{\sym})} + \left\|p^h(t)\right\|_{\Mb(\ext{\Omega};\M^{3 \times 3}_{\dev})}\right)\\
    &\leq C \left(\left\|w(t)\right\|_{L^2(\ext{\Omega};\R^3)} + \left\|\Lambda_h e^h(t)\right\|_{L^2(\ext{\Omega};\M^{3 \times 3}_{\sym})} + \left\|\Lambda_h p^h(t)\right\|_{\Mb(\ext{\Omega};\M^{3 \times 3}_{\dev})}\right).
\end{align*}
In view of the assumption on \MMM $w$ \BBB, from \eqref{boundness in time 2} and the former inequality in \eqref{boundness in time 1} it follows that the sequences $\{u^h(t)\}$ are bounded in $BD(\ext{\Omega})$ uniformly with respect to $t$.

Owing to \eqref{equivalence of variations}, we obtain that $\calD_{\calH_h}$ and $\calV$ are equivalent norms, which immediately implies
\begin{equation} \label{boundness in time 3}
    \calV(\Lambda_h p^h; 0, T) \leq C,
\end{equation}
for every $h>0$. 
Hence, by a generalized version of Helly's selection theorem (see \cite[Lemma 7.2]{DalMaso.DeSimone.Mora.2006}) \MMM and Remark \ref{transfertwoscale}\BBB, there exists a (not relabeled) subsequence, independent of $t$, and $P \in BV(0,T;\Mb(\ext{\Omega} \times \calY;\M^{3 \times 3}_{\dev}))$ such that
\begin{equation*}
    \Lambda_h p^h(t) \weakstartwoscale P(t) \quad \text{two-scale weakly* in $\Mb(\ext{\Omega} \times \calY;\M^{3 \times 3}_{\dev})$},
\end{equation*}
for every $t \in [0,T]$, and 
$\calV(P; 0, T) \leq C$.
By extracting a further subsequence (possibly depending on $t$),
\begin{align*}
    &u^{h_t}(t) \weakstar u(t) \quad \text{weakly* in $BD(\ext{\Omega})$},\\
    &\Lambda_{h_t} e^{h_t}(t) \weaktwoscale E(t) \quad \text{two-scale weakly in $L^2(\ext{\Omega} \times \calY;\M^{3 \times 3}_{\sym})$},
\end{align*}
for every $t \in [0,T]$. 
From \MMM \Cref{two-scale weak limit of scaled strains - 2x2 submatrix}\BBB, we conclude that $u(t) \in KL(\ext{\Omega})$ for every $t \in [0,T]$. 
According to \Cref{two-scale weak limit of scaled strains}, the above subsequence can be chosen so that there exists $\mu(t) \in \calXgamma{\ext{\omega}}$ for which
\begin{equation*}
    \Lambda_h Eu^{h_t}(t) \weakstartwoscale Eu(t) \otimes \calL^{2}_{y} + \widetilde{E}_{\gamma}\mu(t).
\end{equation*}
Since, $\Lambda_{h_t} Eu^{h_t}(t) = \Lambda_{h_t} e^{h_t}(t) + \Lambda_{h_t} p^{h_t}(t)$ in $\ext{\Omega}$ for every $h > 0$ and $t \in [0,T]$, we deduce that $(u(t),E(t),P(t)) \in \calA^{hom}_{\gamma}(w(t))$.

Consider now for every $t \in [0,T]$ the maps
\begin{equation*}
    \sigma^{h_t}(t) := \C\left(\tfrac{x'}{\epsh_t}\right) \Lambda_{h_t} e^{h_t}(t).
\end{equation*}
For a (not relabeled) subsequence, we have
\begin{equation} \label{main result sigma^h(t)}
   \sigma^{h_t}(t) \weaktwoscale \Sigma(t) \quad \text{two-scale weakly in $L^2(\ext{\Omega} \times \calY;\M^{3 \times 3}_{\sym})$},
\end{equation}
where $\Sigma(t) := \C(y) E(t)$. 
Since $\sigma^{h_t}(t) \in \calK_{h_t}$ for every $t \in [0,T]$, by \Cref{two-scale weak limit of admissible stress - regime gamma} we obtain that $\Sigma(t) \in \calK^{hom}_{\gamma}$ for every $t\in [0,T]$.

\noindent{\bf Step 2: \em Global stability.}

Since from Step 1 we have $(u(t),E(t),P(t)) \in \calA^{hom}_{\gamma}(w(t))$ with the associated $\mu(t) \in \calXgamma{\ext{\omega}}$, then for every $(\upsilon,H,\Pi) \in \calA^{hom}_{\gamma}(w(t))$ with the associated $\nu \in \calXgamma{\ext{\omega}}$ we have
\begin{equation*}
    (\upsilon-u(t),H-E(t),\Pi-P(t)) \in \calA^{hom}_{\gamma}(0).
\end{equation*}
From the inclusion $\C(y) E(t) \in \calK^{hom}_{\gamma}$, by \Cref{two-scale dissipation and plastic work inequality} we infer 
\begin{align*}
    \calH^{hom}(\Pi-P(t)) &\geq -\int_{\omega \times I \times \calY} \C(y) E(t) : (H-E(t)) \,dx dy\\
    &= \calQ^{hom}(E(t)) + \calQ^{hom}(H-E(t)) - \calQ^{hom}(H).
\end{align*}
Thus,
\begin{equation*}
    \calH^{hom}(\Pi-P(t)) + \calQ^{hom}(H) \geq \calQ^{hom}(E(t)) + \calQ^{hom}(H-E(t)) \geq \calQ^{hom}(E(t)),
\end{equation*}
hence we deduce \ref{hom-qs S}.

Now we can prove that limit functions $u(t)$ and $E(t)$ do not depend on the subsequence. 
Assume that $(\upsilon(t),H(t),P(t)) \in \calA^{hom}_{\gamma}(w(t))$ with the associated $\nu(t) \in \calXgamma{\ext{\omega}}$ also satisfy the global stability condition in the definition of the two-scale quasistatic evolution. 
By the strict convexity of $\calQ^{hom}$, we find
\begin{equation*}
    H(t) = E(t).
\end{equation*}
Then, by \eqref{admissible two-scale configurations - regime gamma}, 
\begin{align*}
    E\upsilon(t) \otimes \calL^{2}_{y} + \widetilde{E}_{\gamma}\nu(t) &= H(t) \,\calL^{3}_{x} \otimes \calL^{2}_{y} + P(t)\\
    &= E(t) \,\calL^{3}_{x} \otimes \calL^{2}_{y} + P(t)\\
    &= Eu(t) \otimes \calL^{2}_{y} + \widetilde{E}_{\gamma}\mu(t).
\end{align*}
Identifing $Eu(t)$ and $E\upsilon(t)$ with elements of $\Mb(\ext{\Omega};\M^{2 \times 2}_{\sym})$ and 
integrating over $\calY$, 
we obtain
\begin{equation*}
    E\upsilon(t) = Eu(t).
\end{equation*}
Using the variant of Poincar\'{e}-Korn inequality in Step 1, we infer that $\upsilon(t) = u(t)$ on $\ext{\Omega}$.

This implies that the whole sequences converges without need to extract further $t$-dependent subsequences, i.e.
\begin{align*}
    &u^{h}(t) \weakstar u(t) \quad \text{weakly* in $BD(\ext{\Omega})$},\\
    &\Lambda_h e^h(t) \weaktwoscale E(t) \quad \text{two-scale weakly in $L^2(\ext{\Omega} \times \calY;\M^{3 \times 3}_{\sym})$}.
\end{align*}

\noindent{\bf Step 3: \em Energy balance.}

In order to prove \ref{hom-qs E}, it is enough (by arguing as in, e.g. \cite[Theorem 4.7]{DalMaso.DeSimone.Mora.2006} and \cite[Theorem 2.7]{Francfort.Giacomini.2012}) to prove the energy inequality
\begin{align} \label{step 3 inequality}
\begin{split}
    &\calQ^{hom}(E(t)) + \calD_{\calH^{hom}}(P; 0, t) \\
    &\leq \calQ^{hom}(E(0)) 
    + \int_0^t \int_{\Omega \times \calY} \C(y) E(s) : E\dot{w}(s) \,dx dy ds.
\end{split}
\end{align}

For a fixed $t \in [0,T]$, consider a subdivision $0 = t_1 < t_2 < \ldots < t_n = t$ of $[0,t]$. 
In view of the lower semicontinuity of $\calQ^{hom}$ and $\calH^{hom}$ (see \eqref{lower semicontinuity Q} and \eqref{lower semicontinuity H}), from \ref{h-qs E} we have 
\begin{align*}
    &\calQ^{hom}(E(t)) + \sum_{i = 1}^{n} \calH^{hom}\left(P(t_{i+1}) - P(t_i)\right)\\
    &\leq \liminf\limits_{h}\left( \calQ_h(\Lambda_h e^h(t)) + \sum_{i = 1}^{n} \calH_h\left(\Lambda_h p^h(t_{i+1}) - \Lambda_h p^h(t_i)\right) \right)\\
    &\leq \liminf\limits_{h}\left( \calQ_h(\Lambda_h e^h(t)) + \calD_{\calH_h}(\Lambda_h p^h; 0, t) \right)\\
    &= \liminf\limits_{h}\left( \calQ_h(\Lambda_h e^h(0)) 
    + \int_0^t \int_{\Omega} \C\left(\tfrac{x'}{\epsh}\right) \Lambda_h e^h(s) : E\dot{w}(s) \,dx ds \right).
\end{align*}
By the strong convergence assumed in \eqref{main result e^h_0 condition} and \eqref{main result sigma^h(t)}, owing to the Lebesgue's dominated convergence theorem we obtain
\begin{align*}
    &\lim\limits_{h}\left( \calQ_h(\Lambda_h e^h(0)) 
    + \int_0^t \int_{\Omega} \C\left(\tfrac{x'}{\epsh}\right) \Lambda_h e^h(s) : E\dot{w}(s) \,dx ds \right)\\
    &= \calQ^{hom}(E(0)) 
    + \int_0^t \int_{\Omega \times \calY} \C(y) \Lambda_h E(s) : E\dot{w}(s) \,dx dy ds.
\end{align*}
Hence, we have
\begin{align*}
    &\calQ^{hom}(E(t)) + \sum_{i = 1}^{n} \calH^{hom}\left(P(t_{i+1}) - P(t_i)\right) \\
    &\leq \calQ^{hom}(E(0)) 
    + \int_0^t \int_{\Omega \times \calY} \C(y) \Lambda_h E(s) : E\dot{w}(s) \,dx dy ds
\end{align*}
Taking the supremum over all partitions of $[0,t]$ yields \eqref{step 3 inequality}, which concludes the proof.
\end{proof}
\begin{remark}
We point out that as a Corollary of Theorem \ref{main result} and of the fact that the limiting model satisfies an energy equality, we find that strong two-scale convergence in the $L^2$-topology of the scaled initial elastic strains and weak two-scale convergence in measure of the scaled initial plastic strains are enough to guarantee the strong two-scale convergence of the rescaled elastic strains in the $L^2$-topology to the effective one, as well as the convergence of rescaled dissipations to the limiting one.
\end{remark}

\section*{Acknowledgements}
M.~Bu\v{z}an\v{c}i\'{c} and I.~Vel\v{c}i\'{c} were supported by the Croatian Science Foundation under Grant Agreement no. IP-2018-01-8904 (Homdirestroptcm). 
The research of E. Davoli was supported by the Austrian Science Fund (FWF) projects F65, V 662, Y1292, and I 4052. All authors are thankful for the support from the OeAD-WTZ project HR 08/2020.
\printbibliography

@article{mielke.roubicek.stefanelli,
	author = {Mielke, Alexander and Roub{\'i}{\v c}ek, Tom{\'a}{\v s} and Stefanelli, Ulisse},
	date = {2008/03/01},
	date-added = {2022-10-03 07:27:39 +0000},
	date-modified = {2022-10-03 07:27:39 +0000},
	doi = {10.1007/s00526-007-0119-4},
	id = {Mielke2008},
	isbn = {1432-0835},
	journal = {Calculus of Variations and Partial Differential Equations},
	number = {3},
	pages = {387--416},
	title = {{$\Gamma$}-limits and relaxations for rate-independent evolutionary problems},
	url = {https://doi.org/10.1007/s00526-007-0119-4},
	volume = {31},
	year = {2008},
	bdsk-url-1 = {https://doi.org/10.1007/s00526-007-0119-4}}

@article{Davoli.Gavioli.Pagliari,
author={Davoli, Elisa and Gavioli, Chiara and Pagliari, Valerio},
title={A homogenization result in finite plasticity and its application to high-contrast media},
journal={Preprint ArXiv 2204.09084},
year={2022},}

@inbook{Davoli.Kreisbeck,
	address = {Cham},
	author = {Davoli, Elisa and Kreisbeck, Carolin},
	booktitle = {Research in Mathematics of Materials Science},
	doi = {10.1007/978-3-031-04496-0_7},
	editor = {Espa{\~{n}}ol, Malena I. and Lewicka, Marta and Scardia, Lucia and Schl{\"o}merkemper, Anja},
	isbn = {978-3-031-04496-0},
	pages = {159--183},
	publisher = {Springer International Publishing},
	title = {On Static and Evolutionary Homogenization in Crystal Plasticity for Stratified Composites},
	url = {https://doi.org/10.1007/978-3-031-04496-0_7},
	year = {2022},
	bdsk-url-1 = {https://doi.org/10.1007/978-3-031-04496-0_7}}

@article{Cristowiak.Kreisbeck2,
	author = {Christowiak, Fabian and Kreisbeck, Carolin},
	date = {2017/04/28},
	date-added = {2022-10-03 06:57:52 +0000},
	date-modified = {2022-10-03 06:57:52 +0000},
	doi = {10.1007/s00526-017-1171-3},
	id = {Christowiak2017},
	isbn = {1432-0835},
	journal = {Calculus of Variations and Partial Differential Equations},
	number = {3},
	pages = {75},
	title = {Homogenization of layered materials with rigid components in single-slip finite crystal plasticity},
	url = {https://doi.org/10.1007/s00526-017-1171-3},
	volume = {56},
	year = {2017},
	bdsk-url-1 = {https://doi.org/10.1007/s00526-017-1171-3}}

@article{Cristowiak.Kreisbeck,
	author = {Christowiak, Fabian and Kreisbeck, Carolin},
	date = {2020/01/01},
	date-added = {2022-10-03 06:55:51 +0000},
	date-modified = {2022-10-03 06:55:51 +0000},
	doi = {10.1007/s00205-019-01418-0},
	id = {Christowiak2020},
	isbn = {1432-0673},
	journal = {Archive for Rational Mechanics and Analysis},
	number = {1},
	pages = {51--98},
	title = {Asymptotic Rigidity of Layered Structures and Its Application in Homogenization Theory},
	url = {https://doi.org/10.1007/s00205-019-01418-0},
	volume = {235},
	year = {2020},
	bdsk-url-1 = {https://doi.org/10.1007/s00205-019-01418-0}}

@article {Davoli.Ferreira.Kreisbeck,
    AUTHOR = {Davoli, Elisa and Ferreira, Rita and Kreisbeck, Carolin},
     TITLE = {Homogenization in {BV} of a model for layered composites in
              finite crystal plasticity},
   JOURNAL = {Adv. Calc. Var.},
  FJOURNAL = {Advances in Calculus of Variations},
    VOLUME = {14},
      YEAR = {2021},
    NUMBER = {3},
     PAGES = {441--473},
      ISSN = {1864-8258},
   MRCLASS = {49J45 (26B30 74C15 74Q05)},
  MRNUMBER = {4279068},
       DOI = {10.1515/acv-2019-0011},
       URL = {https://doi.org/10.1515/acv-2019-0011},
}

@article {Hanke,
    AUTHOR = {Hanke, Hauke},
     TITLE = {Homgenization in gradient plasticity},
   JOURNAL = {Math. Models Methods Appl. Sci.},
  FJOURNAL = {Mathematical Models and Methods in Applied Sciences},
    VOLUME = {21},
      YEAR = {2011},
    NUMBER = {8},
     PAGES = {1651--1684},
      ISSN = {0218-2025},
   MRCLASS = {35B27 (49J45 74C05 74Q15)},
  MRNUMBER = {2826468},
MRREVIEWER = {Alexander A. Kovalevsky},
       DOI = {10.1142/S0218202511005520},
       URL = {https://doi.org/10.1142/S0218202511005520},
}

@article {Giacomini.Musesti,
    AUTHOR = {Giacomini, Alessandro and Musesti, Alessandro},
     TITLE = {Two-scale homogenization for a model in strain gradient
              plasticity},
   JOURNAL = {ESAIM Control Optim. Calc. Var.},
  FJOURNAL = {ESAIM. Control, Optimisation and Calculus of Variations},
    VOLUME = {17},
      YEAR = {2011},
    NUMBER = {4},
     PAGES = {1035--1065},
      ISSN = {1292-8119},
   MRCLASS = {74C05 (35B27 74G65 74Q05)},
  MRNUMBER = {2859864},
       DOI = {10.1051/cocv/2010036},
       URL = {https://doi.org/10.1051/cocv/2010036},
}

@article {Francfort.Giacomini.Musesti,
    AUTHOR = {Francfort, Gilles A. and Giacomini, Alessandro and Musesti,
              Alessandro},
     TITLE = {On the {F}leck and {W}illis homogenization procedure in strain
              gradient plasticity},
   JOURNAL = {Discrete Contin. Dyn. Syst. Ser. S},
  FJOURNAL = {Discrete and Continuous Dynamical Systems. Series S},
    VOLUME = {6},
      YEAR = {2013},
    NUMBER = {1},
     PAGES = {43--62},
      ISSN = {1937-1632},
   MRCLASS = {74Q05 (35B27 35Q74 74A30 74C05)},
  MRNUMBER = {2983468},
       DOI = {10.3934/dcdss.2013.6.43},
       URL = {https://doi.org/10.3934/dcdss.2013.6.43},
}

@article {Heida.Schweizer2018,
    AUTHOR = {Heida, Martin and Schweizer, Ben},
     TITLE = {Non-periodic homogenization of infinitesimal strain plasticity
              equations},
   JOURNAL = {ZAMM Z. Angew. Math. Mech.},
  FJOURNAL = {ZAMM. Zeitschrift f\"{u}r Angewandte Mathematik und Mechanik.
              Journal of Applied Mathematics and Mechanics},
    VOLUME = {96},
      YEAR = {2016},
    NUMBER = {1},
     PAGES = {5--23},
      ISSN = {0044-2267},
   MRCLASS = {74Q05 (35B27 35Q74 65M22 74C05)},
  MRNUMBER = {3454311},
       DOI = {10.1002/zamm.201400112},
       URL = {https://doi.org/10.1002/zamm.201400112},
}

@article{Heida.Schweizer2016,
	author = {Heida, Martin and Schweizer, Ben},
	title = {Stochastic homogenization of plasticity equations},
	DOI= "10.1051/cocv/2017015",
	url= "https://doi.org/10.1051/cocv/2017015",
	journal = {ESAIM: COCV},
	year = 2018,
	volume = 24,
	number = 1,
	pages = "153-176",
}

@article{Schweizer.Veneroni2015,
	author = {B. Schweizer and M. Veneroni},
	date-modified = {2022-09-19 08:52:15 +0200},
	doi = {10.1080/00036811.2014.896992},
	eprint = {https://doi.org/10.1080/00036811.2014.896992},
	journal = {Applicable Analysis},
	number = {2},
	pages = {375-398},
	publisher = {Taylor & Francis},
	title = {Homogenization of plasticity equations with two-scale convergence methods},
	url = {https://doi.org/10.1080/00036811.2014.896992},
	volume = {94},
	year = {2015},
	Bdsk-Url-1 = {https://doi.org/10.1080/00036811.2014.896992}}

@article{DavoliM3AS,
author = {Davoli, Elisa},
title = {Quasistatic evolution models for thin plates arising as low energy {$\Gamma$}-limits of finite plasticity},
journal = {Mathematical Models and Methods in Applied Sciences},
volume = {24},
number = {10},
pages = {2085-2153},
year = {2014},
doi = {10.1142/S021820251450016X},

URL = { 
        https://doi.org/10.1142/S021820251450016X
    
},
eprint = { 
        https://doi.org/10.1142/S021820251450016X
    
}
,
    
}

@article{DavoliCOCV,
	author = {Davoli, Elisa},
	title = {Linearized plastic plate models as its of 3D finite
          elastoplasticity},
	DOI= "10.1051/cocv/2013081",
	url= "https://doi.org/10.1051/cocv/2013081",
	journal = {ESAIM: COCV},
	year = 2014,
	volume = 20,
	number = 3,
	pages = "725-747",
	month = "",
}

@article{Liero.Roche2012,
	author = {Liero, Matthias and Roche, Thomas},
	da = {2012/08/01},
	date-added = {2022-09-14 08:32:18 +0200},
	date-modified = {2022-09-14 08:32:31 +0200},
	doi = {10.1007/s00030-011-0137-y},
	id = {Liero2012},
	isbn = {1420-9004},
	journal = {Nonlinear Differential Equations and Applications NoDEA},
	number = {4},
	pages = {437--457},
	title = {Rigorous derivation of a plate theory in linear elastoplasticity via {$\Gamma$}-convergence},
	ty = {JOUR},
	url = {https://doi.org/10.1007/s00030-011-0137-y},
	volume = {19},
	year = {2012},
	Bdsk-Url-1 = {https://doi.org/10.1007/s00030-011-0137-y}}

@article{Maggiani.Mora2017,
	author = {Maggiani, G. B. and Mora, M. G.},
	da = {2018/06/01},
	date-added = {2022-09-14 08:30:06 +0200},
	date-modified = {2022-09-14 08:30:18 +0200},
	doi = {10.1007/s10231-017-0704-x},
	id = {Maggiani2018},
	isbn = {1618-1891},
	journal = {Annali di Matematica Pura ed Applicata (1923 -)},
	number = {3},
	pages = {775--815},
	title = {Quasistatic evolution of perfectly plastic shallow shells: a rigorous variational derivation},
	ty = {JOUR},
	url = {https://doi.org/10.1007/s10231-017-0704-x},
	volume = {197},
	year = {2018},
	Bdsk-Url-1 = {https://doi.org/10.1007/s10231-017-0704-x}}

@article{Gidoni.Maggiani.Scala2018,
title = {Existence and regularity of solutions for an evolution model of perfectly plastic plates},
journal = {Communications on Pure and Applied Analysis},
volume = {18},
number = {4},
pages = {1783-1826},
year = {2019},
author = {P. Gidoni and G. B. Maggiani and R. Scala},
}

@article{Maggiani.Mora2016,
author = {Maggiani, Giovanni Battista and Mora, Maria Giovanna},
title = {A dynamic evolution model for perfectly plastic plates},
journal = {Mathematical Models and Methods in Applied Sciences},
volume = {26},
number = {10},
pages = {1825-1864},
year = {2016},
doi = {10.1142/S0218202516500469},

URL = { 
        https://doi.org/10.1142/S0218202516500469
    
},
eprint = { 
        https://doi.org/10.1142/S0218202516500469
    
}
,
    
}

@article{Davoli.Mora2015,
	author = {Davoli, Elisa and Mora, Maria Giovanna},
	da = {2015/11/01},
	date-added = {2022-09-14 08:20:36 +0200},
	date-modified = {2022-09-14 08:21:32 +0200},
	doi = {10.1007/s00526-015-0876-4},
	id = {Davoli2015},
	isbn = {1432-0835},
	journal = {Calculus of Variations and Partial Differential Equations},
	number = {3},
	pages = {2581--2614},
	title = {Stress regularity for a new quasistatic evolution model of perfectly plastic plates},
	ty = {JOUR},
	url = {https://doi.org/10.1007/s00526-015-0876-4},
	volume = {54},
	year = {2015},
	Bdsk-Url-1 = {https://doi.org/10.1007/s00526-015-0876-4}}

@article{DalMaso.DeSimone.Mora.2006,
    Author = {Dal Maso, G. and DeSimone, A. and Mora, M.G.},
	Title = {Quasistatic evolution problems for linearly elastic - perfectly plastic materials},
	Year = {2006},
	Journal = {Arch. Rational Mech. Anal.},
	Volume = {180},
	Pages = {237--291},
}

@article{Davoli.Mora.2013,
	Author = {Davoli, E. and Mora, M.G.},
	Title = {A quasistatic evolution model for perfectly plastic plates derived by {$\Gamma$}-convergence},
	Year = {2013},
	Journal = {Ann. Inst. H. Poincar\'{e} Anal. Non Lin\'{e}aire},
	Volume = {30},
	Pages = {615--660},
}

@article{Francfort.Giacomini.2012,
	Author = {Francfort, G.A. and Giacomini, A.},
	Title = {Small-strain heterogeneous elastoplasticity revisited},
	Year = {2012},
	Journal = {Comm. Pure Appl. Math.},
	Volume = {65},
	Pages = {1185--1241},
}

@article{Francfort.Giacomini.2014,
	Author = {Francfort, G.A. and Giacomini, A.},
	Title = {On periodic homogenization in perfect elasto-plasticity},
	Year = {2014},
	Journal = {J. Eur. Math. Soc. (JEMS)},
	Volume = {16},
	Pages = {409--461},
}

@book{Temam.1985,
	Author = {Temam, R.},
	Title = {Mathematical Problems in Plasticity},
	Year = {1985},
	Publisher = {Gauthier--Villars, Paris},
}

@article{Kohn.Temam.1983,
	Author = {Kohn, R.V. and Temam, R.},
	Title = {Dual spaces of stresses and strains, with applications to {Hencky} plasticity},
	Year = {1983},
	Journal = {Appl. Math. Optim.},
	Volume = {10},
	Pages = {1--35},
}

@article{Demengel.1984,
	Author = {Demengel, F.},
	Title = {Fonctions \`{a} hessien born\'{e}},
	Year = {1984},
	Journal = {Ann. Inst. Fourier (Grenoble)},
	Volume = {34},
	Pages = {155--190},
}

@book{Carbone.DeArcangelis.2002,
	Title = {Unbounded Functionals in the Calculus of Variations: Representation, Relaxation, and Homogenization},
    Author = {Carbone, L. and De Arcangelis, R.},
	Year = {2019},
	Publisher = {Chapman and Hall/CRC},
}

@article{liero2011evolutionary,
	Title = {An evolutionary elastoplastic plate model derived via {$\Gamma$}-convergence},
    Author = {Liero, M. and Mielke, A.},
	Journal = {Mathematical Models and Methods in Applied Sciences},
	Volume = {21},
	Number = {09},
	Pages = {1961--1986},
	Year = {2011},
	Publisher = {World Scientific},
}

@article{hornung2014derivation,
	Title = {Derivation of a homogenized nonlinear plate theory from 3d elasticity},
    Author = {Hornung, P. and Neukamm, S. and Vel{\v{c}}i{\'c}, I.},
	Journal = {Calculus of variations and partial differential equations},
	Volume = {51},
	Number = {3},
	Pages = {677--699},
	Year = {2014},
	Publisher = {Springer},
}

@article{bufford2015multiscale,
	Title = {Multiscale homogenization in {Kirchhoff's} nonlinear plate theory},
    Author = {Bufford, L. and Davoli, E. and Fonseca, I.},
	Journal = {Mathematical Models and Methods in Applied Sciences},
	Volume = {25},
	Number = {09},
	Pages = {1765--1812},
	Year = {2015},
	Publisher = {World Scientific},
}

@article{bukal2017simultaneous,
	Title = {On the simultaneous homogenization and dimension reduction in elasticity and locality of {$\Gamma$}-closure},
    Author = {Bukal, M. and Vel{\v{c}}i{\'c}, I.},
	Journal = {Calculus of Variations and Partial Differential Equations},
	Volume = {56},
	Number = {3},
	Pages = {1--41},
	Year = {2017},
	Publisher = {Springer},
}

@book{ambrosio2000functions,
	Title = {Functions of Bounded Variation and Free Discontinuity Problems},
    Author = {Ambrosio, L. and Fusco, N. and Pallara, D.},
	Year = {2000},
	Publisher = {Courier Corporation},
}

@article{breit2020trace,
	Title = {On the trace operator for functions of bounded {$\mathbb{A}$}-variation},
    Author = {Breit, D. and Diening, L. and Gmeineder, F.},
	Journal = {Analysis \& PDE},
	Volume = {13},
	Number = {2},
	Pages = {559--594},
	Year = {2020},
	Publisher = {Mathematical Sciences Publishers},
}

@article{goffman1964sublinear,
	Title = {Sublinear functions of measures and variational integrals},
    Author = {Goffman, C. and Serrin, J.},
	Journal = {Duke Mathematical Journal},
	Volume = {31},
	Number = {1},
	Pages = {159--178},
	Year = {1964},
	Publisher = {Duke University Press},
}

@article{demengel1984convex,
	Title = {Convex functions of a measure and applications},
    Author = {Demengel, F. and Temam, R.},
	Journal = {Indiana University Mathematics Journal},
	Volume = {33},
	Number = {5},
	Pages = {673--709},
	Year = {1984},
	Publisher = {JSTOR},
}

@book{temam2001navier,
	Title = {Navier-Stokes Equations: Theory and Numerical Analysis},
    Author = {Temam, R.},
	Volume = {343},
	Year = {2001},
	Publisher = {American Mathematical Soc.},
}

@book{fonseca2007modern,
	Title = {Modern Methods in the Calculus of Variations: $L^p$ Spaces},
    Author = {Fonseca, I. and Leoni, G.},
	Year = {2007},
	Publisher = {Springer Science \& Business Media},
}

@article {cailleriethin,
    AUTHOR = {Caillerie, D.},
     TITLE = {Thin elastic and periodic plates},
   JOURNAL = {Math. Methods Appl. Sci.},
  FJOURNAL = {Mathematical Methods in the Applied Sciences},
    VOLUME = {6},
      YEAR = {1984},
    NUMBER = {2},
     PAGES = {159--191},
      ISSN = {0170-4214},
   MRCLASS = {73K10 (35Q20)},
  MRNUMBER = {751739},
MRREVIEWER = {L. P. Lebedev},
       DOI = {10.1002/mma.1670060112},
       URL = {https://doi.org/10.1002/mma.1670060112},
}

@article {damlamianvogelius,
    AUTHOR = {Damlamian, A. and Vogelius, M.},
     TITLE = {Homogenization limits of the equations of elasticity in thin
              domains},
   JOURNAL = {SIAM J. Math. Anal.},
  FJOURNAL = {SIAM Journal on Mathematical Analysis},
    VOLUME = {18},
      YEAR = {1987},
    NUMBER = {2},
     PAGES = {435--451},
      ISSN = {0036-1410},
   MRCLASS = {73K10 (35B99 73K20)},
  MRNUMBER = {876283},
MRREVIEWER = {Marek El\.{z}anowski},
       DOI = {10.1137/0518034},
       URL = {https://doi.org/10.1137/0518034},
}

@book {panasenkobook,
    AUTHOR = {Panasenko, G.},
     TITLE = {Multi-scale modelling for structures and composites},
 PUBLISHER = {Springer, Dordrecht},
      YEAR = {2005},
     PAGES = {xiv+398},
      ISBN = {1-4020-2981-0},
   MRCLASS = {74K10 (35B27 35Q72 74G10 74K30 74Q05 74Q10)},
  MRNUMBER = {2133084},
MRREVIEWER = {Sergey K. Serkov},
}

@article {neukammvelcic2013,
    AUTHOR = {Neukamm, Stefan and Vel\v{c}i\'{c}, Igor},
     TITLE = {Derivation of a homogenized von-{K}\'{a}rm\'{a}n plate theory from
              3{D} nonlinear elasticity},
   JOURNAL = {Math. Models Methods Appl. Sci.},
  FJOURNAL = {Mathematical Models and Methods in Applied Sciences},
    VOLUME = {23},
      YEAR = {2013},
    NUMBER = {14},
     PAGES = {2701--2748},
      ISSN = {0218-2025},
   MRCLASS = {35B27 (35Q74 74B20 74K20 74Q05)},
  MRNUMBER = {3119636},
       DOI = {10.1142/S0218202513500449},
       URL = {https://doi.org/10.1142/S0218202513500449},
}

@article {velcic2015,
    AUTHOR = {Vel\v{c}i\'{c}, Igor},
     TITLE = {On the derivation of homogenized bending plate model},
   JOURNAL = {Calc. Var. Partial Differential Equations},
  FJOURNAL = {Calculus of Variations and Partial Differential Equations},
    VOLUME = {53},
      YEAR = {2015},
    NUMBER = {3-4},
     PAGES = {561--586},
      ISSN = {0944-2669},
   MRCLASS = {35B27 (49J45 74E30 74K20 74Q05)},
  MRNUMBER = {3347471},
       DOI = {10.1007/s00526-014-0758-1},
       URL = {https://doi.org/10.1007/s00526-014-0758-1},
}

@article {cherdantsev2015,
    AUTHOR = {Cherdantsev, Mikhail and Cherednichenko, Kirill},
     TITLE = {Bending of thin periodic plates},
   JOURNAL = {Calc. Var. Partial Differential Equations},
  FJOURNAL = {Calculus of Variations and Partial Differential Equations},
    VOLUME = {54},
      YEAR = {2015},
    NUMBER = {4},
     PAGES = {4079--4117},
      ISSN = {0944-2669},
   MRCLASS = {49J45 (35B27 35Q74 49S05 74B20 74Q05)},
  MRNUMBER = {3426105},
MRREVIEWER = {Marcus Waurick},
       DOI = {10.1007/s00526-015-0932-0},
       URL = {https://doi.org/10.1007/s00526-015-0932-0},
}

@unpublished {BCVZ,
 AUTHOR = {Bu\v{z}an\v{c}i\'{c}, Marin and Cherednichenko, Kirill and Vel\v{c}i\'{c}, Igor and \v{Z}ubrini\'{c}, Josip},
 TITLE = {Spectral and evolution analysis of composite elastic plates with high contrast},
    note = {to appear in J. Elasticity, Preprint ArXiv 2105.05597}
}
\end{document}